\newcommand{\lb}{\left(}
\newcommand{\rb}{\right)}
\newcommand{\eps}{\varepsilon}
\newcommand{\R}{\mathbb{R}}
\newcommand{\C}{\mathbb{C}}
\newcommand{\Z}{\mathbb{Z}}
\newcommand{\F}{\mathbb{F}}
\renewcommand{\P}{\mathbb{P}}
\newcommand{\pd}{\partial}
\newcommand{\diff}{\mathrm{d}}
\newcommand{\Diff}{\mathrm{Diff}}
\renewcommand{\phi}{\varphi}
\newcommand{\ip}[2]{\langle #1, #2 \rangle}
\newcommand{\lspan}[1]{\langle #1 \rangle}
\newcommand{\mopnsi}[2]{\widetilde{\mathcal{M}}{}_{0, #1} (J, #2)}
\newcommand{\mupnsi}[2]{\mathcal{M}{}_{0, #1} (J, #2)}
\newcommand{\norm}[1]{\lVert #1 \rVert}
\renewcommand{\tilde}{\widetilde}
\newcommand{\conj}[1]{\overline{#1}}
\newcommand{\ev}{\operatorname{ev}}
\newcommand{\SL}{\mathrm{SL}}
\newcommand{\SU}{\mathrm{SU}}
\newcommand{\tri}{\triangle}
\newcommand{\double}{\tilde}
\newcommand{\D}{\mathcal{D}}
\newcommand{\CO}{\operatorname{\mathcal{CO}^0}}
\newcommand{\Res}{\operatorname{Res}}
\newcommand{\codim}{\operatorname{codim}}
\newcommand{\im}{\operatorname{Im}}
\newcommand{\re}{\operatorname{Re}}
\renewcommand{\ker}{\operatorname{ker}}
\newcommand{\coker}{\operatorname{coker}}
\newcommand{\End}{\operatorname{End}}
\newcommand{\adj}{\operatorname{adj}}
\newcommand{\Sym}{\operatorname{Sym}}
\newcommand{\qcup}{\mathbin{*}}
\newcommand{\Char}{\operatorname{char}}
\newcommand{\PD}{\operatorname{PD}}
\newcommand{\cosec}{\operatorname{cosec}}
\newcommand{\mc}{\colon}
\newcommand{\id}{\operatorname{id}}
\newcommand{\rightiso}{\xrightarrow{\sim}}
\newcommand{\mtw}{\mathcal{M}_2}
\newcommand{\begen}{\begin{enumerate}[label=(\roman*), align=right]}
\newcommand{\crit}{\operatorname{Crit}}
\newcommand{\trans}[2]{\mathbin{{}_{#1} \mathbin{\pitchfork}_{#2}}}
\theoremstyle{plain}
\newtheorem{thm}{Theorem}[section]
\newtheorem{lem}[thm]{Lemma}
\newtheorem{prop}[thm]{Proposition}
\newtheorem{cor}[thm]{Corollary}
\theoremstyle{remark}
\newtheorem{ex}[thm]{Example}
\theoremstyle{definition}
\newtheorem{defn}[thm]{Definition}
\theoremstyle{plain}
\newtheorem{mthm}{Theorem}
\theoremstyle{plain}
\newtheorem{athm}{Theorem}[section]
\newtheorem{alem}[athm]{Lemma}
\theoremstyle{remark}
\theoremstyle{definition}
\address{Centre for Mathematical Sciences\\University of Cambridge\\CB3 0WB\\UK}
\email{j.smith@dpmms.cam.ac.uk}
\begin{document}

\title{Floer cohomology of Platonic Lagrangians}
\author{Jack Smith}

\begin{abstract}  We analyse holomorphic discs on Lagrangian $\SU(2)$-orbits in a family of quasihomogeneous threefolds of $\SL(2, \C)$, previously studied by Evans--Lekili, introducing several techniques that should be applicable to wider classes of homogeneous Lagrangians.  By studying the closed--open map we place strong restrictions on the self-Floer cohomology of these Lagrangians, which we then compute using the Biran--Cornea pearl complex.
\end{abstract}

\maketitle
\tableofcontents

\section{Introduction}
\label{secIntro}

\subsection{Background}
\label{sscBackg}

In \cite{EL1}, Evans and Lekili initiated the study of \emph{homogeneous Lagrangian submanifolds} of K\"ahler manifolds, that is Lagrangian submanifolds which are the orbit of a Lie group action on the ambient manifold by holomorphic symplectomorphisms.  Amongst other things, they showed that the standard integrable complex structure can be used to construct moduli spaces of holomorphic discs, introduced a particularly simple type of disc, which they termed \emph{axial}, and showed that all index $2$ discs are of this form.  Using the machinery they developed they computed the Floer cohomology of the Chiang Lagrangian in $\C\P^3$ with itself.

Rather surprisingly, working over a field $k$ this cohomology is non-zero if and only if the characteristic of $k$ is $5$.  Evans--Lekili partially explained this using the Auroux--Kontsevich--Seidel criterion (see \protect \MakeUppercase {P}roposition\nobreakspace \ref {labAKS}) for eigenvalues of quantum multiplication by the first Chern class, although this argument also leaves open the possibility of the cohomology being non-zero in characteristic $7$.  It is natural to ask whether there is a simple way in which one can rule this out.

The Chiang Lagrangian is the first in a family of four `Platonic' Lagrangian $\SU(2)$-orbits inside quasihomogeneous Fano threefolds of $\SL(2, \C)$, and one can also ask what the self-Floer cohomology of the other three Lagrangians is.  The aim of the present paper is to address these two questions, with a view towards developing a more general understanding of the Floer theory of homogeneous Lagrangians.

\subsection{Outline of the paper}
\label{sscPaperOutline}

We begin in Section\nobreakspace \ref {secHomLag} by studying holomorphic discs in a complex manifold $X$ whose boundaries lie on a totally real submanifold $L$ (by which we mean a submanifold $L$ such that for all $p \in L$ we have $T_pX = T_pL \oplus J \cdot T_pL$, where $J$ is the complex structure on $X$) which is homogeneous with respect to some group action, with the aim of applying these results when $X$ is K\"ahler and $L$ Lagrangian.  This largely follows \cite{EL1}, reviewing various definitions and slightly simplifying and generalising Evans--Lekili's result that index $2$ discs are axial.

We then specialise to the case of the Platonic Lagrangians: a family of Lagrangian $\SU (2)$-orbits $L_C$ in a sequence of four Fano threefolds $X_C$, parametrised by configurations $C$ of points on the sphere ($C$ can be a triangle $\tri$, tetrahedron $T$, octahedron $O$, or icosahedron $I$, and the respective threefolds are $\C\P^3$, the quadric, the threefold known as $V_5$, and the Mukai--Umemura threefold $V_{22}$); Section\nobreakspace \ref {secChiangFamily} reviews the construction of these objects and sets out their basic properties.  Each $X_C$ carries a holomorphic action of $\SL(2, \C)$, complexifying the $\SU(2)$-action, with dense Zariski open orbit $W_C$ and compactification divisor $Y_C = X_C \setminus W_C$.

The main content of the paper is contained in Section\nobreakspace \ref {secDiscAn}, where we introduce several new ideas for analysing holomorphic discs bounded by these Lagrangians.  First we define an antiholomorphic involution $\tau$ on the dense open orbit $W_C$, built from exponentiating complex conjugation on the Lie algebra $\mathfrak{sl}(2, \C) \cong \mathfrak{su}(2) \otimes \C$, which extends across the compactification divisor $Y_C$ when $C$ is $O$ or $I$.  When $C$ is $\tri$ or $T$, although $\tau$ itself cannot be defined globally, we can still use it to reflect holomorphic discs.  By gluing discs to their reflections were are able to reduce problems involving open holomorphic curves (discs) to closed curves (spheres), and hence employ tools from algebraic geometry.

We then, in analogy with the study of meromorphic functions on Riemann surfaces, define the notion of a \emph{pole} of a holomorphic curve in $X_C$---essentially this is a point where the curve hits $Y_C$---and prove various properties.  In particular, we recover the result that all index $2$ discs are axial for this family by independent methods.  The guiding principle is that just as a meromorphic function on a compact Riemann surface is defined up to the addition of a constant by the positions and principal parts of its poles, a disc should---roughly speaking---be determined up to the action of $\SU(2)$ by the positions of its poles and some local data at these points (although in reality there are global complications arising from monodromy around poles).  The poles of a disc determine the degree of the rational curve obtained by gluing it to its reflection, and controlling this degree is crucial later in enumerating the index $4$ discs.

Next we show, by considering discs hitting a (complex) $1$-dimensional orbit $N_C \subset Y_C$, that a large part of the closed--open map can be computed using just axial discs.  From this we build an eigenvalue constraint analogous to that of Auroux--Kontsevich--Seidel, and prove:
\begin{mthm}[\protect \MakeUppercase {C}orollary\nobreakspace \ref {labCO}\ref{COitm2}, \protect \MakeUppercase {C}orollary\nobreakspace \ref {labCOLO}, \protect \MakeUppercase {P}roposition\nobreakspace \ref {RuleOutChars}] \label{labPrimes} If $HF^*(L_C, L_C; k)$ is non-zero over a field $k$ of characteristic $p$, then $p$ must be $5$, $2$, $2$ or $2$ for $C$ equal to $\tri$, $T$, $O$ or $I$ respectively.
\end{mthm}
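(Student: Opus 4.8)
The plan is to establish each of the three cited statements by exploiting the closed--open map $\CO \mc QH^*(X_C; k) \to HF^*(L_C, L_C; k)$, which, whenever its target is non-zero, is a unital homomorphism of $k$-algebras. Following the Auroux--Kontsevich--Seidel philosophy of Proposition~\ref{labAKS}, let $w_0 \in k$ be the disc potential of $L_C$, characterised by $\CO(c_1(X_C)) = w_0 \cdot \mathbf 1_{L_C}$; concretely $w_0$ is the signed count of Maslov index $2$ holomorphic discs through a generic point of $L_C$. Since a non-zero unital ring homomorphism intertwines quantum multiplication by $c_1(X_C)$ with multiplication by $w_0$ on the target, the operator $(c_1 \qcup {-}) - w_0$ acts non-surjectively on $QH^*(X_C;k)$, hence --- the latter being finite-dimensional --- non-injectively, so $w_0$ must be an eigenvalue of $c_1 \qcup {-}$. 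This is the eigenvalue constraint, and it is the crux of Corollary~\ref{labCO}\ref{COitm2}.

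To convert it into restrictions on $p$ one computes both sides. On the ambient side the relevant (small) quantum cohomology rings, and hence the characteristic-$p$ eigenvalues of $c_1 \qcup {-}$, are accessible: $QH^*(\C\P^3; k)$ and $QH^*$ of the quadric threefold are classical, while those of $V_5$ and of the Mukai--Umemura threefold $V_{22}$ can be determined. On the Lagrangian side the decisive input is that every index $2$ disc bounded by $L_C$ is axial (established in Section~\ref{secDiscAn}): this makes the restriction of $\CO$ to the subalgebra of $QH^*(X_C;k)$ generated by divisor classes computable purely from axial discs, concretely by intersecting them with the complex $1$-dimensional orbit $N_C \subset Y_C$. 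Carrying this out pins down $w_0$ as an explicit element of $\Z \subset k$ (and, when the divisors generate, pins down $\CO$ entirely), and matching it against the spectrum already confines $p$ to a short list for each $C$; this, suitably refined, is Corollary~\ref{labCOLO}.

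For $C = \tri$ this recovers the Evans--Lekili dichotomy, but, exactly as they observed, the bare eigenvalue condition still tolerates $p = 7$, and analogous residual primes survive for $T$, $O$ and $I$; killing these is the content of Proposition~\ref{RuleOutChars}, and I expect it to be the main obstacle, since the naive eigenvalue test cannot detect it. The idea is to extract more from $\CO$ than the single scalar $w_0$: one tracks the image under $\CO$ of the full relation ideal of $QH^*(X_C;k)$, equivalently the behaviour of $\CO$ on the generalised $w_0$-eigenspace, and plays it off against the size of $HF^*(L_C,L_C;k)$ allowed by the Biran--Cornea pearl complex (bounded in terms of $H^*(L_C;k)$), obtaining a contradiction in the offending characteristic. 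This step relies on the finer disc analysis of Section~\ref{secDiscAn} --- in particular the control of the degree of a disc's double via the positions and principal parts of its poles --- rather than on any new abstract Floer theory. Assembling the three results and specialising to each configuration then yields the claimed primes $5$, $2$, $2$ and $2$.
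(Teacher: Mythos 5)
Your overall frame --- that the theorem is extracted from eigenvalue constraints supplied by the closed--open map --- is right, and your account of the Auroux--Kontsevich--Seidel step ($w_0 = \mathfrak m_0$ must be an eigenvalue of $c_1\qcup$) matches Proposition~\ref{labAKS}. But the proposal mislocates, and in one case misses entirely, the mechanisms that cut the list of primes down to a single one per configuration.

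First, the residual primes for $\tri$ and $T$ (e.g.\ $p=7$ for the triangle) are \emph{not} eliminated in Proposition~\ref{RuleOutChars}; they are eliminated already in Corollary~\ref{labCO}\ref{COitm2} by a \emph{second, independent} eigenvalue constraint. The paper computes $\CO(\PD(N_C)) = \pm f_C\cdot 1_{L_C}$ (the number of faces of $C$) by showing that the only holomorphic discs of index $\le 4$ meeting the curve $N_C$ are the axial index $4$ discs of type $\xi_f$, and that the relevant evaluation map is transverse there (Proposition~\ref{labCONC}, relying on Corollary~\ref{labInd4Pol} and Corollary~\ref{labInd4Trans}, plus a compactness argument ruling out sphere bubbles). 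Since $\PD(N_C)=v_C E$, this forces $\pm f_C$ to be an eigenvalue of $\PD(N_C)\qcup$, and intersecting the two spectral conditions gives $p=5$ for $\tri$ and $p=2$ for $T$ outright. Your proposal gestures at ``intersecting axial discs with $N_C$'' only as a way of computing $\CO$ on divisor classes, which is not where the extra constraint lives; it lives in degree $4$.

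Second, Proposition~\ref{RuleOutChars} concerns only the icosahedron, where the $\CO(\PD(N_I))$ computation genuinely fails (lines in $X_I$ join $Y_I$ to $N_I$ and produce bad bubbling). The paper's replacement argument is a relative spin structure trick: the antiholomorphic involution forces all index $2$ contributions to the pearl differential to cancel, so $HF^*(L_I,L_I)$ is independent of the choice of relative spin structure in $H^2(X_I,L_I;\Z/2)\cong\Z/2$; changing that choice flips the sign of $\mathfrak m_0$, so \emph{both} $+12$ and $-12$ must be eigenvalues of $c_1\qcup$, and the only prime dividing both $\chi(c_1\qcup)(12)$ and $\chi(c_1\qcup)(-12)$ is $2$. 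Your proposed substitute --- playing the relation ideal of $QH^*(X_I;k)$ against the rank of the pearl complex --- does not obviously close this gap: nothing about the size of $H^*(L_I;k)$ obstructs a unital ring homomorphism $QH^*(X_I;k)\to k$ in characteristic $43$ or $571$, since these primes arise precisely as characteristics in which $c_1\qcup$ has $12$ as an eigenvalue.

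Third, for the octahedron the list $\{2,19\}$ from Corollary~\ref{labCO}\ref{COitm2} is cut to $\{2\}$ in Corollary~\ref{labCOLO} only after the \emph{sign} of $\CO(\PD(N_O))$ is pinned down to be $+f_O$ (Lemma~\ref{labCOSign}). That sign determination is an orientation comparison between Riemann--Hilbert pairs for the $\xi_f$-axial discs across the three configurations, anchored to the explicit integral computation $HF^0(L_\tri,L_\tri;\Z)\cong\Z/(5)$ from the pearl complex --- a dependency your proposal does not anticipate and which no purely spectral argument supplies.
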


The result for the octahedron actually relies on an orientation computation using the explicit calculation of $HF^*(L_\tri, L_\tri; \Z)$ later in the paper.  For the icosahedron a certain bad bubbled configuration can occur and spoil the count of discs meeting $N_I$, so our constraint reduces just to the Auroux--Kontsevich--Seidel criterion itself, but it can be strengthened using a trick based on the antiholomorphic involution and a change of relative spin structure---this is the content of \protect \MakeUppercase {P}roposition\nobreakspace \ref {RuleOutChars}.

The significance of characteristic $2$ for the octahedron and icosahedron is natural given the existence of the global antiholomorphic involution fixing the Lagrangian in these cases: the quantum corrections in the pearl complex cancel with their reflections modulo $2$.  The characteristics are less clear for the triangle and tetrahedron.  In particular, the fact that $2$ occurs again for the tetrahedron, making it appear to fall into the same pattern as the octahedron and icosahedron, with the triangle as the lone exceptional case, seems to be a numerical coincidence arising from the fact that the numbers involved in the eigenvalue constraints are fairly small.  It also seems to be a coincidence that there is exactly one possible prime in each case.

Although we only develop these techniques (the involution, pole analysis, and constraints on the closed--open map) in the context of the Platonic Lagrangians in this paper, many of the ideas can be applied more widely, to other families of homogeneous Lagrangians.  This is the subject of work in progress by the present author.  See for instance \cite{SmDCS}, where the closed--open map computation is generalised and combined with the study of certain discrete symmetries in order to calculate the self-Floer cohomology of a family of $\mathrm{PSU}(N-1)$-homogeneous Lagrangians in $(\C\P^{N-2})^N$ and some related examples.

In Section\nobreakspace \ref {secMorsePearl} we return to the Lagrangians themselves and construct (as much as is necessary) Heegaard splittings and Morse functions.  This allows us to calculate everything in the pearl complex associated to the Lagrangians (see Section 2 of \cite{BCLQH} for the definitions and Section 3.6 for the identification of the resulting (co)homology with Floer (co)homology) except the index $4$ contributions.  Then in Section\nobreakspace \ref {secCompHF} we combine this knowledge with our understanding of the closed--open map to place strong constraints on the self-Floer cohomology, compute the required index $4$ counts, and deduce:
\begin{mthm}[\protect \MakeUppercase {P}roposition\nobreakspace \ref {labHFLOIk}, \protect \MakeUppercase {P}roposition\nobreakspace \ref {labCOConstr2}, \protect \MakeUppercase {C}orollary\nobreakspace \ref {labTriHF}, \protect \MakeUppercase {C}or\-ollary\nobreakspace \ref {labTHF}, \protect \MakeUppercase {C}orollary\nobreakspace \ref {labIHF}] \label{labHF}  Fix an orientation and spin structure on each Lagrangian $L_C$.  Working over a field $k$ of characteristic $5$, $2$, $2$ and $2$ in the four cases respectively, the Floer cohomology groups are given as $\Z/2$-graded $k$-vector spaces by
\begin{align*}
HF^0(L_\tri, L_\tri; k) &\cong HF^1(L_\tri, L_\tri; k) \cong k
\\ HF^0(L_T, L_T; k) &\cong  HF^1(L_T, L_T; k) \cong k
\\ HF^0(L_O, L_O; k) &\cong HF^1(L_O, L_O; k) \cong k^2
\\ HF^0(L_I, L_I; k) &\cong HF^1(L_I, L_I; k) \cong k.
\end{align*}

Working over $\Z$, the $\Z/2$-graded Floer cohomology rings are concentrated in degree $0$ with
\begin{align*}
HF^0(L_\tri, L_\tri; \Z) &\cong \Z/(5)
\\ HF^0(L_T, L_T; \Z) &\cong \Z/(4)
\\ HF^0(L_O, L_O; \Z) &\cong \Z[x]/(2, x^2+x+1)
\\ HF^0(L_I, L_I; \Z) &\cong \Z/(8).
\end{align*}
\end{mthm}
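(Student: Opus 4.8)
The plan is to compute everything inside the pearl complex $\lb \mathcal{C}^*(L_C), \partial \rb$ of \cite{BCLQH} attached to a carefully chosen Morse function on each $L_C$, working first over a field $k$ and afterwards over $\Z$. Each $L_C$ is monotone with minimal Maslov number $2$ and $\dim_\R L_C = 3$, so the pearl differential splits as $\partial = \partial_0 + \partial_2 + \partial_4$, where $\partial_0$ is the Morse differential and $\partial_{2j}$ counts pearly trajectories whose discs have total Maslov index $2j$; there is no $\partial_6$ term because it would have to change the Morse index by $5$. Filtering by Maslov index produces a spectral sequence whose $E_1$-page is $H_*(L_C; k)$, whose $E_2$-page is the homology of $H_*(L_C; k)$ with respect to the differential induced by $\partial_2$, and whose $E_3$-page is then built from $\partial_4$, converging to $HF^*(L_C, L_C; k)$; in particular $\dim_k HF^*(L_C, L_C; k) \le \dim_k E_2$. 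In Section~\ref{secMorsePearl} I would pin down $\partial_0$ and $\partial_2$ completely --- using the Heegaard splittings of the $L_C$ for the Morse data, and the fact reproved in Section~\ref{secDiscAn} that every index $2$ disc is axial, which keeps the count defining $\partial_2$ manageable --- leaving $\partial_4$ as the sole unknown.

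Next I would feed in the closed--open map. By Theorem~\ref{labPrimes} the group $HF^*(L_C, L_C; k)$ vanishes unless $\Char k$ equals $5$, $2$, $2$, $2$ in the respective cases, so we may restrict attention to those characteristics. The position of the unit in $HF^*(L_C, L_C; k)$, its module structure over $QH^*(X_C)$, and the decomposition of Floer cohomology along the eigenvalues of quantum multiplication by $c_1(X_C)$ together force a lower bound on $\dim_k HF^*(L_C, L_C; k)$ and constrain its two graded pieces; this is the content of Propositions~\ref{labHFLOIk} and~\ref{labCOConstr2}, the octahedral and icosahedral cases additionally using the global antiholomorphic involution $\tau$. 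Comparing this lower bound with the upper bound $\dim_k E_2$ from the previous paragraph already determines $HF^*(L_C, L_C; k)$ outright in most cases, and in the remaining ones cuts the possibilities for $\partial_4$ down to finitely many.

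The genuine enumeration of the index $4$ discs, needed both to settle those last cases and --- crucially --- to obtain the \emph{integral} counts, is where I expect the real work to lie. Here one applies the reflection construction of Section~\ref{secDiscAn}: gluing an index $4$ disc to its $\tau$-reflection yields a rational curve in $X_C$ whose degree is controlled by the pole analysis, so such discs can be enumerated by classical projective geometry (lines, conics, and so on through prescribed points or incident to the orbit $N_C$), after which one must nail down the relevant orientation signs --- for the octahedron this step feeds back into, and relies on, the integral computation for $L_\tri$ carried out elsewhere in the paper. Keeping track of multiple covers, boundary bubbling, and in the icosahedral case the bad bubbled configuration advertised in the introduction is the delicate point, and is the step I would expect to be the main obstacle.

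Finally I would assemble the pieces. Taking cohomology of $\lb \mathcal{C}^*(L_C), \partial_0 + \partial_2 + \partial_4 \rb$ over $k$ gives the $\Z/2$-graded $k$-vector spaces of the statement (Corollaries~\ref{labTriHF},~\ref{labTHF},~\ref{labIHF}); rerunning the same computation over $\Z$ with the fixed orientations and relative spin structures --- equivalently, comparing the integral and mod-$p$ pearl complexes through the Bockstein and universal coefficients --- detects the torsion and produces the cyclic groups $\Z/(5)$, $\Z/(4)$ and $\Z/(8)$ in three of the cases and a group isomorphic to $(\Z/2)^2$ in the octahedral case. For the ring structure one computes the quantum product in the pearl model, that is, counts of pearly trees with two inputs and one output: in the three cyclic cases the ring is determined once the unit is located, while for the octahedron one checks that the product carries no nontrivial idempotent, thereby identifying $HF^0(L_O, L_O; \Z)$ with $\F_4 = \Z[x]/(2, x^2 + x + 1)$ rather than $\Z/(2) \times \Z/(2)$, and that the cohomology is concentrated in degree $0$.
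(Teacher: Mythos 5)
Your roadmap matches the paper's strategy closely: pearl complex on explicit Morse data with $\partial_0$ and $\partial_2$ pinned down by the classification of index $2$ discs as axial, closed--open constraints to restrict the possible answers, doubling index $4$ discs to rational curves of controlled degree to enumerate them, and the sign for the octahedron fed back from the integral triangle computation. Your spectral-sequence packaging of the Maslov filtration is only cosmetically different from the paper's direct computation of the differential matrices $\diff^0,\diff^1$ and their Smith normal forms, and your identification of the index $4$ enumeration (especially the icosahedral case) as the real work is accurate.

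There is, however, one step whose stated logic would fail. You claim that the unit, the $QH^*(X_C)$-module structure, and the eigenvalue decomposition ``force a lower bound on $\dim_k HF^*(L_C,L_C;k)$,'' to be compared against the upper bound $\dim_k E_2$. No such lower bound exists: the zero module is a perfectly good $QH^*$-module, and the closed--open map can only constrain what $HF^*$ is \emph{if} it is nonzero (which ring it must receive a unital homomorphism from $QH^*$ into, hence which primes and which cyclic orders are admissible --- this is \protect \MakeUppercase {C}orollary\nobreakspace \ref {labCO} and \protect \MakeUppercase {L}emma\nobreakspace \ref {labCOConstr1}). The actual sources of nonvanishing in the paper are different in the two pairs of cases: for $O$ and $I$ it is \protect \MakeUppercase {P}roposition\nobreakspace \ref {labWide}, where the global antiholomorphic involution cancels all quantum corrections mod $2$ and makes the Lagrangian wide (a complete additive determination over characteristic $2$, not a bound to be compared with $E_2$); for $\tri$ and $T$ it is the explicit index $4$ count combined with the arithmetic of the resulting determinant ($3XY-4\hat Z$ must be $\pm 5^n$ with $\hat Z\in\{0,\pm2\}$, forcing $\pm5$; similarly $4XY-3Z=\pm4$). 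Since you do invoke the involution for $O$ and $I$ and defer the remaining cases to the index $4$ enumeration, the plan is salvageable, but as written the middle step ``lower bound from $\CO$ versus upper bound from $E_2$'' proves nothing and should be replaced by these two mechanisms. A smaller point: for the octahedral ring, the paper does not compute the product directly in the pearl model to exclude idempotents; it uses that $\CO$ restricted to the subring $\Z[E]/(2,E^2-E-1)\cong\F_4$ of $QH^*(X_O;\Z/2)$ is injective, which forces the four-element target to be a field and simultaneously pins down $D=\pm2$.
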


The results for $L_\tri$ were proved by Evans--Lekili, but the others are new.  By far the hardest part is computing $HF^*(L_I, L_I)$ over $\Z$---if one is only interested in working over fields then the rather involved calculations of Appendix\nobreakspace \ref {secInd4Icos} can be avoided.  In each case the Lagrangian is wide over fields of the special characteristic, meaning that its self-Floer cohomology has the same rank as its classical cohomology, whilst the Floer cohomology over $\Z$ is as big as is allowed by the restrictions we derive from the closed--open map.  Note that $HF^0(L_O, L_O; \Z)$ is the field $\F_4$ of four elements.

Evans and Lekili remark \cite[Corollary B]{EL1} that their results imply that $L_\tri$ is not Hamiltonian-displaceable from itself or from the standard Clifford torus in $\C\P^3$ (recent work by Konstantinov \cite[Corollary 1.2]{MomchilLocSys} using higher rank local systems shows that it is also non-displaceable from the standard $\R\P^3$).  Similarly the fact that $L_T$, $L_O$ and $L_I$ are Floer cohomologically non-trivial, with appropriate coefficients, immediately shows that they are also non-displaceable from themselves.  In fact, in their subsequent paper \cite[Section 7.1]{EL2} Evans--Lekili showed that the real locus of the quadric $X_T$, which is a monotone Lagrangian sphere (homogeneous for a different $\SU(2)$-action on $X_T$), split-generates the Fukaya category over any field $k$ of characteristic $2$, so $L_T$ is not displaceable from this sphere either (note that the ring $QH^*(X_T; k)$ is isomorphic to $k[E]/(E^4)$, so already every element is invertible or nilpotent; in particular, the whole Fukaya category forms one of their summands $D^\pi \mathcal{F}(X_T; k)_0$).  By \cite[Corollary 6.2.6]{EL2} we actually deduce that $L_T$ also split-generates the Fukaya category of the quadric over $k$.

The paper concludes with three appendices, which contain technical discussions which would otherwise distract from the main thread of the computation.  The first establishes transversality for the pearl complex in our setting, the second describes the analysis of index $4$ discs on $L_I$, whilst the third collects together explicit coordinate expressions for various configurations of points on the sphere.

\subsection{Acknowledgements}
\label{sscAck}

First and foremost I am extremely grateful to my supervisor, Ivan Smith, for constant encouragement, guidance and suggestions, for proposing this project to begin with, and for feedback on earlier versions of this paper.  I am also indebted to Dmitry Tonkonog for many useful discussions (in particular for pointing me towards the paper of Haug), to Benjamin Barrett, Jonny Evans, Luis Haug, Momchil Konstantinov and Yank{\i}  Lekili for helpful conversations, and to the anonymous referee who proposed a large number of corrections and improvements.  Wolfram Mathematica was invaluable for algebraic manipulation and experimentation, especially in Appendix\nobreakspace \ref {secInd4Icos}.  A Mathematica notebook containing code for verifying various calculations in the paper is available at \href{http://arxiv.org/abs/1510.08031}{{\ttfamily arXiv:1510.08031}}.  This work was funded by EPSRC.

\section{Homogeneous totally real submanifolds}
\label{secHomLag}

\subsection{Preliminaries}
\label{sscPrelim}

We begin with the following definition, which differs slightly from that given in \cite{EL1}:

\begin{defn} \label{labHomLag} If $X$ is a complex manifold carrying an action of a compact Lie group $K$ by holomorphic automorphisms, and $L$ is a totally real submanifold which is an orbit of the $K$-action, then we say $( X, L )$ is \emph{$K$-homogeneous}.
\end{defn}

Given a complex manifold $X$ with complex structure $J$, and a totally real submanifold $L$, the Maslov index homomorphism $\mu \mc \pi_2(X, L) \rightarrow \Z$ is constructed as follows.  For a continuous map $u \mc (D, \pd D) \rightarrow (X, L)$, where $D$ denotes the closed unit disc $\{ z \in \C : | z | \leq 1 \}$, and $\pd D$ its boundary, consider the complex vector bundle $u^* TX$ over $D$.  This bundle can be trivialised, and in this trivialisation the subbundle $u|_{\pd D}^* TL$ is represented by a map $B \mc \pd D \rightarrow \mathrm{GL}(n, \C)/\mathrm{GL}(n, \R)$, where $n=\dim L=\dim_\C X$.  Then $(\det B)^2 / |\det B|^2$ defines a continuous map $\pd D \rightarrow \pd D$, and we set $\mu(u)$ to be its winding number.

This number is independent of all choices made, and is invariant under homotopies of $u$ relative to its boundary.  If $L$ is orientable then $B$ lifts to a well-defined map $B_+$ to $\mathrm{GL}(n, \C)/ \mathrm{GL}_+(n, \R)$, where $\mathrm{GL}_+$ denotes those matrices with positive determinant.  Then $(\det B_+) /|\det B_+|$ is well-defined, and the Maslov index is twice its winding number, so is even.  In fact $\mu$ is really given by pairing with the \emph{Maslov class} in $H^2(X, L; \Z)$, which we also denote by $\mu$ and which restricts to $2c_1(X)$ in $H^2(X; \Z)$.

For a non-zero class $A \in H_2 ( X, L)$, define the moduli space of $k$-times-marked, parametrised \mbox{($J$-)}holomorphic discs in class $A$ to be
\begin{multline*}
\mopnsi{k}{A} = \{ (u, z_1, \dots, z_k) : u \mc (D, \pd D) \rightarrow (X, L) \text{ holomorphic,}
\\ [u]=A \text{, and } z_1, \dots, z_k \in \pd D \text{ distinct} \}.
\end{multline*}
The virtual dimension of this moduli space is $\dim L + \mu ( A ) + k$.  We will usually drop the subscript $0$ (representing the genus of the curve) and the $J$ from the notation.  Let the corresponding moduli space of unparametrised discs be
\[
\mathcal{M}_k(A) = \mupnsi{k}{A} \coloneqq \mopnsi{k}{A}/\mathrm{PSL}(2, \R),
\]
where $\phi \in \mathrm{PSL}(2, \R)$ acts via $\phi \cdot (u, z_1, \dots, z_n) = (u \circ \phi^{-1}, \phi(z_1), \dots, \phi(z_2))$, of virtual dimension $\dim L+\mu(A) + k - 3$ \cite[Theorem 5.3]{Oh}.

Evans and Lekili \cite[Lemma 3.2]{EL1} made the following crucial observation:

\begin{lem} \label{labDiscReg} If $(X, L)$ is $K$-homogeneous then every holomorphic disc
\[
u \mc (D, \pd D) \rightarrow (X, L)
\]
is regular, and hence all of the above moduli spaces are smooth manifolds of the expected dimension.
\end{lem}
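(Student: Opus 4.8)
The plan is to reduce the statement to the surjectivity, for every holomorphic disc $u \mc (D, \pd D) \to (X, L)$, of the linearised Cauchy--Riemann operator
\[
D_u \mc W^{1,p}\big(u^*TX,\, u|_{\pd D}^* TL\big) \longrightarrow L^p\big(\Lambda^{0,1}T^*D \otimes_\C u^*TX\big), \qquad p > 2,
\]
namely the Dolbeault operator of the holomorphic bundle $u^*TX$ over $D$ restricted to sections whose boundary values lie in the totally real subbundle $u|_{\pd D}^* TL$. Surjectivity of $D_u$ for all $u$ is precisely regularity; granting it, elliptic bootstrapping promotes $W^{1,p}$ solutions to smooth ones and the implicit function theorem for the Banach-bundle section $\bar\partial$ shows that $\mopnsi{k}{A}$ is cut out transversally, hence is a smooth manifold of dimension equal to the Fredholm index of $D_u$ plus the $k$ dimensions contributed by the marked points; quotienting by the reparametrisation action of $\mathrm{PSL}(2,\R)$ then produces $\mupnsi{k}{A}$ with three fewer dimensions. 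Since $D_u$ is a Cauchy--Riemann operator on a complex vector bundle over the disc with totally real boundary conditions, it is Fredholm of real index $\dim L + \mu(A)$, matching the claimed virtual dimensions; so everything comes down to showing $\coker D_u = 0$.

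The one place the homogeneity hypothesis is used is in manufacturing a large kernel for $D_u$. For $\xi$ in the Lie algebra $\mathfrak{k}$ of $K$, let $X_\xi$ be the induced vector field on $X$. Because $K$ acts by holomorphic automorphisms, $t \mapsto \exp(t\xi) \cdot u$ is a smooth path of holomorphic discs through $u$, and since $L$ is a single $K$-orbit every member of the path still maps $(D, \pd D)$ into $(X, L)$; differentiating at $t = 0$ shows that the smooth section $u^*X_\xi$ lies in $\ker D_u$, with boundary values in $u|_{\pd D}^* TL$ because $X_\xi$ is tangent to $L$ along $L$. Moreover the orbit map $K \to L$ is a submersion, so $\{X_\xi(p) : \xi \in \mathfrak{k}\}$ spans $T_pL$ for every $p \in L$, whence $\mathrm{span}_\C\{X_\xi(p) : \xi \in \mathfrak{k}\} = T_pL \oplus J\, T_pL = T_pX$ by total reality. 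Thus the bundle pair $(u^*TX,\, u|_{\pd D}^* TL)$ is generated, at every point of $D$, by sections lying in $\ker D_u$.

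Finally I would invoke the standard automatic-surjectivity principle that such global generation forces $\coker D_u = 0$: decompose the bundle pair over $D$ as an orthogonal sum of complex line bundle pairs with partial (Maslov) indices $\kappa_1, \dots, \kappa_n$; the projection of a kernel section to each summand is again holomorphic, and the holomorphic sections of a line bundle pair of index $\kappa$ span a space of dimension $\max(0, \kappa+1)$, which vanishes for $\kappa \le -1$, so generation by kernel sections rules out negative $\kappa_j$; since the cokernel of $D_u$ is the sum of the cokernels of the summands and a summand of index $\ge -1$ has vanishing cokernel, we conclude $\coker D_u = 0$. (Alternatively one pairs a hypothetical cokernel element $\eta$ with the $u^*X_\xi$: the pairing $\ip{\eta}{u^*X_\xi}$ is a closed $1$-form on $D$ with vanishing boundary period and, by the boundary conditions, sign-definite restriction to $\pd D$, forcing $\eta|_{\pd D} = 0$ and then $\eta \equiv 0$ by unique continuation.) This recovers the observation of Evans--Lekili, \cite[Lemma~3.2]{EL1}. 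If I had to identify a main obstacle it would be purely the analytic bookkeeping — correctly setting up the partial-index decomposition with its boundary regularity and checking the behaviour of the $\mathrm{PSL}(2,\R)$-quotient — since the only step special to our situation, the production of the spanning family of kernel sections above, is immediate from homogeneity and total reality, and the remaining ingredients are available in the standard references on pseudoholomorphic curves.
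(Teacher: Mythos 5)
Your argument is correct and is essentially the route the paper takes: the lemma is quoted from Evans--Lekili, and the argument reviewed in Section\nobreakspace\ref{sscParInd} is exactly yours --- the infinitesimal $K$-action produces holomorphic sections of $u^*TX$ lying in $\ker D_u$ whose boundary values span $u|_{\pd D}^*TL$ fibrewise over $\pd D$, which forces every partial index to be non-negative and hence $\coker D_u = 0$. One overstatement to correct: the complex span of $\{X_\xi(u(z)) : \xi \in \mathfrak{k}\}$ need not be all of $T_{u(z)}X$ at \emph{interior} points $z$, since $K$ is only assumed to act transitively on $L$, not on $X$ (for the Platonic Lagrangians the infinitesimal action genuinely degenerates along the compactification divisor), so the bundle pair is generated by kernel sections along $\pd D$ but not necessarily at every point of $D$. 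This costs you nothing, because your own deduction only needs generation somewhere: a rank-one summand of partial index $\leq -1$ has trivial kernel, so its presence would already contradict generation of $F$ at a single boundary point.
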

Their proof actually shows that all partial indices of such discs are non-negative (see Section\nobreakspace \ref {sscParInd} for the definition of partial indices, where we also review this argument), which will be used to establish various transversality results later.

\subsection{Axial discs}
\label{sscAxDiscs}

Again following Evans--Lekili, we next define the notion of an axial disc:

\begin{defn} If $(X, L)$ is $K$-homogeneous, $u \mc (D, \pd D) \rightarrow (X, L)$ is a holomorphic disc, and there exists a smooth group homomorphism $R \mc \R \rightarrow K$ such that (possibly after reparametrising $u$) we have $u(e^{i\theta}z)=R(\theta)u(z)$ for all $z \in D$ and all $\theta \in \R$, then we say $u$ is \emph{axial}.
\end{defn}

We will frequently make use of Lie groups, Lie algebras and their actions so let us briefly fix notation.  The Lie algebra of the compact group $K$ will be denoted by $\mathfrak{k}$ (Fraktur k).  More generally, Lie groups will be denoted in uppercase (for example $\mathrm{GL}(n, \C)$ or $G$) whilst the corresponding Lie algebras will be denoted by the same names but in lowercase Fraktur (e.g.~$\mathfrak{gl}(n, \C)$ or $\mathfrak{g}$ respectively).  The exponential map from a Lie algebra to the corresponding Lie group will be denoted by $e^\cdot$, whilst if a Lie group $G$ acts on a manifold $M$ the infinitesimal action of a Lie algebra element $\xi \in \mathfrak{g}$ on a point $p \in M$, meaning
\[
\left.\frac{\diff}{\diff t}\right|_{t=0} e^{t\xi}p,
\]
will usually be denoted by $\xi \cdot p$.  We will sometimes also use $g \cdot p$ to denote the action of a group element $g \in G$ on $p$, although we will often just write $gp$.

Identifying the upper half-plane (with infinity adjoined) with $D$ via the M\"obius map $z \mapsto (iz+1)/(z+i)$ sending $0$, $1$ and $\infty$ to $-i$, $1$ and  $i$ respectively, we get an identification of the group of holomorphic automorphisms of $D$ with the subgroup $\mathrm{PSL}(2, \R)$ of the group of all M\"obius maps.  We view the Lie algebra $\mathfrak{psl}(2, \R) \cong \mathfrak{sl}(2, \R)$ as sitting inside the algebra $\mathrm{Mat}_{2 \times 2} (\C)$ of $2 \times 2$ complex matrices.  Under these identifications, the rotation $z \mapsto e^{i\theta}z$ of $D$ is generated by the matrix
\[
\rho \coloneqq \begin{pmatrix} 0 & \frac{1}{2} \\ -\frac{1}{2} & 0 \end{pmatrix}.
\]

Before proving the main results of this subsection (\protect \MakeUppercase {L}emma\nobreakspace \ref {labInd2Ax} and \protect \MakeUppercase {L}emma\nobreakspace \ref {labInd4Ax}), we first need a straightforward result about $\mathfrak{psl}(2, \R)$:

\begin{lem} \label{labLieAlg} For $\eta \in \mathfrak{psl}(2, \R) \leq \mathrm{Mat}_{2 \times 2}(\C)$ the following are equivalent:
\begen
\item \label{sl2itm1} $\eta$ acts on $\pd D$ without fixed points, i.e.~$\eta \cdot z \neq 0$ for all $z \in \pd D$.
\item \label{sl2itm2} $\det \eta > 0$.
\item \label{sl2itm3} Some real multiple of $\eta$ is conjugate by an element of $\SL(2, \R)$ to $\rho$.
\end{enumerate}
\end{lem}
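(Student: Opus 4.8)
The plan is to prove the three equivalences by direct computation with $2\times 2$ matrices, exploiting the standard structure theory of $\mathfrak{sl}(2,\R)$ via the discriminant (determinant) of a traceless matrix. Write a general element of $\mathfrak{psl}(2,\R)\cong\mathfrak{sl}(2,\R)$ as $\eta = \begin{pmatrix} a & b \\ c & -a \end{pmatrix}$ with $a,b,c\in\R$, so that $\det\eta = -a^2-bc$. The boundary $\pd D$ is a circle, and under the upper-half-plane model its points correspond to $z\in\R\cup\{\infty\}$; the infinitesimal action of $\eta$ at such a point is the value of the associated vector field, a quadratic $cz^2 - 2az - b$ (suitably interpreted at $\infty$). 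So the fixed points of $\eta$ on $\pd D$ are precisely the real roots of this quadratic, whose discriminant is $4a^2 + 4bc = -4\det\eta$.

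First I would establish $\ref{sl2itm1}\Leftrightarrow\ref{sl2itm2}$: $\eta$ has no fixed point on $\pd D$ iff the quadratic $cz^2-2az-b$ has no real root, iff its discriminant $-4\det\eta$ is negative (I would check separately the degenerate cases $c=0$, and $c=a=0$, where one must track the behaviour at $z=\infty$ and argue these force $\det\eta\le 0$ with a fixed point), iff $\det\eta>0$. Then for $\ref{sl2itm2}\Leftrightarrow\ref{sl2itm3}$: if a nonzero real multiple $\lambda\eta$ is $\SL(2,\R)$-conjugate to $\rho$, then since conjugation preserves the determinant and $\det\rho = \tfrac14>0$ we get $\lambda^2\det\eta = \tfrac14$, so $\det\eta>0$; conversely if $\det\eta>0$ then $\eta$ has purely imaginary eigenvalues $\pm i\sqrt{\det\eta}$, and I would diagonalise over $\C$ and descend to show $\eta/(2\sqrt{\det\eta})$ is conjugate over $\SL(2,\R)$ to $\rho$ — concretely, the orbit of $\rho$ under $\SL(2,\R)$-conjugation is exactly the set of traceless real matrices with determinant $\tfrac14$ (this is the elliptic conjugacy class), which one can verify by writing down an explicit conjugating element, or by a dimension count on the orbit together with connectedness of the level set $\{\det = \tfrac14\}$ inside the traceless matrices.

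The main obstacle is the converse direction of $\ref{sl2itm2}\Rightarrow\ref{sl2itm3}$: one has to produce, for an arbitrary traceless real $\eta$ with $\det\eta>0$, an explicit element $g\in\SL(2,\R)$ (not merely $\mathrm{GL}(2,\R)$) with $g\eta g^{-1}$ a positive multiple of $\rho$, and to be careful that the scaling is by a \emph{real} multiple as the statement demands. I would handle this by first normalising so $\det\eta = \tfrac14$, then using the action of $\SL(2,\R)$ on the hyperbolic plane (or directly on the set of such $\eta$, which is a one-sheeted level set homeomorphic to a hyperboloid on which $\SL(2,\R)$ acts transitively) to move $\eta$ to $\rho$; the sign of the multiple is pinned down by requiring the rotation to be counterclockwise, i.e.\ by the orientation of $\pd D$, and flipping $\eta \mapsto -\eta$ if necessary (which lands in the same conjugacy class anyway since $-\rho$ is $\SL(2,\R)$-conjugate to $\rho$ via $\mathrm{diag}(1,-1)$... wait, that has determinant $-1$; one uses instead the rotation by $\pi$, which lies in $\SL(2,\R)$). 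This last sign bookkeeping is the fiddly point, but it is routine once the conjugacy-class description is in hand.
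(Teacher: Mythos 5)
Your argument for the equivalence of (i) and (ii) via the discriminant of the quadratic vector field on $\R \cup \{\infty\}$ is correct and is a genuinely different route from the paper's: the paper proves (i)$\Rightarrow$(ii) dynamically, using compactness of $\pd D$ to show the flow $e^{t\eta}\cdot 1$ returns to $1$ at some finite time $T$ and then computing $e^{T\eta}\cdot 1$ explicitly, and it closes the cycle (i)$\Rightarrow$(ii)$\Rightarrow$(iii)$\Rightarrow$(i) rather than proving (i)$\Leftrightarrow$(ii) directly. Your discriminant computation is cleaner and gives both directions at once (with the degenerate case $c=0$ correctly forcing a fixed point at $\infty$ and $\det\eta=-a^2\le 0$). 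Your (iii)$\Rightarrow$(ii) via invariance of the determinant under conjugation is also fine. For (ii)$\Rightarrow$(iii) your core idea---scale $\eta$ to determinant $\tfrac14$, note it then shares the eigenvalues $\pm i/2$ with $\rho$, conjugate over $\C$ and descend to $\R$---is exactly the paper's.

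However, two of your auxiliary claims in that last step are false, and the parenthetical at the end shows you have not resolved the resulting sign issue. The level set $\{\eta : \operatorname{tr}\eta = 0,\ \det\eta = \tfrac14\}$ is not connected and is not a single $\SL(2,\R)$-orbit: writing $\eta = \lb\begin{smallmatrix} a & b \\ c & -a\end{smallmatrix}\rb$, the condition $-a^2-bc=\tfrac14$ forces $bc<0$, so the set splits into the two components $\{b>0,\,c<0\}$ and $\{b<0,\,c>0\}$ (it is a two-sheeted hyperboloid, not one-sheeted), and the connected group $\SL(2,\R)$ preserves each component under conjugation. Concretely, the centraliser of $\rho$ in $\mathrm{GL}(2,\R)$ is $\{\alpha I + \beta\rho\}$, every element of which has determinant $\alpha^2+\beta^2/4>0$; hence every $g$ with $g\rho g^{-1}=-\rho$ lies in the coset $\mathrm{diag}(1,-1)\cdot Z(\rho)$ and has negative determinant, so $-\rho$ is \emph{not} $\SL(2,\R)$-conjugate to $\rho$. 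Your proposed repair, conjugating by ``the rotation by $\pi$'', does not work either: that matrix is $-I$, which is central. The correct fix is the one you already gesture at (and the one the paper uses): after obtaining $\eta = g\rho g^{-1}$ with $g\in\mathrm{GL}(2,\R)$, if $\det g<0$ replace $g$ by $g\cdot\mathrm{diag}(-1,1)$ and $\eta$ by $-\eta$; this is legitimate precisely because statement (iii) only asks that \emph{some real multiple} of $\eta$---possibly negative---be conjugate to $\rho$. With that correction your proof is complete.
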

\begin{proof} \ref{sl2itm1}$\implies$\ref{sl2itm2}: Assuming $\eta$ acts without fixed points, by compactness of $\pd D$ we can pick $\eps > 0$ such that $\norm{\eta \cdot z} \geq \eps$ for all $z \in \pd D$ (using the standard metric on $\pd D$).  This means that as $t$ increases from $0$ the point $e^{t\eta} \cdot 1$ moves around the unit circle at speed at least $\eps$, so at some time $T \in (0, 2\pi/\eps]$ it returns to its starting point.  In other words, there exists $T \in (0, 2 \pi / \eps ]$ such that $e^{T \eta} \cdot 1 = 1$.  An explicit calculation of $e^{T \eta} \cdot 1$ shows that $\det \eta > 0$.

\ref{sl2itm2}$\implies$\ref{sl2itm3}: Given $\eta \in \mathfrak{psl}(2, \R)$ with $\det \eta > 0$, scale $\eta$ to make its determinant $\frac{1}{4}$.  Then $\eta$ and $\rho$ both have eigenvalues $\pm i/2$ (they are both trace-free), so are conjugate over $\C$.  It is well-known that real matrices conjugate over $\C$ are conjugate over $\R$, so we have $\eta = g \rho g^{-1}$ for some $g \in \mathrm{GL}(n, \R)$.  Replacing $g$ by $g \cdot \lb \begin{smallmatrix} \!\!-1 & 0 \\ 0 & 1 \end{smallmatrix} \rb$ and reversing the sign of $\eta$, if necessary, we may assume that $\det g > 0$.  Dividing $g$ by the square root of its determinant then ensures $g \in \SL(2, \R)$ as required.

\ref{sl2itm3}$\implies$\ref{sl2itm1}: This is immediate from the fact that $\rho$ acts on $\pd D$ without fixed points.
\end{proof}

We can now prove a slightly stronger version of \cite[Corollary 3.10]{EL1}:

\begin{lem} \label{labInd2Ax} If $(X, L)$ is $K$-homogeneous and $u \mc (D, \pd D) \rightarrow (X, L)$ is a holomorphic disc of Maslov index $2$ then $u$ is axial.
\end{lem}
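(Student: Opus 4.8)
The plan is to use the regularity of $u$ (Lemma~\ref{labDiscReg}) together with the non-negativity of all partial indices to control the linearized operator, and then exploit the $K$-action to produce the required homomorphism $R \mc \R \to K$. First I would consider the infinitesimal action map $\mathfrak{k} \to \Gamma(u^* TX)$, $\xi \mapsto \xi \cdot u$; since $L$ is a $K$-orbit, each such vector field is tangent to $L$ along $\pd D$, so lies in the kernel of the linearized Cauchy--Riemann operator $D_u$ (these are exactly the infinitesimal deformations of $u$ coming from moving it by the group). The point is that for a Maslov index $2$ disc, regularity forces $\ker D_u$ to be as small as possible: the virtual dimension of $\widetilde{\mathcal{M}}_0(J, A)$ is $\dim L + \mu(A) = \dim L + 2$, and the reparametrization group $\mathrm{PSL}(2, \R)$ is $3$-dimensional, so generically the group $K$ can only move $u$ in a $(\dim L + 2)$-dimensional family while the stabilizer of $u$ in $K$ must be correspondingly large.

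Next I would split off the reparametrization directions. The key observation, following Evans--Lekili, is that among the holomorphic reparametrizations of $D$ the rotations $z \mapsto e^{i\theta}z$, generated by $\rho$, together with the $K$-action, should match up: one wants to find $\xi \in \mathfrak{k}$ such that the vector field $\xi \cdot u$ on $u^* TX$ agrees with the vector field generated by $\rho$ acting by reparametrization, i.e.\ $du(\rho \cdot z)$. Both of these lie in $\ker D_u$, and by a partial-index / Riemann--Roch count for a Maslov $2$ disc, $\ker D_u$ is spanned (as a real vector space) by the infinitesimal reparametrizations together with the infinitesimal $K$-translations, with the only relations coming from elements of $\mathfrak{k}$ whose infinitesimal action vanishes on the image of $u$. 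Concretely, $D_u$ has index $\dim L + 2$ and is surjective, its kernel contains the $3$-dimensional space of reparametrization fields and the image of $\mathfrak{k}$; a dimension count then shows these together span the kernel, so in particular the reparametrization field $du(\rho \cdot z)$ is congruent, modulo the reparametrization fields coming from the parabolic and hyperbolic elements fixing a boundary point, to $\xi \cdot u$ for some $\xi \in \mathfrak{k}$.

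I would then promote this infinitesimal statement to a global one. Having found $\xi \in \mathfrak{k}$ with $\xi \cdot u = du(\rho \cdot z)$ after possibly reparametrizing $u$ by an element of $\mathrm{PSL}(2, \R)$ to align the fixed-point structure (using Lemma~\ref{labLieAlg} to normalize the relevant element of $\mathfrak{psl}(2,\R)$ to $\rho$), the two flows $e^{\theta \xi}$ on $X$ and $e^{\theta \rho}$ (rotation) on $D$ satisfy $u(e^{i\theta} z) = e^{\theta \xi} u(z)$ by uniqueness of integral curves: both sides solve the same ODE in $\theta$ with the same initial condition at $\theta = 0$. Setting $R(\theta) = e^{\theta \xi}$ gives the desired group homomorphism $\R \to K$, so $u$ is axial. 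The main obstacle I anticipate is the bookkeeping in the partial-index argument: one must check carefully that the non-negativity of partial indices forces $\ker D_u$ to have exactly the expected dimension and to be spanned by reparametrizations plus group translations, rather than containing extra ``accidental'' deformations, and one must handle the reparametrization freedom so that the element of $\mathfrak{psl}(2,\R)$ that gets matched with $\mathfrak{k}$ is genuinely (a conjugate of) $\rho$ and not a parabolic or hyperbolic element, which would not exponentiate to a circle action.
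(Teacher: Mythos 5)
Your overall strategy is the same as the paper's: a dimension count inside the kernel of the linearised operator (equivalently $T_u\widetilde{\mathcal{M}}_0(A)$, which has dimension $n+2$ by regularity) forces the $3$-dimensional space of infinitesimal reparametrisations $u\cdot\mathfrak{h}$ to meet the space of infinitesimal $K$-translations $\mathfrak{k}\cdot u$ nontrivially, and a matched element is then exponentiated to give axiality. Two remarks on the intermediate claims: you do not need (and have not justified) the assertion that $\ker D_u$ is \emph{spanned} by reparametrisations plus group translations — the paper only uses the inequality $\dim\bigl((\mathfrak{k}\cdot u)\cap(u\cdot\mathfrak{h})\bigr)\geq\dim\mathfrak{k}\cdot u+3-(n+2)$. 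And there is no reason the matched reparametrisation should be $\rho$ itself; all the count gives you is a nonzero subalgebra $\mathfrak{g}\leq\mathfrak{psl}(2,\R)$ of reparametrisations that act like elements of $\mathfrak{k}$.

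The genuine gap is exactly the point you flag at the end and then leave unresolved: why does $\mathfrak{g}$ contain an \emph{elliptic} element (a conjugate of a multiple of $\rho$), rather than only parabolic or hyperbolic ones, which would give an $\R$-action but not a circle action? This does not follow from the dimension count alone, and the paper needs two further ingredients to close it. First, one uses the surjectivity of the boundary evaluation map $E_z\mc T_u\widetilde{\mathcal{M}}_0(A)\to T_{u(z)}L$ restricted to $\mathfrak{k}\cdot u$ at \emph{every} $z\in\pd D$ (a consequence of homogeneity) to compare $\dim\bigl((\mathfrak{k}\cdot u)\cap(u\cdot\mathfrak{h})\bigr)$ with $\dim\bigl((\mathfrak{k}\cdot u)\cap\ker E_z\bigr)$ and conclude that $\mathfrak{g}$ has no \emph{global} fixed point on $\pd D$. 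Second, one needs the Lie-algebraic fact that a subalgebra of $\mathfrak{psl}(2,\R)$ with no global fixed point on $\pd D$ must contain an individual fixed-point-free element: this is clear when $\dim\mathfrak{g}$ is $1$ or $3$, and the $2$-dimensional case is excluded by an explicit computation showing that closure under the bracket forces any $2$-dimensional subalgebra to have a common fixed point on $\pd D$ (via Lemma~\ref{labLieAlg}, fixed-point-free is equivalent to positive determinant, i.e.\ ellipticity). Without these two steps your argument does not produce the homomorphism $R\mc\R\to K$ with $u(e^{i\theta}z)=R(\theta)u(z)$, so the proof as written is incomplete at its crux.
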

\begin{proof} Let $A = [u]$ and $n=\dim L$.  By \protect \MakeUppercase {L}emma\nobreakspace \ref {labDiscReg} we have that the space $M \coloneqq \widetilde{\mathcal{M}}_0(A)$ of unmarked parametrised holomorphic discs in class $A$ is a smooth manifold of dimension $n+2$.  The tangent space $T_u M$ consists of smooth sections of $u^* TX$, holomorphic over the interior of $D$, which lie in $u|_{\pd D}^* TL$ when restricted to $\pd D$.  For $z \in \pd D$, let $E_z$ denote the `evaluate at $z$' map $T_uM \rightarrow T_{u(z)}L$.

The group $K$ acts smoothly on $M$ on the left by post-composition with the action on $X$ (i.e.~for an element $k \in K$ and a disc $v \in M$ we define the disc $k \cdot v$ by $(k \cdot v)(z) = k \cdot  v (z)$ for all $z \in D$), whilst $\mathrm{PSL}(2, \R)$ acts smoothly on the right by reparametrisation.  For brevity let $\mathfrak{h}$ denote $\mathfrak{psl}(2, \R)$, and let $\psi \mc \mathfrak{h} \rightarrow T_u M$ denote the infinitesimal reparametrisation action at $u$.

Since $\mathfrak{k} \cdot p = T_p L$ for all $p \in L$ (by homogeneity), we see that for each $z \in \pd D$ the map $E_z$ is surjective when restricted to $\mathfrak{k} \cdot u \leq T_uM$, so $\dim ((\mathfrak{k}\cdot u) \cap \ker E_z) = \dim \mathfrak{k}\cdot u - n$.  And $\dim u \cdot \mathfrak{h} = 3$, otherwise $u$ must be constant and hence of index zero.  Counting dimensions inside $T_uM$ we see that
\[
\label{labDims}
\dim \big((\mathfrak{k}\cdot u) \cap (u \cdot \mathfrak{h})\big) \geq \dim \mathfrak{k}\cdot u + \dim u \cdot \mathfrak{h} - \dim T_uM \geq \dim \mathfrak{k}\cdot u + 1 - n,
\]
so we deduce that for all $z \in \pd D$
\[
(\mathfrak{k}\cdot u)\cap (u \cdot \mathfrak{h}) \nsubseteq (\mathfrak{k} \cdot u) \cap \ker E_z,
\]
i.e.~$E_z$ cannot vanish on $(\mathfrak{k}\cdot u) \cap (u \cdot \mathfrak{h})$.  Letting $\mathfrak{g} = \psi^{-1} (\mathfrak{k}\cdot u) \leq \mathfrak{h}$ be the space of infinitesimal reparametrisations which act like an element of $\mathfrak{k}$, we conclude that $E_z \circ \psi |_\mathfrak{g} \neq 0$ for all $z \in \pd D$ (and, in particular, $\mathfrak{g} \neq 0$).  In other words, the subspace $\mathfrak{g}$ has no global fixed points when it acts on $\pd D$.

Suppose we can show that $\mathfrak{g}$ contains an individual element $\eta$ which acts without fixed points on $\pd D$, and therefore satisfies the three equivalent conditions in \protect \MakeUppercase {L}emma\nobreakspace \ref {labLieAlg}.  After scaling such an $\eta$ we may assume that we have $\eta = g \rho g^{-1}$ for some $g \in \SL(2, \C)$, and by definition of $\mathfrak{g}$ there exists $\xi \in \mathfrak{k}$ with $\xi \cdot u = u \cdot \eta$.  Reparametrising $u$ by $g$ we then get $\xi \cdot u = u \cdot \rho$ and hence $e^{\theta \xi}u(z) = u(e^{i\theta}z)$ for all $z \in D$ and all $\theta \in \R$ (recalling that the $e^\cdot$ on the left-hand side denotes the exponential map $\mathfrak{k} \rightarrow K$), so $u$ is axial as required.  It therefore remains to show the existence of such an $\eta$.

First note that $\mathfrak{g}$ is a Lie subalgebra of $\mathfrak{h}$.  Indeed, it is the projection to $\mathfrak{h}$ of the subalgebra of $\mathfrak{k} \oplus \mathfrak{h}$ which acts trivially on $u$.  Our problem is thus to show that a subalgebra $\mathfrak{g}$ of $\mathfrak{h} = \mathfrak{psl}(2, \R)$ with no global fixed point on $\pd D$ contains a fixed-point-free element.  This is clear if $\dim \mathfrak{g}$ is $1$ or $3$, so we are left to deal with the case where $\mathfrak{g}$ is two-dimensional, to which we now restrict our attention.

Let $\eta_H$, $\eta_X$, and $\eta_Y$ be the standard basis vectors
\[
\begin{pmatrix} 1 & 0 \\ 0 & -1 \end{pmatrix}\text{, } \begin{pmatrix} 0 & 1 \\ 0 & 0 \end{pmatrix}\text{, and } \begin{pmatrix} 0 & 0 \\ 1 & 0 \end{pmatrix}
\]
of $\mathfrak{h}$, and let $\lspan{\cdot}$ denote linear span.  We cannot have $\mathfrak{g}=\lspan{ \eta_X, \eta_Y }$, since this space is not closed under the Lie bracket, so we can pick a basis for $\mathfrak{g}$ of the form $\eta_H + a \eta_X + b \eta_Y, c \eta_X + d \eta_Y$, with $a, b, c, d \in \R$ and $c$ and $d$ not both zero.  Without loss of generality $c \neq 0$, so we can change basis to be of the form $\eta_H + \kappa \eta_Y, \eta_X + \lambda \eta_Y$, with $\kappa, \lambda \in \R$.  Closure under the Lie bracket forces $\kappa^2 = 4 \lambda$, but then every $\eta \in \mathfrak{g}$ fixes the point $w \coloneqq (\kappa i/2 + 1)/(\kappa/2 + i) \in \pd D$, contradicting our hypothesis.  Therefore $\dim \mathfrak{g} = 2$ is impossible, and the proof is complete.
\end{proof}

Using this we obtain a similarly-modified version of \cite[Corollary 3.11]{EL1}:

\begin{lem} \label{labInd4Ax} Suppose $(X, L)$ is $K$-homogeneous and $Z$ is a complex submanifold of $X \setminus L$ of complex codimension $2$, which is $K$-invariant setwise.  If $u \mc (D, \pd D) \rightarrow (X, L)$ is a holomorphic disc of Maslov index $4$ which intersects $Z$ cleanly in a single point then $u$ is axial.
\end{lem}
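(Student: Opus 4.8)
The plan is to mimic the proof of Lemma \ref{labInd2Ax}, but now working inside the locus of discs that meet $Z$ cleanly at one point, which cuts the dimension by the complex codimension of $Z$. First I would set $A = [u]$, $n = \dim L$, and consider the space $M \coloneqq \widetilde{\mathcal{M}}_0(A)$ of unmarked parametrised holomorphic discs in class $A$; by Lemma \ref{labDiscReg} this is smooth of dimension $n + \mu(A) - 2 = n + 2$. Inside $M$ I would look at the subset $M_Z$ of discs whose image meets $Z$ cleanly in exactly one interior point. Since $Z$ has complex codimension $2$ and discs are regular (indeed all partial indices are non-negative, so the relevant evaluation/incidence condition is cut out transversally — this is exactly the kind of statement the transversality appendix is set up to give), $M_Z$ is a submanifold of codimension $2 \cdot 2 = 4$ inside $M$... wait, that over-counts. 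The correct bookkeeping: requiring a marked interior point to map into a codimension-$2$ complex submanifold is two complex, i.e. $4$ real, conditions, but adding the marked point restores $2$ real dimensions, so $M_Z$ has real dimension $(n+2) + 2 - 4 = n$. I would make this precise by intersecting with $Z$ at a prescribed marked point and then quotienting appropriately; the upshot is $\dim M_Z = n$ (or one extracts the relevant tangent space directly).

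Next, exactly as before, $K$ acts on $M$ by post-composition and $\mathrm{PSL}(2,\R)$ acts by reparametrisation; both actions preserve $M_Z$ because $Z$ is $K$-invariant and "meeting $Z$ cleanly in one point" is reparametrisation-invariant. So $\mathfrak{k} \cdot u$ and $u \cdot \mathfrak{h}$ (with $\mathfrak{h} = \mathfrak{psl}(2,\R)$) both sit inside $T_u M_Z$. As in Lemma \ref{labInd2Ax}, for each $z \in \partial D$ the evaluation map $E_z \colon T_u M_Z \to T_{u(z)}L$ is surjective on $\mathfrak{k} \cdot u$ by homogeneity, and $\dim u \cdot \mathfrak{h} = 3$ (a Maslov $4$ disc is non-constant). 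Counting dimensions inside $T_u M_Z$, which now has dimension $n$ rather than $n+2$:
\[
\dim\big((\mathfrak{k}\cdot u) \cap (u\cdot\mathfrak{h})\big) \geq \dim \mathfrak{k}\cdot u + 3 - n,
\]
whereas $\dim\big((\mathfrak{k}\cdot u)\cap \ker E_z\big) = \dim\mathfrak{k}\cdot u - n$. The difference is now $3$ rather than $1$, so $(\mathfrak{k}\cdot u)\cap(u\cdot\mathfrak{h})$ still cannot lie in $\ker E_z$, and we again obtain a subalgebra $\mathfrak{g} = \psi^{-1}(\mathfrak{k}\cdot u) \leq \mathfrak{h}$ with no global fixed point on $\partial D$. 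Then the purely Lie-algebraic argument at the end of Lemma \ref{labInd2Ax} — dispatching $\dim\mathfrak{g} \in \{1,3\}$ immediately and ruling out $\dim\mathfrak{g}=2$ via the bracket-closure computation $\kappa^2 = 4\lambda$ — produces a fixed-point-free $\eta \in \mathfrak{g}$, and Lemma \ref{labLieAlg} lets us conjugate $\eta$ to $\rho$, so that $u$ becomes axial after reparametrisation, exactly as in the Maslov $2$ case.

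The main obstacle is the transversality/dimension bookkeeping for $M_Z$: one must check that the clean-intersection condition with the $K$-invariant submanifold $Z$ genuinely cuts out a smooth submanifold of $M$ of the expected codimension, and correctly identify $T_u M_Z$ as the space of sections of $u^*TX$ (holomorphic inside, tangent to $L$ on the boundary) whose value at the intersection point with $Z$ is constrained to lie in $T Z$. The regularity input from Lemma \ref{labDiscReg} and the non-negativity of partial indices are what make this work — the linearised incidence map is surjective because the kernel of $E$-type constraints remains large enough — but I would want to invoke the pearl-complex transversality appendix (or a direct partial-index estimate) to be safe. Everything downstream of "$\mathfrak{g}$ has no global fixed point" is verbatim the argument already given, so the only real content beyond Lemma \ref{labInd2Ax} is this codimension count.
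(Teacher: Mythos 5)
Your overall strategy --- rerunning the dimension count of Lemma \ref{labInd2Ax} inside the locus $M_Z \subset M$ of discs meeting $Z$ cleanly in one point --- is a genuinely different route from the paper's, but it has a gap at exactly the step you flag as ``the only real content'': the claim that $M_Z$ is cut out of $M$ with the expected codimension, i.e.\ that the linearised incidence condition with $Z$ is surjective. Homogeneity gives surjectivity of \emph{boundary} evaluation maps, because $\mathfrak{k}\cdot p = T_pL$ for $p \in L$; it says nothing about \emph{interior} evaluation at a point of $Z \subset X \setminus L$, where the $K$-orbit can be very small (in the application $Z = N_C$ is a single low-dimensional orbit). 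Regularity of $u$ (all partial indices $\geq 0$) is also not enough: as noted in Section \ref{sscParInd}, interior evaluation is a submersion precisely when all partial indices are $\geq 1$, and a disc with partial indices $0,0,4$, say, has interior evaluation image of real dimension at most $4$ inside the $6$-dimensional $T_{u(z_0)}X$, which need not be transverse to $T_{u(z_0)}Z$. If the linearised incidence map onto $T_{u(z_0)}X/\bigl(T_{u(z_0)}Z + \img du(z_0)\bigr) \cong \C$ has image only a real line, then your tangent space $T_uM_Z$ has dimension $n+3$ and the count gives $\dim\bigl((\mathfrak{k}\cdot u)\cap(u\cdot\mathfrak{h})\bigr) \geq \dim\mathfrak{k}\cdot u - n$, which no longer beats $\dim\bigl((\mathfrak{k}\cdot u)\cap\ker E_z\bigr) = \dim\mathfrak{k}\cdot u - n$, and the argument stalls. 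The transversality appendix cannot rescue this: it perturbs Morse data for a fixed $J$ and fixed moduli spaces of discs, and cannot make a non-transverse incidence at a specific holomorphic disc transverse. (There is also an arithmetic slip --- $\dim M = n + \mu(A) = n+4$, so the expected dimension of $M_Z$ is $n+2$, not $n$ --- but this is harmless, since $\dim\mathfrak{k}\cdot u + 3 - (n+2)$ still exceeds $\dim\mathfrak{k}\cdot u - n$.)

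The paper sidesteps the issue entirely by blowing up $X$ along $Z$: since $Z$ is $K$-invariant and disjoint from $L$, the blowup $\tilde{X}$ inherits a $K$-action, the lift $\tilde{L} \cong L$ is still a $K$-homogeneous totally real submanifold, and the proper transform of $u$ has Maslov index $4 - 2(\codim Z - 1) = 2$, so Lemma \ref{labInd2Ax} applies verbatim upstairs and axiality descends. This is precisely the device that converts your interior incidence condition into a setting where Lemma \ref{labDiscReg}, applied in $\tilde{X}$, provides the dimension bound you need for free; to keep your direct approach you would have to either prove surjectivity of the linearised incidence map by hand or pass through the blowup anyway.
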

\begin{proof}  $K$ acts on the normal bundle of $Z$ in $X$, and taking the projectivisation we obtain an action on the exceptional divisor of the blowup $\tilde{X}$ of $X$ along $Z$.  This action extends to the whole of the complex manifold $\tilde{X}$, and the projection $\pi \mc \tilde{X} \rightarrow X$ is $K$-equivariant.  The lift $\tilde{L}$ of $L$ to $\tilde{X}$ is a $K$-homogeneous totally real submanifold, and the proper transform $\tilde{u}$ of $u$ is a holomorphic disc $(D, \pd D) \rightarrow (\tilde{X}, \tilde{L})$ of Maslov index $2$ (under the blowup the index is decreased by $\codim Z - 1$ multiplied by twice the number of intersection points of $u$ and $Z$), so by \protect \MakeUppercase {L}emma\nobreakspace \ref {labInd2Ax} we deduce that $\tilde{u}$ is axial.  This implies that $u$ itself is axial.
\end{proof}

\subsection{The form of axial discs}
\label{sscAxForm}

If our compact Lie group $K$ has a complexification $G$, and the action of $K$ on $X$ extends to an action of $G$ (holomorphic in both the $G$ and $X$ factors), then axial discs have a particularly simple form.  Indeed, if $u \mc (D, \pd D) \rightarrow (X, L)$ is holomorphic and $R \mc \R \rightarrow K$ is a Lie group morphism, such that $u(e^{i \theta} z)=R(\theta)u(z)$ for all $z \in D$ and all $\theta \in \R$, then we claim that
\begin{equation}
\label{eqAxDisc}
u(z)=e^{-i R'(0) \log z} u(1)
\end{equation}
for all non-zero $z \in D$.

Note first that we have $R(\theta)=e^{\theta R'(0)}$ for all $\theta \in \R$, so \eqref{eqAxDisc} holds on $\pd D$.  Moreover, we see that $e^{2 \pi R'(0)}$ fixes $u(1)$, and hence the right-hand side of \eqref{eqAxDisc} is well-defined for all $z \in \C^*$.  Fix a point $p \in \pd D$ and pick vectors $\xi_1, \dots, \xi_n$ in the Lie algebra $\mathfrak{k}$ of $K$ whose infinitesimal actions at $u(p)$ form a basis for $T_{u(p)} L$.  Then the map
\[
(z_1, \dots, z_n) \mapsto e^{\sum z_i \xi_i}u(p)
\]
defines a holomorphic parametrisation of a neighbourhood of $u(p)$, under which $L$ corresponds to $\R^n \subset \C^n$ in coordinate space.  So in our chart both sides of \eqref{eqAxDisc} are given on a neighbourhood of $p$ in $D\setminus \{0\}$ by $n$ continuous functions, holomorphic off $\pd D$, and equal and real on $\pd D$.  The standard Schwarz reflection argument then proves that they agree on the whole neighbourhood of $p$ (or at least the component containing $p$), and hence, by the identity theorem, on all of $D \setminus \{0\}$.

We will frequently want to describe various axial discs later, so it will be convenient to have a shorthand for expressions of the form appearing on the right-hand side of \eqref{eqAxDisc}.  For $\xi \in \mathfrak{k}$ and $p \in X$ satisfying $e^{2\pi \xi}p=p$ we therefore define $u_{\xi, p}$ to be the map
\[
z \mapsto e^{-i \xi \log z}p.
\]
We are being deliberately vague about the domain of definition here.  The obvious choice is $\C^*$, but in our applications the map will in fact extend over $0$ and $\infty$ to give a whole sphere.  Sometimes we will just want the disc (i.e.~the restriction of the sphere to $D$).  Hopefully it will be clear from the context.

Note that if a holomorphic disc $u \mc D \rightarrow X$ is invariant under a finite group of rotations, so that for some positive integer $n$ we have $u(e^{2\pi i/n}z)=u(z)$ for all $z \in D$, then $u$ factors through $z \mapsto z^n$ via some holomorphic map $v \mc D \rightarrow X$.

\section{The Platonic Lagrangians}
\label{secChiangFamily}

\subsection{The Chiang Lagrangian}
\label{sscChiangLag}

Given a finite-dimensional complex inner product space $W$, the symplectic form $\omega$ induced by the metric and complex structure has primitive $1$-form
\[
\lambda = \im(z^\dag \diff z)/2
\]
(meaning $\omega = \diff \lambda$), where $z$ is a vector of coordinates with respect to an orthonormal basis and $\dag$ denotes conjugate transpose.  The unitary group $\mathrm{U}(W)$ clearly preserves this $1$-form, and hence its action on $W$ is Hamiltonian, with moment map $\tilde{\mu} \mc W \rightarrow \mathfrak{u}(W)^*$ given by
\[
\ip{\tilde{\mu}(z)}{\xi} = -X_\xi \lrcorner \lambda_z = -\frac{1}{2} \im z^\dag \xi z = \frac{i}{2} z^\dag \xi z
\]
for all $z \in W$ and all $\xi \in \mathfrak{u}(W)$.  Here $\ip{\cdot}{\cdot}$ denotes the pairing between $\mathfrak{u}(W)^*$ and $\mathfrak{u}(W)$, $X_\xi$ is the vector field generated by the infinitesimal action of $\xi$, and $\lrcorner$ denotes contraction.  Our sign convention is that the moment map satisfies $\omega(X_\xi, \cdot) = \ip{\diff \tilde{\mu}}{\xi}$ for all $z$ and $\xi$.

Now consider the fundamental representation $V$ of $\SU(2)$.  This is (tautologically) unitary with respect to the standard inner product $g$, so all of its tensor powers $V^{\otimes d}$ are also unitary with respect to the corresponding tensor powers $g^{\otimes d}$.  Inside $V^{\otimes d}$ we have the subrepresentation comprising totally symmetric tensors, which is isomorphic to the $d$th symmetric power $S^dV$ of $V$, and we deduce that $S^dV$ is unitary with respect to the restriction of $g^{\otimes d}$.  Fix a basis of $S^dV$ which is orthonormal with respect to this inner product, let $\phi \mc \mathfrak{su}(2) \rightarrow \mathrm{Mat}_{(d+1)\times(d+1)}(\C)$ describe the infinitesimal action in this basis, and let $z$ denote a corresponding coordinate vector.

Taking $W = S^dV$ above, we see that the $\SU(2)$-action on $S^dV$ is Hamiltonian, with moment map $\tilde{\mu} \mc S^dV \rightarrow \mathfrak{su}(2)^*$ defined by
\[
\ip{\tilde{\mu}(z)}{\xi} = \frac{i}{2} z^\dag \phi(\xi) z
\]
for all $z \in S^d V$ and all $\xi \in \mathfrak{su}(2)$.  This action commutes with the diagonal $\mathrm{U}(1)$-action on $S^dV$, and the moment map is $\mathrm{U}(1)$-invariant, so it descends to a Hamiltonian action on the projective space $\P S^dV$ with moment map $\mu$ given by
\begin{equation}
\label{eqMomMap}
\ip{\mu([z])}{\xi} = \frac{i}{2} \frac{z^\dag \phi(\xi) z}{z^\dag z}
\end{equation}
for all $z \in S^dV$, representing $[z] \in \P S^dV$, and all $\xi \in \mathfrak{su}(2)$.  Our convention is that the symplectic form on a projective space is obtained from symplectic reduction of the corresponding vector space at the unit sphere level, so a projective line has area $\pi$.

It is well-known (see, for example, \cite[Proposition 1.5]{Ch}) that an orbit of a Hamiltonian action of a compact Lie group is isotropic if it is contained in the moment map preimage of a fixed point of the coadjoint representation of the group.  In particular, orbits contained in the zero set of the moment map are isotropic.  In \cite{Ch} Chiang considered the case of the above $\SU(2)$-action on $S^dV$ with $d=3$.  In her example the set $\mu^{-1} (0)$ is a single three-dimensional orbit inside $\C\P^3$, and hence is Lagrangian: this is the so-called Chiang Lagrangian.

\subsection{Coordinates on projective space}
\label{sscCoords}

Let $x$ and $y$ be the standard basis vectors for the fundamental representation $V$ of $\SU(2)$, which we now think of as being extended to a representation of $\SL(2, \C)$, with respect to which a group element $\lb \begin{smallmatrix} t & u \\ v & w \end{smallmatrix} \rb \in \SL(2, \C)$ acts as the matrix itself.  We then have an induced basis for $S^d V$ given by $\{ x^i y^j : 0 \leq i, j \text{ and } i+j=d \}$.  We'll refer to this as \emph{the standard basis} for $S^d V$ and the corresponding coordinates (and their projective counterparts) as \emph{standard coordinates} on $S^d V$ (respectively $\P S^dV$).  This is the identification we will always use between $\P V$ and $\C \P^1$.  We will also use the identifications
\[
\C\P^1 \cong \C \cup \{\infty\} \cong \{x \in \R^3 : \norm{x}=1\}
\]
given by viewing a point $\lambda$ in $\C \cup \{\infty\}$ as both the point $[\lambda x + y] = [\lambda:1]$ in $\C\P^1$ and the point on the unit sphere given by stereographic projection through the north pole from the complex (equatorial) plane.  For example $[i:1]$ in $\C\P^1$ corresponds to $i$ in $\C$ and $(0, 1, 0)$ in $\R^3$.

The vectors $x$ and $y$ are orthonormal with respect to the standard inner product $g$, and under our embedding of $S^dV$ in $V^{\otimes d}$ as totally symmetric tensors (normalised, so, for example, $xy$ embeds as $(x\otimes y + y \otimes x)/2$) we see that with respect to $g^{\otimes d}$ the $x^iy^j$ are pairwise orthogonal and satisfy
\[
\norm{x^iy^j}=\sqrt{\frac{i!\,j!}{d!}}.
\]
We thus have a \emph{unitary basis}
\[
\Bigg\{ \sqrt{\frac{d!}{i!\,j!}}  \ x^iy^j : 0 \leq i, j \text{ and } i+j=d \Bigg\}
\]
and corresponding \emph{unitary coordinates}.

A point in $\P S^dV$ is a non-zero homogeneous polynomial of degree $d$ in $x$ and $y$, modulo $\C^*$-scalings.  Since $\C$ is algebraically closed, we can express such a polynomial as a product of $d$ linear combinations of $x$ and $y$, which are uniquely determined up to scaling and reordering.  Moreover, the action of $\SL(2, \C)$ on $\P S^dV$ induced by the representation $S^dV$ corresponds precisely to expressing elements in this factorised form and acting via the fundamental representation on each factor.  In other words, we have an $\SL(2, \C)$-equivariant identification between $\P S^dV$ and $\Sym^d \P V \cong \Sym^d \C \P^1$.

In this way, an unordered $d$-tuple of points on $\C\P^1$ can be viewed as a point of $\P S^d V$ and thus expressed in terms of either standard or unitary coordinates.  As an example, consider the equilateral triangle on the real axis in $\C \P^1$, with one vertex at $\infty$.  Its two other vertices are at $\pm 1/\sqrt{3}$, so it is represented by the point $[x(x+\sqrt{3}y)(x-\sqrt{3}y)]=[x^3-3xy^2]$ in $\P S^3 V$.  It is therefore given by $[1:0:-3:0]$ in standard coordinates and $[1:0:-\sqrt{3}:0]$ in unitary coordinates.  Note that the expression \eqref{eqMomMap} for the moment map is valid only in unitary coordinates.

\subsection{From the triangle to the Platonic solids}
\label{sscQHT}

With these notions fixed, there is another, more geometric, way to describe Chiang's construction.  If we fix a value of $d \geq 3$, and a configuration $C$ of $d$ distinct points in $\C \P^1$, then the $\SL(2, \C)$-orbit of $C$ in $\Sym^d \C \P^1 \cong \P S^d V$ is a three-dimensional complex submanifold, of which the $\SU(2)$-orbit is a three-dimensional totally real submanifold.  In \cite{AF} Aluffi and Faber identified those $C$ for which the $\SL(2, \C)$-orbit has smooth closure $X_C$ in $\P S^d V$.  There are four cases, namely the orbits of the configurations $C$ given by (using the notation of Evans--Lekili): $\tri$, the vertices of an equilateral triangle on a great circle in $\C \P^1$; $T$, $O$ and $I$, respectively the vertices of a regular tetrahedron, octahedron and icosahedron in $\C \P^1$.  These are quasihomogeneous threefolds of $\SL(2, \C)$, in the sense that they carry an $\SL(2, \C)$-action with dense Zariski open orbit.

In each case the restriction of the $\SU(2)$-action to $X_C$ (with the Fubini--Study K\"ahler form) is Hamiltonian with moment map of the form \eqref{eqMomMap}.  The representative configurations $\tri$, $T$, $O$ and $I$ all lie in the zero sets of the respective moment maps, and hence their $\SU(2)$-orbits are Lagrangian; we denote these `Platonic' Lagrangians by $L_C$.  The Chiang Lagrangian itself can then be described as $L_\tri$ in $X_\tri = \P S^3 V \cong \C \P^3$.  The stabiliser of $C$ in $\SL(2, \C)$ is a finite subgroup of $\SU(2)$ which we denote by $\Gamma_C$.

\subsection{Basic properties of the spaces $X_C$}
\label{sscBasProps}

In this subsection we collect together some of the properties of the quasihomogeneous threefolds $X_C$.  Most of the results are contained in \cite[Section 4]{EL1}.  We follow the notation of Evans--Lekili.

For each $C$ let $W_C$ denote the Zariski open $\SL(2, \C)$-orbit in $X_C$, and $Y_C$ its complement, the compactification divisor.  $Y_C$ consists of those $d$-point configurations in $X_C$ where at least $d-1$ of the points coincide.  Inside $Y_C$ we have the subvariety $N_C$ consisting of those configurations where all $d$ points coincide.

If $[z_0 : \dots : z_d]$ are standard coordinates on $\P S^d V$, then the roots of the polynomial
\[
f(T) \coloneqq \sum z_j(-T)^j
\]
correspond (with multiplicity) to the $d$-tuple of points obtained by viewing $[z]$ as a point of $\Sym^d \C \P^1$.  We count $\infty$ as a root with multiplicity $d - \deg f$.  $Y_C$ is therefore defined by the vanishing of the discriminant $\Delta (f)$ of $f$; the `infinite roots' are automatically taken care of by this.

The cohomology ring of $X_C$ is
\[
H^* ( X_C; \Z) = \Z [H, E] / (H^2 = k_C E, E^2=0),
\]
where $k_C$ is $1$, $2$, $5$, $22$ for $C$ equal to $\tri$, $T$, $O$, $I$ respectively, and $H$ is the class of a hyperplane section.  The first Chern class of $X_C$ is $c_1 (X_C) = l_C H$, where $l_C$ is $4$, $3$, $2$, $1$ for the four choices of $C$.  The latter follows from some vanishing order computations we make (see the comment after \protect \MakeUppercase {L}emma\nobreakspace \ref {labsigmaOrd}).

The numbers $1$, $2$, $5$, $22$ come about as follows.  The value of $k_C$ is the triple intersection product of three transverse hyperplane sections of $X_C$.  We can take these hyperplane sections to be of the form $X_C \cap \Pi_z$ for $z$ equal to $0$, $1$ and $\infty$, where $\Pi_z$ consists of those $d$-point configurations containing the point $z \in \C\P^1$, and then each $p \in X_C \cap \Pi_0 \cap \Pi_1 \cap \Pi_\infty$ can be described by choosing three ordered vertices of $C$ to send to $0$, $1$ and $\infty$.  This can be done in $|\Gamma_C|/2$ different ways for each $p$, corresponding to rotating $C$ before choosing the points (we divide by $2$ as we are interested in the image of $\Gamma_C$ in $\mathrm{SO}(3)$).  Any such triple gives rise to some $p$, so we conclude that the triple intersection consists of $2d(d-1)(d-2)/|\Gamma_C|$ points, which works out to be $1$, $2$, $5$, $22$ in the four cases respectively.  This argument appears in \cite[Section 0]{AF}.

In quantum cohomology the product is deformed to give a $\Z/2l_C$-graded ring
\[
QH^* ( X_C; \Z) = \Z [H, E] / (H^2 = k_C E+R_C, E^2=Q_C),
\]
where $R_C$ and $Q_C$ are as given in Table\nobreakspace \ref {tabQH}; see \cite[Section 2]{BM}.  We collapse the grading to $\Z/2$, and the ring is then concentrated in degree $0$.

\begin{center}
\vspace{0.15cm}
\begin{tabular}{c*{4}{>{\centering\arraybackslash}p{2cm}}} \toprule
$C$ & $\tri$ & $T$ & $O$ & $I$ \\ \midrule
$R_C$ & $0$ & $0$ & $3$ & $2H+24$ \\
$Q_C$ & $1$ & $H$ & $E+1$ & $2E+H+4$ \\
\bottomrule
\end{tabular}
\vspace{0.15cm}
\captionof{table}{Quantum corrections to the cup product.}
\label{tabQH}
\end{center}

If we take a basis $\xi_1$, $\xi_2$, $\xi_3$ of $\mathfrak{su}(2)$ then
\[
\sigma \mc p \mapsto (\xi_1 \cdot p) \wedge (\xi_2 \cdot p) \wedge (\xi_3 \cdot p)
\]
(for $p \in X_C$) defines a holomorphic section of $\Lambda_\C^3 TX_C$ which vanishes precisely on $Y_C$, to order $1$ (this is proved in \protect \MakeUppercase {L}emma\nobreakspace \ref {labsigmaOrd}), so $X_C$ is Fano with anticanonical divisor $Y_C$.  Let $\Omega$ be the nowhere-zero holomorphic $3$-form on the Calabi--Yau complement $W_C=X_C \setminus Y_C$ defined by $\Omega=\sigma^{-1}$.  We claim that $L_C$ is special Lagrangian (with phase $0$) in the sense of Auroux \cite[Definition 2.1]{Au}---explicitly this means that $\Omega|_{L_C}$ is real, as a section of $\C \otimes \Lambda^3 T^*L_C$.  To see this, note that for any $p \in L_C$ we get holomorphic coordinates $(z_1, z_2, z_3)$ on $X_C$ about $p$ defined by
\[
(z_1, z_2, z_3) \mapsto e^{z_1 \xi_1 + z_2 \xi_2 + z_3 \xi_3} p,
\]
and the real parts $(x_1, x_2, x_3)$ form local coordinates on $L_C$.  We then have $\sigma (p) = \pd_{z_1} \wedge \pd_{z_2} \wedge \pd_{z_3}$, so $\Omega (p) = \diff z_1 \wedge \diff z_2 \wedge \diff z_3$, and hence
\[
\left. \Omega (p) \right|_{L_C} = \diff x_1 \wedge \diff x_2 \wedge \diff x_3,
\]
which is real.

The importance of this fact lies in the following result of Auroux \cite[Lemma 3.1]{Au}:
\begin{lem} \label{labIndInt}  If $L$ is special Lagrangian in the complement $X \setminus Y$ of an anticanonical divisor in a compact K\"ahler manifold, then the Maslov index of a disc $u \mc (D, \pd D) \rightarrow (X, L)$ is given by twice the algebraic intersection number $[u] \cdot [Y]$.
\end{lem}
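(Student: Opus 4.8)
The plan is to compute the Maslov class $\mu \in H^2(X, L; \Z)$ directly and compare it to the intersection pairing with $[Y]$. Recall from the preliminaries that $\mu$ restricts to $2c_1(X)$ in $H^2(X; \Z)$, and $c_1(X) = \PD[Y]$ since $Y$ is an anticanonical divisor; so the two homomorphisms $\mu$ and $2([\,\cdot\,] \cdot [Y])$ on $\pi_2(X, L)$ agree after restriction to $\pi_2(X) \to \pi_2(X, L)$. The content of the lemma is thus really a statement about the relative classes that do not come from $X$, which is where the special Lagrangian hypothesis enters.

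First I would recall the standard description of the Maslov index via the holomorphic volume form. Given the nowhere-vanishing holomorphic $n$-form $\Omega$ on $X \setminus Y$ (here $n = \dim_\C X$), and a disc $u \mc (D, \pd D) \to (X, L)$ whose boundary avoids $Y$ (which we may arrange by a small perturbation, since $Y$ has real codimension $2$ and $\pd D$ is $1$-dimensional), the section $\Omega$ trivialises $\Lambda^n_\C TX$ over a neighbourhood of $L$, and along $\pd D$ one obtains a map $\pd D \to \C^*$ by evaluating $\Omega$ on a frame of $u|_{\pd D}^* TL$; squaring and normalising gives the phase map whose winding number is $\mu(u)$. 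The special Lagrangian condition says precisely that $\Omega|_L$ is a real (nonzero) section of $\C \otimes \Lambda^n T^*L$, so evaluated on any frame of $TL$ it is real, meaning the phase map is constant (equal to $\pm 1$) wherever $u(\pd D)$ stays in $X \setminus Y$. Hence the entire winding of the phase is concentrated at the points where $u$ crosses $Y$.

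Next I would do the local model at a crossing. Near a transverse intersection point of $u$ with the smooth divisor $Y$, one can choose holomorphic coordinates in which $Y = \{z_1 = 0\}$ and $\Omega = z_1^{-1} \, dz_1 \wedge \cdots \wedge dz_n$ (this is the defining feature of $\Omega$ having a simple pole along the anticanonical divisor $Y$; in our situation this is exactly the order-$1$ vanishing of $\sigma = \Omega^{-1}$ recorded earlier). The disc $u$ hits $z_1 = 0$ with some local multiplicity $m$, and a short computation shows the phase map picks up winding $2m$ as $\pd D$ traverses a small loop around the preimage of that point. Summing over all intersection points and using homotopy invariance of both sides to reduce to the case where all intersections are transverse and positive (or to track signs and multiplicities in general), one gets $\mu(u) = 2 \sum_p m_p = 2\,[u] \cdot [Y]$. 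To make the reduction clean I would first note both sides are homomorphisms $\pi_2(X, L) \to \Z$ (the intersection number is well-defined on homology since $Y$ is closed and $L \cap Y = \emptyset$), so it suffices to check equality on a generating set, and in fact it suffices to produce, for each generator, a representative that is either a sphere (handled by the $c_1$ computation) or a disc in good position with controllable crossings.

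The main obstacle is the bookkeeping at the boundary: ensuring that the phase map really is globally constant along the part of $\pd D$ lying over $X \setminus Y$, and correctly accounting for the jump at each $Y$-crossing with the right sign and multiplicity, including the case of non-transverse or tangential intersections. In a fully rigorous treatment one wants to invoke homotopy invariance of the Maslov index to perturb $u$ to meet $Y$ transversally and positively, but one must be careful that such a perturbation stays in the class where $\Omega|_L$ real still controls the phase — i.e. one perturbs $u$ as a smooth (not holomorphic) map relative to $L$, which is fine for computing $\mu$. Since this argument is entirely standard (it is precisely Auroux's Lemma 3.1), I expect the proof in the paper to simply cite \cite{Au} or sketch this phase-winding computation in a few lines rather than reproduce the local analysis in detail.
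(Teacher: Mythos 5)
Your argument is correct and is exactly the phase-winding computation of Auroux's Lemma 3.1, which the paper simply cites rather than reproving (as you anticipated in your final paragraph). Nothing further is needed.
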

It will therefore be important for us to be able to calculate these intersection numbers.  This is the subject of the following subsection.

\subsection{Intersections with the compactification divisor}
\label{sscCompDiv}

The action of $\SL(2, \C)$ restricts to the variety $Y_C$, and is transitive on the dense subset $Y_C \setminus N_C$, so either every $p \in Y_C \setminus N_C$ is a smooth point of $Y_C$ or every such $p$ is singular.  But the set $\mathrm{Sing}(Y_C)$ of singular points of $Y_C$ is a proper Zariski closed subset, so we deduce that $\mathrm{Sing}(Y_C)$ is contained in $N_C$.  Since the action of $\SL(2, \C)$ on $N_C$ is also transitive, we see that in fact $\mathrm{Sing}(Y_C)$ is $N_C$ or empty, and that $N_C$ is itself smooth.

Let $\xi_v$, $\xi_e$ and $\xi_f$ in $\mathfrak{su}(2)$ be generators of rotations about a vertex of $C$, the midpoint of an edge, and the centre of a face respectively, scaled so that $\{ t \in \R : e^{2 \pi t} \cdot C=C\} = \Z$, and directed so that the vertex, midpoint and centre are at the `top' of the axis of rotation.  Here the top of the axis is taken with right-handed convention, so, for example, the rotation $(\theta, z) \mapsto e^{i \theta} z$ has $\infty$ at the top of its axis, whilst $e^{-i \theta} z$ has $0$ at the top (this is right-handed in the sense that when the fingers of the right hand are curled around the axis in the direction of rotation, the outstretched thumb points towards the top).  One choice of such rotations for $C=\tri=[x^3+y^3]$ is shown in Fig.\nobreakspace \ref {figRotGens}.  We think of the triangle as having two faces---one for each side.

\begin{figure}[ht]
\centering
\includegraphics[scale=1]{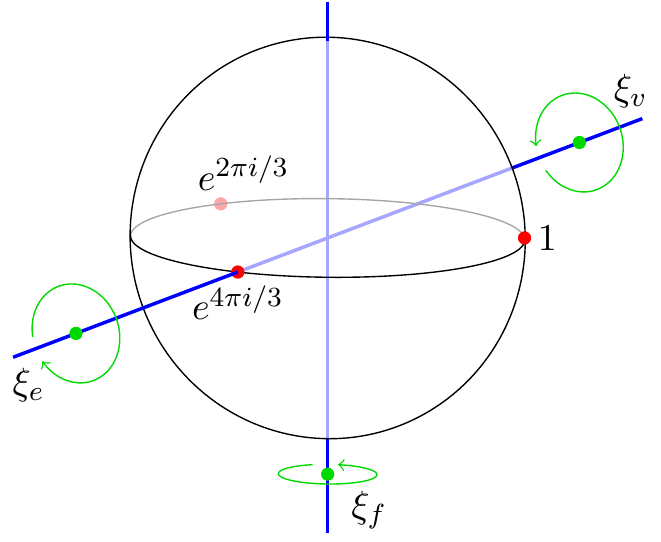}
\caption{Examples of choices for $\xi_v$, $\xi_e$ and $\xi_f$ for $C=\tri$.\label{figRotGens}}
\end{figure}

Let $\xi_g \in \mathfrak{su}(2)$ be the generator of a rotation which is generic, i.e.~not of any of these three forms, scaled in the same way.  Let $r_C$ be $2$, $3$, $4$, $5$ for $C$ equal to $\tri$, $T$, $O$, $I$, denoting the number of faces meeting at a vertex.  Then $e^{2 \pi \xi_v}$ represents a rotation through angle $2 \pi/ r_C$---the smallest angle through which one can rotate $C$ about a vertex to return it to its original position.  Similarly $e^{2\pi \xi_\bullet}$ represents a rotation through angle $2\pi /s$, where $s$ is $2$, $3$ or $1$ for $\bullet$ equal to $e$, $f$ or $g$ respectively.

\begin{defn}\label{labubull}  For $\bullet$ equal to $v$, $e$, $f$ or $g$, let the holomorphic map $u_\bullet \mc \C^* \rightarrow X_C$ be $u_{\xi_\bullet, C}$, using the notation introduced in Section\nobreakspace \ref {sscAxForm}.
\end{defn}

Note that as $z$ winds around the unit circle in $\C^*$ the configuration $u_\bullet (z)$ traces out the rotation generated by $\xi_\bullet$.  And as $z$ moves towards $\infty$ the configuration stretches towards the point $w \in \C\P^1$ at the top of the axis, meaning that all of the points of the configuration, except for the bottom of the axis if this is one of them, move towards $w$.  The model for this stretching (when $w=\infty$) is multiplication by a positive real number $t$ on $\C\P^1$: as $t \rightarrow \infty$ all points except $0$ converge to $\infty$.  Similarly, as $z$ moves towards $0$ the configuration $u_\bullet(z)$ stretches towards the bottom of the axis, corresponding to the limit $t \rightarrow 0$.

\begin{ex}\label{labuv}  For the choices shown in Fig.\nobreakspace \ref {figRotGens}, with vertices at $1$, $\zeta \coloneqq e^{2\pi i/3}$ and  $\zeta^2$, we see that for all $z \in \C^*$ the configuration representing $u_v (z)$ has a vertex at $\zeta^2$.  As $z \rightarrow \infty$, the other two vertices of $u_v(z)$ tend to $\zeta^2$, whilst as $z \rightarrow 0$ the other vertices tend to $-\zeta^2$.  As $z$ moves around $\pd D$, these two vertices rotate around the axis of $\xi_v$ (which fixes $\zeta^2$).
\end{ex}

Returning to the general case, we deduce that $u_v$ patches continuously, and hence holomorphically, over the point $0$ in the domain by a $(d-1)$-fold point at the bottom of the axis of $\xi_v$ and a single point at the top.  For $\bullet$ equal to $e$, $f$ or $g$, the map $u_\bullet$ patches over $0$ simply by a $d$-fold point at the bottom of the axis of $\xi_\bullet$.  The difference between $v$ and $e$, $f$, $g$ is that in the former case $C$ contains the top of the axis of rotation, whilst in the other three it does not.  Similarly the maps $u_\bullet$ all extend over $\infty$, although the exact nature of the limit configuration depends on $C$.  In any case, $u_\bullet$ does indeed define a whole holomorphic sphere.

We now study the intersection of $u_v$ with $Y_C$ at $0$:

\begin{lem}\label{labVanOrd} At the point $0$ in the domain, $u_v$ meets $Y_C$ transversely at a point of $Y_C \setminus N_C$, so if $F$ denotes the discriminant $\Delta (f)$ considered in Section\nobreakspace \ref {sscBasProps} then the vanishing order of $F$ along $Y_C \setminus N_C$ is the vanishing order of $F \circ u_v$ at $0$.  This number is
\[
2 \cdot \binom{d-1}{2} \cdot \frac{1}{r_C} = k_C,
\]
where $k_C$ is the coefficient appearing in the cohomology ring of $X_C$.
\end{lem}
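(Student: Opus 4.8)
The plan is to prove all three parts of the statement — transversality at $0$ into $Y_C\setminus N_C$, the resulting equality of vanishing orders, and the value $2\binom{d-1}{2}/r_C=k_C$ — by writing $u_v$ out explicitly near $0$. The middle assertion is purely formal: since $\mathrm{Sing}(Y_C)\subseteq N_C$, the divisor $Y_C$ is smooth at $u_v(0)$, so if $u_v$ is transverse there it meets $Y_C$ with local multiplicity $1$, and then the order of vanishing of $F$ along $Y_C\setminus N_C$ is exactly that of $F\circ u_v$ at $0$. To set up coordinates I would conjugate $C$ by an element of $\SU(2)$ so that the vertex about which $\xi_v$ rotates lies at $\infty\in\C\P^1$, its antipode at $0$, and $\xi_v$ generates the rotation $\lambda\mapsto e^{i\theta/r_C}\lambda$ (the $1/r_C$ because $e^{2\pi\xi_v}$ rotates $C$ about this vertex through the minimal angle $2\pi/r_C$). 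Writing $w_1,\dots,w_{d-1}\in\C$ for the remaining vertices of $C$, the flow $e^{-i\xi_v\log z}$ acts on $\C\P^1$ by $\lambda\mapsto z^{1/r_C}\lambda$, so $u_v(z)$ is the configuration $\{\infty\}\cup\{z^{1/r_C}w_k\}_{k=1}^{d-1}$, i.e.~(up to scale) the polynomial $x\prod_{k=1}^{d-1}(z^{1/r_C}w_k\,x+y)$. The structural point is that the non-zero $w_k$ fall into orbits of size $r_C$ under multiplication by $e^{2\pi i/r_C}$ — this is precisely the vertex-rotation symmetry of $C$ — which makes every elementary symmetric function of $w_1,\dots,w_{d-1}$ vanish outside degrees divisible by $r_C$; in particular the apparent fractional powers of $z$ cancel, $u_v$ extends holomorphically over $z=0$ (as already noted), and $u_v(0)=[xy^{d-1}]$, which indeed lies in $Y_C\setminus N_C$.

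For the order computation, $F\circ u_v(z)$ is the discriminant, taken in degree $d$, of $\prod_{k=1}^{d-1}(T-z^{1/r_C}w_k)$: this has formal degree $d$ but actual degree $d-1$, since $\infty$ is a root. The degree-$d$ discriminant of a polynomial of actual degree $d-1$ is its leading coefficient squared times its degree-$(d-1)$ discriminant, and here the leading coefficient is a non-zero constant, so $F\circ u_v(z)=\prod_{k<l}(z^{1/r_C}w_k-z^{1/r_C}w_l)^2=z^{2\binom{d-1}{2}/r_C}\prod_{k<l}(w_k-w_l)^2$. The final product is non-zero because the vertices of $C$ are distinct, so $F\circ u_v$ vanishes at $0$ to order exactly $2\binom{d-1}{2}/r_C$. (Consistency forces $r_C\mid(d-1)(d-2)$, which drops out at the end.)

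For transversality I would use the same normal form. Expanding $x\prod_k(z^{1/r_C}w_k\,x+y)$ and using the orbit structure, $u_v(z)$ first differs from $u_v(0)$ at $z$-degree $1$, with leading term a multiple of $x^{r_C+1}y^{d-1-r_C}$; so $u_v'(0)$ is a non-zero multiple of $\overline{x^{r_C+1}y^{d-1-r_C}}$ in $T_{u_v(0)}\P S^dV=S^dV/\langle xy^{d-1}\rangle$, provided the relevant symmetric function of $w_1,\dots,w_{d-1}$ is non-zero; by the orbit structure this is, up to sign, $\sum_j\rho_j^{r_C}$ with $\rho_j$ running over representatives of the $r_C$-orbits among the non-zero $w_k$, which is clearly non-zero when there is a single such orbit (the cases $C=\tri,T,O$) and is checked for $C=I$ from the explicit coordinates in the final appendix, using that the two rings of non-polar vertices of the icosahedron are antipodal and lie at different latitudes. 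On the other hand $Y_C\setminus N_C$ is a single $\SL(2,\C)$-orbit, so $T_{u_v(0)}Y_C$ is the image of the infinitesimal $\mathfrak{sl}(2,\C)$-action at $[xy^{d-1}]$, which a short computation identifies as $\langle\overline{x^2y^{d-2}},\overline{y^d}\rangle$; since $r_C\geq2$ the monomial $x^{r_C+1}y^{d-1-r_C}$ is neither $x^2y^{d-2}$ nor $y^d$, so $u_v'(0)$ is not tangent to $Y_C$. This establishes the transversality, hence the equality of vanishing orders, and finally $2\binom{d-1}{2}/r_C=(d-1)(d-2)/r_C=2d(d-1)(d-2)/|\Gamma_C|=k_C$, using $|\Gamma_C|=2dr_C$ (the image of $\Gamma_C$ in $\mathrm{SO}(3)$ permutes the $d$ vertices transitively with cyclic vertex-stabiliser of order $r_C$).

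The main obstacle I expect is pinning down this local description of $u_v$ near $z=0$: the bookkeeping with the fractional powers $z^{1/r_C}$, the use of the $r_C$-orbit structure of the vertices to see they cancel, and the precise identification of $u_v(0)$ and of the direction $u_v'(0)$ — together with the slightly fiddly non-vanishing check for the icosahedron. Once that is in place, the discriminant identity for a polynomial with a root at infinity and the monomial comparison giving transversality are both routine.
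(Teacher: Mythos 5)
Your proposal is correct and follows essentially the same route as the paper: place the fixed vertex at $\infty$, use the $r_C$-fold rotational symmetry of the remaining vertices to see that $u_v(z)$ expands in integer powers of $z$ with gaps of $r_C$ (including the non-vanishing check for the icosahedron via the antipodal rings), deduce transversality by comparing $u_v'(0)$ with the infinitesimal $\mathfrak{sl}(2,\C)$-action spanning $T_{u_v(0)}Y_C$, and compute the order of $F\circ u_v$ from the $d-1$ roots converging at rate $z^{1/r_C}$. The only cosmetic difference is that you work directly in the tangent space $S^dV/\lspan{xy^{d-1}}$ and compare monomials, where the paper exhibits the same three vectors as a basis of $T_{u_v(0)}X_C$ in an affine chart; the substance is identical.
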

\begin{proof}  Acting by an element of $\SU(2)$ if necessary, we may assume that $C$ contains the vertex $\infty$ and that $\xi_v$ has this vertex at its top.  Then $u_v(0)$ comprises a $(d-1)$-fold point at $0$ and a single point at $\infty$, so lies in $Y_C \setminus N_C$.  This set is connected and contained in the smooth locus of $Y_C = F^{-1}(0)$, so the vanishing order of $F$ is constant along it.

The non-zero entries in the standard coordinates of the point $C \in X_C$ are separated from each other by gaps of exactly $r_C-1$.  This is because if $w \in \C^* \subset \C\P^1$ is a vertex of $C$, and $\zeta$ is a primitive $r_C$th root of unity, then the points $w, \zeta w, \dots, \zeta^{r_C-1} w$ in $C$ contribute a factor $(-1)^{r_C+1}(wx)^{r_C} + y^{r_C}$ to the element of $\P S^d V$ representing $C$.  For example, for the tetrahedron, $C=T$, we could take the vertices to be
\[
\infty\text{, }\frac{1}{\sqrt{2}}\text{, }\frac{e^{2\pi i/3}}{\sqrt{2}}\text{ and }\frac{e^{4 \pi i/3}}{\sqrt{2}},
\]
so that $T$ is represented by
\[
x(x+\sqrt{2}y)(e^{2\pi i/3}x+\sqrt{2}y)(e^{4\pi i/3}x+\sqrt{2}y)=x(x^3+2\sqrt{2}y^3).
\]
Then the standard coordinates of the point $T \in X_T$ are $[1:0:0:2\sqrt{2}:0]$, whose non-zero entries are separated by a gap of $2$.  For the icosahedron, $C=I$, there are two $r_I$-tuples of vertices obtained in this way, giving rise to a factor of $(w_1^5x^5+y^5)(w_2^5x^5+y^5)$ for some $w_1$ and $w_2$, and one needs to check that the coefficient $w_1^5 + w_2^5$ of $x^5y^5$ doesn't vanish, which would give a gap of $9$ rather than $4$.  But this is straightforward once one observes that these two $r_I$-tuples do not lie on the equator---so $|w_j| \neq 1$---and that $w_2$ can be taken to be antipodal to $w_1$, so $w_2 = -1/\conj{w_1}$.  In fact, for each $C$ the standard coordinates of one choice of representative with a vertex at $\infty$ are given in Appendix\nobreakspace \ref {secExplReps} as $C_v$.

The coordinate expression for $u_v$ is then of the form
\[
u_v(z) = [x(y^{d-1}+a_1 z x^{r_C}y^{d-r_C-1} + \dots + a_m z^m x^{mr_C}y^{d-mr_C-1})] \in \P S^d V
\]
for all $z \in \C$, for some non-zero complex constants $a_1, \dots a_m$.  It is notationally easier to work directly with homogeneous polynomials in $x$ and $y$ than with their coefficients, which are the standard coordinates, so we shall use the chart
\begin{equation}
\label{eqChart}
\Bigg[ \sum_{i=0}^d b_i x^i y^{d-i} \Bigg] \mapsto \sum_{j \neq 1} \frac{b_j}{b_1} x^j y^{d-j} \in \C[x, y]_d / \lspan{xy^{d-1}} \cong \C^d,
\end{equation}
where $\C[x, y]_d$ denotes the homogeneous degree $d$ part of the polynomial ring and $\lspan{\cdot}$ denotes linear span as before.  In this chart we have
\[
u_v(z) = \sum_{i=0}^{\lfloor (d-1)/r_C \rfloor} a_i z^ix^{ir_C+1}y^{d-ir_C-1}
\]
for all $z \in \C$, and so
\[
u'_v(z) = \sum_{i=0}^{\lfloor (d-1)/r_C \rfloor} i a_i z^{i-1}x^{ir_C+1}y^{d-ir_C-1}.
\]
From this it is clear that $u_v'(0)$ is non-zero.

If $\eta_X$ and $\eta_Y$ represent the elements of $\mathfrak{sl}(2, \C) \leq \mathrm{Mat}_{2\times2}(\C)$ defined in the proof of \protect \MakeUppercase {L}emma\nobreakspace \ref {labInd2Ax} then (by $\SL(2, \C)$-invariance of $Y_C$) the vectors $\eta_X \cdot u_v(0)$ and $\eta_Y \cdot u_v(0)$ in $T_{u_v(0)}X_C$ actually lie in $T_{u_v(0)}Y_C$.  It is easy to check (in the above chart, for example) that these two vectors, along with $u_v'(0)$, in fact form a basis for $T_{u_v(0)}X_C$ as a complex vector space.  In particular, we have that $u_v'(0)$ defines a complementary direction to $T_{u_v(0)}Y_C$ in $T_{u_v(0)}X_C$, so $u_v$ meets $Y_C$ transversely, proving the first part of the claim.

Geometrically $\eta_X$ generates a translation of $\C \subset \C\P^1$, fixing $\infty$, so the vector $\eta_X \cdot u_v(0)$ corresponds to an infinitesimal translation of the $(d-1)$-fold point at $0$ in the limit configuration $u_v(0)$.  Similarly $\eta_Y \cdot u_v(0)$ corresponds to an infinitesimal translation of the single point at $\infty$ in $u_v(0)$.  The vector $u_v'(0)$, on the other hand, corresponds to infinitesimally `uncollapsing' the $(d-1)$-fold point into $d-1$ distinct points.

Now consider the function $F \circ u_v$.  Strictly $F$ is not a function but a section of $\mathcal{O}(2d-2)$, where $2d-2$ is the degree of the discriminant of a polynomial of degree $d$, but we will happily blur this distinction as we are only concerned with its local properties.  It is proportional to the product of the squares of the differences of the roots of $f \circ u_v$, i.e.~the vertices of the configuration representing $u_v$, with appropriate conventions to deal with the infinities.  These roots are $\infty$ and $d-1$ distinct complex numbers which tend to zero at order $z^{1/r_C}$ as $z \rightarrow 0$.  Therefore $F \circ u_v (z)$ vanishes at order
\[
2 \cdot \binom{d-1}{2} \cdot \frac{1}{r_C}.
\]
Here the binomial coefficient represents the number of pairs of roots which are converging, the $1/r_C$ corresponds to the order of their convergence, and the overall factor of $2$ comes from the fact that we are interested in the \emph{squares} of the differences.

To see that this quantity coincides with $k_C=2d(d-1)(d-2)/|\Gamma_C|$, simply apply the orbit-stabiliser theorem to the action of $\Gamma_C$ on the vertices of $C$.
\end{proof}

We can also use similar considerations to show that the holomorphic section $\sigma$ of $\Lambda_\C^3 TX_C$ constructed in Section\nobreakspace \ref {sscBasProps} vanishes to order $1$ on $Y_C$, and hence that $Y_C$ is anticanonical (rather than some higher multiple of $Y_C$):

\begin{lem}\label{labsigmaOrd}  $\sigma$ vanishes to order $1$ on $Y_C \setminus N_C$.
\end{lem}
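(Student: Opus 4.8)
The plan is to reduce the statement to a single computation along the axial curve $u_v$ of Definition~\ref{labubull}, using the transversality already proved in Lemma~\ref{labVanOrd}.

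First I would observe that $\sigma$ may be computed from \emph{any} $\C$-basis of $\mathfrak{sl}(2, \C) \cong \mathfrak{su}(2) \otimes \C$: since the infinitesimal $\SL(2, \C)$-action makes $\xi \cdot p$ depend $\C$-linearly on $\xi$, and the wedge defining $\sigma$ is $\C$-trilinear, replacing the chosen basis of $\mathfrak{su}(2)$ by a $\C$-basis $\zeta_1, \zeta_2, \zeta_3$ of $\mathfrak{sl}(2, \C)$ rescales $\sigma$ by the (nonzero) determinant of the change of basis, hence leaves its divisor of zeros unchanged. After acting by $\SU(2)$ so that, exactly as in the proof of Lemma~\ref{labVanOrd}, the configuration $C$ contains the vertex $\infty$ with $\xi_v$ pointing to it, the generator $\xi_v$ rotates $\C\P^1$ about the $0$--$\infty$ axis, so is a nonzero complex multiple of $\eta_H$; hence $\{\xi_v, \eta_X, \eta_Y\}$ (with $\eta_X$, $\eta_Y$ the nilpotent matrices from the proof of Lemma~\ref{labInd2Ax}) is a $\C$-basis, and I may work with the section $\sigma'(p) \coloneqq (\xi_v \cdot p) \wedge (\eta_X \cdot p) \wedge (\eta_Y \cdot p)$ in place of $\sigma$.

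Next, since $Y_C \setminus N_C$ is connected and contained in the smooth locus of $Y_C$, the order of vanishing of $\sigma$ along it is well-defined, and because $u_v$ meets $Y_C$ transversely at the point $u_v(0) \in Y_C \setminus N_C$ (Lemma~\ref{labVanOrd}) this order equals the order of vanishing at $z = 0$ of the pulled-back section $z \mapsto \sigma'(u_v(z))$, measured against a nowhere-zero holomorphic frame of the line bundle $u_v^* \Lambda_\C^3 TX_C$ near $0$. The proof of Lemma~\ref{labVanOrd} already gives us that $u_v'(0)$, $\eta_X \cdot u_v(0)$, $\eta_Y \cdot u_v(0)$ form a $\C$-basis of $T_{u_v(0)} X_C$, so for $z$ near $0$ the three sections $u_v'(z)$, $\eta_X \cdot u_v(z)$, $\eta_Y \cdot u_v(z)$ frame $u_v^* TX_C$, and their wedge is exactly such a nowhere-zero local frame.

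The one genuinely new input is the identity $\xi_v \cdot u_v(z) = iz\, u_v'(z)$ for $z$ near $0$. I would prove it by writing $u_v(z) = g(z) \cdot C$ with $g(z) = e^{-i\xi_v \log z}$: then $g'(z) g(z)^{-1} = -\tfrac{i}{z}\xi_v$ (using that $\xi_v$ commutes with $g(z)$), so $u_v'(z) = -\tfrac{i}{z}(\xi_v \cdot u_v(z))$; alternatively it drops out of the coordinate formula for $u_v$ in the chart \eqref{eqChart}, in which, after projectivising, $\eta_H$ acts on the $x^j y^{d-j}$-coefficient by multiplication by $2(j-1)$. Granting this,
\[
\sigma'(u_v(z)) = (\xi_v \cdot u_v(z)) \wedge (\eta_X \cdot u_v(z)) \wedge (\eta_Y \cdot u_v(z)) = iz \cdot \bigl( u_v'(z) \wedge (\eta_X \cdot u_v(z)) \wedge (\eta_Y \cdot u_v(z)) \bigr),
\]
so $\sigma' \circ u_v$ has a simple zero at $z = 0$ and $\sigma$ vanishes to order exactly $1$ on $Y_C \setminus N_C$. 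I do not expect a serious obstacle here: the content is the displayed identity together with the reduction to $u_v$, and the rest is routine bookkeeping with the line bundle $\Lambda_\C^3 TX_C$, which is trivial over a neighbourhood of $0$.
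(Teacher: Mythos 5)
Your proposal is correct and follows essentially the same route as the paper: restrict $\sigma$ to the transverse slice $u_v$, frame $u_v^*\Lambda^3_\C TX_C$ by $u_v' \wedge (\eta_X\cdot u_v)\wedge(\eta_Y\cdot u_v)$ using \protect\MakeUppercase{L}emma\nobreakspace\ref{labVanOrd}, and observe that the action of the rotation generator on $u_v(z)$ equals a nonzero constant times $z\,u_v'(z)$, producing exactly one factor of $z$. The only cosmetic difference is that you derive this identity via the logarithmic derivative of $e^{-i\xi_v\log z}$ and work with $\xi_v$ in place of $\eta_H$ (harmless, as they are proportional), whereas the paper computes $\eta_H\cdot u_v(z)=2r_Cz\,u_v'(z)$ directly in the chart \eqref{eqChart}.
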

\begin{proof}  It is enough to construct a holomorphic map $u \mc \C \rightarrow X_C$, taking $0$ to a point of $Y_C \setminus N_C$, such that $\sigma \circ u$ vanishes to order $1$ at $0$ as a section of $u^* \Lambda_\C^3 TX_C$.  We claim that $u_v$ will do.

For $z$ in a neighbourhood of $0$ in $\C$, the proof of \protect \MakeUppercase {L}emma\nobreakspace \ref {labVanOrd} shows that a basis of $T_{u_v(z)}X_C$ is given by $\eta_X \cdot u_v(z)$, $\eta_Y \cdot u_v(z)$ and $u_v'(z)$, and so we get a holomorphic frame for $u_v^* \Lambda_\C^3 TX_C$ by wedging them together.  Working in the chart \eqref{eqChart}, we saw above that $u_v'(z)$ can be written as
\[
x(a_1 x^{r_C}y^{d-r_C-1} +  2a_2 z x^{2r_C}y^{d-2r_C-1} + \dots + ma_m z^{m-1} x^{mr_C}y^{d-mr_C-1})
\]
for some non-zero $a_1, \dots, a_m \in \C$, but we also have that
\[
\eta_H \cdot u_v(z) = \frac{\diff}{\diff w}\bigg|_{w=0} x(a_1 z e^{2r_Cw} x^{r_C}y^{d-r_C-1} +  \dots + a_m z^m e^{2mr_Cw} x^{mr_C}y^{d-mr_C-1})
\]
for all $z \in \C$.  This is because $e^{w\eta_H}$ acts by multiplying the coefficient $b_j$ of $x^jy^{d-j}$ by $e^{(2j-d)w}$, and hence acts as $e^{2(j-1)w}$ on $b_j/b_1$.  Evaluating the right-hand side we see that $\eta_H \cdot u_v(z) = 2r_Czu'_v(z)$, and hence $\sigma \circ u_v$---which is proportional to $(\eta_H \cdot u_v) \wedge (\eta_X \cdot u_v) \wedge (\eta_Y \cdot u_v)$---vanishes to order $1$ at $0$, as desired.
\end{proof}

Combining \protect \MakeUppercase {L}emma\nobreakspace \ref {labVanOrd} with \protect \MakeUppercase {L}emma\nobreakspace \ref {labsigmaOrd} we deduce that
\[
K_{X_C}^{-k_C} \cong \mathcal{O}(k_CY_C) \cong \mathcal{O}(2d-2),
\]
where $K_{X_C}$ is the canonical bundle of $X_C$.  Therefore the coefficient $l_C$ of $H$ in $c_1(X_C)$ is $(2d-2)/k_C$, which agrees with the values $4$, $3$, $2$ and $1$ quoted earlier.

From the preceding two lemmas we also immediately deduce:

\begin{cor}\label{labIndFor}  The intersection number of a holomorphic disc
\[
u \mc (D, \pd D) \rightarrow (X_C, L_C)
\]
with $Y_C$ is the sum, over the intersection points, of the vanishing order of $\sigma$, which is equal to that of $F \circ u$ divided by $k_C$.
\end{cor}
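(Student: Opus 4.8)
The plan is to read the corollary off from Lemma~\ref{labVanOrd} and Lemma~\ref{labsigmaOrd} together with the standard fact that a holomorphic curve meets a complex divisor (away from which its boundary lies) with a well-defined, non-negative local intersection multiplicity equal to the order of vanishing of a local equation for the divisor pulled back to the domain. Concretely, I want to establish a chain of equalities: the algebraic intersection number $[u]\cdot[Y_C]$ (which by Lemma~\ref{labIndInt} is half the Maslov index of $u$) equals $\sum_i\operatorname{ord}_{p_i}(\sigma\circ u)$ equals $\frac{1}{k_C}\sum_i\operatorname{ord}_{p_i}(F\circ u)$, where $p_1,\dots,p_N$ are the points of $u^{-1}(Y_C)$.

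First I would reduce to a finite count of interior points. Since $u(\partial D)\subset L_C$ and $L_C\cap Y_C=\emptyset$, the preimage $u^{-1}(Y_C)$ lies in the interior of $D$ and is compact, and $u(D)\not\subset Y_C$; hence, after choosing local lifts of $u$ to $S^dV\setminus 0$, both $\sigma\circ u$ and $F\circ u$ are holomorphic and not identically zero near each point of $u^{-1}(Y_C)$, so $u^{-1}(Y_C)=\{p_1,\dots,p_N\}$ is finite. Next I would identify the relevant divisors. The form $\sigma$ is a holomorphic section of the line bundle $\Lambda_\C^3 TX_C=K_{X_C}^{-1}$ vanishing exactly on $Y_C$; since $Y_C$ is irreducible, being the closure of the single $\SL(2,\C)$-orbit $Y_C\setminus N_C$, the zero divisor of $\sigma$ is a positive multiple of $Y_C$, and by Lemma~\ref{labsigmaOrd} its multiplicity at the generic point of $Y_C$ is $1$; thus $\operatorname{div}(\sigma)=Y_C$. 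Likewise $F=\Delta(f)$ is a section of $\mathcal{O}(2d-2)|_{X_C}$ vanishing exactly on $Y_C$, and Lemma~\ref{labVanOrd} gives $\operatorname{div}(F)=k_C\,Y_C=k_C\operatorname{div}(\sigma)$. As these are zero divisors of sections of line bundles they are Cartier, so the identity $\operatorname{div}(F)=k_C\operatorname{div}(\sigma)$ forces, in any local trivialisation near $u(p_i)$, an equation $F=\sigma^{k_C}\cdot h_i$ with $h_i$ a nowhere-vanishing holomorphic function.

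Then I would conclude. Because $\operatorname{div}(\sigma)$ is the Cartier divisor $Y_C$ with local equation $\sigma$, the local intersection multiplicity $m_i$ of $u$ with $Y_C$ at $p_i$ is by definition $\operatorname{ord}_{p_i}(\sigma\circ u)\geq 1$, and positivity of complex intersection numbers of holomorphic discs gives $[u]\cdot[Y_C]=\sum_{i=1}^N m_i$. The local equation $F=\sigma^{k_C}h_i$ yields $\operatorname{ord}_{p_i}(F\circ u)=k_C\,\operatorname{ord}_{p_i}(\sigma\circ u)=k_C\,m_i$, so summing over $i$ gives $[u]\cdot[Y_C]=\sum_i\operatorname{ord}_{p_i}(\sigma\circ u)=\frac{1}{k_C}\sum_i\operatorname{ord}_{p_i}(F\circ u)$, as required. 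The only point needing any care is that Lemmas~\ref{labVanOrd} and~\ref{labsigmaOrd} are phrased along the smooth stratum $Y_C\setminus N_C$, whereas a disc may in principle meet the singular locus $N_C$; this is exactly why I would run the argument through the globally-defined line-bundle sections $\sigma$ and $F$, whose zero divisors are automatically Cartier, so that no local analysis of $Y_C$ at $N_C$ is needed. Everything else is the elementary positivity of intersection numbers of holomorphic curves.
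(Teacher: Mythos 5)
Your proof is correct and follows essentially the same route the paper intends: the corollary is stated as an immediate consequence of Lemma~\ref{labVanOrd} and Lemma~\ref{labsigmaOrd}, via the identification $\operatorname{div}(\sigma)=Y_C$ and $\operatorname{div}(F)=k_C\,Y_C$ and the computation of $[u]\cdot[Y_C]$ as the total vanishing order of the pulled-back section. Your extra care about the singular locus $N_C$ — routing the argument through the globally defined line-bundle sections so that the local identity $F=\sigma^{k_C}h$ with $h$ non-vanishing holds even where $Y_C$ is singular — is exactly the detail the paper leaves implicit, and you have filled it in correctly.
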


In order to apply \protect \MakeUppercase {L}emma\nobreakspace \ref {labInd4Ax}, we also need to understand what happens when discs hit $N_C$:

\begin{lem}\label{labIntNC}  A clean intersection of a holomorphic disc $u$ with $N_C$ contributes at least $2$ to the intersection number $[u].[Y_C]$.  A non-clean intersection contributes at least $3$.
\end{lem}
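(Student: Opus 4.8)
The plan is to analyse the local intersection of $u$ with $Y_C$ near a point of $N_C$ by pushing everything down to the level of root configurations, exactly as in the proof of Lemma~\ref{labVanOrd}. Suppose $u$ meets $N_C$ at the domain point $0$, so $u(0)$ is a $d$-fold point of $\C\P^1$; acting by $\SU(2)$ we may take this point to be $\infty$, so that in the chart \eqref{eqChart} we have $u(0)=0$ and, writing $u(z)=\sum_{i\ge 0} b_i(z)\, x^i y^{d-i}$ with $b_1\equiv 1$, all the coefficients $b_i$ with $i\neq 1$ vanish at $z=0$. The discriminant $F=\Delta(f)$ is (up to a nonvanishing factor) the product of the squares of the pairwise differences of the $d$ roots of $f\circ u$, and by Corollary~\ref{labIndFor} the contribution of this intersection point to $[u].[Y_C]$ is $\big(\operatorname{ord}_{z=0}(F\circ u)\big)/k_C$. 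So the whole problem is to bound the vanishing order of $F\circ u$ from below.

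\medskip

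First I would treat the \emph{clean} case. Cleanness of the intersection with $N_C$ means the proper transform of $u$ in the blowup of $X_C$ along $N_C$ is still holomorphic and meets the exceptional locus transversely; concretely this forces the $d$ roots of $f\circ u(z)$ to separate at the slowest possible nonzero rate, i.e.\ each pair of roots has difference of order exactly $z^{1/d}$ as $z\to 0$ (after possibly passing to a local branched cover, which is the standard way to make sense of this). There are $\binom{d}{2}$ such pairs, and $F$ involves the \emph{squares} of the differences, so $\operatorname{ord}_{z=0}(F\circ u)\ge 2\binom{d}{2}\cdot\frac1d = d-1$. Dividing by $k_C=2d(d-1)(d-2)/|\Gamma_C|$ and using the known values ($k_\tri=1$, $k_T=2$, $k_O=5$, $k_I=22$; and $d=3,4,6,12$ respectively) one checks directly that $(d-1)/k_C\ge 2$ in each of the four cases. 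This gives the first assertion. (One should also note the degenerate possibility that $u$ is constant near $0$, which is excluded since $u$ is a nonconstant disc, and the possibility that $u$ hits $N_C$ only through the excluded factor $xy^{d-1}$, which does not occur because $u(0)=\infty$ is a $d$-fold, not $(d-1)$-fold, point.)

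\medskip

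For the \emph{non-clean} case the roots must either separate at some strictly faster rate, or some subset of them must remain coincident to higher order; in either event the order of vanishing of at least one squared root-difference strictly increases, and a short bookkeeping argument shows $\operatorname{ord}_{z=0}(F\circ u)\ge d$ rather than $d-1$ (the minimal extra contribution is $1$, coming from one pair of roots converging at order $z^{1/(d-1)}$ or faster, or equivalently from the $z^{1/d}$ branch structure being replaced by a coarser one). Then $d/k_C\ge 3$ is again a case-by-case numerical verification using the four values of $d$ and $k_C$. The main obstacle, and the step I would be most careful about, is making the qualitative statement ``a non-clean intersection forces strictly larger vanishing order'' genuinely rigorous: one wants a clean combinatorial lemma relating the ramification profile of the $d$-valued root function $z\mapsto\{$roots of $f\circ u(z)\}$ near $z=0$ to $\operatorname{ord}_{z=0}(F\circ u)$, and to identify precisely which profile corresponds to a clean intersection with $N_C$. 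Once that correspondence is pinned down, the inequalities are immediate arithmetic.
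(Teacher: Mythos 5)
Your route---bounding $\operatorname{ord}_0(F\circ u)$ directly from the rate at which the $d$ roots coalesce---is genuinely different from the paper's, but as executed it fails on the arithmetic, and the failure points to a conceptual error in the model. With $k_C=2d(d-1)(d-2)/|\Gamma_C|$ equal to $1,2,5,22$ and $d=3,4,6,12$, your clean-case bound gives $(d-1)/k_C = 2,\ \tfrac32,\ 1,\ \tfrac12$ for $\tri, T, O, I$, so the claimed inequality $(d-1)/k_C\geq 2$ holds only for the triangle; likewise $d/k_C = 3,\ 2,\ \tfrac65,\ \tfrac{6}{11}$, so the non-clean bound also fails for three of the four cases. (For $T$ your ``exact'' clean profile would give local intersection number $\tfrac32$, which is not an integer, so no curve of $X_T$ can realise it.) The underlying problem is the assertion that a clean intersection with $N_C$ forces all roots to separate at rate $z^{1/d}$. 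The extremal clean intersection is in fact the axial disc $u_f$ (cf.\ \protect\MakeUppercase{C}orollary~\ref{labInd4Pol}), whose roots converge to the face centre at rate $z^{1/3}$ in a pattern dictated by the face structure of $C$, giving $\operatorname{ord}_0(F\circ u_f)=2\binom{d}{2}\cdot\tfrac13 = 2k_C$, i.e.\ contribution exactly $2$. A direct proof along your lines would have to show that no holomorphic curve \emph{lying in $X_C$} can make the roots coalesce more slowly than this; that is precisely the realisability question you flag at the end and do not resolve, and it is the whole content of the lemma.

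The paper sidesteps the combinatorics entirely: it proves $N_C=\mathrm{Sing}(Y_C)$ by exhibiting three curves through $[y^d]$---a parametrisation of $N_C$ itself, $A_e\cdot u_e$ and $A_f\cdot u_f$---whose derivatives at $0$ span $T_{[y^d]}X_C$ and along each of which $\sigma$ vanishes to order at least $2$ (the orders being $\infty$, $3$ and $2$). Hence $d\sigma$ vanishes along $N_C$, so \emph{any} curve meeting $N_C$ intersects $Y_C$ with multiplicity at least $2$, while a non-clean intersection (where $u'(0)=0$ or $u'(0)$ lies in $TN_C$, on which the Hessian of $\sigma$ vanishes because $\sigma\equiv 0$ on $N_C$) has multiplicity at least $3$. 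If you want to salvage your approach you must replace the $z^{1/d}$ model by the correct lower bound $\operatorname{ord}_0(F\circ u)\geq 2k_C$, which is essentially equivalent to what the paper proves.
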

\begin{proof}  It is sufficient to prove that $N_C = \mathrm{Sing}(Y_C)$, or equivalently that the point $p \coloneqq [y^d]$ in $N_C$ is a singular point of $Y_C$.  This will follow if we can find holomorphic maps $u_1, u_2, u_3 \mc  \C\P^1 \rightarrow X_C$ such that $u_i(0)=p$ for each $i$, the $u_i'(0)$ span $T_pX_C$, and $o(\sigma \circ u_i) \geq 2$ for each $i$, where $o(f)$ denotes the vanishing order of a holomorphic function (or section) $f$ defined on a neighbourhood of $0$.  We claim that if $A_e, A_f \in \SU(2)$ are such that $A_e \cdot u_e(0)=A_f \cdot u_f(0)=p$ then $u_1 \mc z \mapsto [(zx+y)^d]$, $u_2=A_e \cdot u_e$ and $u_3=A_f \cdot u_f$ have the required properties.

To see this, we just need to check linear independence of the $u_i'(0)$ and to compute $o(\sigma \circ u_i)$.  We work in the chart
\[
\Bigg[ \sum_{i=0}^d b_i x^i y^{d-i} \Bigg] \mapsto \sum_{j \neq 0} \frac{b_j}{b_0} x^j y^{d-j} \in \C[x, y]_d / \lspan{y^d} \cong \C^d,
\]
analogous to that used earlier but with the $y^d$-component in the denominator, rather than the $xy^{d-1}$-component.  For a similar reason to that at the start of the proof of \protect \MakeUppercase {L}emma\nobreakspace \ref {labVanOrd}, the non-zero entries in the components of $A_e \cdot u_e$ and $A_f \cdot u_f$ in this chart are separated by gaps of $1$ and $2$ respectively (see the explicit coordinates of the configurations $C_e$ and $C_f$ respectively in Appendix\nobreakspace \ref {secExplReps}).  We thus have
\[
u_2(z) = \sum_{i=1}^{\lfloor d/2 \rfloor} a_i^e z^ix^{2i}y^{d-2i}
\]
and
\[
u_3(z) = \sum_{i=1}^{\lfloor d/3 \rfloor} a_i^f z^ix^{3i}y^{d-3i}
\]
for all $z \in \C$, for some non-zero coefficients $a^e_1, a^e_2, \dots$ and $a^f_1, a^f_2, \dots$.  Hence $u_2'(0)\propto x^{d-2}y^2$ and $u_3'(0) \propto x^{d-3}y^3$.  Clearly $u_1'(0) \propto x^{d-1}y$, and so the $u_i'(0)$ are indeed linearly independent.

Finally we compute $o(\sigma \circ u_i)$.  The map $u_1$ is contained in $N_C$, so $\sigma \circ u_1$ is identically zero and we can write $o(\sigma \circ u) = \infty$.  For $u_2$ and $u_3$ we apply \protect \MakeUppercase {C}orollary\nobreakspace \ref {labIndFor}, and reduce the problem to computing the vanishing of $F \circ u_2$ and $F \circ u_3$.  As in the proof of \protect \MakeUppercase {L}emma\nobreakspace \ref {labVanOrd} this comes down to counting pairs of points in the configuration representing $u_2(z)$ or $u_3(z)$ which converge as $z \rightarrow 0$.  In the case of $u_2$ we get
\begin{equation}
\label{equeInd}
2 \cdot \binom{d}{2} \cdot \frac{1}{2} = 3k_C,
\end{equation}
whilst for $u_3$ we get
\begin{equation}
\label{equfInd}
2 \cdot \binom{d}{2} \cdot \frac{1}{3} = 2k_C.
\end{equation}
These are both clearly greater than $k_C$, so the $o(\sigma \circ u_i)$ are all at least $2$.  This completes the proof.
\end{proof}

Given the expression for $k_C$ appearing in the statement of \protect \MakeUppercase {L}emma\nobreakspace \ref {labVanOrd}, the equalities \eqref{equeInd} and \eqref{equfInd} in this proof reduce to $6 (d-2)= r_Cd$, which follows from treating $C$ as a triangulation of $S^2$ and calculating the Euler characteristic, using the fact that the number of edges is $r_Cd/2$ whilst the number of faces is $r_Cd/3$.

\subsection{Maslov indices of axial discs}
\label{sscMasAx}

In view of the results of Section\nobreakspace \ref {sscAxDiscs}, it will be useful to know the Maslov indices of axial discs in $X_C$ bounded by $L_C$, so let $u$ be such a disc.  From Section\nobreakspace \ref {sscAxForm} we know that $u$ can be written as $u(z)=e^{-i\xi \log z}u(1)$ for some $\xi \in \mathfrak{su}(2)$.  Without loss of generality we assume $u$ is non-constant so $\xi \neq 0$.

If one of the vertices in the configuration representing $u(1)$ lies at the top of the axis of $\xi$ then, up to the action of $\SU(2)$, the disc $u$ is equal to $u_v|_D$, or a multiple cover thereof (from now on we will stop writing $|_D$ for the restrictions of axial spheres to discs; we will only make it explicit when confusion could arise).  Similarly, if the top of the axis lies at the mid-point of an edge or the centre of a face then, up to the $\SU(2)$-action and taking multiple covers, $u$ is given by $u_e$ or $u_f$ respectively.  If none of these possibilities occurs then we are in the generic situation, and we may assume $\xi_g$ was chosen so that $u$ coincides with $u_g$ (or a multiple cover), again up to the action of $\SU(2)$.

Since $\SU(2)$ is connected, for any $A \in \SU(2)$ and any continuous disc $u \mc (D, \pd D) \rightarrow (X_C, L_C)$ we have that $u$ and $A \cdot u$ define the same class in $\pi_2 (X_C, L_C)$.  This means that Maslov index is invariant under the action of $\SU(2)$ on discs.  And for any such $u$ the index of the $n$-fold cover of $u$ is $n$ times the index of $u$.  We are therefore left to compute the indices of $u_v, u_e, u_f, u_g$ restricted to $D$.  This is dealt with by:

\begin{lem}\label{labAxInds}  The Maslov indices $\mu(u_\bullet|_D)$ are $2$, $6$, $4$ and $12$ for $\bullet$ equal to $v$, $e$, $f$ and $g$ respectively.
\end{lem}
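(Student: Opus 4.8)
The plan is to compute each Maslov index $\mu(u_\bullet|_D)$ via Lemma~\ref{labIndInt}, which tells us that the index is twice the algebraic intersection number $[u_\bullet|_D] \cdot [Y_C]$, and Corollary~\ref{labIndFor}, which tells us that this intersection number is the sum over intersection points of the vanishing order of $\sigma$, equivalently the vanishing order of $F \circ u_\bullet$ divided by $k_C$. So the task reduces to locating all the points where each $u_\bullet$ hits $Y_C$ and computing the local vanishing orders there.

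First I would handle the intersection at $z = 0$. From the analysis following Definition~\ref{labubull}, $u_v$ patches over $0$ by a $(d-1)$-fold point at the bottom of the axis and a single point at the top, so $u_v(0) \in Y_C \setminus N_C$; Lemma~\ref{labVanOrd} already computes the vanishing order of $F \circ u_v$ there to be $k_C$, contributing $1$ to the intersection number. For $\bullet$ equal to $e$, $f$ or $g$, the map $u_\bullet$ patches over $0$ by a $d$-fold point at the bottom of the axis, so $u_\bullet(0) \in N_C$; here I would rerun the root-counting argument from the proof of Lemma~\ref{labVanOrd} in a chart adapted to $[y^d]$ (as in the proof of Lemma~\ref{labIntNC}). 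The $d$ roots all converge to $0$ at order $z^{1/s}$ where $s$ is $2$, $3$ or $1$ for $e$, $f$, $g$ respectively, so $F \circ u_\bullet$ vanishes to order $2\binom{d}{2}/s$, giving intersection contributions $3k_C/k_C = 3$, $2k_C/k_C = 2$ and $6k_C/k_C = 6$ at $z = 0$ for $e$, $f$, $g$ — note these match \eqref{equeInd} and \eqref{equfInd}. Then I would do the same at $z = \infty$: by the symmetry $z \leftrightarrow 1/z$ (which swaps the top and bottom of the axis and replaces $\xi_\bullet$ by $-\xi_\bullet$), $u_v$ limits to a configuration with a single point at the bottom and a $(d-1)$-fold point at the top — again in $Y_C \setminus N_C$, contributing $1$ — while $u_e$, $u_f$, $u_g$ limit to $d$-fold points, contributing $3$, $2$, $6$ respectively. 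Finally I would argue that for $z \in \C^*$ the configuration $u_\bullet(z)$ consists of $d$ distinct points: $u_\bullet|_{\pd D}$ traces out a rotation of the configuration $C$, which has distinct vertices, and by the $\C^*$-equivariance of $u_{\xi,p}$ under $z \mapsto tz$ this persists for all $z \in \C^*$; hence there are no further intersections with $Y_C$.

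Adding up: $u_v|_D$ meets $Y_C$ with total intersection number $1$ (only the point $z=0$ lies in the closed disc; $z = \infty$ is outside $D$), giving $\mu = 2$. Wait — I need to be careful about which of $0$ and $\infty$ lies in $D$. Since $D = \{|z| \le 1\}$, only $z = 0$ is interior to $D$, but I must check whether $u_\bullet$ meets $Y_C$ along $\pd D$; it does not, since $u_\bullet|_{\pd D}$ is a rotation of $C$, which lies in $W_C$. For $u_e|_D$ the intersection number is $3$, giving $\mu = 6$; for $u_f|_D$ it is $2$, giving $\mu = 4$; for $u_g|_D$ it is $6$, giving $\mu = 12$ — as claimed.

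The main obstacle is bookkeeping the vanishing orders correctly, in particular confirming the order-$z^{1/s}$ convergence of the roots at $z = 0$ for each of $e$, $f$, $g$ by reading off the gap structure of the standard coordinates of $u_\bullet$ (as done via the explicit representatives $C_e$, $C_f$, $C_g$ in Appendix~\ref{secExplReps}), and making sure the chart and the "blur the distinction between $F$ and a section of $\mathcal{O}(2d-2)$" caveat from the proof of Lemma~\ref{labVanOrd} are applied consistently. Everything else is a direct application of Corollary~\ref{labIndFor} together with the structure of the axial spheres $u_\bullet$ established in Section~\ref{sscCompDiv}.
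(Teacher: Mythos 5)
Your proposal is correct and follows essentially the same route as the paper: reduce to the vanishing order of $F\circ u_\bullet$ at the unique intersection point $z=0$ via Lemma~\ref{labIndInt} and Corollary~\ref{labIndFor}, then count pairs of converging roots (order $z^{1/r_C}$ for $v$, $z^{1/2}$, $z^{1/3}$, $z^{1}$ for $e$, $f$, $g$), exactly as in Lemma~\ref{labVanOrd} and the proof of Lemma~\ref{labIntNC}. The only superfluous step is the discussion of $z=\infty$, which you correctly discard since it lies outside $D$.
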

\begin{proof}  By \protect \MakeUppercase {L}emma\nobreakspace \ref {labIndInt} and \protect \MakeUppercase {C}orollary\nobreakspace \ref {labIndFor} we can equivalently compute $o(F \circ u_\bullet)$, and then multiply by $2/k_C$.  This was done in \protect \MakeUppercase {L}emma\nobreakspace \ref {labVanOrd} for $\bullet = v$ and in the proof of \protect \MakeUppercase {L}emma\nobreakspace \ref {labIntNC} for $\bullet = e$ or $f$---see equations \eqref{equeInd} and \eqref{equfInd}---giving the claimed values of $\mu(u_\bullet)$ in these cases.

We mimic the same arguments for $u_g$.  Now there are $d(d-1)/2$ pairs of vertices converging at order $1$, so
\[
o(F \circ u_g) = 2 \cdot \binom{d}{2} = 6k_C,
\]
and thus $\mu(u_g|_D)=12$.
\end{proof}

Now we can give a result of Evans--Lekili \cite[Lemma 4.4]{EL1}, translating their proof into this language:

\begin{lem} \label{labMonot}
The Lagrangians $L_C$ are monotone with minimal Maslov index
\[
\min \{\mu(u) > 0 : u \in \pi_2(X_C, L_C)\}
\]
equal to $2$.
\end{lem}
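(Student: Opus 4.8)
The plan is to prove the two assertions in turn. For the value of the minimal Maslov index, observe first that $u_v$ restricted to $D$ is a holomorphic disc with boundary on $L_C$ (as $\xi_v \in \mathfrak{su}(2)$ and $C \in L_C$), and that by Lemma~\ref{labAxInds} its Maslov index is $2$; hence the minimal positive Maslov index is at most $2$. On the other hand $L_C$ is orientable---indeed parallelisable, since a basis of $\mathfrak{su}(2)$ gives, through the infinitesimal action, a global frame for $TL_C$---so by the discussion of the Maslov class in Section~\ref{sscPrelim} every class in $\pi_2(X_C, L_C)$ has even Maslov index. The image of the homomorphism $\mu \colon \pi_2(X_C, L_C) \to \Z$ is therefore a subgroup of $2\Z$ which contains $2$, so it equals $2\Z$, and the smallest positive value attained is $2$.

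For monotonicity, the first step is to record that $X_C$ is itself monotone. From the presentation $H^*(X_C; \Z) = \Z[H, E]/(H^2 = k_C E,\, E^2 = 0)$, with $\deg H = 2$ and $\deg E = 4$, one reads off that $H^2(X_C; \R)$ is one-dimensional, spanned by $H$. The restriction to $X_C$ of the Fubini--Study class is a positive multiple of $H$, and $c_1(X_C) = l_C H$ with $l_C > 0$, so there is a constant $\kappa > 0$ with $c_1(X_C) = \kappa [\omega]$ in $H^2(X_C; \R)$.

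Now let $A \in \pi_2(X_C, L_C)$, represented by a smooth map $u \colon (D, \partial D) \to (X_C, L_C)$. Since $\pi_1(L_C) = \Gamma_C$ is finite, the loop $\partial u$ has finite order $m$ in $\pi_1(L_C)$, so the $m$-fold concatenation of $\partial u$ bounds a disc $v$ in $L_C$; capping off $m$ copies of $u$ with $v$ along the boundary produces a $2$-sphere in $X_C$, whose class we call $[S] \in H_2(X_C)$. Because $v$ maps into the Lagrangian $L_C$ we have $\omega(S) = m\, \omega(A)$, and because the Maslov class restricts to $2c_1(X_C)$ while $v$ contributes nothing to the relative homology class we have $\langle 2c_1(X_C), [S] \rangle = m\, \mu(A)$. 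Feeding these into the monotonicity of $X_C$ yields
\[
m\,\mu(A)/2 = \langle c_1(X_C), [S] \rangle = \kappa\, \omega(S) = \kappa m\, \omega(A),
\]
so $\mu(A) = 2\kappa\, \omega(A)$. As $A$ was arbitrary, $L_C$ is monotone; together with the first paragraph, its minimal Maslov index is $2$.

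The argument is mostly soft. The one step that genuinely uses the geometry of the $X_C$, rather than formal input, is the monotonicity of $X_C$ itself, and this rests on $b_2(X_C) = 1$---Fano-ness alone would not suffice. A minor technical point is that $\partial u$ need only be of finite order, not null-homotopic, in $\pi_1(L_C)$, which is why one passes to an $m$-fold concatenation before capping off.
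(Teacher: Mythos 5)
Your proof is correct, and the first half (minimal Maslov index) is essentially identical to the paper's: exhibit $u_v$ with $\mu = 2$ and use orientability of $L_C$ to force evenness. For monotonicity, however, you take a genuinely different route. The paper computes $H^*(L_C;\Z)$ via Hurewicz and universal coefficients, feeds this into the long exact sequence of the pair to conclude that $H_2(X_C, L_C)$ has rank $1$, and then observes that any two real-valued homomorphisms on a rank-one group are proportional, so it only remains to exhibit one disc ($u_v$ again) on which both $\mu$ and $\omega$ are positive. You instead invoke the standard transfer criterion: $X_C$ is a monotone symplectic manifold (since $b_2(X_C)=1$, $c_1 = l_C H$ with $l_C>0$, and $[\omega]$ is a positive multiple of $H$), and $\pi_1(L_C) = \Gamma_C$ is finite, so capping an $m$-fold iterate of any disc by a disc in $L_C$ produces a sphere on which ambient monotonicity can be applied. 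Both arguments ultimately rest on the same geometric input, namely $b_2(X_C)=1$; yours is the more portable and widely quotable criterion (monotone ambient plus torsion fundamental group of the Lagrangian), while the paper's is marginally more economical here because the homology computation it performs is reused elsewhere (e.g.\ in \protect\MakeUppercase{P}roposition~\ref{labHFLOIk}) and it avoids the mild bookkeeping of realising the $m$-fold boundary iterate and tracking classes under $H_2(X_C)\to H_2(X_C,L_C)$, which you handle correctly.
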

\begin{proof}
The holomorphic disc $u_v$ has Maslov index $2$, and since $L_C$ is orientable all Maslov indices of discs bounded by it are even.  This proves the second statement.  To prove the first, note that by the Hurewicz and universal coefficient theorems $H^*(L_C; \Z)$ is $\Z$, $0$, $\Gamma_C^{\mathrm{ab}}$ and $\Z$ in degrees $0$ to $3$, where $\Gamma_C^{\mathrm{ab}}$ is the abelianisation of the fundamental group $\Gamma_C$ of $L_C$.  Then by the long exact sequence in homology for the pair $(X_C, L_C)$ the group $H_2 (X_C, L_C)$ has rank $1$.  Therefore Maslov index and area are proportional, and it suffices to exhibit a disc with both quantities positive---again $u_v$ will do.
\end{proof}

\section{Disc analysis for the Platonic Lagrangians}
\label{secDiscAn}

\subsection{Moduli spaces and evaluation maps}
\label{sscModAndEv}

In this subsection we introduce some notation for various moduli spaces of discs, and their accompanying evaluation maps, that we shall use in the rest of the paper.

Recall from Section\nobreakspace \ref {sscPrelim} that for a non-negative integer $k$ and a non-zero class $A \in H_2(X_C, L_C)$ we have the moduli space $\mathcal{M}_k(A)$ of holomorphic discs $(D, \pd D) \rightarrow (X_C, L_C)$ representing class $A$, with $k$ marked points $z_1, \dots, z_k$ on the boundary, modulo reparametrisation.  By \protect \MakeUppercase {L}emma\nobreakspace \ref {labDiscReg}, this is a smooth manifold of the expected dimension.

\begin{defn}\label{labModSps}  For positive integers $i$, let $M_{2i}$ be the disjoint union of the moduli spaces $\mathcal{M}_i(A)$ over the (finite) collection of classes $A$ with $\mu(A)=2i$.  Note that $i$ occurs as both the number of marked points and half the index.  This manifold carries an evaluation map $\ev_i \mc M_{2i} \rightarrow (L_C)^i$ defined by $[u, z_1, \dots, z_i] \mapsto (u(z_1), \dots, u(z_i))$.  Similarly let $M_{4i}^\mathrm{int}$ be the moduli space of unparametrised index $4i$ discs with $i$ \emph{interior} marked points, which comes with an evaluation map $\ev_i^{\mathrm{int}} \mc M_{4i}^\mathrm{int} \rightarrow X_C^i$.  The space of unparametrised discs of index $\mu$ has dimension $\dim L_C + \mu -3 = \mu$ \cite[Theorem 5.3]{Oh}, and each boundary (respectively interior) marked point increases the dimension by $1$ (respectively $2$).  Therefore $\dim M_{2i} = 3i$ and $\dim M_{4i}^\mathrm{int} = 6i$.

Let $M_{2i}'$ denote the disjoint union of the spaces $\mathcal{M}_0(A)$ over classes $A$ with $\mu(A)=2i$, i.e.~the space of unmarked unparametrised index $2i$ discs.  This has dimension $2i$.
\end{defn}

To emphasise the point, these are the bare uncompactified moduli spaces.  The only ones we expect to be compact are $M_2$ and $M'_2$: since the minimal Maslov index of $L_C$ is $2$ there can be no bubbling from an index $2$ disc with at most one marked point (if the minimal Chern number of $X_C$ is $1$, as it is when $C=I$, then a priori  there could be sphere bubbling but we shall see below that in fact this does not occur).

It is well-known, following de Silva \cite{VdS} and Fukaya--Oh--Ohta--Ono \cite[Chapter 8]{FOOObig}, that a choice of orientation and spin structure on $L_C$ induces orientations on these moduli spaces, so in order to justify working over $\Z$ (rather than $\Z/2$) we claim that $L_C$ is orientable and spin.  To see that this is the case simply note that the infinitesimal action of $\mathfrak{su}(2)$ on $L_C$ trivialises its tangent bundle.  From now on we fix an orientation and spin structure on each $L_C$ (the actual choice is irrelevant to our arguments).  Our general reference for Floer theory, in the form of quantum homology, is \cite{BCQS}, for which the orientation conventions are described in \cite[Appendix A]{BCEG}.

\subsection{Index $2$ discs}
\label{sscInd2}

We now construct the moduli space $M_2'$ of unmarked, unparametrised index $2$ discs, and compute the degree of the evaluation map $\ev_1 \mc M_2 \rightarrow L_C$.  This amounts to counting the number of index $2$ discs through a generic point of $L_C$.

Recall that the points of $Y_C$ represent $d$-point configurations on $\P V$ in which at least $d-1$ of the vertices coincide.  Define the map $\pi_C \mc Y_C \rightarrow \P V$ by letting $\pi_C(p)$ be the position of the multiple point in the configuration $p$.

\begin{prop} \label{labInd2MS}  We have that:
\begen
\item \label{ind2itm1} $M_{2}'$ is diffeomorphic to $S^2$.
\item \label{ind2itm2} $M_2$ is a circle bundle over $M_2'$ and the evaluation map $\ev_1 \mc M_2 \rightarrow L_C$ is a covering map of degree $\mathfrak{m}_0 = \pm d$.
\end{enumerate}
\end{prop}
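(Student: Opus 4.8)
The plan is to reduce everything to the single axial disc $u_v$. By Lemma~\ref{labInd2Ax} an index $2$ disc is axial, so by Section~\ref{sscMasAx} it is, up to the $\SU(2)$-action, reparametrisation and passage to multiple covers, one of $u_v$, $u_e$, $u_f$, $u_g$; since by Lemma~\ref{labAxInds} these have Maslov indices $2$, $6$, $4$, $12$, the only possibility is $u_v$ itself with no cover taken. As $\SU(2)$ is connected all such discs lie in one class $A$, so $M_2' = \mathcal{M}_0(A)$ is the single $\SU(2)$-orbit of $[u_v]$; since $\pm I$ acts trivially on $X_C$ this action factors through $\mathrm{SO}(3)$, and the same description shows $M_2$ is the image of $\SU(2) \times \partial D$ under $(A, z_1) \mapsto [A \cdot u_v, z_1]$, in particular connected.

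For part~\ref{ind2itm1} I would compute $\mathrm{Stab}_{\mathrm{SO}(3)}([u_v])$. The relation $u_v(e^{i\theta}z) = e^{\theta\xi_v}u_v(z)$ shows the circle $R$ of rotations about the axis of $\xi_v$ fixes $[u_v]$; a closed subgroup of $\mathrm{SO}(3)$ strictly containing $R \cong \mathrm{SO}(2)$ must be $\mathrm{O}(2)$, but the extra, axis-reversing element carries $u_v$ to a disc hitting $Y_C$ at the configuration whose $(d-1)$-fold point sits at the \emph{opposite} pole. By $\SL(2,\C)$-invariance of $Y_C$ the disc $u_v|_D$ meets $Y_C$ only at the single interior point $z=0$ (its boundary lies in $L_C \subset W_C$), so $[u] \mapsto u(0)$ is a well-defined $\mathrm{SO}(3)$-equivariant map $M_2' \to Y_C$, and the above shows the flipped disc is genuinely different. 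Hence $\mathrm{Stab}_{\mathrm{SO}(3)}([u_v]) = R$ and $M_2' \cong \mathrm{SO}(3)/\mathrm{SO}(2) \cong S^2$; concretely $[u] \mapsto u(0)$ identifies $M_2'$ with the orbit in $Y_C$ of configurations consisting of a $(d-1)$-fold point and one antipodal point.

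For part~\ref{ind2itm2}, forgetting the marked point gives $M_2 \to M_2'$ whose fibre over $[u]$ is $\partial D / \mathrm{Aut}(u)$, where $\mathrm{Aut}(u) \leq \mathrm{PSL}(2,\R)$ fixes $u$; an automorphism of $u_v|_D$ must preserve its unique intersection with $Y_C$, so fixes $z=0$, so is a rotation, and the normalisation of $\xi_v$ forces it to be the identity. Thus the fibres are the boundary circles and $M_2$ is a circle bundle over $M_2'$. Next, $\ev_1$ is $\SU(2)$-equivariant, and at any $[u,z_1]$ the curves $t \mapsto [e^{t\xi}u, z_1]$ for $\xi \in \mathfrak{su}(2)$ map under $\ev_1$ to curves with velocities filling $\mathfrak{su}(2) \cdot u(z_1) = T_{u(z_1)}L_C$, so $\ev_1$ is a submersion, hence a local diffeomorphism since $\dim M_2 = \dim L_C = 3$; as $M_2$ is compact (no bubbling off an index $2$ disc with one marked point, the minimal Maslov index being $2$; cf.\ the discussion after Definition~\ref{labModSps}) and $L_C$ is connected, $\ev_1$ is a covering map. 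To get its degree I would count $\ev_1^{-1}(C)$: since $\theta \mapsto e^{\theta\xi_v}C$ is injective on $[0,2\pi)$, each index $2$ disc whose boundary circle contains $C$ contributes exactly one preimage, and the translates $A \cdot u_v$ with this property correspond in $M_2' = \mathrm{SO}(3)/R$ to the cosets $\gamma R$ with $\gamma$ in the image $\overline{\Gamma}_C$ of $\Gamma_C = \mathrm{Stab}_{\SU(2)}(C)$ in $\mathrm{SO}(3)$; as $\overline{\Gamma}_C \cap R$ is the cyclic group of order $r_C$ generated by $e^{2\pi\xi_v}$, this count is $|\overline{\Gamma}_C|/r_C = \tfrac12|\Gamma_C|/r_C$, which using $|\Gamma_C| = 2d(d-1)(d-2)/k_C$ and the identity $k_C = 2\binom{d-1}{2}/r_C$ of Lemma~\ref{labVanOrd} equals $d$. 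Since $M_2$ is connected all $d$ preimages carry the same sign, so the degree $\mathfrak{m}_0$ is $\pm d$.

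The main obstacle I anticipate is the bookkeeping in this final count: correctly handling the two-to-one cover $\SU(2) \to \mathrm{SO}(3)$, identifying $\overline{\Gamma}_C \cap R$ as the order-$r_C$ vertex stabiliser, and above all confirming that \emph{every} index $2$ disc through $C$ is counted exactly once, with no spurious components of $M_2$ and no hidden multiplicities, so that the coset count genuinely computes the covering degree.
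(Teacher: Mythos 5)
Your proposal is correct and follows essentially the same route as the paper: classify index $2$ discs as $\SU(2)$-translates of $u_v$, realise $M_2'$ as the homogeneous space $\SU(2)/H\cong\mathrm{SO}(3)/\mathrm{SO}(2)\cong S^2$ (equivalently via evaluation at the pole), deduce that $\ev_1$ is an equivariant local diffeomorphism from a compact connected manifold and hence a covering, and compute the degree by the coset/orbit--stabiliser count $|\overline{\Gamma}_C|/r_C=d$, which is exactly the paper's alternative computation of the index $[\Gamma_C:\Gamma_C']$. (Your stabiliser argument should allow for $\mathrm{SO}(3)$ as well as $\mathrm{O}(2)$ as the putative larger subgroup, but since both contain an axis-reversing element your argument already excludes both.)
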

\begin{proof} \ref{ind2itm1}  By \protect \MakeUppercase {L}emma\nobreakspace \ref {labInd2Ax} an arbitrary index $2$ disc $u$ is axial, so we can parametrise it to be in the form $u \mc z \mapsto e^{-i\xi \log z}p$, with $\xi \in \mathfrak{su}(2)$ and $p \in L_C$.  By \protect \MakeUppercase {L}emma\nobreakspace \ref {labAxInds} the top of the axis of $\xi$ must pass through a vertex of the configuration representing $p$, otherwise $u$ would have index at least $4$ (in fact, unless the top of the axis passed through a vertex, mid-point of an edge or centre of a face the index would be at least $12$).  Moreover $\xi$ must be scaled so that $\{t \in \R : e^{2\pi \xi t}p=p\}$ is $\Z$, otherwise $u$ would be a multiple cover and again have index at least $4$.  Therefore $\xi = A \xi_v A^{-1}$ for some $A \in \SU(2)$ which maps the configuration $u_v(1)$ to $p$, and $u = A \cdot u_v$.  The matrix $A$ is uniquely determined by $u$ and our choice of parametrisation.  The freedom in the latter (once we have decided to put the disc in axial form) consists of reparametrisations of the form $z \mapsto e^{i\theta}z$, which corresponds to multiplying $A$ on the right by elements of the one-parameter subgroup $H$ generated by $\xi_v$.  Thus $M_2'$ is diffeomorphic to $\SU(2) / H$, which is $S^2$ (the quotient map is the Hopf fibration).

Alternatively, we have a smooth map $\phi \mc M_2' \rightarrow N_C$ given by $u \mapsto \pi_C \circ u(0)$, where discs are parametrised so that their unique intersection with $Y_C$ occurs at $0$ in the domain.  Concretely, an index $2$ disc $u$ meets $Y_C$ at a unique point, which corresponds to a configuration on the sphere comprising a $(d-1)$-fold point and a single antipodal point, and $\phi$ sends $u$ to the position of the former.  This map is manifestly $\SU(2)$-equivariant, and the $\SU(2)$-action on $N_C$ is transitive, so $\phi$ is surjective and every point is regular.  Hence $\phi$ is a diffeomorphism $M_2' \rightiso N_C \cong S^2$ if we can show it is injective.  To prove injectivity, note that the generator $\xi$ of an (axial) index $2$ disc $u$ has $\phi(u)$ at the bottom of its axis, and its scaling is determined by the fact that $u$ is not a multiple cover, so $\phi(u)$ uniquely determines $\xi$ and hence the disc $u$ up to reparametrisation.

\ref{ind2itm2}  The once-marked moduli space is always a circle bundle over the unmarked moduli space, and $\ev_1 \mc M_2 \rightarrow L_C$ is $\SU(2)$-equivariant so is a submersion and hence a local diffeomorphism.  Since $M_2$ is compact, $\ev_1$ is therefore a covering map.  To see that the degree is $d$, up to an overall sign, note that for $p \in L_C$ and a disc $u \in M_2'$, the fibre of $M_2$ over $u$ hits $p$ under $\ev_1$ if and only if $\phi(u)$ is a vertex of the configuration representing $p$.  There are precisely $d$ such choices of $u$ for a given $p$, and in each case there is a unique point in the corresponding fibre of $M_2$ which maps to $p$.  (The reason that all discs count with the same sign is that $M_2$ is connected, so $\ev_1$ is either everywhere orientation-preserving or everywhere orientation-reversing.)

Another approach is to view $M_2'$ as $\SU(2) / H$.  Then $M_2$ is $\SU(2) / \Gamma_C'$, where $\Gamma_C'$ is the subgroup $\{e^{2 \pi k \xi_v} : k \in \Z \}$ of $\Gamma_C$, which is manifestly a circle bundle over $M_2'$.  Thinking of $L_C$ as $\SU(2)/\Gamma_C$, we see that the degree of the evaluation map, up to sign, is the index of $\Gamma_C'$ in $\Gamma_C$, which is $d$.
\end{proof}

\subsection{The antiholomorphic involution I}
\label{sscAntInv}

The purpose of the present subsection is to introduce the key tool for simplifying computations with holomorphic discs on $L_C$---a method for completing such discs to spheres, based on a partially-defined antiholomorphic involution of $X_C$.  Global antiholomorphic involutions have previously appeared in Floer theory, for example in the work of Fukaya--Oh--Ohta--Ono \cite{FOOOinv} and Haug \cite{Ha}, and we shall apply some of their ideas later.

We begin with the following observation:

\begin{prop} \label{labAntInv}  There exists an antiholomorphic involution $\tau$ of $W_C$ whose fixed-point set is precisely $L_C$.  If $C=O$ or $I$ then $\tau$ extends to the whole of $X_C$, preserving $Y_C \setminus N_C$ and $N_C$ setwise.
\end{prop}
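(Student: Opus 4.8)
The plan is to realise $\tau$ as the descent to $W_C\cong\SL(2,\C)/\Gamma_C$ of the compact-real-form involution of $\SL(2,\C)$, and then, in the cases $C=O$ or $I$, to recognise this descent as the map on configurations induced by the antipodal map of $\C\P^1$, which is manifestly defined on all of $\P S^dV$.

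First I would introduce $\sigma\mc\SL(2,\C)\to\SL(2,\C)$, $\sigma(g)=(g^{\dagger})^{-1}$, where $\dagger$ is conjugate transpose. This is the exponential of complex conjugation on $\mathfrak{sl}(2,\C)=\mathfrak{su}(2)\otimes\C$ relative to the real form $\mathfrak{su}(2)$ (that conjugation being $Z\mapsto-Z^{\dagger}$), so $\sigma$ is an antiholomorphic group automorphism with $\sigma^2=\id$ and fixed-point set exactly $\SU(2)$. Fixing a base point $p_0=C\in L_C$, whose $\SL(2,\C)$-stabiliser is $\Gamma_C$, I identify $W_C$ with $\SL(2,\C)/\Gamma_C$; since $\Gamma_C\subseteq\SU(2)$ is fixed pointwise by $\sigma$, the automorphism $\sigma$ descends to an involution $\tau$ of $W_C$, which is antiholomorphic because it is covered by the antiholomorphic $\sigma$ through the holomorphic covering $\SL(2,\C)\to W_C$. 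Finally I would compute the fixed locus: $\tau(g\cdot p_0)=g\cdot p_0$ forces $\gamma:=g^{-1}\sigma(g)=(g^{\dagger}g)^{-1}\in\Gamma_C$, whence $g^{\dagger}g=\gamma^{-1}$ is a positive-definite element of the compact group $\SU(2)$ and hence is the identity; so $g\in\SU(2)$ and $\operatorname{Fix}(\tau)=\SU(2)\cdot p_0=L_C$.

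For the extension I would use the antipodal map $\alpha\mc\C\P^1\to\C\P^1$, $[z_0:z_1]\mapsto[-\conj{z_1}:\conj{z_0}]$, an antiholomorphic fixed-point-free involution, together with the equivariance $\alpha(g\cdot z)=\sigma(g)\cdot\alpha(z)$ for all $g\in\SL(2,\C)$ (a one-line matrix check). Applying $\alpha$ pointwise to $d$-point configurations gives an antiholomorphic involution $\tilde{\alpha}$ of $\Sym^d\C\P^1\cong\P S^dV$, again $\sigma$-equivariant as self-maps, $\tilde{\alpha}\circ g=\sigma(g)\circ\tilde{\alpha}$. When $C=O$ or $I$ the configuration $C$ is centrally symmetric, so $\tilde{\alpha}(C)=C$, and then $\tilde{\alpha}(g\cdot C)=\sigma(g)\cdot C=\tau(g\cdot C)$, i.e.\ $\tilde{\alpha}$ restricts to $\tau$ on $W_C$. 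Since $\tilde{\alpha}$ is a continuous self-map of $\P S^dV$ carrying $W_C$ onto itself, it carries the closure $X_C=\conj{W_C}$ onto itself, so $\tilde{\alpha}|_{X_C}$ is the required extension. It preserves $Y_C=X_C\setminus W_C$ automatically, and since $\alpha$ is a bijection of $\C\P^1$ it preserves the multiplicity type of a configuration, hence sends $N_C$ (the locus of a single $d$-fold point) to itself and therefore also $Y_C\setminus N_C$ to itself.

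I expect the only step requiring genuine (if small) care to be the computation of $\operatorname{Fix}(\tau)$: a priori $g^{-1}\sigma(g)$ could be a nontrivial element of $\Gamma_C$, and the point is that it cannot, since $g^{\dagger}g$ is positive-definite while the only positive-definite element of $\SU(2)$ is the identity. Everything else is either a short explicit calculation (the $\sigma$-equivariance of $\alpha$) or formal bookkeeping (descent through the quotient, taking closures). As a consistency check on why the global extension is asserted only for $O$ and $I$: for $C=\tri$ or $T$ the configuration fails to be centrally symmetric, so $\tilde{\alpha}$ does not restrict to $\tau$ on $W_C$ and this route to a global extension is unavailable.
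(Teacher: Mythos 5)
Your proposal is correct and takes essentially the same route as the paper's proof: define $\tau$ by descending the compact-real-form involution $A \mapsto (A^\dagger)^{-1}$ through the covering $\SL(2,\C) \to W_C \cong \SL(2,\C)/\Gamma_C$, and then, for $C=O$ or $I$, identify it with the globally defined antipodal involution on $\Sym^d\C\P^1 \cong \P S^dV$ using the central symmetry of the configuration. Your explicit verification of the reverse inclusion $\mathrm{Fix}(\tau) \subseteq L_C$ via positive-definiteness of $g^\dagger g$ is a detail the paper leaves implicit, and is a welcome addition.
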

\begin{proof}  Given a point $p \in W_C$ there exists an $A \in \SL(2, \C)$ such that $p = A \cdot C$.  $A$ is unique up to multiplication on the right by elements of $\Gamma_C$, and $p$ lies in $L_C$ if and only if $A$ is in $\SU(2)$.  Letting $\ddag$ denote conjugate-transpose-inverse (which is an antiholomorphic group involution on $\mathrm{SL(2, \C)}$, fixing $\SU(2)$), define $\tau (p) = A^\ddag \cdot C$.  Since $\ddag$ is a group homomorphism and fixes $\SU(2)$, and hence also $\Gamma_C$, this is independent of the choice of $A$, i.e.~it depends only on the underlying point $p$.  Thus $\tau$ is well-defined.  It's manifestly antiholomorphic and involutive.

We now interpret this algebraic construction geometrically.  First note that if we define $J_0 = \lb \begin{smallmatrix} 0 & -1 \\ 1 & 0 \end{smallmatrix} \rb$ then for any $A \in \SL(2, \C)$ we have
\[
A^\ddag = J_0 \conj{A} J_0^{-1}.
\]
And for $z \in \C\P^1$, the map $z \mapsto J_0^{\pm 1} \cdot \conj{z}$ is precisely the antipodal map $\alpha \mc z \mapsto -1/\conj{z}$.  So if $p \in W_C$ is described by $A \cdot C$ for some $A \in \SL(2, \C)$ then $A^\ddag \cdot C$ is obtained by taking $C$, applying the antipodal map $\alpha$ (to each factor of $\Sym^d \C \P^1 \cong \P S^d V$), acting by $A$, and then applying $\alpha$ again.

The configurations $O$ and $I$ are invariant under $\alpha$, so $\tau$ acts on $W_C$ simply as (the restriction of) the antipodal map itself.  Therefore $\tau$ extends to all of $X_C$ and clearly preserves coincidences of points, so fixes $Y_C \setminus N_C$ and $N_C$ setwise.
\end{proof}

For the triangle and tetrahedron, which are not preserved by $\alpha$, $\tau$ is rather more subtle.  It can be extended to $Y_C \setminus N_C$, which it collapses down to $N_C$, but then it cannot possibly extend further to a global involution since it is not injective.  Evans--Lekili remark that $L_\tri$ can't be the fixed-point set of \emph{any} antiholomorphic involution, since by \protect \MakeUppercase {P}roposition\nobreakspace \ref {labInd2MS}\ref{ind2itm2} the count of index $2$ discs is odd (this count was also computed by Evans--Lekili \cite[Lemma 6.2]{EL1}).

To see that $\tau$ extends over $Y_C \setminus N_C$, recall the proof of \protect \MakeUppercase {L}emma\nobreakspace \ref {labVanOrd} where we saw that the vectors $\eta_X \cdot u_v(0)$, $\eta_Y \cdot u_v(0)$ and $u_v'(0)$ form a basis for the tangent space $T_{u_v(0)}X_C$.  Therefore for any $A \in \SL(2, \C)$ the map
\[
\phi \mc (z_X, z_Y, z) \mapsto Ae^{z_X \eta_X + z_Y \eta_Y} u_v(z)
\]
gives a holomorphic parametrisation of a neighbourhood of $Au_v(0)$ in $X_C$ by a neighbourhood $U$ of $0$ in $\C^3$.  It is straightforward to check by hand that $\tau \circ u_v(z) = u_v(1/\conj{z})$ for all $z \in \C^*$, so for $(z_X, z_Y, z)$ in $U$ with $z \neq 0$ we have
\[
\tau \circ \phi (z_X, z_Y, z) = A^\ddag e^{-\conj{z}_X \eta_X^\dag - \conj{z}_Y \eta_Y^\dag}u_v(1/\conj{z}).
\]
Since $u_v(1/\conj{z})$ extends smoothly and antiholomorphically over $0$, sending $0$ to a $d$-fold point antipodal to the $(d-1)$-fold point in the configuration $u_v(0)$, we see that $\tau \circ \phi$ extends smoothly and antiholomorphically over $U$, mapping $U \cap \{z=0\}$ to $N_C$.  Hence $\tau$ itself extends smoothly and antiholomorphically over a neighbourhood of $Au_v(0)$, collapsing the intersection of this neighbourhood with $Y_C$ to $N_C$.  Since $Au_v(0)$ takes every value in $Y_C \setminus N_C$ as $A$ varies over $\SL(2, \C)$, we see that $\tau$ can be defined on all of $X_C \setminus N_C$.

For $C$ equal to $O$ or $I$, the involution on $X_C$ is the restriction of the antipodal involution on $\P S^d V$ and it is easy to see in coordinates that it is antisymplectic: the point (i.e.~homogeneous polynomial of degree $d$ in $x$ and $y$, modulo scaling)
\[
[(a_1 x + b_1 y) \dots (a_d x + b_d y)]
\]
maps to
\[
[(\conj{b}_1 x - \conj{a}_1 y) \dots (\conj{b}_d x - \conj{a}_d y)]
\]
so in standard coordinates we have
\[
[z_0 : z_1 : z_2 : \dots : z_d] \mapsto [\conj{z}_d : -\conj{z}_{d-1} : \conj{z}_{d-2} : \dots : (-1)^{d} \conj{z}_0],
\]
which flips the sign of the Fubini--Study form.  For $C$  equal to $\tri$ or $T$, however, the involution is \emph{not} antisymplectic.  In fact we shall see shortly that given a holomorphic disc $u$ on $L_C$, the reflection of $u$ by $\tau$ often has different Maslov index from $u$ itself.  By monotonicity of $L_C$, this means the reflected disc has different area.

We next take a slight detour to prepare us to deal with the points where $\tau$ is not defined.

\begin{lem} \label{labMatLim}  If $U$ is a punctured open neighbourhood of $0$ in $\C$, $E$ is an $n$-dimensional complex vector space, $e_0, \dots, e_n$ is a sequence of vectors in $E$ such that any proper subsequence is linearly independent, and $A \mc U \rightarrow \mathrm{GL}(E)$ is a holomorphic map with the property that for each $i$ the limit $\lim_{z \rightarrow 0} [A(z) \cdot e_i]$ exists in $\P E$, then:
\begen
\item \label{EndLimitm1} Shrinking $U$ if necessary, there exists a holomorphic function $\kappa \mc U \rightarrow \C^*$ such that $\kappa A$ extends continuously (and thus holomorphically) over $0$ as a map to $\End E$.
\item \label{EndLimitm2} If $A$ actually maps to $\SL(E)$ then its matrix components (with respect to any basis) are meromorphic over $0$, i.e.~they have at worst poles at $0$.
\end{enumerate}

\end{lem}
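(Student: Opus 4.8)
The plan is to understand $A$ through its action on a frame extracted from the $e_i$: rescale each ``column'' $A(z)e_i$ so that its projective limit becomes an honest limit, and then use the extra vector $e_0$ as a Pl\"ucker-type device forcing the rescaling factors to be mutually meromorphic at $0$.

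First I would record the linear algebra. By hypothesis $e_1, \dots, e_n$ is a basis of $E$, and if $e_0 = \sum_{i=1}^n c_i e_i$ then every $c_i$ is non-zero --- otherwise $e_0$ together with the $e_j$, $j \neq i$, would be a linearly dependent proper subsequence. Next, for each $i \in \{0, 1, \dots, n\}$ the hypothesis that $z \mapsto [A(z)e_i]$ has a limit in $\P E$ lets me choose a linear functional non-vanishing at that limit point and divide by it, so that after shrinking $U$ I obtain $A(z) e_i = \mu_i(z) v_i(z)$ with $\mu_i \mc U \to \C^*$ holomorphic and $v_i \mc U \to E$ bounded near $0$, hence extending holomorphically across $0$, with $v_i(0) \neq 0$. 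Since each $A(z)$ is invertible, $v_1(z), \dots, v_n(z)$ are linearly independent for every $z \in U$, so $v_1 \wedge \dots \wedge v_n$, viewed as a scalar function via a fixed isomorphism $\Lambda^n E \cong \C$, is not identically zero.

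The heart of the proof is to compare the $\mu_i$. Applying $A(z)$ to $e_0 = \sum c_i e_i$ yields $\mu_0 v_0 = \sum_{i=1}^n c_i \mu_i v_i$ in $E$; wedging this identity with $v_1 \wedge \dots \wedge \widehat{v_i} \wedge \dots \wedge v_n$ annihilates every term on the right except the $i$-th, leaving $\mu_0 \,(v_0 \wedge v_1 \wedge \dots \wedge \widehat{v_i} \wedge \dots \wedge v_n) = \pm c_i \mu_i \,(v_1 \wedge \dots \wedge v_n)$ in $\Lambda^n E$. Both wedge products extend holomorphically across $0$, and the one on the right is not identically zero, so $\mu_i/\mu_0$ --- hence $\mu_i/\mu_1$ --- is meromorphic across $0$ for each $i$. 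Putting $m = -\min_{1 \le i \le n}\operatorname{ord}_0(\mu_i/\mu_1) \ge 0$ and $\kappa(z) = z^m/\mu_1(z)$, which is holomorphic and nowhere zero on $U$, each $\kappa\mu_i = z^m(\mu_i/\mu_1)$ extends holomorphically across $0$, so $(\kappa A)(z)e_i$ does too for every basis vector $e_1, \dots, e_n$. Thus $\kappa A$ extends continuously over $0$ as a map to $\End E$, and continuity upgrades to holomorphy by the removable singularity theorem applied to matrix entries; this is \ref{EndLimitm1}.

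For \ref{EndLimitm2}, suppose $A$ maps into $\SL(E)$. Then $\det(\kappa A) = \kappa^n \det A = \kappa^n$, and by \ref{EndLimitm1} this extends holomorphically across $0$; being a power of the nowhere-zero function $\kappa$ it is not identically zero, so $\kappa^n = z^k g$ with $k \ge 0$ and $g$ holomorphic, $g(0) \neq 0$. An elementary argument --- the winding number of $\kappa$ around a small loop about $0$ must equal $k/n$, and then $\kappa/(z^{k/n}g^{1/n})$ has $n$-th power $1$, hence is a constant root of unity --- shows that $\kappa$ extends holomorphically across $0$, so $1/\kappa$ is meromorphic there and $A = (1/\kappa)(\kappa A)$ has meromorphic matrix entries in one, and therefore in every, basis. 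I expect the substantive points to be the Pl\"ucker identity of the third paragraph, which is the only place the ``extra'' vector $e_0$ (and hence the full general-position hypothesis) enters, and, for \ref{EndLimitm2}, the observation that $\det(\kappa A) = \kappa^n$ is exactly what controls the scaling factor $\kappa$; everything else is bookkeeping and elementary one-variable complex analysis.
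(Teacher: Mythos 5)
Your proof is correct and follows essentially the same route as the paper's: you rescale each column $A(z)e_i$ so that its projective limit becomes an honest limit, then use the extra vector $e_0$ to compare the scaling factors, your wedge-product identity being exactly the coordinate-free form of the paper's adjugate-matrix computation. The only cosmetic differences are your choice of normalisation for $\kappa$ and your explicit winding-number justification that $\kappa^n$ holomorphic over $0$ forces $\kappa$ holomorphic over $0$, a step the paper leaves implicit.
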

\begin{proof}  \ref{EndLimitm1} For each $i$, let $f_i \mc U \cup \{0 \} \rightarrow \P E$ denote the map $z \mapsto [A(z) \cdot e_i]$, with $f_i(0)$ defined to be the limit $\lim_{z \rightarrow 0} f_i(z)$.  Taking $e_1, \dots, e_n$ as a basis for $E$, we can view $A$ as a matrix-valued function with components $(a_{ij})$, and for each $i$ (including $0$) we can pick an index $k_i$ such that the $e_{k_i}$-component of $f_i(0)$ is non-zero.  For $i \geq 1$ let $\lambda_i$ denote $a_{k_ii}$, and analogously let $\lambda_0$ denote the $e_{k_0}$-component of $A(z) \cdot e_0$.

By shrinking $U$ if necessary we may assume that the $\lambda_i$ are nowhere zero on $U$ (by choice of the $k_i$) and so the map $B \mc U \rightarrow \mathrm{GL}(n, \C)$ given by
\[
B = A \begin{pmatrix} \lambda_1 & & \\ & \ddots & \\ & & \lambda_n \end{pmatrix}^{-1}
\]
is well-defined.  Note that for all $i, j \geq 1$ the limit $a_{ij}/\lambda_j$ exists as $z \rightarrow 0$---it is just the ratio of the $i$th and $k_j$th components of $f_i(0)$---and so $B$ extends over $0$, as a map to $\End \C^n$.

Let $B$ have components $b_{ij}$ and $e_0$ have components $\mu_i$.  The statement that $A(z) \cdot e_0$ tends to $f_0(0)$ in $\P E$ tells us that ${\lambda_0(z)}^{-1} A(z) \cdot e_0$ tends to a limit in $\C^n$ (namely the lift of $f_0(0)$ to $\C^n$ with $k_0$-component equal to $1$) as $z \rightarrow 0$, so
\[
\frac{1}{\lambda_0} B \begin{pmatrix} \lambda_1 & & \\ & \ddots & \\ & & \lambda_n \end{pmatrix} \begin{pmatrix} \mu_1 \\ \vdots \\ \mu_n \end{pmatrix} = \begin{pmatrix} \nu_1 \\ \vdots \\ \nu_n \end{pmatrix}
\]
for some holomorphic functions $\nu_i$ which extend over $0$.  We therefore have
\[
\frac{\det B}{\lambda_0}\begin{pmatrix} \lambda_1 \mu_1 \\ \vdots \\ \lambda_n \mu_n \end{pmatrix} = \adj B \begin{pmatrix} \nu_1 \\ \vdots \\ \nu_n \end{pmatrix},
\]
where $\adj B$ denotes the adjugate of $B$, and the right-hand side extends over $0$ (because $B$ and the $\nu_i$ do).  Let the components of the right-hand side be $\nu_i'$.

Now, since proper subsequences of $e_0, \dots, e_n$ are linearly independent, the $\mu_i$ must all be non-zero.  And $B$ is non-singular on $U$ so
\[
\kappa \mc z \mapsto \det B(z) / \lambda_0(z)
\]
defines a holomorphic function $U \rightarrow \C^*$.  We then have
\begin{equation}
\label{eqEndLim}
\kappa A = B \begin{pmatrix} \frac{\lambda_1 \det B}{\lambda_0} & & \\ & \ddots & \\ & & \frac{\lambda_n \det B}{\lambda_0} \end{pmatrix} = B \begin{pmatrix} \frac{\nu_1'}{\mu_1} & & \\ & \ddots & \\ & & \frac{\nu_n'}{\mu_n} \end{pmatrix},
\end{equation}
and the latter extends over $0$ (since $B$ and the $\nu_i'$ extend over $0$ and the $\mu_i$ are non-zero).  This proves \ref{EndLimitm1}.

\ref{EndLimitm2} Take determinants in \eqref{eqEndLim} to see that $\kappa$ is holomorphic over $0$.  Dividing through by $\kappa$, we thus deduce that the matrix components of $A$ with respect to our chosen basis are meromorphic over $0$.  Changing basis clearly preserves this property.
\end{proof}

Combining \protect \MakeUppercase {P}roposition\nobreakspace \ref {labAntInv} and \protect \MakeUppercase {L}emma\nobreakspace \ref {labMatLim} allows us to reflect holomorphic maps using $\tau$:

\begin{cor} \label{labExt}  If $U$ is a punctured open neighbourhood of $0$ in $\C$, and
\[
u \mc U \cup \{0\} \rightarrow X_C
\]
is a holomorphic map with $u(U) \subset W_C$, then $\tau \circ u|_U$ extends continuously over $0$.  In particular, holomorphic discs with boundary on $L_C$ extend to holomorphic spheres.
\end{cor}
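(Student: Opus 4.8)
The plan is to reduce to the cases $C = \tri$ and $C = T$ (where $\tau$ is only partially defined), lift $u$ to $\SL(2, \C)$ after passing to a cyclic cover of $U$, invoke Lemma~\ref{labMatLim} to see that the lift is meromorphic at the puncture, and then reflect using the identity $A^\ddag = J_0 \conj{A} J_0^{-1}$ from the proof of Proposition~\ref{labAntInv}. If $C = O$ or $I$ then $\tau$ extends to all of $X_C$ by Proposition~\ref{labAntInv}, so $\tau \circ u$ is continuous on $U \cup \{0\}$ and there is nothing to prove; so assume $C = \tri$ or $T$.

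Next I lift $u$ to the group. Since $W_C \cong \SL(2, \C)/\Gamma_C$ with $\Gamma_C$ finite and $\SL(2, \C)$ simply connected, the obstruction to lifting $u|_U$ through the covering $\SL(2, \C) \rightarrow W_C$ is the image of $\pi_1(U) \cong \Z$ in $\Gamma_C$; replacing $U$ by its cyclic $m$-fold cover $q \mc \tilde{U} \rightarrow U$, $w \mapsto w^m$, for the appropriate $m$ dividing $|\Gamma_C|$, this obstruction vanishes and I obtain a holomorphic lift $A \mc \tilde{U} \rightarrow \SL(2, \C)$ with $A(w) \cdot C = u(w^m)$ for all $w$. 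Because $d \geq 3$ the configuration $C$ contains at least three distinct points; taking homogeneous coordinate vectors $e_0, e_1, e_2 \in \C^2$ for three of them, any two of the $e_i$ are linearly independent, and for each $i$ the class $[A(w) \cdot e_i]$ converges in $\P V$ as $w \to 0$, since it is one of the $d$ points of the configuration $u(w^m)$ and $u$ is continuous at $0$. Lemma~\ref{labMatLim}\ref{EndLimitm2}, applied with $E = V$ (so $n = 2$), then shows that the matrix entries of $A$ are meromorphic at $0$: say $A(w) = w^{-N} B(w)$ with $B$ holomorphic near $0$ and $B(0) \neq 0$; taking determinants, $\det B(w) = w^{2N}$, so $B(0)$ has rank exactly $1$ whenever $N \geq 1$.

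To finish, I reflect: $\tau(u(w^m)) = [A(w)^\ddag \cdot C] = [J_0 \conj{B(w)} J_0^{-1} \cdot C]$, since the scalar $\conj{w}^{-N}$ is irrelevant in $\P S^d V$. The matrix $P(w) \coloneqq J_0 \conj{B(w)} J_0^{-1}$ tends to $P(0) \neq 0$ as $w \to 0$, so $[P(w) \cdot C]$ tends to $[P(0) \cdot C]$ provided $P(0) \cdot C \neq 0$ — in particular whenever $N = 0$, when $P(0)$ is invertible. The only remaining case is that $P(0)$ has rank $1$ and $\ker P(0)$, a single point $[\ell_0]$ of $\P V$, is one of the $d$ points of $C$; factoring $P(w) \cdot C$ as $(P(w) \ell_0)$ times the product of the other $d-1$ factors $P(w)\ell_j$ and Taylor-expanding $B(w) k = w^r(v_0 + O(w))$ ($v_0 \neq 0$) along a generator $k$ of $\ker B(0)$, one finds $P(w)\ell_0 = \conj{w}^{\,r}(J_0 \conj{v_0} + O(\conj{w}))$ up to a nonzero constant, so after discarding the scalar $\conj{w}^{\,r}$ the class $[P(w) \cdot C]$ again converges in $\P S^d V$; the limit lies in the closed variety $X_C$ because the nearby points $\tau(u(w^m))$ do (they lie in $W_C$). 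Hence $\tau \circ u \circ q$ extends continuously over the puncture, and as $q$ is a proper surjection with $q(0) = 0$, so does $\tau \circ u$ over $0 \in U$; being antiholomorphic off $0$, the extension is antiholomorphic. For the concluding sentence: if $u \mc (D, \pd D) \rightarrow (X_C, L_C)$ is a holomorphic disc then $u^{-1}(Y_C)$ is a finite subset of the interior of $D$ (the boundary lands in $L_C \subset W_C$), $\tau \circ u$ is defined and extends antiholomorphically across each such point by the above, it agrees with $u$ on $\pd D$ as $\tau|_{L_C} = \id$, and gluing $u$ to $z \mapsto \tau(u(1/\conj{z}))$ along $\pd D$ produces the desired holomorphic sphere.

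I anticipate the main obstacle to be precisely the degenerate case above, in which $\conj{B(w)} \cdot C$ vanishes in the limit because $\ker P(0)$ meets the configuration: one must pin down the exact order of vanishing to extract a genuine limit in $\P S^d V$, and then argue (as indicated) that this limit lands in $X_C$ and not merely in the ambient projective space. The passage to and from the cyclic cover $q$ is a minor but real point, since $u|_U$ generically does not lift holomorphically to $\SL(2, \C)$ on the nose.
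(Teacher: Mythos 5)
Your proof is correct and follows essentially the same route as the paper: lift to $\SL(2, \C)$ after passing to a finite cyclic cover, apply Lemma~\ref{labMatLim} to control the lift at the puncture, and reflect via $A^\ddag = J_0 \conj{A} J_0^{-1}$. The only difference is cosmetic: the paper extracts the limit vertex-by-vertex in $\P V \cong \C\P^1$ (cancelling the common power of $\conj{z}$ from the two homogeneous coordinates of each $[B(z)^\ddag \cdot w]$ for $w \in C$, then using $\P S^d V \cong \Sym^d \C\P^1$), whereas you cancel a single power of $\conj{w}$ from the whole product $P(w) \cdot C$ after isolating the unique degenerating factor --- the same computation packaged differently.
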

\begin{proof}  Since the map $\SL(2, C) \rightarrow W_C$, $A \mapsto A \cdot C$, is a covering map, we can lift $u$ on simply connected open sets to $\SL(2, \C)$.  Lifting along a path in $U$ which encircles $0$ we may pick up some non-trivial monodromy, but since $\Gamma_C$ is finite this monodromy has finite order, $N$ say.  Defining $v = u \circ (z \mapsto z^N)$, we thus see that $v$ lifts to a map $A: U' \rightarrow \SL(2, \C)$ on some small punctured neighbourhood $U'$ of $0$.  Clearly it is enough to show that $\tau \circ v$ extends continuously over $0$.  By the definition of $\tau$, we have that $\tau \circ v$ is given by $z \mapsto A(z)^\ddag \cdot C$.  We thus need to show that ${A(z)}^\ddag \cdot C$ tends to some limit (in $X_C$, or equivalently in $\P S^d V$) as $z \rightarrow 0$.

Now, since $u(z)$ tends to a limit in $X_C$ as $z \rightarrow 0$, if we pick three distinct points $w_0, w_1, w_2 \in C \subset \C\P^1$ then for each $j$ there exists $\hat{w}_j \in \C\P^1$ with $A(z) \cdot w_j \rightarrow \hat{w}_j$ as $z \rightarrow 0$.  Letting $E = \C^2$, and picking lifts $e_0$, $e_1$ and $e_2$ of $w_0$, $w_1$ and $w_2$ to $E$, we can apply \protect \MakeUppercase {L}emma\nobreakspace \ref {labMatLim}\ref{EndLimitm1}, noting that the linear independence hypothesis holds since the $w_i$ are distinct.  The conclusion is that there exists a holomorphic $\kappa \mc U \rightarrow \C^*$ such that $B \coloneqq \kappa A$ extends over $0$.

We then have for all $z \in U'$ and all $w \in C$ that
\[
[{A(z)}^\ddag \cdot w] = [{B(z)}^\ddag \cdot w]
\]
in $\C\P^1$, and the homogeneous coordinates of the right-hand side are antiholomorphic functions of $z$ which never both vanish and which extend over $0$.  Cancelling off $\conj{z}^m$ from both coordinates, where $m$ is the minimum of their vanishing orders at $z=0$ (which may be $0$), we see that there is a well-defined limit in $\C\P^1$ as $z \rightarrow 0$.  Since this holds for all $w \in C$ we're done: $v$, and hence, $u$ extends continuously over $0$.

Now suppose $u$ is a holomorphic disc $(D, \pd D) \rightarrow (X_C, L_C)$, and let
\[
P = u^{-1}(Y_C) \subset D \setminus \pd D.
\]
Note that $P$ is discrete and hence finite.  By the standard Schwarz reflection argument, if $c \mc \C\P^1 \rightarrow \C\P^1$ denotes $z \mapsto 1/\conj{z}$ then we can extend $u$ to a holomorphic map $\double{u} \mc \C \P^1 \setminus c(P) \rightarrow X_C$ by defining
\[
\double{u}(z) = \begin{cases} u(z) & \text{if } z \in D \\ \tau \circ u \circ c(z) & \text{if } z \in c(D \setminus P).
\end{cases}
\]
The only question now is whether $\double{u}$ extends holomorphically (or, equivalently, continuously) over $c(P)$.  But this is precisely what we just showed.  Hence the disc extends to a sphere as claimed.
\end{proof}

To study holomorphic discs bounded by $L_C$, we can therefore now restrict our attention to holomorphic spheres with equator on $L_C$.  This is extremely useful as holomorphic maps from $\C \P^1$ into $X_C$ are necessarily algebraic (pull back $\mathcal{O}_{\C\P^d}(1)$ from $X_C$ and use the fact that holomorphic line bundles on $\C \P^1$ are all of the form $\mathcal{O}_{\C\P^1}(m)$, for $m \in \Z$, and thus are algebraic).  We shall frequently use the notation $\double{u}$ for the completion of a disc $u$ to a sphere, without explicit warning.  Following Fukaya et al.~\cite{FOOOinv} and Haug \cite{Ha}, we will refer to this sphere as the \emph{double} of $u$.

Note that in the proof of \protect \MakeUppercase {C}orollary\nobreakspace \ref {labExt} it is important that we can use the finiteness of the order of the monodromy to lift the map $u$ to $\SL(2, \C)$ (after composing with an appropriate $z \mapsto z^N$) on a whole punctured neighbourhood of $0$---there exist holomorphic maps $A \mc \C \setminus \R_{\geq 0} \rightarrow \SL(2, \C)$, for example, such that $A(z) \cdot w \rightarrow 0$ as $z \rightarrow 0$ for $w$ equal to $0$, $1$ or $\infty$, but with
\[
{A(z)}^\ddag \cdot 1
\]
\emph{not} tending to any limit as $z \rightarrow 0$.  An example of such a map is given by
\[
A(e^z) = \frac{e^{-iz}}{\sqrt{2} z} \begin{pmatrix} e^{2iz} && -e^{2iz} \\ z^2 + 1 && z^2 - 1 \end{pmatrix},
\]
with $\im{z}$ taken in $(0, 2 \pi)$.

\subsection{Poles}
\label{sscPolesI}

We have already seen several examples of the importance of the intersections of a holomorphic disc $u \mc (D, \pd D) \rightarrow (X_C, L_C)$ with the compactification divisor $Y_C$.  We call such points \emph{poles of $u$}; in analogy with the study of meromorphic functions, the term will be used quite loosely to refer to both the position of such points (in $D$) and to various aspects of the local behaviour of $u$ there.  We can similarly speak of the poles of the double $\double{u}$ of $u$, which occur precisely at the poles of $u$ and their reflections across $\pd D$, or indeed of any holomorphic map from a Riemann surface to $X_C$ (as long as no component of the map is contained in $Y_C$).

In this subsection we study these poles systematically, developing the analogy with meromorphic functions.  Of course $\C \P^1$ can be viewed as $\C \cup \{\infty\}$, carrying an obvious action of the additive group $\C$ with dense open orbit compactified by the divisor $\{\infty\}$.  A meromorphic function $f$ on a Riemann surface $\Sigma$ corresponds to a holomorphic map $\Sigma \rightarrow \C \P^1$ and the poles of $f$ as a function are then precisely the intersections of the corresponding map with the compactification divisor, so in this sense our new definition extends the existing one.

We begin the discussion proper with the key definitions:

\begin{defn}  A \emph{pole germ} is the germ (at $0$) of a holomorphic map $u$, from an open neighbourhood of $0$ in $\C$ to $X_C$, such that $u^{-1}(Y_C)$ contains $0$ as an isolated point.  More generally, for a Riemann surface $\Sigma$ and a point $a \in \Sigma$, one can speak of a \emph{pole germ at $a$}.  If we don't specify `at $a$' then we are implicitly working at $0$ in $\C$.  We define an equivalence relation on pole germs at $a$ by $u_1 \sim u_2$ if and only if there exists a germ of  holomorphic map $A$, from a neighbourhood of $a$ in $\Sigma$ to $\SL(2, \C)$, such that $u_2 = A \cdot u_1$, and the \emph{principal part} of a pole germ $u$ is its equivalence class $[u]_a$ under this relation.

We say a pole germ $u$ is of \emph{type $\xi \in \mathfrak{su}(2)$} and \emph{order $k \in \Z_{\geq 1}$} if its principal part is
\[
[z \mapsto e^{-ik \xi \log z} \cdot C]_0,
\]
and $\xi$ is scaled so that $\{t \in \R : e^{2 \pi \xi t} \in \Gamma_C\} = \Z$.  We say that $u$ is \emph{quasi-axial} if it is of type $\xi$ and order $k$ for some $\xi$ and $k$.  The index $\mu_a(u)$ of a pole germ $u$ at $a$ is defined to be twice the intersection multiplicity of $u$ with $Y_C$ at $a$.
\end{defn}

A priori the notion of being of type $\xi$ and order $k$ only makes sense for pole germs at $0$ in $\C$, or after fixing a local coordinate about the base point $a$ if working on an arbitrary Riemann surface $\Sigma$, but we will show in \protect \MakeUppercase {L}emma\nobreakspace \ref {labRigQA} that in fact it is independent of such a choice of coordinate.  Note that if $u$ is a quasi-axial pole germ of type $\xi_1$ then it is also of type $\xi_2$ whenever $\xi_1$ and $\xi_2$ are conjugate by an element of $\Gamma_C$.  \protect \MakeUppercase {L}emma\nobreakspace \ref {labRigQA} also shows that the converse holds, i.e.~if $u$ is of types $\xi_1$ and $\xi_2$ then the $\xi_i$ are conjugate by an element of $\Gamma_C$.

Clearly if $u \mc \Sigma \rightarrow X_C$ is a holomorphic map from a Riemann surface $\Sigma$, with a pole at $a \in \Sigma$ (i.e.~$a$ is an isolated point of $u^{-1}(Y_C)$), then $u$ defines a pole germ at $a$.  We can therefore apply the terms defined for pole germs at $a$ to poles of actual maps $u$, as opposed to just germs.  For example, we can say that the Maslov index of a holomorphic disc is the sum of the indices of its poles.

Next we prove a simple lemma:

\begin{lem} \label{labPolDeg}  The index of a pole germ $u$ at a point $a$ in a Riemann surface $\Sigma$ is determined by its principal part $[u]_a$.
\end{lem}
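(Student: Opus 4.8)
The plan is to reduce the statement to elementary facts about orders of vanishing of holomorphic functions. Since $X_C$ is smooth and $Y_C$ is a reduced hypersurface (indeed an irreducible one, being the closure of the single $\SL(2,\C)$-orbit $Y_C \setminus N_C$), the divisor $Y_C$ is locally principal: near any point of $X_C$ it is cut out by a single holomorphic function $g$, unique up to multiplication by a local unit. One concrete choice of $g$ is obtained by trivialising $\Lambda_\C^3 TX_C$ locally and reading off $\sigma$, using Lemma~\ref{labsigmaOrd} together with the irreducibility of $Y_C$. For a pole germ $u$ at $a$ the map $u$ is not (locally) contained in $Y_C$, so $g \circ u \not\equiv 0$, and the intersection multiplicity of $u$ with $Y_C$ at $a$ is $\operatorname{ord}_a(g \circ u)$, which does not depend on the choice of $g$; the index $\mu_a(u)$ is twice this number. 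Thus it suffices to show that $\operatorname{ord}_a(g \circ u)$ is unchanged if we replace $u$ by $A \cdot u$ for a germ of holomorphic map $A \mc (\Sigma, a) \rightarrow \SL(2,\C)$.

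First I would package the replacement as a holomorphic map: because the $\SL(2,\C)$-action on $X_C$ is holomorphic, $\Phi \mc (z,x) \mapsto A(z) \cdot x$ defines a holomorphic map from a neighbourhood of $(a, u(a))$ in $\Sigma \times X_C$ to $X_C$, with $(A \cdot u)(z) = \Phi(z, u(z))$. Fix a local equation $g_2$ for $Y_C$ near $A(a) \cdot u(a)$ and a local equation $g_1$ for $Y_C$ near $u(a)$, and set $G(z,x) \coloneqq g_2(\Phi(z,x))$. For each fixed $z$ the map $x \mapsto \Phi(z,x)$ is the restriction of the automorphism of $X_C$ given by $A(z)$, which preserves $Y_C$ setwise, so $G(z, \cdot)$ vanishes on $Y_C$ near $u(a)$; hence $G$ vanishes on the germ at $(a, u(a))$ of $\Sigma \times Y_C$, which is the reduced hypersurface cut out there by $g_1(x)$, and therefore $G = h \cdot g_1$ for some holomorphic germ $h$. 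Moreover $G(a, \cdot)$ is the function $x \mapsto g_2(A(a) \cdot x)$, which is itself a reduced local equation for $Y_C$ near $u(a)$ (as $A(a)$ is an automorphism carrying $Y_C$ to $Y_C$), so it differs from $g_1$ by a unit; this forces $h(a, u(a)) \neq 0$, and hence $h$ is a unit near $(a, u(a))$. Substituting $x = u(z)$ gives $g_2\bigl((A \cdot u)(z)\bigr) = h(z, u(z))\, g_1(u(z))$, where $z \mapsto h(z, u(z))$ is non-vanishing at $a$, so $\operatorname{ord}_a(g_2 \circ (A \cdot u)) = \operatorname{ord}_a(g_1 \circ u)$, which is exactly what is needed.

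The only slightly delicate point is the factorisation $G = h \cdot g_1$ with $h$ a unit. It rests on $X_C$ being smooth (so that $Y_C$, and likewise $\Sigma \times Y_C$, is a Cartier divisor, cut out by a single equation even near the singular locus $N_C$) together with reducedness of $Y_C$ (so that any holomorphic function vanishing on $Y_C$ is divisible by a reduced local equation, and two reduced local equations differ by a unit); the fact that $g_1$ remains reduced as a function of $(z,x)$ on $\Sigma \times X_C$ follows by restricting to $z = a$. Everything else is routine bookkeeping with vanishing orders and the $\SL(2,\C)$-invariance of $Y_C$.
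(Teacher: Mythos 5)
Your argument is correct, but it is genuinely different from the one in the paper. The paper's proof is a single explicit computation: writing the discriminant $F$ as the square of a Vandermonde determinant in the linear factors of a configuration, one reads off the transformation law $F \circ (A \cdot u) = \det\big(\rho(A)\big)^2\, F \circ u$ with $\rho = S^{d-1}V$, and since $\det \rho(A(z))$ is a nowhere-vanishing holomorphic function the vanishing orders of $F \circ u$ and $F \circ (A \cdot u)$ coincide. You instead prove the general local-analytic statement that intersection multiplicity with a Cartier divisor is invariant under a holomorphic family of automorphisms preserving that divisor: you pick a reduced local equation $g$ for $Y_C$ (using smoothness of $X_C$, irreducibility of $Y_C$, and \protect \MakeUppercase{L}emma\nobreakspace \ref{labsigmaOrd} to see $\sigma$ provides one), and show $g_2(A(z)\cdot x) = h(z,x)\,g_1(x)$ with $h$ a local unit. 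The delicate points you flag -- that $g_1$ stays square-free in $\mathcal{O}_{\Sigma \times X_C}$ (your restriction-to-$z=a$ argument works, since a germ is a non-unit at $(a, u(a))$ if and only if its restriction to $z=a$ is a non-unit at $u(a)$), and that the ideal of functions vanishing on $\{g_1 = 0\}$ is $(g_1)$ -- are handled correctly, though the latter implicitly invokes the R\"uckert Nullstellensatz and the UFD property of the local ring. The trade-off: the paper's route is shorter and entirely explicit but tied to the discriminant description of $Y_C$; yours is more machinery-heavy but applies verbatim to any reduced invariant divisor in any complex manifold with a holomorphic group action, which is in the spirit of the generalisations the paper alludes to.
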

\begin{proof}  Suppose $A$ is a holomorphic map from an open neighbourhood of $a$ in $\Sigma$ to $\SL(2, \C)$.  We want to show that $\mu_a(u)=\mu_a(A\cdot u)$.  By taking a local coordinate about $a$ we may assume that we are working at $0$ in $\C$.

Recall that the divisor $Y_C$ is defined by the vanishing (to order $k_C$) of the discriminant $F$ in $X_C$.  For a point $
[(u_1 x + v_1 y) \dots (u_d x + v_d y)] \in X_C$ we have
\[
F\big([(u_1 x + v_1 y) \dots (u_d x + v_d y)]\big) = \begin{vmatrix} u_1^{d-1} & u_2^{d-1} & \dots & u_d^{d-1} \\ u_1^{d-2}v_1 & u_2^{d-2}v_2 & \dots & u_d^{d-2}v_d \\ \vdots & \vdots & \ddots & \vdots \\ v_1^{d-1} & v_2^{d-1} & \dots & v_d^{d-1} \end{vmatrix}^2,
\]
by the Vandermonde determinant, so if $\rho \mc \SL(2, \C) \rightarrow \mathrm{GL}(d, \C)$ denotes the representation $S^{d-1} V$ (which describes the action on the columns of the above matrix) then we have
\[
F \circ (A \cdot u) = \det\big( \rho (A) \big)^2 F \circ u.
\]
Hence $F \circ (A \cdot u)$ and $F \circ u$ vanish to the same order at $0$.
\end{proof}

\begin{ex} \label{labAxPol}  As an illustration, recall the axial spheres $u_v$, $u_e$ and $u_f$ defined in Section\nobreakspace \ref {sscCompDiv}.  Their poles at $0$ are of type $\xi_v$, $\xi_e$ and $\xi_f$ respectively, and order $1$.  For $C$ equal to $O$ or $I$, the poles at $\infty$ are of the same type and order.  For $C=\tri$, the poles at $\infty$ are of type $\xi_e$, $\xi_v$, $\xi_f$ respectively (all of order $1$), since a vertex of the triangle is opposite the mid-point of an edge whilst the two faces are `opposite' each other.  Similarly, for $C=T$ they are of type $\xi_f$, $\xi_e$, $\xi_v$ (and order $1$), since a vertex of the tetrahedron is opposite the centre of a face whilst mid-points of edges are opposite each other.  By \protect \MakeUppercase {L}emma\nobreakspace \ref {labPolDeg} the index of a quasi-axial pole is determined by its type and order, so from \protect \MakeUppercase {L}emma\nobreakspace \ref {labAxInds} we see that poles of type $\xi_v$, $\xi_e$, $\xi_f$ and $\xi_g$ of order $1$ have indices $2$, $6$, $4$ and $12$ respectively.  A pole of type $\xi$ and order $k$ is equivalent to a $k$-fold cover of a pole of type $\xi$ and order $1$ so its index is $k$ times the index of the order $1$ pole.
\end{ex}

For a positive integer $N$, let $\psi_N$ denote the map $z \mapsto z^N$ or its germ at $0$.  If $u_1$ and $u_2$ are two pole germs with the same principal part (i.e. $u_1 \sim u_2$) then it is clear that for all positive integers $N$ we have $u_1 \circ \psi_N \sim u_2 \circ \psi_N$.  A converse is also true, which allows us to lift questions about principal parts to multiple covers:

\begin{lem} \label{labPolComp}  If $u_1$ and $u_2$ are pole germs such that for some positive integer $N$ we have $u_1 \circ \psi_N \sim u_2 \circ \psi_N$, then $u_1 \sim u_2$.  (Clearly a similar result is valid for pole germs at arbitrary points $a$, if $\psi_N$ is replaced by an appropriate local $N$-fold cover.)
\end{lem}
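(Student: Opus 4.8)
The plan is to show that the $\SL(2,\C)$-valued gauge transformation witnessing $u_1\circ\psi_N\sim u_2\circ\psi_N$ can be chosen invariant under the deck group of $\psi_N$, so that it descends to a gauge transformation relating $u_1$ and $u_2$ themselves. This will avoid having to lift anything to $\SL(2,\C)$ or to keep track of the monodromy of the $u_i$.

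Concretely, I would start from a germ of holomorphic map $B$, from a connected neighbourhood $U$ of $0$ in $\C$ to $\SL(2,\C)$, with $u_2(z^N)=B(z)\cdot u_1(z^N)$ for $z\in U$; after shrinking $U$ we may assume $u_1(z^N)\in W_C$ for all $z\in U\setminus\{0\}$, since $u_1$ is a pole germ. Writing $\zeta=e^{2\pi i/N}$ for a generator of the deck group, the invariance $u_i((\zeta z)^N)=u_i(z^N)$ gives $u_2(z^N)=B(\zeta z)\cdot u_1(z^N)$ as well, so $g(z):=B(\zeta z)^{-1}B(z)$ fixes the point $u_1(z^N)$ and therefore lies in its stabiliser, which for $z\neq 0$ is the finite group $A\Gamma_C A^{-1}$ (where $u_1(z^N)=A\cdot C$). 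The crux is then to show that $g\equiv I$: since the trace is conjugation-invariant, $\operatorname{tr} g(z)$ takes values in the finite set $\{\operatorname{tr}\gamma:\gamma\in\Gamma_C\}$ for $z\neq 0$, so being holomorphic on the connected set $U$ it is constant, equal to $\operatorname{tr} g(0)=2$; together with $\det g\equiv 1$ this forces the characteristic polynomial of $g(z)$ to be $(t-1)^2$, i.e.\ $g(z)$ is unipotent. But a unipotent element of finite order in $\SL(2,\C)$ is the identity, so $g(z)=I$ for $z\neq 0$, and hence $g\equiv I$ on $U$ by continuity.

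Once $B(\zeta z)=B(z)$, it is routine that $B$ is a holomorphic function of $z^N$, say $B=A\circ\psi_N$ with $A$ a germ of holomorphic map into $\SL(2,\C)$, and substituting back gives $u_2(w)=A(w)\cdot u_1(w)$ near $w=0$, i.e.\ $u_1\sim u_2$. The parenthetical generalisation to pole germs at an arbitrary point $a$ of a Riemann surface follows verbatim after choosing a local holomorphic coordinate centred at $a$ and replacing $\psi_N$ by $z\mapsto z^N$ in that coordinate. I expect the only genuine obstacle to be the identification of $g$ with the identity; the one subtlety there is that the step ``finite image $+$ holomorphic on a connected domain $\Rightarrow$ constant'' applied to $\operatorname{tr} g$ relies on having first arranged that $u_1(z^N)$ stays in the open orbit $W_C$ on the punctured neighbourhood.
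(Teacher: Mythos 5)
Your proof is correct, and it reaches the same punchline as the paper's argument---the deck-transformation defect of the gauge transformation is a finite-order element of $\SL(2,\C)$ with characteristic polynomial $(t-1)^2$, hence the identity---but by a noticeably leaner route. The paper first enlarges $N$ so that $u_1\circ\psi_N$ and $u_2\circ\psi_N$ lift to maps $B_1,B_2$ into $\SL(2,\C)$ on a punctured neighbourhood, normalises so that $B_2=AB_1$, records the monodromies $B_i(\zeta z)=B_i(z)D_i$ with $D_i\in\Gamma_C$, computes $A(z)^{-1}A(\zeta z)=B_1(z)D_2D_1^{-1}B_1(z)^{-1}$, and then lets $z\to 0$ in the characteristic polynomial to conclude $D_2D_1^{-1}=I$; it even flags, with an explicit counterexample, that diagonalisability of $D_2D_1^{-1}$ (your ``finite order'' hypothesis) is exactly the step that cannot be dropped. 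You bypass the lifts and the monodromy bookkeeping entirely by observing that $g(z)=B(\zeta z)^{-1}B(z)$ lies in the stabiliser of $u_1(z^N)$, which is a conjugate of the finite group $\Gamma_C$ once $u_1(z^N)$ is in the open orbit $W_C$, and you replace the paper's limiting argument with the observation that the holomorphic function $\operatorname{tr}g$ has finite image and is therefore constant, pinned to $2$ by $g(0)=I$. What this buys is economy: there is no need to pass to a further cover to make the lifts exist, and no need to know that the conjugating element is independent of $z$. You have also correctly isolated the two places where care is required---shrinking so that $u_1(z^N)$ stays in $W_C$ (so the stabiliser really is finite), and the finite-order step that kills the unipotent possibility---so I see no gap.
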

\begin{proof}  Replacing $N$ by a multiple if necessary, we may assume that away from $0$ the pole germs $u_1 \circ \psi_N$ and $u_2 \circ \psi_N$ lift to maps $B_1$ and $B_2$ from a punctured neighbourhood of $0$ to $\SL(2, \C)$.  Since $u_1 \circ \psi_N \sim u_2 \circ \psi_N$ there exists a map $A$ from a (non-punctured) neighbourhood of $0$ to $\SL(2, \C)$ such that $B_2^{-1}AB_1 \in \Gamma_C$ (on a small punctured neighbourhood of $0$).  If we can show that $A(z)$ is invariant under $z \mapsto e^{2 \pi i / N} z$ then we have that $A = \tilde{A} \circ \psi_N$ for some holomorphic map $\tilde{A}$, and that $u_2 = \tilde{A} \cdot u_1$, so $u_1 \sim u_2$.

Well, since $\Gamma_C$ is discrete, there exists $M \in \Gamma_C$ such that $B_2=AB_1M$ near $0$; replacing $B_1$ by $B_1M$ we may assume that $M$ is the identity.  By the construction of $B_1$ and $B_2$ as lifts of an $N$-fold cover, there exist $D_1, D_2 \in \Gamma_C$ such that $B_i (\zeta z)=B_i(z) D_i$ for all $z$ in a punctured neighbourhood of $0$, where $\zeta \coloneqq e^{2 \pi i / N}$.  We then have
\[
A(z)=B_2(z)B_1(z)^{-1} \text{ and } A(\zeta z)=B_2(z)D_2D_1^{-1}B_1(z)^{-1}
\]
so
\[
A(z)^{-1}A(\zeta z)=B_1(z)D_2D_1^{-1}B_1(z)^{-1}
\]
on a punctured neighbourhood of $0$.  Taking characteristic polynomials and letting $z \rightarrow 0$, we see that $D_2D_1^{-1}$ has characteristic polynomial $(T-1)^2$.  We also know that $D_2D_1^{-1}$ is diagonalisable, since it lies in $\Gamma_C \subset \SU(2)$, so it must be the identity, $I$.  Hence $A(z)^{-1}A(\zeta z)=I$ on a punctured neighbourhood of $0$, and thus $A(z)$ is invariant under $z \mapsto \zeta z$, as required.
\end{proof}

In light of this result and \protect \MakeUppercase {L}emma\nobreakspace \ref {labMatLim}\ref{EndLimitm2}, we can reduce the study of poles to that of meromorphic maps to $\SL(2, \C)$ with poles in the ordinary sense.  We briefly remark that it is important that $D_2D_1^{-1}$ is diagonalisable in the last step of the above proof.  Otherwise we could have, say,
\[
D_2D_1^{-1} = \begin{pmatrix} 1 & 1 \\ 0 & 1 \end{pmatrix} \text{ and } B_1 = \begin{pmatrix} z & 0 \\ 0 & 1/z \end{pmatrix}.
\]
Then $B_1D_2D_1^{-1}B_1^{-1} \rightarrow I$ as $z \rightarrow 0$ but clearly $D_2D_1^{-1} \neq I$.

We now characterise the simplest type of pole.

\begin{lem} \label{labVertPol}  A pole germ $u$ is of type $\xi_v$ if and only if $u(0) \in Y_C \setminus N_C$.  In this case, the order of $u$ is $\mu_0(u)/2$.
\end{lem}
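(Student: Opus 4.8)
The plan is to prove the two implications separately and to read off the order formula from the forward one: once we know $u$ has type $\xi_v$ and order $k$, Example~\ref{labAxPol} (via Lemma~\ref{labPolDeg}) already tells us that $\mu_0(u)=2k$, so $k=\mu_0(u)/2$. For the implication ``type $\xi_v$ $\Rightarrow$ $u(0)\in Y_C\setminus N_C$'' I would argue directly from the definitions: the principal part of $u$ is $[z\mapsto e^{-ik\xi_v\log z}\cdot C]_0$, which is represented by the $k$-fold cover $u_v\circ\psi_k$ of the axial sphere of Section~\ref{sscCompDiv}, so $u=A\cdot(u_v\circ\psi_k)$ for some germ $A$ of holomorphic map into $\SL(2,\C)$; letting $z\to0$ gives $u(0)=A(0)\cdot u_v(0)$, and $u_v(0)$ lies in $Y_C\setminus N_C$ (a $(d-1)$-fold point at the bottom of the axis of $\xi_v$ and a simple point at the top, as recalled in Section~\ref{sscCompDiv}). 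Since $\SL(2,\C)$ preserves $Y_C$ and $N_C$, hence $Y_C\setminus N_C$, this direction is immediate.

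For the converse, assume $u(0)\in Y_C\setminus N_C$. First I would pass to a cover, exactly as in the proof of Corollary~\ref{labExt}: after composing with $\psi_{n_0}$, where $n_0\geq1$ is the finite order of the monodromy of the lift of $u$ to $\SL(2,\C)$, the map $v\coloneqq u\circ\psi_{n_0}$ can be written as $v=A\cdot C$ on a punctured neighbourhood of $0$ for a holomorphic $A$ into $\SL(2,\C)$; since (as noted there) each vertex trajectory $z\mapsto A(z)\cdot w$ has a limit in $\C\P^1$, Lemma~\ref{labMatLim}\ref{EndLimitm2} shows the matrix entries of $A$ are meromorphic at $0$. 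Now $v(0)\in Y_C\setminus N_C$ has exactly one vertex trajectory, that of some $v_0\in C$, whose limit differs from the common limit of the other $d-1$; conjugating by an element of $\SU(2)$ (which rotates $C$, replaces $A$ by $hAh^{-1}$, and replaces the reference generator $\xi_v$ by the rotation generator about the new position of $v_0$) I may assume that $v_0$ sits at $\infty\in\C\P^1$, so $C$ now has a vertex at $\infty$ and $\xi_v$ generates rotation about $\infty$.

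The heart of the argument is then to normalise $A$ by a sequence of \emph{holomorphic} $\SL(2,\C)$-valued gauge transformations (none of which changes the principal part of $v$): replace $A$ by $g_1A$ so that $A(z)\cdot\infty\to\infty$; then straighten the holomorphic arc $z\mapsto A(z)\cdot\infty$ to be constantly $\infty$, which forces $A=\lb\begin{smallmatrix}t&s\\0&t^{-1}\end{smallmatrix}\rb$ with $t,s$ meromorphic; observe that the images $t^2\lambda_i+ts$ of the finite vertices $\lambda_i$ of $C$ tend to a common finite limit, which forces $t$ to vanish to some order $\ell\geq1$ at $0$ and $ts$ to be holomorphic; and finally apply the (manifestly holomorphic, unit-determinant) gauge $\lb\begin{smallmatrix}z^\ell/t & -z^\ell s\\0 & t/z^\ell\end{smallmatrix}\rb$, which turns $A$ into $\mathrm{diag}(z^\ell,z^{-\ell})$. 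Hence $v$ has the same principal part as $z\mapsto\mathrm{diag}(z^\ell,z^{-\ell})\cdot C=e^{-i(2\ell r_C)\xi_v\log z}\cdot C$, i.e.\ $v=u\circ\psi_{n_0}$ is of type $\xi_v$ and order $2\ell r_C$. To descend back to $u$, write $\mu_0(u)=2k$ with $k\in\Z_{\geq1}$ (it is twice the intersection multiplicity); since $\mu_0$ is multiplied by $n_0$ under an $n_0$-fold cover and the type $\xi_v$, order $2\ell r_C$ pole has index $2\cdot2\ell r_C$ by Example~\ref{labAxPol}, we get $n_0k=2\ell r_C$, so $z\mapsto e^{-i(2\ell r_C)\xi_v\log z}\cdot C$ equals $(z\mapsto e^{-ik\xi_v\log z}\cdot C)\circ\psi_{n_0}$. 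Thus $u\circ\psi_{n_0}$ and $(z\mapsto e^{-ik\xi_v\log z}\cdot C)\circ\psi_{n_0}$ have the same principal part, and Lemma~\ref{labPolComp} removes the cover: $u$ is of type $\xi_v$ and order $k=\mu_0(u)/2$.

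I expect the forward implication to be the only real content. Within it the two delicate points are: (a) checking that every gauge transformation used is genuinely holomorphic (not merely meromorphic), so that the principal part really is unchanged at each step, and in particular that the final straightening matrix has holomorphic entries, which relies on the order estimates extracted from the clustering of the finite vertices; and (b) making the descent along $\psi_{n_0}$ legitimate, which works only because the bookkeeping of Example~\ref{labAxPol} together with the multiplicativity of $\mu_0$ under covers forces $2\ell r_C$ to be the multiple $n_0k$ of $n_0$ — exactly the hypothesis needed to invoke Lemma~\ref{labPolComp}.
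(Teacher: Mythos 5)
Your proposal is correct and follows essentially the same route as the paper: the forward implication by inspecting the limit configuration, and the converse by passing to a finite cover, lifting to $\SL(2,\C)$, using Lemma~\ref{labMatLim}\ref{EndLimitm2} for meromorphy of the matrix entries, reducing the lift to $\mathrm{diag}(z^\ell,z^{-\ell})$ up to a holomorphic left factor, and descending via Lemma~\ref{labPolComp} with the index bookkeeping of Example~\ref{labAxPol}. The only difference is cosmetic: you reach the diagonal normal form by successive explicit holomorphic gauge transformations, whereas the paper gets there by a direct vanishing-order analysis of the four entries of the (right-translated) lift using $ad-bc=1$; both hinge on the same clustering of the $d-1\geq 2$ finite vertex trajectories.
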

\begin{proof}  If $u$ is of type $\xi_v$ then it is easy to see that the limit configuration $u(0)$ consists of a $(d-1)$-fold point and a separate single point.  Hence $u(0) \in Y_C \setminus N_C$.  The statement about the order follows immediately from the comments at the end of \protect \MakeUppercase {E}xample\nobreakspace \ref {labAxPol}.

Conversely suppose that $u(0)$ is of this form.  By replacing $u$ by $A_0 \cdot u$ for a suitable $A_0 \in \SL(2, \C)$ (which doesn't change the principal part), we may assume that the $(d-1)$-fold point is at $0$, and the single point is at $\infty$.  For appropriate $N$ we can lift $u \circ \psi_N$ to a map $B$ from a punctured neighbourhood of $0$ to $\SL(2, \C)$.  Let $w_\infty \in C$ be the point with $B(z) \cdot w_\infty \rightarrow \infty$ as $z \rightarrow 0$, and let $R \in \SU(2)$ be a rotation sending $w_\infty$ to $\infty$.

Now consider the map $\tilde{B} \coloneqq BR^{-1}$.  This has the property that $\tilde{B}(z) \cdot \infty \rightarrow \infty$ as $z \rightarrow 0$, but for $d-1$ other points $p_1, \dots, p_{d-1} \in \C\P^1$ (namely the points of $(R\cdot C) \setminus \{\infty\}$) we have $\tilde{B}(z) \cdot p_i \rightarrow 0$.  Let
\[
\tilde{B}=\begin{pmatrix} a & b \\ c & d \end{pmatrix}.
\]
By \protect \MakeUppercase {L}emma\nobreakspace \ref {labMatLim}\ref{EndLimitm2}, the functions $a$, $b$, $c$ and $d$ are meromorphic over $0$.  From our knowledge of the limit behaviour, we have $c/a \rightarrow 0$ as $z \rightarrow 0$, and $(ap_i+b)/(cp_i+d) \rightarrow 0$ for $i=1, \dots, d-1$.

For a meromorphic function $f$ defined on a neighbourhood of $0$, let $o(f)$ denote the vanishing order of $f$ at $0$---this may be $\infty$, if $f$ is identically $0$, or negative if $f$ has a pole (this extends our earlier definition from the proof of \protect \MakeUppercase {L}emma\nobreakspace \ref {labIntNC}).  The statements about the limits above can be expressed as $o(c)>o(a)$ and $o(ap_i+b)>o(cp_i+d)$.  Note that for all $i$ we have $o(ap_i+b)\geq \min\{o(a), o(b)\}$, and for all but at most one $i$ we have equality; similarly for $o(cp_i+d)$.  As $d\geq3$, we can pick an index $j$ so that we have equality for $ap_j+b$, and then
\[
\min\{o(a), o(b)\} = o(ap_j+b) > o(cp_j+d) \geq \min\{o(c), o(d)\}.
\]
Since $o(c)>o(a)$, we must have $o(a), o(b)>o(d)$.

By considering $\det \tilde{B}$, we see that $ad-bc=1$.  And since $o(a)<o(c)$ and $o(d)<o(b)$, we have $o(ad)<o(bc)$.  Therefore $ad \rightarrow 1$ and $bc \rightarrow 0$ as $z \rightarrow 0$, so $o(a)=-o(d)$.  Since $o(d)<o(a)$, we must have $o(a)>0$.  Letting $\kappa=o(a)$ we see that $az^{-\kappa}$ and $dz^\kappa$ are holomorphic over $0$, as are $bz^\kappa$ and $cz^{-\kappa}$.  In other words
\[
\tilde{B}=A\begin{pmatrix} z^\kappa & 0 \\ 0 & z^{-\kappa} \end{pmatrix}
\]
for a holomorphic $\SL(2, \C)$-valued function $A$ (with entries $az^{-\kappa}$, $bz^\kappa$, $cz^{-\kappa}$, $dz^\kappa$), so
\[
u \circ \psi_N = B \cdot C = \tilde{B} R \cdot C \sim \begin{pmatrix} z^\kappa & 0 \\ 0 & z^{-\kappa} \end{pmatrix} R \cdot C \sim e^{-i (2r_C \kappa) \xi_v \log z} \cdot C.
\]
The final equivalence holds because the configuration $R \cdot C$ has a vertex at $\infty$ and as $z$ moves around the unit circle the matrix
\[
\begin{pmatrix} z^\kappa & 0 \\ 0 & z^{-\kappa} \end{pmatrix}
\]
sweeps out a rotation about this vertex through angle $4\pi$, i.e.~$2r_C$ times the smallest angle needed to bring $R \cdot C$ back to its initial position.

Taking indices of poles, we get $N \mu_0(u) = 4r_C\kappa$ so
\[
m \coloneqq 2r_C\kappa/N = \mu_0(u)/2
\]
is an integer (it's half of twice an intersection number).  We can therefore write
\[
u \circ \psi_N \sim \big(e^{-im \xi_v \log z} \cdot C\big)\circ \psi_N
\]
and deduce by \protect \MakeUppercase {L}emma\nobreakspace \ref {labPolComp} that $u$ is of type $\xi_v$ and order $m = \mu_0(u)/2$, as claimed.
\end{proof}

Clearly the value of $u(0)$ is independent of the choice of local coordinate about $0$ (as long as it is centred at $0$ of course), so the property of being of type $\xi_v$ is also independent of this choice; this gives the first hint at the rigidity of quasi-axial poles, which is explored further in the next result:

\begin{lem} \label{labRigQA}  Suppose $u$ is a pole germ of type $\xi$ and order $k$.
\begen
\item \label{Rigitm1} If $u$ is also of type $\hat{\xi}$ and order $\hat{k}$ then $\hat{k}=k$ and $\hat{\xi}$ is conjugate to $\xi$ by an element of $\Gamma_C$.
\item \label{Rigitm2} If $\phi$ is a holomorphic function defining a change of coordinates about $0$, with $\phi(0)=0$, then $u \circ \phi$ is also of type $\xi$ and order $k$.
\end{enumerate}
So given a pole germ at an arbitrary point on a Riemann surface, it makes sense to say that it is quasi-axial (by choosing a local coordinate).  The order of such a pole is uniquely defined, and its type is well-defined up to conjugation by $\Gamma_C$.  With this in place we can state:
\begen
\setcounter{enumi}{2}
\item \label{Rigitm3} Given a holomorphic disc $u \mc (D, \pd D) \rightarrow (X_C, L_C)$ with a pole at $a$ of type $\xi$ and order $k$, the corresponding pole of $\double{u}$ at $1/\conj{a}$ is of type $-\xi$ and order $k$.
\end{enumerate}
\end{lem}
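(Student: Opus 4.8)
The plan is to read off the principal part of $\double{u}$ near $1/\conj{a}$ directly from the algebraic description of $\tau$ in the proof of Proposition~\ref{labAntInv}. Write $c \mc z \mapsto 1/\conj{z}$, so that on a punctured neighbourhood of $1/\conj{a}$ we have $\double{u} = \tau \circ u \circ c$. By part~\ref{Rigitm2} the type and order of a pole germ do not depend on the choice of local coordinate, so it is enough to produce one convenient coordinate at $1/\conj{a}$ in which $\double{u}$ has principal part $[w \mapsto e^{-ik(-\xi)\log w}\cdot C]_0$. To this end fix a holomorphic coordinate $t$ centred at $a$, together with a germ of holomorphic map $A$ to $\SL(2,\C)$, such that $u = A \cdot \big(t \mapsto e^{-ik\xi\log t}\cdot C\big)$ (the definition of being of type $\xi$ and order $k$). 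Since $c$ and complex conjugation are each antiholomorphic, $s \coloneqq \overline{t \circ c}$ is a holomorphic coordinate centred at $1/\conj{a}$, and $t \circ c = \conj{s}$ by construction.

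Next I would invoke two facts from the proof of Proposition~\ref{labAntInv}: that $\tau(B \cdot C) = B^\ddag \cdot C$ for every $B \in \SL(2,\C)$, where $B^\ddag = (B^\dag)^{-1}$ is the conjugate-transpose-inverse; and that $\ddag$ is a group automorphism of $\SL(2,\C)$ which fixes $\SU(2)$ (hence $\Gamma_C$) pointwise. Applying the first with $B = A(\conj{s})\,e^{-ik\xi\log\conj{s}}$ gives, on a punctured neighbourhood of $0$,
\[
\double{u}(s) = A(\conj{s})^\ddag \cdot \big(e^{-ik\xi\log\conj{s}}\big)^\ddag \cdot C .
\]
The conclusion then rests on two elementary computations. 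First, $s \mapsto A(\conj{s})^\ddag$ is a holomorphic $\SL(2,\C)$-valued germ at $0$: the entries of $A(\conj{s})$ are antiholomorphic in $s$, so the coefficientwise conjugation contained in $\ddag$ returns holomorphic functions, and transposition and inversion preserve this (the latter because the value $A(0)^\ddag$ is invertible). Thus this factor has no effect on the principal part. Second, $(e^M)^\ddag = e^{-M^\dag}$ for trace-free $M$; taking $M = -ik\xi\log\conj{s}$ and using $\conj{\log\conj{s}} = \log s$ together with $\xi^\dag = -\xi$ (as $\xi \in \mathfrak{su}(2)$) yields $M^\dag = -ik\xi\log s$, whence $(e^{-ik\xi\log\conj{s}})^\ddag = e^{ik\xi\log s} = e^{-ik(-\xi)\log s}$.

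Putting these together, $\double{u}(s) = A(\conj{s})^\ddag \cdot e^{-ik(-\xi)\log s}\cdot C$ (and this extends holomorphically over $s = 0$, consistently with Corollary~\ref{labExt}), so the principal part of $\double{u}$ at $1/\conj{a}$ in the coordinate $s$ is exactly $[w \mapsto e^{-ik(-\xi)\log w}\cdot C]_0$; in particular $\double{u}$ genuinely has a pole there, the limit configuration lying in $Y_C$ just as for the axial spheres of Section~\ref{sscCompDiv}. Finally $-\xi$ is normalised in the required way, i.e.\ $\{t \in \R : e^{2\pi(-\xi)t} \in \Gamma_C\} = \Z$, because $\Gamma_C$ is a group and so $e^{2\pi(-\xi)t} = (e^{2\pi\xi t})^{-1}$ lies in $\Gamma_C$ precisely when $e^{2\pi\xi t}$ does. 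Hence the pole of $\double{u}$ at $1/\conj{a}$ is of type $-\xi$ and order $k$, as claimed.

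The only genuinely delicate part is the bookkeeping of the several antiholomorphic ingredients — the reflection $c$, the conjugation inside $\ddag$, and the branch of the logarithm — and in particular keeping track of the single sign change which turns $\xi$ into $-\xi$; this comes entirely from the identity $\xi^\dag = -\xi$ for $\mathfrak{su}(2)$, and once the types are under control the equality of orders is automatic from the exponential normal form. A secondary point to make explicit is why one is entitled to test the type and order in the transported coordinate $s$, which is precisely where part~\ref{Rigitm2} is used.
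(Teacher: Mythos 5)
Your argument for part~\ref{Rigitm3} is correct and is essentially the paper's own proof: the paper first normalises $a=0$ by the M\"obius map $z \mapsto (z-a)/(\conj{a}z-1)$ (which commutes with $c$) and then composes with $z \mapsto 1/z$, whereas you work in the transported coordinate $s = \overline{t \circ c}$; these are the same computation, and in both cases the content is the identity $\big(e^{-ik\xi\log\conj{s}}\big)^\ddag = e^{-ik(-\xi)\log s}$ coming from $\xi^\dag = -\xi$, together with the observation that $s \mapsto A(\conj{s})^\ddag$ is holomorphic and so does not affect the principal part. Your remarks about the normalisation of $-\xi$ and about why one may test type and order in the coordinate $s$ are also correct.

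The gap is that the lemma has three parts and you have proved only the third. Part~\ref{Rigitm2} is not merely bookkeeping you may cite — you explicitly invoke it to justify working in the coordinate $s$, so it must be established first; the paper's proof writes $u$ in the form $A(z)\,R\,\mathrm{diag}(z^{k\kappa}, z^{-k\kappa})\,R^{-1}\cdot C$ and absorbs the factor $(\phi(z)/z)^{k\kappa}$ into a new holomorphic prefactor. Part~\ref{Rigitm1} (uniqueness of the order and of the type up to $\Gamma_C$-conjugacy) is entirely missing and is the most substantial piece: one must compare two such normal forms, pass to a common multiple cover so that $z^{k\kappa}$ is single-valued, and argue that a matrix of the form $\mathrm{diag}(z^{\hat{k}\hat{\kappa}}, z^{-\hat{k}\hat{\kappa}})\,S\,\mathrm{diag}(z^{-k\kappa}, z^{k\kappa})$ with $S \in \SU(2)$ can only be holomorphic over $0$ if $\hat{k}\hat{\kappa} = k\kappa$ and $S$ is diagonal, after which the scaling convention pins down $\hat{k} = k$. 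None of this is foreshadowed in your proposal, so as a proof of the lemma as stated it is incomplete.
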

\begin{proof} \ref{Rigitm1} Identifying $\mathfrak{su}(2)$ with the trace-free skew-hermitian $2\times2$ matrices in the standard way, there exists $R \in \SU(2)$ such that
\[
\xi = R \begin{pmatrix} \kappa i & 0 \\ 0 & -\kappa i \end{pmatrix} R^{-1}
\]
for some positive real number $\kappa$.  Then $u$ can be written in the form
\begin{equation}
\label{eqAxPol}
u(z)=A(z) R \begin{pmatrix} z^{k\kappa} & 0 \\ 0 & z^{-k\kappa} \end{pmatrix} R^{-1} \cdot C
\end{equation}
for some holomorphic map $A$ from a neighbourhood of $0$ to $\SL(2, \C)$.  We also deduce that $\kappa$ is rational since some multiple cover of $u$ lifts to $\SL(2, \C)$.  We can do exactly the same for $\hat{\xi}$, with some $\hat{R}$, $\hat{\kappa}$ and $\hat{A}$.

Note that for any $N \in \Z_{\geq1}$ the disc $u \circ \psi_N$ is of types $\xi$ and $\hat{\xi}$ and orders $Nk$ and $N\hat{k}$, so it suffices to prove that for some $N$ the result holds with $u \circ \psi_N$ in place of $u$.  Choosing $N$ so that $Nk\kappa, N\hat{k}\hat{\kappa} \in \Z$, we may therefore assume that $k\kappa$ and $\hat{k}\hat{\kappa}$ are integers, and hence that $z^{\pm k \kappa}$ and $z^{\pm \hat{k}\hat{\kappa}}$ define genuine holomorphic functions.

Since
\[
A(z) R \begin{pmatrix} z^{k\kappa} & 0 \\ 0 & z^{-k\kappa} \end{pmatrix} R^{-1} \cdot C = \hat{A}(z) \hat{R} \begin{pmatrix} z^{\hat{k}\hat{\kappa}} & 0 \\ 0 & z^{-\hat{k}\hat{\kappa}} \end{pmatrix} \hat{R}^{-1} \cdot C
\]
for all $z$ in a punctured neighbourhood of $0$, there exists $D \in \Gamma_C$ such that
\[
A(z) R \begin{pmatrix} z^{k\kappa} & 0 \\ 0 & z^{-k\kappa} \end{pmatrix} R^{-1} = \hat{A}(z) \hat{R} \begin{pmatrix} z^{\hat{k}\hat{\kappa}} & 0 \\ 0 & z^{-\hat{k}\hat{\kappa}} \end{pmatrix} \hat{R}^{-1} D
\]
near $0$.  Letting $S = \hat{R}^{-1}DR \in \SU(2)$, we therefore have that
\[
\begin{pmatrix} z^{\hat{k}\hat{\kappa}} & 0 \\ 0 & z^{-\hat{k}\hat{\kappa}} \end{pmatrix} S \begin{pmatrix} z^{-k\kappa} & 0 \\ 0 & z^{k\kappa} \end{pmatrix}
\]
is holomorphic over $0$.  Recalling that $\kappa$ and $\hat{\kappa}$ are positive, and by definition so are $k$ and $\hat{k}$, this is only possible if $\hat{k}\hat{\kappa}=k\kappa$ and $S$ is diagonal (and hence commutes with $\lb \begin{smallmatrix} 1 & 0 \\ 0 & -1 \end{smallmatrix} \rb$).  We thus have
\[
\hat{k}\hat{\xi} = \hat{R} \begin{pmatrix} \hat{k}\hat{\kappa}i & 0 \\ 0 & -\hat{k}\hat{\kappa}i \end{pmatrix} \hat{R}^{-1} = DRS^{-1} \begin{pmatrix} k\kappa i & 0 \\ 0 & -k\kappa i \end{pmatrix} SR^{-1}D^{-1} = D k\xi D^{-1},
\]
so $\hat{k}\hat{\xi}$ and $k\xi$ are conjugate by an element of $\Gamma_C$.

By our scaling convention, we have
\[
\{t \in \R : e^{2 \pi k\xi t} \in \Gamma_C\} = \frac{1}{k}\Z \text{ and } \{t \in \R : e^{2 \pi \hat{k}\hat{\xi} t} \in \Gamma_C\} = \frac{1}{\hat{k}}\Z.
\]
Since $\hat{k}\hat{\xi}$ and $k\xi$ are conjugate by $\Gamma_C$, we must therefore have that $\hat{k}=k$ and hence that $\hat{\xi}$ is conjugate to $\xi$ by $\Gamma_C$, as claimed.

\ref{Rigitm2} Let $\kappa \in \mathbb{Q}, R \in \SU(2)$ be as in the previous part.  Note that $\phi$ vanishes to order $1$ at the origin, so there exists a holomorphic $\kappa$th power $\chi$ of $\phi(z)/z$ defined about $z=0$.  We then have (using the expression \eqref{eqAxPol})
\[
u\circ \phi(z)=\bigg((A\circ \phi(z)) R \begin{pmatrix} \chi(z)^k & 0 \\ 0 & \chi(z)^{-k} \end{pmatrix} R^{-1}\bigg) R \begin{pmatrix} z^{k\kappa} & 0 \\ 0 & z^{-k\kappa} \end{pmatrix} R^{-1} \cdot C
\]
near $0$, and the expression in the large brackets is holomorphic.  So $u\circ \phi$ is quasi-axial, of type $\xi$ and order $k$.

\ref{Rigitm3} By applying the change of coordinate $z \mapsto (z-a)/(\conj{a}z-1)$ (which commutes with the reflection $c \mc z \rightarrow 1/\conj{z}$) we may assume $a=0$.  For $z$ near $0$ we have $u(z)=A(z)e^{-ik\xi \log z}\cdot C$ for some holomorphic map $A$ from a neighbourhood of $0$ to $\SL(2, \C)$.  Then for $z$ near $\infty$ we have
\[
u(z)=\Big( A(1/\conj{z})e^{ik\xi \log \conj{z}}\Big)^\ddag \cdot C=A(1/\conj{z})^\ddag e^{-ik\xi \log z} \cdot C,
\]
using $\xi^\dag=-\xi$.

Now let $\phi \mc \C\P^1 \rightarrow \C\P^1$ be $z \mapsto 1/z$.  For $z \in \C^*$ small we have
\[
u \circ \phi (z) = A(\conj{z})^\ddag e^{ik\xi \log z} \cdot C.
\]
Therefore the pole of $u \circ \phi$ at $0$---and hence that of $u$ at $\infty$---is of type $-\xi$ and order $k$, completing the proof.
\end{proof}

This result shows that quasi-axial poles are rather well-behaved, and we can make the following definition:

\begin{defn} \label{labQA} A disc $u$ is \emph{quasi-axial} if all of its poles are quasi-axial.
\end{defn}

Armed with \protect \MakeUppercase {L}emma\nobreakspace \ref {labVertPol}, and the sanity check of \protect \MakeUppercase {L}emma\nobreakspace \ref {labRigQA}, we can now classify poles and discs of index $2$, and obtain a new proof of \protect \MakeUppercase {L}emma\nobreakspace \ref {labInd2Ax} in this setting:

\begin{cor} \label{labInd2Pol}  All index $2$ poles are of type $\xi_v$ and order $1$.  All index $2$ discs with boundary on $L_C$ are, up to reparametrisation, of the form $A \cdot u_v$ for $A \in \SU(2)$.  In particular they are all axial.
\end{cor}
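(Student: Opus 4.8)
The plan is to establish the claim about poles directly from \protect \MakeUppercase {L}emma\nobreakspace \ref {labVertPol}, then use additivity of the Maslov index over poles to reduce the claim about discs to a rigidity statement for holomorphic maps into $\SL(2, \C)$. For the poles: if $u$ is a pole germ of index $2$ then its intersection multiplicity with $Y_C$ at $0$ is $1$, so the intersection is clean and transverse, and $u(0) \notin N_C$ since by \protect \MakeUppercase {L}emma\nobreakspace \ref {labIntNC} a clean intersection with $N_C$ contributes at least $2$ to $[u].[Y_C]$ and hence would give index at least $4$. So $u(0) \in Y_C \setminus N_C$ and \protect \MakeUppercase {L}emma\nobreakspace \ref {labVertPol} applies verbatim: $u$ is of type $\xi_v$ and order $\mu_0(u)/2 = 1$.

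Now let $u \mc (D, \pd D) \rightarrow (X_C, L_C)$ have Maslov index $2$. As $u(\pd D) \subset L_C \subset W_C$, the poles of $u$ are interior and finite in number, and since $L_C$ is special Lagrangian in $X_C \setminus Y_C$ the combination of \protect \MakeUppercase {L}emma\nobreakspace \ref {labIndInt} and \protect \MakeUppercase {C}orollary\nobreakspace \ref {labIndFor} identifies $\mu(u)$ with the sum of the indices of these poles, each of which is a positive even integer and so at least $2$. Hence $u$ has a single pole, necessarily of index $2$, which by the first paragraph is of type $\xi_v$ and order $1$. Reparametrising, we may put this pole at $0 \in D$, so that $u$ has the same principal part at $0$ as the axial sphere $u_v$ (see \protect \MakeUppercase {E}xample\nobreakspace \ref {labAxPol}); by the definition of the equivalence on pole germs there is a holomorphic $A_0$ from a neighbourhood of $0$ into $\SL(2, \C)$ with $u = A_0 \cdot u_v$ near $0$.

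It remains to globalise this and pin down the conjugating matrix. Since $0$ is the only pole of $u$, both $u$ and $u_v$ map $D \setminus \{0\}$ into $W_C$; the monodromy of each around $0$ lies in the finite group $\Gamma_C$, so after pulling back along a sufficiently divisible $N$-fold cover $z \mapsto z^N$ both lift to $\SL(2, \C)$-valued maps, meromorphic over $0$ by \protect \MakeUppercase {L}emma\nobreakspace \ref {labMatLim}\ref{EndLimitm2}. Comparing these lifts, using the agreement of principal parts at $0$ and the explicit form of $u_v$, one sees that $A_0$ extends to a holomorphic $A \mc D \rightarrow \SL(2, \C)$ with $u = A \cdot u_v|_D$ on $D$. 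On $\pd D$ both $u$ and $u_v$ land in $L_C = \SU(2) \cdot C$, so $A$ carries each boundary value into $L_C$; expressing these in terms of $\SU(2)$ modulo $\Gamma_C$ shows $A(\pd D) \subseteq \SU(2)$. Finally, a holomorphic disc $A \mc D \rightarrow \SL(2, \C)$ with $A(\pd D) \subseteq \SU(2)$ is constant: writing $A = \lb \begin{smallmatrix} a & b \\ c & d \end{smallmatrix} \rb$, the conditions $d = \conj{a}$ and $c = -\conj{b}$ on $\pd D$ make $ad$ and $bc$ holomorphic on $D$ and real on $\pd D$, hence constant by Schwarz reflection, after which $ad - bc = 1$ and the maximum modulus principle force $a$, $b$, $c$ and $d$ to be constant. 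So $u = A \cdot u_v$ with $A \in \SU(2)$, and since $u_v(e^{i\theta}z) = e^{\theta \xi_v} u_v(z)$ the disc $u$ is axial, with defining homomorphism $\theta \mapsto A e^{\theta \xi_v} A^{-1}$.

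I expect the last step to be the crux: globalising $A_0$ to a holomorphic map on all of $D$, which needs some care with the $\Gamma_C$-monodromy on the branched cover, and the rigidity fact that a holomorphic disc in $\SL(2, \C)$ with boundary on $\SU(2)$ is constant. The remainder is a routine synthesis of \protect \MakeUppercase {L}emma\nobreakspace \ref {labIntNC}, \protect \MakeUppercase {L}emma\nobreakspace \ref {labVertPol} and the additivity of the Maslov index over poles.
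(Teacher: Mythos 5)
Your proposal is correct, and its skeleton coincides with the paper's: classify index $2$ pole germs via Lemma~\ref{labIntNC} and Lemma~\ref{labVertPol}, use additivity of the index over poles to reduce to a single pole of type $\xi_v$ and order $1$, and then identify the disc with $A \cdot u_v$ by showing a certain holomorphic object is a constant element of $\SU(2)$. The divergence is in that last rigidity step. The paper lifts $u \circ \psi_{2r_C}$ to $B \mc D \setminus \{0\} \to \SL(2,\C)$, observes that $z \mapsto B(z)e^{i(2r_C)\xi_v \log z}\cdot C$ is a holomorphic disc on $L_C$ of index $0$, and invokes \emph{monotonicity} to conclude it is constant. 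You instead produce a genuinely global holomorphic $A \mc D \to \SL(2,\C)$ with $u = A \cdot u_v$ and $A(\pd D) \subset \SU(2)$, and prove directly that such an $A$ is constant (Schwarz reflection applied to $ad$ and $bc$, then $ad - bc = 1$ together with the zero-freeness this forces and the constancy of $|a|$, $|b|$ on $\pd D$). This is essentially the alternative the paper itself sketches immediately after its proof (``a disc without poles is constant, not using monotonicity''), made explicit for $2\times 2$ matrices; it buys independence from the monotonicity of $L_C$, at the cost of slightly more complex analysis. Two small points you should tighten: the extension of the germ $A_0$ to all of $D$ is asserted rather than proved --- it does hold, either by noting that the solution set $\{(z,g): g\cdot u_v(z) = u(z)\}$ is a covering of $D\setminus\{0\}$ whose section $A_0$ has trivial monodromy because it is already single-valued on a full neighbourhood of $0$, or by matching the $\Gamma_C$-monodromies of the two lifts as in the proof of Lemma~\ref{labPolComp} --- and the phrase ``maximum modulus principle'' should really be ``$\log|a|$ is harmonic and constant on $\pd D$, and $a$ is zero-free since $ad$ is a nonzero constant''. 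Neither is a genuine gap.
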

\begin{proof}  If $u$ is a pole germ at a point $a$ on a Riemann surface $\Sigma$, with $\mu_a(u)=2$, then $u$ intersects $Y_C$ with multiplicity $1$ at $a$ and hence $u(a) \in Y_C \setminus N_C$ by \protect \MakeUppercase {L}emma\nobreakspace \ref {labIntNC}.  So by \protect \MakeUppercase {L}emma\nobreakspace \ref {labVertPol} $u$ is of type $\xi_v$ and order $1$.

Now suppose $u$ is an index $2$ disc.  Since $\mu(u)$ is the sum of the indices of the poles of $u$, all of which are positive and even, we see that $u$ has a single pole, of index $2$.  Reparametrising $u$ if necessary, we may assume that the pole is at the point $0 \in D$.  By the above, we know that the pole is of type $\xi_v$ and order $1$.  Thus $u\circ \psi_{2r_C}$ lifts to a map $B \mc D \setminus \{0\} \rightarrow \SL(2, \C)$ which lands in $\SU(2)$ when restricted to the boundary $\pd D$ and is such that $B(z)e^{i(2r_C)\xi_v \log z}$ is holomorphic over $0$.

Therefore
\[
z \mapsto B(z)e^{i(2r_C)\xi_v \log z} \cdot C
\]
defines a holomorphic map $D \rightarrow W_C$ with boundary on $L_C$; in other words it's a holomorphic disc on $L_C$ of index $0$ (it doesn't meet $Y_C$), so by monotonicity is constant---say $A \cdot C$ for $A \in \SU(2)$.  Then, multiplying $A$ on the right by an element of $\Gamma_C$ if necessary, we have $B(z)=Ae^{-i(2r_C)\xi_v \log z}$, so $u$ is
\[
z \mapsto Ae^{-i\xi_v \log z} \cdot C,
\]
which is precisely $A \cdot u_v$.  In particular, we have $u(e^{i\theta}z)=e^{\theta A\xi_vA^{-1}}u(z)$ for all $z\in D$ and $\theta \in \R$, so $u$ is axial.
\end{proof}

An alternative way to see that a disc without poles is constant, not using monotonicity, would be to reflect it to a sphere (also without poles) and lift it to a holomorphic map $\C\P^1 \rightarrow \SL(2, \C)$.  Any such map is constant (compose it with the embedding $\SL(2, \C)\hookrightarrow \C^4$ and observe that the composite must be constant) and so the disc itself is constant.

We can also classify the poles on index $4$ discs, although now we \emph{do} rely on the general results of Section\nobreakspace \ref {sscAxDiscs}:

\begin{cor} \label{labInd4Pol}  Suppose $u$ is an index $4$ disc.  Either $u$ has two poles of type $\xi_v$ and order $1$, one pole of type $\xi_v$ and order $2$, or one pole of type $\xi_f$ and order $1$.  In the latter two cases the disc is axial and is an $\SU(2)$-translate of $u_v \circ \psi_2$ or $u_f$ respectively.
\end{cor}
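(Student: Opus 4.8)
The plan is to split the argument according to the pole structure of $u$. Recall that the Maslov index of a disc is the sum of the indices of its poles, that each pole has even index at least $2$, and that here $\mu(u)=4$; hence $u$ has either two poles of index $2$ or a single pole of index $4$. In the first case \protect\MakeUppercase{C}orollary\nobreakspace\ref{labInd2Pol} immediately gives that both poles are of type $\xi_v$ and order $1$, and there is nothing further to check, so the real content lies in the single-pole case.

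So suppose $u$ has a unique pole, at a point $a\in D$, of index $4$; equivalently $u$ meets $Y_C$ at $a$ with intersection multiplicity $2$ and nowhere else. I would then distinguish two sub-cases according to whether $u(a)$ lies in $N_C$ or in $Y_C\setminus N_C$. If $u(a)\in N_C$, then by \protect\MakeUppercase{L}emma\nobreakspace\ref{labIntNC} a non-clean intersection would contribute at least $3$, so the intersection is clean, and \protect\MakeUppercase{L}emma\nobreakspace\ref{labInd4Ax}---applied with $Z=N_C$, which is a smooth $\SU(2)$-invariant complex submanifold of $X_C\setminus L_C$ of complex codimension $2$---shows that $u$ is axial. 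If instead $u(a)\in Y_C\setminus N_C$, then \protect\MakeUppercase{L}emma\nobreakspace\ref{labVertPol} forces $u$ to be of type $\xi_v$ and order $\mu_a(u)/2=2$; to upgrade this to a statement about the whole disc I would mimic the argument in the proof of \protect\MakeUppercase{C}orollary\nobreakspace\ref{labInd2Pol}, namely: after applying an $\SU(2)$-rotation and replacing $u$ by $u\circ\psi_{r_C}$ (which does not lose information, since $\psi_{r_C}$ is surjective onto $D$), the disc lifts to a map $B\mc D\setminus\{0\}\to\SL(2,\C)$ with $B|_{\pd D}\subset\SU(2)$ and $B(z)e^{i(2r_C)\xi_v\log z}$ holomorphic over $0$; then $z\mapsto B(z)e^{i(2r_C)\xi_v\log z}\cdot C$ is a holomorphic disc on $L_C$ missing $Y_C$, hence of index $0$, hence constant by monotonicity (\protect\MakeUppercase{L}emma\nobreakspace\ref{labMonot}), and unwinding this identity yields $u=A\cdot(u_v\circ\psi_2)$ for some $A\in\SU(2)$.

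It then remains to identify precisely which axial disc arises in each sub-case. By the discussion of Section\nobreakspace\ref{sscMasAx}, any axial disc is, up to the $\SU(2)$-action, an $n$-fold cover of one of $u_v$, $u_e$, $u_f$, $u_g$, whose indices are $2$, $6$, $4$, $12$ by \protect\MakeUppercase{L}emma\nobreakspace\ref{labAxInds}; since our disc has index $4$ the only options are $u_v\circ\psi_2$ and $u_f$. The pole of $u_v\circ\psi_2$ at $0$ is $u_v(0)$, whose limit configuration is a $(d-1)$-fold point together with an antipodal single point, so it lies in $Y_C\setminus N_C$, whereas the pole of $u_f$ at $0$ is a $d$-fold point and so lies in $N_C$; moreover both are axial, as is any $\SU(2)$-translate of an axial disc. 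Hence in the sub-case $u(a)\in N_C$ the disc must be an $\SU(2)$-translate of $u_f$, and in particular has one pole of type $\xi_f$ and order $1$ (by \protect\MakeUppercase{E}xample\nobreakspace\ref{labAxPol}), while in the sub-case $u(a)\in Y_C\setminus N_C$ the previous paragraph already delivered $u=A\cdot(u_v\circ\psi_2)$ with its single pole of type $\xi_v$ and order $2$. This exhausts all possibilities and matches the three cases in the statement.

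The main obstacle I anticipate is the $Y_C\setminus N_C$ sub-case: there \protect\MakeUppercase{L}emma\nobreakspace\ref{labInd4Ax} is unavailable, so one has to rerun the lifting-and-monotonicity argument of \protect\MakeUppercase{C}orollary\nobreakspace\ref{labInd2Pol} by hand, taking care with the chosen branch of $\log$ and with the finite monodromy of the $\SL(2,\C)$-lift around the pole (which is why one passes to the $\psi_{r_C}$-cover). Everything else is bookkeeping: combining \protect\MakeUppercase{L}emma\nobreakspace\ref{labIntNC}, \protect\MakeUppercase{L}emma\nobreakspace\ref{labVertPol}, \protect\MakeUppercase{L}emma\nobreakspace\ref{labAxInds} and \protect\MakeUppercase{C}orollary\nobreakspace\ref{labInd2Pol}, and using the $N_C$ versus $Y_C\setminus N_C$ location of the pole to separate the $u_v\circ\psi_2$ and $u_f$ cases.
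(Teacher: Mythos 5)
Your proof is correct and follows essentially the same route as the paper: split by the number of poles, handle the two-pole case via \protect\MakeUppercase{C}orollary\nobreakspace\ref{labInd2Pol}, and in the single-pole case distinguish $u(a)\in Y_C\setminus N_C$ (where \protect\MakeUppercase{L}emma\nobreakspace\ref{labVertPol} plus the lifting-and-monotonicity argument gives $A\cdot(u_v\circ\psi_2)$) from $u(a)\in N_C$ (where \protect\MakeUppercase{L}emma\nobreakspace\ref{labIntNC} and \protect\MakeUppercase{L}emma\nobreakspace\ref{labInd4Ax} with $Z=N_C$ give axiality, and \protect\MakeUppercase{L}emma\nobreakspace\ref{labAxInds} pins down $u_f$). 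The only difference is that you spell out the $\SL(2,\C)$-lifting step in more detail than the paper, which simply cites the argument of \protect\MakeUppercase{C}orollary\nobreakspace\ref{labInd2Pol}.
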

\begin{proof}  The poles of $u$ have positive even indices which sum to $4$, so either there are two poles of index $2$ (and thus of type $\xi_v$ and order $1$ by \protect \MakeUppercase {C}orollary\nobreakspace \ref {labInd2Pol}), or one pole of index $4$.  In the latter case, if the disc hits $Y_C$ on $Y_C \setminus N_C$ then by \protect \MakeUppercase {L}emma\nobreakspace \ref {labVertPol} the pole is of type $\xi_v$ and order $2$, and arguing as in \protect \MakeUppercase {C}orollary\nobreakspace \ref {labInd2Pol} the disc is axial of the form $A \cdot u_v \circ \psi_2$ (clearly this argument generalises to show that if $u$ is a quasi-axial disc with a single pole of type $\xi$ and order $k$ then it is a translate of $e^{-i\xi \log z} \circ \psi_k$).

Otherwise the disc hits $N_C$, and by \protect \MakeUppercase {L}emma\nobreakspace \ref {labIntNC} this intersection is clean.  Applying \protect \MakeUppercase {L}emma\nobreakspace \ref {labInd4Ax} with $Z=N_C$ we deduce that $u$ is again axial.  From \protect \MakeUppercase {L}emma\nobreakspace \ref {labAxInds} we know that the only axial discs of index $4$ which hit $N_C$ are translates of $u_f$ under the action of $\SU(2)$.  Hence $u$ has a single pole of type $\xi_f$ and order $1$.
\end{proof}

\subsection{Group derivatives}
\label{sscGrpDer}

In this subsection we define a meromorphic Lie algebra-valued notion of the derivative of a holomorphic curve in $X_C$, which is closely related to the logarithmic (or Darboux) derivative of a smooth map to a Lie group, and thus to the pullback of the Maurer--Cartan form on $\SL(2, \C)$ \cite[page 311]{HiNe}.  In \cite{Hit} and subsequent papers, Hitchin constructed holomorphic curves in quasihomogeneous threefolds of $\SL(2, \C)$ and used the Maurer--Cartan pullback to produce meromorphic connections on the Riemann sphere, in order to build solutions to isomonodromic deformation problems and the Painlev\'e equations.  Our approach here is in the opposite direction---we use properties of our derivative to constrain holomorphic curves---although we hope that some of our ideas may be applicable to the study of related isomonodromic deformations.

\begin{defn} \label{labGrpDer} Let $u \mc \C\P^1 \rightarrow X_C$ be a parametrised holomorphic curve not contained in $Y_C$, so it has isolated poles.  The \emph{group derivative} $\D u$ is the meromorphic $\mathfrak{sl}(2, \C)$-valued function on $\C\P^1$ defined as follows.  For $p\in \C\P^1 \setminus (u^{-1}(Y_C) \cup \{\infty\})$ pick a lift $B$ of $u$ to $\SL(2, \C)$ on an open neighbourhood $U$ of $p$, and define $\D u|_U$ to be $B'B^{-1}$ (where $'$ denotes $\pd/\pd z$, and $z$ is our coordinate on $\C\subset \C\P^1$).  If $B_1$ and $B_2$ are two different lifts of $u$ on $U$ then there exists a locally constant map $M \mc U \rightarrow \Gamma_C$ such that $B_2 = B_1 M$, so then $B_2'B_2^{-1} = B_1'MM^{-1}B_1^{-1} = B_1'B^{-1}$.  Therefore $\D u$ is well-defined on $U$ and these local definitions glue together to give a holomorphic map $\C\P^1 \setminus (u^{-1}(Y_C) \cup \{\infty\}) \rightarrow \mathfrak{sl}(2, \C)$.

For $p \in u^{-1}(Y_C) \cap \C$ we can compose $u$ with an appropriate local multiple cover $\psi$ near $p$ so that it lifts to a holomorphic map $\tilde{B}$ from a punctured neighbourhood of $p$ to $\SL(2, \C)$.  By \protect \MakeUppercase {L}emma\nobreakspace \ref {labMatLim}\ref{EndLimitm2}, the components of $\tilde{B}$ are meromorphic over $0$, and hence the components of $\tilde{B}'\tilde{B}^{-1} = \psi'\cdot(\D u\circ \psi)$ are meromorphic over $p$.  Thus $\D u$ itself has at worst a pole at $p$.  To see that $\D u$ is meromorphic over $\infty$, simply make a change of coordinate $w=1/z$ and use the chain rule and the fact that the group derivative of this reparametrised curve is meromorphic over $0$.

If $u$ is also non-constant, so that $\D u$ is not identically zero, we get a holomorphic map $[\D u] \mc \C\P^1 \rightarrow \P\mathfrak{sl}(2, \C) \cong \C\P^2$---the \emph{projectivised group derivative}.  If $\psi$ is an automorphism of $\C\P^1$ then $\D (u\circ \psi)=\psi'\cdot(\D u\circ \psi)$ so $[\D (u\circ \psi)]=[\D u]\circ \psi$.
\end{defn}

Note that by construction we have $u'=\D u \cdot u$.

The group derivative is easily understood at quasi-axial poles:

\begin{lem} \label{labGrpDerRes} Suppose $u \mc \C\P^1 \rightarrow X_C$ is a holomorphic curve not contained in $Y_C$, which has a pole of type $\xi$ and order $k$ at the point $a \in \C \subset \C\P^1$.  So near $a$ there exists a holomorphic map $A$ to $\SL(2, \C)$ such that $u$ is given locally by
\[
u(z) = A(z)e^{-i k \xi \log (z-a)} \cdot C.
\]
Then $\D u$ has a simple pole at $a$ with residue $\Res_a \D u=-ikA(a)\xi A(a)^{-1}$.
\end{lem}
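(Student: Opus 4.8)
The plan is to reduce everything to the elementary transformation law for $\D$ under left multiplication by $\SL(2,\C)$-valued holomorphic maps. Translating the coordinate so that $a=0$ (which is harmless: a translation has derivative $1$, so it merely shifts the coordinate and identifies residues at $a$ with residues at $0$), I would first record the following identity. If $v \mc \C\P^1 \rightarrow X_C$ is a holomorphic curve not contained in $Y_C$ and $A$ is a holomorphic map from an open set into $\SL(2,\C)$, then on any simply connected open set avoiding $u^{-1}(Y_C)$ we have
\[
\D(A \cdot v) = A'A^{-1} + A(\D v)A^{-1}.
\]
This is immediate from Definition~\ref{labGrpDer}: if $B$ is a local lift of $v$ to $\SL(2,\C)$ then $AB$ is a local lift of $A \cdot v$, and $(AB)'(AB)^{-1} = A'A^{-1} + A B'B^{-1}A^{-1}$. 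Since, as explained in that definition, both $\D(A\cdot v)$ and $\D v$ are genuine single-valued meromorphic $\mathfrak{sl}(2,\C)$-valued functions on the whole of $\C\P^1$, this identity, established on an open set, propagates across the punctured neighbourhood of $0$ even though intermediate lifts exist only on sectors.

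Next I would compute the group derivative of the model pole $v(z) = e^{-ik\xi \log z}\cdot C$. On any sector about $0$ — equivalently, after pulling back by a multiple cover $\psi_N$ with $N$ large enough that $z \mapsto e^{-ik\xi\log z}$ descends to a genuine $\SL(2,\C)$-valued map, exactly as in the construction of $\D$ at poles — the map $z \mapsto e^{-ik\xi\log z}$ is a local holomorphic lift of $v$. Since $\xi$ commutes with $(\log z)\,\xi$, we have $\bigl(e^{-ik\xi\log z}\bigr)' = -\tfrac{ik}{z}\,\xi\, e^{-ik\xi\log z}$, whence
\[
\D v = -\frac{ik}{z}\,\xi,
\]
a single-valued meromorphic function with a simple pole at $0$ of residue $-ik\xi$.

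Finally, applying the transformation law to $u = A \cdot v$ near $0$ yields
\[
\D u = A'A^{-1} - \frac{ik}{z}\,A\xi A^{-1}.
\]
Here $A'A^{-1}$ is holomorphic near $0$ because $A$ is holomorphic and $\SL(2,\C)$-valued there, while $A(z)\xi A(z)^{-1} = A(0)\xi A(0)^{-1} + O(z)$, so the second term contributes a simple pole with residue $-ikA(0)\xi A(0)^{-1}$; this is precisely $\Res_0 \D u$, as claimed, and the pole is genuinely of order one since $\xi \neq 0$ by the scaling convention (so $A(0)\xi A(0)^{-1} \neq 0$). There is no substantial obstacle here; the only subtlety — and it is a mild one — is the bookkeeping around the branch point at $0$, where the intermediate lifts $e^{-ik\xi\log z}$ and $A\,e^{-ik\xi\log z}$ live only on sectors. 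Since the objects actually manipulated, $\D u$ and $\D v$, are single-valued and meromorphic, one may either fix a sector and invoke the identity theorem, or descend through the cover using $\D(u\circ\psi_N) = \psi_N'\cdot(\D u\circ\psi_N)$; either way the conclusion drops out of the computation above.
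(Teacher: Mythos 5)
Your proposal is correct and is essentially the paper's own argument: the paper computes $\D u = \bigl(Ae^{-ik\xi\log(z-a)}\bigr)'e^{ik\xi\log(z-a)}A^{-1} = A'A^{-1} - \tfrac{ik}{z-a}A\xi A^{-1}$ directly by the product rule, which is exactly your transformation law combined with your model computation $\D v = -ik\xi/z$. Your extra care about single-valuedness of $\D u$ across the branch point at $a$, and the remark that the residue is nonzero because the scaling convention forces $\xi \neq 0$, are points the paper leaves implicit but are correctly handled.
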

\begin{proof}  This is a straightforward explicit computation: we have near $a$ that
\begin{align*}
\D u &= \Big(Ae^{-ik\xi \log (z-a)}\Big)'e^{ik \xi \log(z-a)}A^{-1}
\\ &= A'A^{-1} -\frac{ik}{z-a} A \xi A^{-1},
\end{align*}
and $A'A^{-1}$ and $A \xi A^{-1}$ are both regular at $a$, so the result follows immediately.
\end{proof}

It also has the following properties:

\begin{lem} \label{labGrpDerProps}  Let $u \mc \C\P^1 \rightarrow X_C$ be a holomorphic curve not contained in $Y_C$, and $\psi$ an antiholomorphic involution of $\C\P^1$.
\begen
\item \label{grpitm1} If $u$ intertwines $\psi$ with the antiholomorphic involution $\tau$ on $X_C$ (or $X_C \setminus Y_C$) then
\[
\D u \circ \psi = -\Big(\conj{\psi}'\circ \psi \Big) \cdot \D u^\dagger
\]
as meromorphic maps from $\C\P^1$ to $\mathfrak{sl}(2, \C)$.  Here $\dag$ denotes conjugate transpose as usual, whilst the derivative of $\conj{\psi}$ is computed by viewing it as a meromorphic function on $\C\P^1$.  In particular, if $\psi$ is the reflection $c$ in the equator then
\[
\D u \circ c = \conj{z}^2 \D u^\dagger.
\]
\item \label{grpitm3} If $u$ intertwines $c$ and $\tau$, and has a quasi-axial pole at $a \in \C^* \subset \C\P^1$, then
\[
\Res_{1/\conj{a}} \D u = -(\Res_a \D u)^\dagger.
\]
\item \label{grpitm2} If $u$ is quasi-axial and non-constant, with $n \geq 2$ poles, then either $[\D u]$ has degree $n-2$ or the image of $u$ is contained in a linear subspace of $\P S^dV$ of dimension less than $\deg u$.
\end{enumerate}
\end{lem}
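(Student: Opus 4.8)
The plan is to treat the three parts separately; parts~\ref{grpitm1} and \ref{grpitm3} are short computations with local lifts, while \ref{grpitm2} is the substantive one.

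For \ref{grpitm1} I would work on the dense open locus where $u$ and $u\circ\psi$ both avoid $Y_C$ (the asserted identity is between meromorphic objects, so this suffices), choosing there a local holomorphic lift $B$ of $u$ to $\SL(2,\C)$, so that $\D u=B'B^{-1}$. Recalling from the proof of Proposition~\ref{labAntInv} that $\tau$ is implemented on lifts by $\ddag$ (conjugate-transpose-inverse), the intertwining relation $u\circ\psi=\tau\circ u$ lifts to $B\circ\psi=B^{\ddag}M$ for a locally constant $M$ valued in $\Gamma_C$. Differentiating this by $\partial/\partial\bar z$ and using that $\psi$ is antiholomorphic while $B$ is holomorphic, the left side becomes $(\D u\circ\psi)\,(B\circ\psi)\,(\partial\psi/\partial\bar z)$; using $(B^{-1})'=-B^{-1}B'B^{-1}$, conjugate-transposing, and $(B^{\ddag})^{-1}=B^{\dagger}$, the right side becomes $-\D u^{\dagger}(B\circ\psi)$. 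Cancelling $B\circ\psi$ gives $(\D u\circ\psi)(\partial\psi/\partial\bar z)=-\D u^{\dagger}$, and differentiating $\psi\circ\psi=\id$ yields $\conj{\psi}'\circ\psi=(\partial\psi/\partial\bar z)^{-1}$, which rearranges to the stated formula. For $\psi=c$ one has $\conj{\psi}'\circ\psi(z)=-\conj{z}^2$, giving $\D u\circ c=\conj{z}^2\D u^{\dagger}$.

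Part~\ref{grpitm3} should then be immediate from \ref{grpitm1} applied to $\psi=c$: writing $\D u(z)=\frac{r}{z-a}+(\text{regular})$ near $a$ with $r=\Res_a\D u$, the relation $\D u\circ c=\conj{z}^2\D u^{\dagger}$ gives $\D u(1/\conj{z})=\frac{\conj{z}^2 r^{\dagger}}{\conj{z}-\conj{a}}+\cdots$; substituting $w=1/\conj{z}$ and reading off the residue at $w=1/\conj{a}$ produces $-r^{\dagger}$. (Alternatively this drops out of Lemma~\ref{labGrpDerRes} together with Lemma~\ref{labRigQA}\ref{Rigitm3}, which identifies the reflected pole as being of type $-\xi$ and the same order.)

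For \ref{grpitm2} the idea is to present $[\D u]$ as the map associated to a linear system on $\C\P^1$ and compute its degree. Because $u$ is quasi-axial, Lemma~\ref{labGrpDerRes} shows that $\D u$ has a \emph{simple} pole with nonzero residue at each of the $n$ poles of $u$ and is holomorphic at every other point of $\C$; transferring to the chart at infinity shows that the $\mathfrak{sl}(2,\C)$-valued meromorphic $1$-form $\D u\,\diff z$ has polar divisor exactly $P\coloneqq\sum(\text{poles of }u)$, of degree $n$. Hence $\D u\,\diff z$ is a nonzero global section of $\mathcal{O}_{\C\P^1}(n-2)\otimes\mathfrak{sl}(2,\C)$; writing it in a basis as a triple $(\omega_1,\omega_2,\omega_3)$ of sections of $\mathcal{O}(n-2)$ and letting $d_0\geq 0$ be the degree of their common zero divisor, the map $[\D u]=[\omega_1:\omega_2:\omega_3]$ has degree $(n-2)-d_0$. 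If $d_0=0$ we are done. Otherwise there is a base point $b$, which (after an automorphism of $\C\P^1$, harmless for all quantities involved) we may take to be finite; since the residues of $\D u$ at the poles of $u$ are nonzero, $b$ is a regular point of $u$ with $\D u(b)=0$, whence $u'(b)=\D u(b)\cdot u(b)=0$, so $u$ has a critical point at $b$. Now let $\P\Lambda$ be the linear span of the image of $u$, of dimension $N+1$, so that $u\colon\C\P^1\to\P\Lambda\cong\C\P^N$ is non-degenerate of degree $\deg u\geq N$; if the image of $u$ were \emph{not} contained in a linear subspace of dimension less than $\deg u$ then $\deg u=N$, in which case the $N+1$ homogeneous coordinates of $u$ are linearly independent sections of the $(N+1)$-dimensional space $H^0(\mathcal{O}(N))$, hence a basis, so $u$ is a rational normal curve up to a projective transformation — but rational normal curves are embeddings without critical points, contradicting the existence of $b$. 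Therefore the image of $u$ lies in a linear subspace of dimension less than $\deg u$, as required. The step I expect to be most delicate is the verification that $\D u\,\diff z$ has no worse than simple poles, in particular at $\infty$, so that the degree identity $\deg[\D u]=(n-2)-d_0$ holds on the nose; after that the only external input is the classical description of minimal-degree rational curves.
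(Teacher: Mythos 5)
Your proposal is correct and takes essentially the same approach as the paper in all three parts: the same lift-and-$\ddag$ computation for the reflection identity, the same residue manipulation at the reflected pole, and for the degree statement the same key inputs (simple poles with nonzero residues from Lemma \ref{labGrpDerRes}, order-two vanishing of $\D u$ at $\infty$, the relation $u' = \D u \cdot u$, and the rational normal curve characterisation of non-degenerate minimal-degree curves). The only difference in part \ref{grpitm2} is cosmetic — you run the dichotomy in the contrapositive direction via the base locus of the linear system, whereas the paper assumes non-degeneracy and shows directly that the cleared components have trivial gcd.
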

\begin{proof} \ref{grpitm1} Throughout the proof the notation ${}^{-1}$ will always denote an inverse matrix, rather than inverse function.  Since both sides are antiholomorphic (away from their poles) it suffices to prove the result on the dense open set of points $p$ for which $p$ and $\psi(p)$ are not in $u^{-1}(Y_C) \cup \{\infty\}$, so fix such a $p$.  Near $p$ we can lift $u$ to some holomorphic map $B$ to $\SL(2, \C)$ and then $B^\ddag \circ \psi$ lifts $u$ near $\psi(p)$.  We then have near $\psi(p)$ that
\begin{equation}
\label{eqDuLine1}
\D u=\big( B^\ddag \circ \psi \big)' \big( B^\ddag \circ \psi \big)^{-1} = \big( B^\ddag \circ \psi \big)' \big( B \circ \psi \big)^\dag.
\end{equation}
Letting $g$ denote complex conjugation on $\C\P^1$, the chain rule gives
\[
\big( B^\ddag \circ \psi\big)' = \frac{\pd}{\pd z}\big(B^\ddag \circ g \circ \conj{\psi}\big) = \conj{\psi}' \cdot \lb\frac{\pd B^\ddag \circ g}{\pd z} \circ \conj{\psi}\rb = \conj{\psi}' \cdot \lb\frac{\pd B^\ddag}{\pd \conj{z}} \circ \psi\rb,
\]
and therefore
\[
\big(B^\ddag \circ \psi\big)' = \conj{\psi}' \cdot \lb \frac{\pd B^{-1}}{\pd z} \circ \psi\rb^\dag = -\conj{\psi}' \cdot \lb \big(B^{-1} B' B^{-1}\big) \circ \psi\rb^\dag.
\]
Plugging this into \eqref{eqDuLine1} we get
\[
\D u = -\conj{\psi}' \cdot \big(\big(B'B^{-1}\big)^\dagger\circ \psi\big),
\]
and composing both sides with $\psi$ (which is an involution) gives the first result.  Reflection in the equator is given by $z \mapsto 1/\conj{z}$, and the second result follows from an easy calculation.

\ref{grpitm3} Simply apply the previous part to get
\begin{align*}
\Res_{1/\conj{a}}{\D u} &= \lim_{z \rightarrow 1/\conj{a}} (z-1/\conj{a}) \cdot \D u(z)
\\ &= \lim_{z \rightarrow a} (1/\conj{z}-1/\conj{a}) \cdot \D u \circ c(z)
\\ &= \lim_{z \rightarrow a} -\lb \frac{z(z-a)}{a} \cdot \D u (z)\rb^\dagger
\\ &= - ( \Res_a \D u )^\dagger.
\end{align*}

\ref{grpitm2} Suppose that the image of $u$ is not contained in a linear subspace of dimension less than $\deg u$.  Then we can choose homogeneous coordinates on $\P S^d V$ in which $u$ is given by
\[
[1: z: z^2: \dots : z^{\deg u}:0 : \dots :0]
\]
and it is easy to check that $u$ is an immersion (it's even an embedding: the rational normal curve in the subspace it spans).  Reparametrising $u$ if necessary, we may also assume that $\infty$ is not a pole.  Since $u'=\D u\cdot u$, the fact that $u$ is an immersion ensures that $\D u$ has no zeros in $\C\P^1 \setminus \{\infty\}$, and by a change of coordinate $w=1/z$ we see that $\D u$ vanishes to order $2$ at $\infty$.  By \protect \MakeUppercase {L}emma\nobreakspace \ref {labGrpDerRes}, $\D u$ has a simple pole at each pole of $u$.

Therefore $\D u$ has $n$ poles of order $1$, at $a_1, \dots, a_n \in \C$ say, and a single zero of order $2$, at $\infty$.  So the components of $(z-a_1)\dots(z-a_n)\D u$, with respect to an arbitrary basis of $\mathfrak{sl}(2, \C)$, are polynomials $f_1$, $f_2$ and $f_3$ in $z$ such that $\gcd \{f_1, f_2, f_3\}=1$ and $\max_i \deg f_i = n-2$.  Reordering our basis if necessary, we may assume that $\deg f_1 = n-2$, then $[\D u]$ intersects the line in $\P \mathfrak{sl}(2, \C)$ given by the vanishing of the first component with total multiplicity $n-2$.  Hence $\deg [\D u] = n-2$.
\end{proof}

\subsection{Partial indices and transversality}
\label{sscParInd}

Recall that a Riemann--Hilbert pair $(E, F)$ comprises a holomorphic rank $n$ vector bundle $E$ over the disc $D$, along with a smooth totally real rank $n$ subbundle $F$ over the boundary $\pd D$.  By a result of Oh \cite[Theorem I]{Oh} (following Vekua \cite{Vek} and Globevnik \cite[Lemma 5.1]{Glob}), such a pair can be split as a direct sum $(E, F) \cong \oplus_{i=1}^n (E_i, F_i)$ of rank $1$ Riemann--Hilbert pairs (where the $\cong$ indicates an isomorphism of holomorphic bundles over $D$ preserving the subbundles over $\pd D$), and for each $i$ there exists a partial index $\kappa_i \in \Z$ and a holomorphic trivialisation of $E_i$ in which the fibre of $F_i$ at the point $z \in \pd D$ is given by $z^{\kappa_i/2}\R \subset \C$.  See \cite[Section 2]{EL1} for a fuller discussion, on which our treatment is based.

There is a Cauchy--Riemann operator $\conj{\pd}$, taking smooth sections of $E$ which lie in $F$ when restricted to $\pd D$ to $E$-valued $(0, 1)$-forms on $D$; strictly we need to pass to appropriate Sobolev completions to do the analysis, but this will not concern us.  For rank $1$ pairs in the standard form $(\C, z^{\kappa/2}\R)$, with $\kappa \geq 0$, we can explicitly write down the kernel of $\conj{\pd}$:
\[
\ker \conj{\pd} = \Bigg\{\sum_{r=0}^\kappa a_r z^r : a_r \in \C \text{ with } a_r=\conj{a}_{\kappa-r} \text{ for all } r \Bigg\}.
\]
For example, for $\kappa=0$ the only solutions are real constants, for $\kappa=1$ a basis is given by $(1+z)$ and $i(1-z)$, whilst for $\kappa=2$ a basis is given by $z$, $(1+z^2)$ and $i(1-z^2)$.  More generally, for any integer $\kappa$ we have
\[
\dim \ker \conj{\pd} = \max \{\kappa + 1, 0\} \text{ and } \dim \coker \conj{\pd} = \max \{-(\kappa+1), 0\},
\]
and hence the index of $\conj{\pd}$ is $\kappa+1$.  Note that all dimensions here are over $\R$---the boundary condition imposed by $F$ means that the spaces of sections involved do not have natural complex structures.  Returning to the case of a general Riemann--Hilbert pair $(E, F)$, of arbitrary rank, the operator $\conj{\pd}$ splits into operators $\conj{\pd}_i$ of index $\kappa_i + 1$ on each summand $(E_i, F_i) \cong (\C, z^{\kappa_i/2}\R)$, so the total $\conj{\pd}$-operator has index $n + \sum_i \kappa_i$.

Note that the fibrewise $\C$-linear span of the elements of $\ker \conj{\pd}$ is precisely the span of the summands $E_i$ of non-negative partial index (similarly, the $\R$-linear span of their boundary values is the span of the corresponding $F_i$).  More generally, if we fix an integer $\kappa$ and consider the Riemann--Hilbert pair $(E, z^{-\kappa/2}F)$ then the fibrewise span of the elements of $\ker \conj{\pd}$ for this pair is the span of the summands of the \emph{original} pair whose partial indices are at least $\kappa$.  In this way, we see that the filtration
\[
\cdots \supset \bigoplus_{i : \kappa_i \geq -1} (E_i, F_i) \supset \bigoplus_{i : \kappa_i \geq 0} (E_i, F_i) \supset \bigoplus_{i : \kappa_i \geq 1} (E_i, F_i) \supset \cdots
\]
is uniquely determined by $(E, F)$, and hence so are the tuple of partial indices and the spaces
\[
\bigoplus_{i : \kappa_i \geq \kappa} (E_i, F_i) \bigg/ \bigoplus_{i : \kappa_i \geq \kappa+1} (E_i, F_i).
\]
This filtration was exploited by Evans--Lekili in the proof of \cite[Lemma 3.12]{EL1}.  However, the span of the summands of a given partial index is not determined in general: consider for example $(E, F) = (\C^2, \R \oplus z^{1/2}\R)$, which has one obvious splitting by the natural basis $e_1$ and $e_2$ of $\C^2$, but can in fact be split by the basis
\[
e_1 + \lb a(1+z)+bi(1-z)\rb e_2 \text{ and } e_2,
\]
for any real numbers $a$ and $b$.

Given a holomorphic disc $u \mc (D, \pd D) \rightarrow (X_C, L_C)$, there is an associated rank $3$ Riemann--Hilbert pair $(E, F)=(u^*TX_C, u|_{\pd D}^* TL_C)$, and we refer to the partial indices of this pair as the partial indices of $u$.  It is easy to see directly from the definitions that the sum of the partial indices of $u$ is its Maslov index $\mu(u)$.  The disc $u$ is regular if and only if $\coker \conj{\pd} = 0$, i.e.~if and only if all of its partial indices are at least $-1$.  In this case the moduli space
\[
\widetilde{\mathcal{M}}_0([u])
\]
of unmarked parametrised holomorphic discs in the same homology class as $u$ is a smooth manifold near $u$, of the correct dimension, with tangent space
\[
T_u \widetilde{\mathcal{M}}_0([u]) = \ker \conj{\pd}.
\]
Evans--Lekili \cite[Lemma 2.11 and Lemma 3.2]{EL1} showed using homogeneity that in fact all $\kappa_i$ are non-negative, an argument which we review shortly.

Our motivation for analysing partial indices is to prove transversality results for various evaluation maps on moduli spaces of discs.  In particular, we are interested in showing that the maps $\ev_2 \mc M_4 \rightarrow L_C^2$ and $\ev_1^\mathrm{int} \mc M_4^{\mathrm{int}} \rightarrow X_C$ (as defined in Section\nobreakspace \ref {sscModAndEv}) are submersions at certain points.  To answer these questions we pull back $\ev_2$ and $\ev_1^{\mathrm{int}}$ under the (surjective) projections
\[
\coprod_{\{A \in H_2(X_C, L_C) : \mu(A)= 4\}} \widetilde{\mathcal{M}}_0(A) \rightarrow M_4, \quad u \mapsto [u, -1, 1]
\]
and
\[
\coprod_{\{A \in H_2(X_C, L_C) : \mu(A)= 4\}} \widetilde{\mathcal{M}}_0(A) \rightarrow M_4^{\mathrm{int}}, \quad u \mapsto [u, 0],
\]
which allows us to work with moduli spaces of parametrised discs with fixed marked points, namely $-1$ and $1$ in the first case and $0$ in the second.  Using this simplification, it is easy to see from the explicit form of $\ker \conj{\pd}$ above that $\ev_2$ and $\ev_1^\mathrm{int}$ are submersions at a parametrised disc $u$ if and only if all partial indices of $u$ are at least $1$ (cf.~\cite[Lemma 2.12]{EL1}), and in fact the positions of the marked points, which we chose to be $\pm 1$ and $0$, are irrelevant.

Given a holomorphic disc $u$ and a meromorphic map $\xi$ from $D$ to $\mathfrak{sl}(2, \C)$, let $\xi \cdot u$ denote the meromorphic section of $E=u^*TX_C$ defined by $z \mapsto \xi(z) \cdot u(z) \in T_{u(z)}X_C$.  For any $u$ and any basis $\alpha$, $\beta$, $\gamma$ of $\mathfrak{su}(2)$ we then have holomorphic sections $\alpha \cdot u$, $\beta \cdot u$ and $\gamma \cdot u$ of $u^*TX_C$ which form a global frame for $F=u|_{\pd D}^*TL_C$ when restricted to $\pd D$.  In particular, the fibrewise $\R$-linear span of the boundary values of the elements of $\ker \conj{\pd}$ is the whole of $F$, which shows that all partial indices are non-negative.  This is roughly the argument used by Evans--Lekili.

Before looking at index $4$ discs we warm up by considering an index $2$ disc:

\begin{lem} \label{labInd2Pars} The partial indices of an index $2$ disc $u$ are $0$, $0$ and $2$.
\end{lem}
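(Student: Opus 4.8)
The plan is to reduce to the model disc $u_v$ and then read off the partial indices from a single, well-chosen element of $\ker\conj{\pd}$ that vanishes at the pole.

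First I would observe that the partial indices of a disc are unchanged both by the action of $\SU(2)$ on discs and by reparametrisation, since each of these induces an isomorphism of the associated Riemann--Hilbert pair. By Corollary \ref{labInd2Pol} it therefore suffices to treat $u = u_v$. Write $(E, F) = (u_v^*TX_C, u_v|_{\pd D}^*TL_C)$. Two facts are already available: all partial indices $\kappa_1, \kappa_2, \kappa_3$ of $u_v$ are non-negative (the homogeneity argument recalled above, using that $\alpha\cdot u_v, \beta\cdot u_v, \gamma\cdot u_v$ for a basis $\alpha,\beta,\gamma$ of $\mathfrak{su}(2)$ give a frame for $F$ over $\pd D$ lying in $\ker\conj{\pd}$), and they sum to $\mu(u_v) = 2$ by Lemma \ref{labAxInds}. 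Hence the multiset $\{\kappa_1, \kappa_2, \kappa_3\}$ is either $\{0, 0, 2\}$ or $\{0, 1, 1\}$, and the whole content of the lemma is to decide which.

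The key point is that $\xi_v \cdot u_v$, the infinitesimal action of the vertex-axis generator, is a nonzero element of $\ker\conj{\pd}$ that \emph{vanishes at the pole} $z = 0$. It is a holomorphic section of $E$ over all of $D$ because the $\SL(2,\C)$-action on $X_C$ is holomorphic; its boundary values lie in $F$ because $\xi_v \in \mathfrak{su}(2)$ and $L_C$ is an $\SU(2)$-orbit; it is not identically zero because $u_v(1) = C$ is not fixed by any nontrivial $e^{t\xi_v}$; and it vanishes at $z = 0$ because $u_v(0)$ is the configuration consisting of a single point at the top and a $(d-1)$-fold point at the bottom of the axis of $\xi_v$, both of which are fixed by the rotation $e^{t\xi_v}$, so the vector field $\xi_v \cdot(-)$ on $X_C$ vanishes at the point $u_v(0)$. (As a cross-check, from $\D u_v = -i\xi_v/z$ and $u_v' = \D u_v \cdot u_v$ one gets $\xi_v\cdot u_v = \pm i z\, u_v'$, and $u_v'(0)\neq 0$ by the proof of Lemma \ref{labVanOrd}, so this zero is simple.)

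Finally I would feed this into the partial-index formalism. In the splitting $(E, F) \cong \bigoplus_{i=1}^3 (E_i, F_i)$, with each $E_i$ holomorphically trivialised so that $F_i$ at $z$ is $z^{\kappa_i/2}\R$, the section $\xi_v\cdot u_v$ has $i$-th component $\sum_{r=0}^{\kappa_i} a_r^{(i)} z^r$ with $a_r^{(i)} = \conj{a}_{\kappa_i - r}^{(i)}$. Vanishing at $z = 0$ forces $a_0^{(i)} = 0$, hence also $a_{\kappa_i}^{(i)} = 0$, so the $i$-th component is nonzero only if $\kappa_i \geq 2$. Since $\xi_v\cdot u_v \neq 0$, some $\kappa_i \geq 2$; together with $\kappa_i \geq 0$ and $\sum_i\kappa_i = 2$ this forces the partial indices to be $0$, $0$ and $2$. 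The one step needing genuine care — and whose analogue should resurface in the index $4$ analysis — is the geometric input of the previous paragraph: that the pole of an index $2$ disc sits at a point of $Y_C \setminus N_C$ whose $\SL(2,\C)$-stabiliser contains the one-parameter subgroup generating the disc, so that the associated Killing field on $X_C$ supplies a solution of $\conj{\pd}$ with a zero there.
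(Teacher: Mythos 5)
Your proof is correct, but it takes a genuinely softer route than the paper's. Both arguments hinge on the same geometric fact --- that $\xi_v\cdot u_v$ is a non-trivial element of $\ker\conj{\pd}$ vanishing at the pole --- but the paper divides this section by $z$ and exhibits an explicit holomorphic frame $\alpha\cdot u$, $\beta\cdot u$, $(\xi_v/z)\cdot u$ that splits the Riemann--Hilbert pair (checking linear independence at $0$ via the vanishing order of $(\alpha\cdot u)\wedge(\beta\cdot u)\wedge(\xi_v\cdot u)$, which is controlled by the Maslov index), and then reads the indices $0$, $0$, $2$ directly off the boundary behaviour. You instead avoid constructing any splitting: you combine the a priori non-negativity of the partial indices with the constraint $\sum_i\kappa_i=\mu(u)=2$ to reduce to two candidate multisets, and rule out $\{0,1,1\}$ by observing that in a summand of partial index $0$ or $1$ the reality condition $a_r=\conj{a}_{\kappa-r}$ kills every kernel element once $a_0=0$, so a non-trivial kernel element with an interior zero forces some $\kappa_i\geq 2$. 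What your version buys is brevity and the avoidance of verifying that the frame actually induces a splitting; what it loses is the explicit splitting itself, which is the thing the paper actually develops here and reuses --- the analogous frames in \protect\MakeUppercase{L}emma\nobreakspace\ref{labInd4AxPars} and \protect\MakeUppercase{L}emma\nobreakspace\ref{labInd4Pars}, and the identification of the index-$2$ summand with the tangent line to the disc, feed directly into \protect\MakeUppercase{C}orollary\nobreakspace\ref{labInd4Trans} and the orientation comparison in \protect\MakeUppercase{L}emma\nobreakspace\ref{labCOSign}, where a soft counting argument of your kind would not suffice.
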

\begin{proof} We have seen that $u$ is axial of type $\xi_v$, so (up to reparametrisation) is of the form $z \mapsto A e^{-i \xi_v \log z} \cdot C$ for some $A \in \SU(2)$.  Acting by $A^{-1}$, which clearly doesn't change the isomorphism class of the corresponding Riemann--Hilbert pair $(E, F)$, we may assume that in fact $A$ is the identity.  The infinitesimal action of $\mathfrak{sl}(2, \C)$ at $u(z)$ is surjective except at $u(0) \in Y_C \setminus N_C$, where it has rank $2$, with kernel spanned by $\xi_v$.  If we take a basis $\alpha$, $\beta$, $\gamma = \xi_v$ of $\mathfrak{su}(2)$, we therefore have holomorphic sections $\alpha \cdot u$, $\beta \cdot u$ and $\gamma \cdot u$ of $E$ which are $\C$-linearly independent everywhere except at $0$, where $\gamma \cdot u$ vanishes.

By viewing the Maslov index of $u$ as twice the intersection with an anticanonical divisor, as in \protect \MakeUppercase {L}emma\nobreakspace \ref {labIndInt}, we have that
\[
(\alpha \cdot u) \wedge (\beta \cdot u) \wedge (\gamma \cdot u)
\]
vanishes to order $1$ at $0$.  Therefore
\[
v_1 \coloneqq \alpha \cdot u \text{, } v_2 \coloneqq \beta \cdot u \text{ and } v_3 \coloneqq \frac{\gamma}{z} \cdot u
\]
are linearly independent on the whole of $D$, including $0$, so form a holomorphic global frame for $E$.  Moreover they induce a splitting of the Riemann--Hilbert pair, meaning that for each $i$ the holomorphic line bundle spanned by $v_i$ meets $F$ in a real line bundle over $\pd D$.  One can immediately read off that $F$ is given by the real span of $v_1$, $v_2$ and $zv_3$ (restricted to $\pd D$), so the partial indices are $0$, $0$ and $2$.
\end{proof}

We note in passing that the index $2$ subbundle is the tangent bundle to the disc, generated by reparametrisations.  To see this, note that $u$ lifts to $e^{-i \xi_v \log z}$ away from zero, so
\[
\D u = -\frac{i \xi_v}{z},
\]
and hence $v_3 = i\D u \cdot u=iu'$ (strictly we defined the group derivative for closed curves but clearly we could have just as well defined it for discs; alternatively we could work with the double $\double{u}$ of our disc).

Next we consider axial index $4$ discs.  Such discs are either of type $\xi_v$ and order $2$ or of type $\xi_f$ and order $1$.  In the former case (which we are not really concerned with) the disc is a double cover of an axial index $2$ disc, so the partial indices are $0$, $0$ and $4$ from \protect \MakeUppercase {L}emma\nobreakspace \ref {labInd2Pars}.  In contrast, for the latter case we have the following result:

\begin{lem} \label{labInd4AxPars}  If $u$ is an axial index $4$ disc of type $\xi_f$ (meaning that its pole is of type $\xi_f$) then the partial indices of $u$ are $1$, $1$ and $2$.
\end{lem}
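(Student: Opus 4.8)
The plan is to run the same kind of argument as in the proof of Lemma~\ref{labInd2Pars}: normalise $u$, write down an explicit holomorphic frame of $E := u^*TX_C$ over $D$ which splits the Riemann--Hilbert pair $(E,F) := (u^*TX_C,\,u|_{\pd D}^*TL_C)$, and read off the partial indices. Since acting by $\SU(2)$ and reparametrising do not change the isomorphism class of $(E,F)$, by Corollary~\ref{labInd4Pol} I may assume $u = u_f$ with its $\xi_f$-axis vertical, so that $u$ has a single pole, at $0$, of type $\xi_f$ and order $1$ and index $4$, with $u(0) = p := [y^d] \in N_C$; as in the proof of Lemma~\ref{labIntNC}, in the chart $[\sum_i b_i x^iy^{d-i}] \mapsto (b_j/b_0)_{j \geq 1}$ this disc reads $u(z) = \sum_{i \geq 1} a_i z^i x^{3i}y^{d-3i}$ with $a_1 \neq 0$. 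I would then fix the $\C$-basis $\xi_f = \tfrac{i}{6}\eta_H$, $\alpha := \eta_X - \eta_Y$, $\beta := i(\eta_X + \eta_Y)$ of $\mathfrak{sl}(2,\C)$ (the first three spanning $\mathfrak{su}(2)$ over $\R$), in the notation $\eta_H,\eta_X,\eta_Y$ of the proof of Lemma~\ref{labInd2Ax}.

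The frame will be $v_1 := \alpha \cdot u$, $v_2 := \tfrac1z\,\eta_X \cdot u$, $v_3 := \tfrac1z\,\xi_f \cdot u\ (= iu')$. Here $\xi_f \cdot u = iz u'$ by axiality, so $v_3$ is holomorphic and, since $u'(0) \neq 0$, nonvanishing at $0$; and $\eta_X$ fixes $p = [y^d]$, so $\eta_X \cdot u$ vanishes at $0$ and $v_2$ is holomorphic. Because $\{\alpha,\eta_X,\xi_f\}$ is a $\C$-basis of $\mathfrak{sl}(2,\C)$, the wedge $v_1 \wedge v_2 \wedge v_3$ equals a nonzero constant times $z^{-2}(\sigma \circ u)$, and by Corollary~\ref{labIndFor} (together with Lemma~\ref{labsigmaOrd}) $\sigma \circ u$ vanishes to order exactly $\mu(u)/2 = 2$ at $0$; hence $v_1 \wedge v_2 \wedge v_3$ is nowhere zero and $v_1,v_2,v_3$ is a holomorphic frame of $E$ over all of $D$. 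On $\pd D$ one has $\C v_3 \cap F = \R(z v_3) = \R(\xi_f \cdot u)$, a line of winding $1$, so the summand $\C v_3$ contributes partial index $2$.

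It remains to show the complementary rank-$2$ subbundle $E' := \C v_1 \oplus \C v_2$, with $F' := E'|_{\pd D} \cap F$, has partial indices $(1,1)$ rather than $(0,2)$. Writing $\eta_X = \tfrac12\alpha - \tfrac{i}{2}\beta$ gives $\beta \cdot u = -iv_1 + 2iz v_2$, so $F'_z = \R(\alpha\cdot u(z)) \oplus \R(\beta\cdot u(z))$ is a rank-$2$ subbundle lying in $E'|_{\pd D}$ (and nothing larger, as $\xi_f \cdot u = z v_3 \notin E'$); thus $(E,F) = (E',F') \oplus (\C v_3, \C v_3 \cap F)$. Since both partial indices of $(E',F')$ are $\geq 0$ by homogeneity and sum to $2$, it suffices to rule out a partial index $0$; by the filtration discussion in Section~\ref{sscParInd} this amounts to showing $\ker\conj{\pd} = 0$ for the pair $(E', z^{-1}F')$, equivalently that the only holomorphic section $\rho$ of $E'$ over $D$ with $\rho(0) = 0$ and $\rho|_{\pd D} \subset F'$ is $\rho = 0$. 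Writing $\rho = f v_1 + g v_2$ and translating the condition $\rho|_{\pd D}\subset F'$ into coordinates (via $\beta\cdot u = -iv_1+2izv_2$), one finds that $f,g$ are forced to be polynomials of degree $\leq 1$ and $\leq 2$ subject to a handful of reality relations among their coefficients; imposing $\rho(0)=0$ kills their constant terms, and the reality relations then propagate to kill the rest, giving $\rho=0$ and hence partial indices $1,1,2$.

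The main obstacle is this last step: distinguishing $(1,1)$ from $(0,2)$ for $(E',F')$. The subtlety is that $v_1 = \alpha\cdot u$ has boundary values literally in $F$, so naively it ``looks like'' an index-$0$ summand; what saves the argument is that $\C v_1$ is \emph{not} one of the line summands of the actual decomposition, precisely because of the \emph{complex} (rather than real) linear dependence $\beta\cdot u(0) = -i\,\alpha\cdot u(0)$ forced at the pole. Passing to the cokernel criterion of Section~\ref{sscParInd} lets one avoid choosing the summands and reduces everything to the short linear-algebra computation about $\rho$. (Alternatively one could blow up $X_C$ along $N_C$ as in Lemma~\ref{labInd4Ax}, where the proper transform of $u$ is an index-$2$ axial disc, and track how the relative tangent sheaf of the blowup redistributes the two ``lost'' units of index as $1+1$ rather than $2+0$ --- but verifying that redistribution is essentially the same computation.)
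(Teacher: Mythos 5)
Your proposal is correct in substance and follows the same overall strategy as the paper's proof: normalise $u$ to the standard axial form with its pole at $0 \in D$ evaluating to $[y^d] \in N_C$, build a holomorphic frame for $E = u^*TX_C$ out of the infinitesimal $\mathfrak{sl}(2,\C)$-action (dividing by $z$ exactly where the action degenerates), and certify that it is a frame by comparing $v_1 \wedge v_2 \wedge v_3$ with $\sigma \circ u$, which vanishes to order $\mu(u)/2 = 2$. Where you genuinely diverge is the final step. The paper exhibits the splitting explicitly, replacing the naive frame by $\frac{(1+z)\alpha+i(1-z)\beta}{z}\cdot u$, $\frac{i(1-z)\alpha-(1+z)\beta}{z}\cdot u$ and $\frac{\gamma}{z}\cdot u$, whose boundary values (after multiplying the first two by $z^{1/2}$ and the third by $z$) visibly have real span equal to $F$, so $1,1,2$ is read off directly. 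You instead split off only the index-$2$ line $\C v_3$ and distinguish $(1,1)$ from $(0,2)$ on the rank-$2$ complement via the kernel criterion for the shifted pair $(E', z^{-1}F')$. I checked that this closes: the boundary condition forces $g(z) = -z^2\conj{g(z)}$ on $\pd D$ (so $g$ has degree $\leq 2$ with $g_2 = -\conj{g_0}$ and $g_1 \in i\R$) and $f - g/(2z)$ real (so $f$ has degree $\leq 1$ with $f_1 = -\conj{g_0}$ and $\im f_0 = g_1/(2i)$), after which $\rho(0)=0$ kills every coefficient. Your route trades the need to guess the correct splitting sections for a short explicit Fourier-coefficient check; both work, and the cokernel criterion is arguably the more robust tool if one wanted to treat other pole types.

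One repairable error: with the paper's conventions $\eta_X$ acts by $y \mapsto x$, so $\eta_X \cdot [y^d] \propto [xy^{d-1}] \neq 0$ in $T_{[y^d]}X_C$; it is $\eta_Y$, not $\eta_X$, whose infinitesimal action vanishes at $p=[y^d]$ (equivalently $\alpha + i\beta = -2\eta_Y$, consistent with the paper's description of the kernel at $u(0)$ as $\lspan{\xi_f, \alpha+i\beta}$). You should therefore take $v_2 = \frac{1}{z}\,\eta_Y \cdot u$; everything else goes through verbatim after swapping $\eta_X \leftrightarrow \eta_Y$, which merely flips the sign in your identity $\beta \cdot u = -iv_1 + 2izv_2$ to $+iv_1 + 2izv_2$ and has no effect on the reality conditions.
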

\begin{proof}  Now we may assume that $u$ is of the form $z \mapsto e^{-i \xi_f \log z} \cdot C$, so the infinitesimal action of $\mathfrak{sl}(2, \C)$ has rank $1$ at $u(0) \in N_C$ (but is surjective at $u(z)$ for all non-zero $z$).  The kernel at $u(0)$ is spanned by $\gamma = \xi_f$ and by $\alpha + i\beta$, where $\alpha$, $\beta$, $\gamma$ is a basis of $\mathfrak{su}(2)$ corresponding to infinitesimal right-handed rotations about a right-handed set of orthogonal axes.  Now $(\alpha + i \beta)\cdot u$ and $\gamma \cdot u$ both vanish at $0$, and similar Maslov index considerations show that the sections
\[
\frac{\alpha+i \beta}{z} \cdot u\text{, }(\alpha-i\beta)\cdot u\text{ and } \frac{\gamma}{z} \cdot u
\]
form a holomorphic global frame for $E$.  However, they do not induce a splitting of the Riemann--Hilbert pair.  Instead we must take linear combinations of the first two in order to get a frame which interacts well with $F$:
\[
v_1 \coloneqq \frac{(1+z)\alpha+i(1-z)\beta}{z} \cdot u\text{, }v_2 \coloneqq \frac{i(1-z)\alpha-(1+z)\beta}{z} \cdot u \text{ and } v_3 \coloneqq \frac{\gamma}{z} \cdot u.
\]
Now $F$ is the real span of $z^{1/2}v_1$, $z^{1/2}v_2$ and $zv_3$, so the partial indices are $1$, $1$ and $2$.
\end{proof}

Analogously to the index $2$ case we have that the index $2$ subbundle is the tangent bundle to $u$.

Now we deal with non-axial index $4$ discs:

\begin{lem} \label{labInd4Pars} Suppose $u$ is a non-axial index $4$ disc whose double $\double{u}$ is not contained in a linear subspace of dimension less than $\deg \double{u}$.  If $[\D \double{u}]$ is not a double cover of a line then $u$ has partial indices $1$, $1$, and $2$.
\end{lem}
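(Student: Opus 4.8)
The plan is to read off the Riemann--Hilbert splitting type of $(E,F):=(u^*TX_C, u|_{\pd D}^*TL_C)$ from the infinitesimal $\SL(2,\C)$-action along $u$, and then to reduce everything to the group derivative of the double. By Corollary~\ref{labInd4Pol} a non-axial index $4$ disc has exactly two poles, both of type $\xi_v$ and order $1$, at interior points $a_1,a_2$, and (Lemma~\ref{labVertPol}) $u$ meets $Y_C$ transversely at each; hence $\double u$ has the four simple poles $a_1,a_2,1/\conj a_1,1/\conj a_2$, of types $\xi_v,\xi_v,-\xi_v,-\xi_v$ by Lemma~\ref{labRigQA}\ref{Rigitm3}. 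As $\double u$ is quasi-axial and non-constant, the assumption that $\double u$ does not lie in a linear subspace of dimension less than $\deg\double u$, together with Lemma~\ref{labGrpDerProps}\ref{grpitm2}, forces $[\D\double u]$ to have degree exactly $2$. Viewed as a meromorphic $\mathfrak{sl}(2,\C)$-valued $1$-form, $\D\double u$ then has simple poles exactly at those four points, with residues $\zeta_1,\zeta_2,-\zeta_1^\dagger,-\zeta_2^\dagger$ (Lemma~\ref{labGrpDerRes} and Lemma~\ref{labGrpDerProps}\ref{grpitm3}), where $\zeta_j:=\Res_{a_j}\D\double u$, summing to zero by the residue theorem. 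Since the partial indices of $u$ are all non-negative (by homogeneity) and sum to its Maslov index $4$, the conclusion $(\kappa_1,\kappa_2,\kappa_3)=(1,1,2)$ is equivalent to ruling out a partial index equal to $0$.

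The mechanism is as follows. The infinitesimal action gives a bundle map $\phi\colon \mathcal{O}^{\oplus 3}\to TX_C$ (the summands carrying the fibre $\mathfrak{sl}(2,\C)$) with $\det\phi=\sigma$, which vanishes simply along $Y_C\setminus N_C$. When $C$ is $O$ or $I$ the involution $\tau$ is global (Proposition~\ref{labAntInv}) and $\double u$ avoids $N_C$, so $\double u^*TX_C$ is the double of $(E,F)$ and hence $\cong\mathcal{O}(\kappa_1)\oplus\mathcal{O}(\kappa_2)\oplus\mathcal{O}(\kappa_3)$; pulling $\phi$ back along $\double u$ yields $0\to\mathcal{O}^{\oplus 3}\xrightarrow{\phi}\double u^*TX_C\to Q\to 0$ with $Q$ a torsion sheaf of length $1$ at each of the four poles. (For $C=\tri$ or $T$, where $\double u$ necessarily meets $N_C$, I would instead run the same computation for the Riemann--Hilbert pair $(E,F)$ itself over $D$, with $Q$ of length $1$ at $a_1,a_2$.) Tensoring with $\mathcal{O}(-2)$ (respectively twisting the totally real boundary condition down by $2$) and taking the long exact sequence, the number of vanishing partial indices equals $h^1$ of the twisted-down pair, which is the cokernel of the connecting map $\delta\colon H^0(Q)\to H^1(\mathcal{O}(-2))\otimes\mathfrak{sl}(2,\C)\cong\mathfrak{sl}(2,\C)$. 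Since $\double u'=\phi(\D\double u)$ and $\double u'$ is a local holomorphic section of $\double u^*TX_C$ whose value at each pole is transverse to $Y_C$, hence generates $Q$ there, $\delta$ sends the generator of the factor of $Q$ at a pole $b$ to $\Res_b\phi^{-1}(\double u')=\Res_b\D\double u$. Therefore $\img\delta=\lspan{\zeta_1,\zeta_2,-\zeta_1^\dagger,-\zeta_2^\dagger}_\C$, and the claim reduces to showing these four residues span all of $\mathfrak{sl}(2,\C)$.

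Here the second hypothesis enters. Choosing an affine coordinate with no pole at infinity, write $\D\double u=P(z)/\prod_{j=1}^4(z-b_j)$ with $P$ an $\mathfrak{sl}(2,\C)$-valued polynomial of degree $\le 2$; since $[\D\double u]=[P]$ has degree $2$, the polynomial $P$ has degree exactly $2$ and its three scalar components have no common factor. As $\Res_{b_j}\D\double u$ is a nonzero multiple of $P(b_j)$, the four residues span fewer than three dimensions only if $P(b_1),\dots,P(b_4)$ all lie in a common hyperplane $\ker\lambda\subset\mathfrak{sl}(2,\C)$; but then the degree-$\le 2$ polynomial $\lambda\circ P$ vanishes at the four distinct points $b_j$, so $\lambda\circ P\equiv 0$ and the image of $[\D\double u]=[P]$ lies on the line $\{\lambda=0\}$. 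Being a degree-$2$ map onto a line, $[\D\double u]$ would then be a double cover of a line, contrary to hypothesis. Hence the residues span $\mathfrak{sl}(2,\C)$, $\delta$ is surjective, no partial index vanishes, and $(\kappa_1,\kappa_2,\kappa_3)=(1,1,2)$.

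The part requiring real care is the precise identification of the connecting homomorphism with the residue map, and in particular handling $C\in\{\tri,T\}$: there $\double u$ unavoidably meets $N_C$, where $\tau$ degenerates, so $\double u^*TX_C$ is no longer the double of $(E,F)$, and one must carry out the cohomology computation on the Riemann--Hilbert pair over $D$, checking that reflecting the disc across $\pd D$ feeds precisely the adjoint residues $-\zeta_j^\dagger$ into the boundary term so that the relevant $H^1$-class again depends only on the span of $\zeta_1,\zeta_2,\zeta_1^\dagger,\zeta_2^\dagger$. This bookkeeping is the step I expect to be fiddliest; everything else is the structural argument above plus the transversality and index facts already established.
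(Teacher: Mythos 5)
Your argument is genuinely different from the paper's and, for $C=O$ or $I$, it is essentially correct. The paper proves the lemma by exhibiting an explicit splitting frame $v_1,v_2,v_3$ for the Riemann--Hilbert pair, built from the real and imaginary parts $\xi_R,\xi_I$ of the residue at one pole together with $iu'$, and checking pointwise linear independence; the non-degeneracy of the conic $[\D \double{u}]$ enters through the statement that it meets the line $\lspan{\xi_R,\xi_I}$ only at $0$ and $\infty$, transversely (transversality supplies the independence of the next Laurent coefficient $\eta$, which is what makes the frame non-degenerate at the pole itself). Your route replaces this frame construction by the Euler-type sequence $0\to\mathcal{O}^{\oplus3}\to\double{u}^*TX_C\to Q\to0$, the identification of the connecting map with the residues of $\D\double{u}$, and the count $\#\{\kappa_i=0\}=\dim\coker\delta$; the hypothesis on $[\D\double{u}]$ is then converted into surjectivity of $\delta$ by the pleasant observation that a linear functional applied to $\D\double{u}$ clears denominators to a polynomial of degree at most $2$, so cannot vanish at four points without vanishing identically. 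This is conceptually cleaner --- it explains \emph{why} the non-degeneracy of the conic is the right condition, namely that it is exactly the spanning of $\mathfrak{sl}(2,\C)$ by the four residues --- and it avoids the case-by-case independence checks at the second pole, at the price of the connecting-map bookkeeping.

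The gap is exactly where you flag it, but your proposed repair for $C=\tri$ or $T$ fails as literally stated. If you run the computation ``over $D$ with $Q$ of length $1$ at $a_1,a_2$'', then $H^0(Q)$ is only $2$-dimensional and $\delta$ maps into a $3$-dimensional space, so it can never be surjective and you would falsely conclude that some partial index vanishes; the adjoint residues $-\zeta_j^\dagger$ cannot be fed in afterwards as a ``boundary term'' --- they must appear as honest generators of $H^0(Q)$. The correct fix is to double the exact sequence of Riemann--Hilbert pairs rather than the geometric curve: $\phi$ is a map of pairs from $(\mathcal{O}^{\oplus3},\mathfrak{su}(2))$ to $(E,F)$, so it doubles to a sheaf map $\mathcal{O}_{\C\P^1}^{\oplus3}\to\double{E}\cong\oplus_i\mathcal{O}(\kappa_i)$ whose determinant, a section of $\mathcal{O}(4)$, vanishes simply at $a_1$, $a_2$, $1/\conj{a}_1$, $1/\conj{a}_2$; the cokernel of this abstract double has length $1$ at all four points, the sphere computation then goes through verbatim, and Lemma~\ref{labGrpDerProps}\ref{grpitm3} supplies the residues $-\zeta_j^\dagger$ at the reflected points. (For $\tri$ and $T$ the geometric double $\double{u}^*TX_C$ has degree $8$, respectively $6$, rather than $\sum_i\kappa_i=4$, and its Euler sequence degenerates along $N_C$ where the infinitesimal action drops to rank $1$, so it genuinely cannot be used.) With that replacement your argument is complete.
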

\begin{proof}  By \protect \MakeUppercase {C}orollary\nobreakspace \ref {labInd4Pol} $u$ must have two poles of type $\xi_v$ and order $1$, so they evaluate to $Y_C \setminus N_C$ and $u'$ is transverse to $Y_C$ there.  The infinitesimal action of $\mathfrak{sl}(2, \C)$ at the poles has rank $2$ and the kernel is spanned by the residue of $\D u$, otherwise $u'$ would blow up.  After reparametrising we may assume that one of the poles is at $0$, where the residue of $\D u$ is $\xi_R + i \xi_I$, with $\xi_R$ and $\xi_I$ in $\mathfrak{su}(2)$.  We then have that
\[
v_1 \coloneqq \frac{(1+z)\xi_R + i(1-z)\xi_I}{z} \cdot u \text{, } v_2 \coloneqq \frac{i(1-z)\xi_R - (1+z)\xi_I}{z} \cdot u \text{ and } v_3 \coloneqq i \D u \cdot u = iu'
\]
define holomorphic sections of $E$.

By \protect \MakeUppercase {L}emma\nobreakspace \ref {labGrpDerProps}\ref{grpitm1} $[\D \double{u}]$ takes the value $[\xi_R - i \xi_I]$ at $\infty$; we use the notation $\D \double{u}$ when we wish to emphasise that we are thinking about the group derivative of the double $\double{u}$, rather than just the hemisphere $\D u$ coming from $u$ itself.  We know from \protect \MakeUppercase {L}emma\nobreakspace \ref {labGrpDerProps}\ref{grpitm2} that $[\D \double{u}]$ has degree $2$, so assuming it is not a double cover of a line we deduce that it is a smooth conic.  In particular, $\xi_R$ and $\xi_I$ are linearly independent---otherwise $[\D u(0)]$ would be equal to $[\D u(\infty)]$---and $\D \double{u}$ meets the line $\lspan{\xi_R, \xi_I}$ they span in $\P \mathfrak{sl}(2, \C)$ at $0$ and $\infty$ only.  Moreover, these two intersections are transverse.  The latter means that if $\D \double{u}$ has Laurent expansion
\[
\D \double{u} (z) = \frac{\xi_R + i\xi_I}{z} + \eta + \dots
\]
about $0$, with $\eta$ in $\mathfrak{sl}(2, \C)$, then $\eta$ (which generates the tangent direction to $[\D \double{u}]$ at $0$) is linearly independent of $\xi_R$ and $\xi_I$.  Therefore $v_1$, $v_2$ and $v_3$ are linearly independent in the fibre over $0$: if $V$ denotes $\lim_{z \rightarrow 0} (\xi_R + i \xi_I)/z \cdot u(z)$ then
\[
v_1(0) = V + (\xi_R - i \xi_I) \cdot u(0) \text{, } v_2(0) = iV - i(\xi_R - i \xi_I) \cdot u(0) \text{ and } v_3(0) = V + \eta \cdot u(0).
\]

At all points $z \in D$ that are not poles, the $v_j$ are again linearly independent since $\xi_R$, $\xi_I$ and $i \D u(z)$ span the whole of $\mathfrak{sl}(2, \C)$ ($\D u(z)$ is non-zero by the proof of \protect \MakeUppercase {L}emma\nobreakspace \ref {labGrpDerProps}\ref{grpitm2}, so it spans the line $[\D u(z)]$, and we have seen that the latter is not contained in $\lspan{\xi_R, \xi_I}$) and hence their infinitesimal action generates the fibre of $E$.  And at the other pole $a$ of $u$, $\xi_R$ and $\xi_I$ are linearly independent of $[\D u(a)]$, which generates the kernel of the infinitesimal action of $\mathfrak{sl}(2, \C)$, so $v_1(a)$ and $v_2(a)$ span $T_{u(a)}Y_C$.  Since $v_3(a)$ is transverse to this subspace, we see that the $v_j$ are also linearly independent in the fibre over $a$.

We conclude that $v_1$, $v_2$ and $v_3$ form a holomorphic frame for $E$.  It is easy to see that the boundary bundle $F$ is the real span of $z^{1/2}v_1$, $z^{1/2}v_2$ and $zv_3$, and hence the partial indices are $1$, $1$ and $2$.
\end{proof}

We can now deduce the result we want:

\begin{cor} \label{labInd4Trans} If $u \mc (D, \pd D) \rightarrow (X_C, L_C)$ is an index $4$ holomorphic disc which is either axial of type $\xi_f$ or is non-axial but satisfies the hypotheses of \protect \MakeUppercase {L}emma\nobreakspace \ref {labInd4Pars}, then $\ev_2$ and $\ev_1^\mathrm{int}$ are both submersions at $u$ (irrespective of the positions of the marked points).
\end{cor}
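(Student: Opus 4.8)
The plan is to read the result straight off the criterion established at the end of Section~\ref{sscParInd}. After pulling $\ev_2$ and $\ev_1^{\mathrm{int}}$ back along the surjective projections from $\coprod_{\{A\,:\,\mu(A)=4\}}\widetilde{\mathcal{M}}_0(A)$ onto $M_4$ (via $u\mapsto[u,-1,1]$) and onto $M_4^{\mathrm{int}}$ (via $u\mapsto[u,0]$), the explicit description of $\ker\conj{\pd}$ for a split Riemann--Hilbert pair shows that the pulled-back maps are submersions at a parametrised disc $u$ exactly when every partial index of $u$ is at least $1$, and that this is insensitive to the positions of the marked points. Since the projections are surjective and the pulled-back maps factor as $\ev\circ(\text{projection})$, surjectivity of the differential upstairs at $u$ forces surjectivity of the differential of $\ev$ at the image point; so the corollary reduces to checking that, in each case of the hypothesis, all partial indices of $u$ are $\geq 1$.

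This is precisely what the preceding lemmas supply. If $u$ is axial of type $\xi_f$, then Lemma~\ref{labInd4AxPars} gives partial indices $1,1,2$. If $u$ is non-axial and satisfies the hypotheses of Lemma~\ref{labInd4Pars} --- that $\double{u}$ lies in no linear subspace of dimension less than $\deg\double{u}$, and that $[\D\double{u}]$ is not a double cover of a line --- then Lemma~\ref{labInd4Pars} again gives partial indices $1,1,2$. In both cases all three partial indices are at least $1$, so by the criterion above $\ev_2$ and $\ev_1^{\mathrm{int}}$ are submersions at $u$, whatever the positions of the marked points.

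There is no real obstacle in the corollary itself: the substantive work has already been done --- in the classification of index~$4$ poles (Corollary~\ref{labInd4Pol}), in setting up the group derivative, and above all in the partial-index computations of Lemmas~\ref{labInd4AxPars} and~\ref{labInd4Pars}. The only point to keep straight is the bookkeeping: a boundary marked point raises the relevant dimension by $1$ and an interior one by $2$, and the threshold ``all partial indices $\geq 1$'' is exactly what makes $\ker\conj{\pd}$, evaluated at the two boundary points (for $\ev_2$, landing in $T_{u(-1)}L_C\times T_{u(1)}L_C$) or at the interior point (for $\ev_1^{\mathrm{int}}$, landing in $T_{u(0)}X_C$), surject onto the target. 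Since this is already folded into the statement recalled from Section~\ref{sscParInd}, the corollary follows with no new input.
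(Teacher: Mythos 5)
Your argument is exactly the paper's: the corollary is reduced to the criterion from Section~\ref{sscParInd} that the evaluation maps are submersions precisely when all partial indices are at least $1$, and this is supplied by Lemma~\ref{labInd4AxPars} in the axial case and Lemma~\ref{labInd4Pars} in the non-axial case. The paper's proof is a two-line version of the same reduction, so your proposal is correct and follows the same route.
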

\begin{proof}
We remarked earlier that it is enough to show that the partial indices of $u$ are all at least $1$, and this follows from \protect \MakeUppercase {L}emma\nobreakspace \ref {labInd4AxPars} or \protect \MakeUppercase {L}emma\nobreakspace \ref {labInd4Pars}.
\end{proof}

\subsection{Degree control}
\label{sscDegCont}

In Section\nobreakspace \ref {sscAntInv} we saw that every holomorphic disc $u \mc (D, \pd D) \rightarrow (X_C, L_C)$ extends to a sphere $\double{u}$.  It is important for us to have control over the degree of this sphere, in terms of the index of the original disc.

For $C$ equal to $O$ or $I$, the antiholomorphic involution $\tau$ extends to the whole of $X_C$ and for any disc $u$ we have $\mu(\double{u}) = 2 \mu(u)$, since intersections of $\double{u}$ with $Y_C$ inside $D$ pair up with their reflections outside $D$, and each member of the pair has the same intersection multiplicity.

The situation is more complicated for $C$ equal to $\tri$ or $T$ so in these cases we restrict our attention to quasi-axial discs.  We have seen that these are particularly well-behaved and are really all that we need to understand.  So suppose that $u$ is a quasi-axial holomorphic disc with $n$ poles at points $a_1, \dots, a_n$ of orders $k_1, \dots, k_n$.  The poles can be divided into four classes: those of type $\xi_v$, type $\xi_e$, type $\xi_f$, and generic.  Let the poles in the $\xi_v$ class be those at $\{a_i : i \in I_v\}$, and similarly let $I_e$, $I_f$ and $I_g$ index the poles in the $\xi_e$, $\xi_f$ and generic classes.  We then have, using \protect \MakeUppercase {E}xample\nobreakspace \ref {labAxPol}, that
\[
\mu(u)=\sum_i \mu_{a_i}(u)=2\sum_{i \in I_v} k_i + 6\sum_{i \in I_e} k_i + 4\sum_{i \in I_f} k_i + 12\sum_{i \in I_g} k_i.
\]

By \protect \MakeUppercase {L}emma\nobreakspace \ref {labRigQA}\ref{Rigitm3} we have that poles of type $\xi$ and order $k$ reflect to poles of type $-\xi$ and order $k$.  For $C=\tri$ we have $-\xi_v = \xi_e$ and $-\xi_f = \xi_f$ up to conjugation by $\Gamma_C$ (and generic class poles remain in this class under reflection) so
\[
\mu(\double{u})=8\sum_{i \in I_v} k_i + 8\sum_{i \in I_e} k_i + 8\sum_{i \in I_f} k_i + 24\sum_{i \in I_g} k_i.
\]
Similarly, for $C=T$ we have $-\xi_v = \xi_f$ and $-\xi_e = \xi_e$, so
\[
\mu(\double{u})=6\sum_{i \in I_v} k_i + 12\sum_{i \in I_e} k_i + 6\sum_{i \in I_f} k_i + 24\sum_{i \in I_g} k_i.
\]
In particular, if all poles of $u$ are of type $\xi_v$ (or, equivalently, $u$ doesn't hit $N_C$) then we have $\mu(\double{u})=4\mu(u)$ for $\tri$ and $\mu(\double{u})=3\mu(u)$ for $T$.  Recall that by \protect \MakeUppercase {L}emma\nobreakspace \ref {labVertPol} the condition that $u$ doesn't hit $N_C$ in fact automatically forces it to be quasi-axial.

In order to translate this information about Maslov index into control over degree, we use the fact that $\mu$ restricted to $H_2(X_C)$ is just $2c_1(X_C)=2l_C H$, where $H \in H^2(X_C; \Z)$ is the hyperplane class and $l_C$ is $4$, $3$, $2$ and $1$ for $C$ equal to $\tri$, $T$, $O$ and $I$.  Explicitly, we have
\[
\deg (\double{u}) = H \cdot [\double{u}] = \frac{\ip{c_1(X_C)}{[\double{u}]}}{l_C}=\frac{\mu(\double{u})}{2l_C}.
\]
Table\nobreakspace \ref {tabDegCont} gives the resulting degrees.

\begin{center}
\vspace{0.15cm}
\makebox[2.5cm]{}
\begin{tabular}{cccc}
\addlinespace[\abovetopsep]\cmidrule[\heavyrulewidth]{1-3}
$C$ & $\mu(\double{u})/\mu(u)$ & $\deg (\double{u}) / \mu(u)$ & \\ \midrule
$\tri$ & $4$ & $1/2$ & \multirow{2}{*}{\parbox{2.7cm}{\centering Assuming $u$ does not hit $N_C$}} \\
$T$ & $3$ & $1/2$ & \\ \midrule[\cmidrulewidth]
$O$ & $2$ & $1/2$ & \\
$I$ & $2$ & $1$ & \\
\cmidrule[\heavyrulewidth]{1-3}\addlinespace[-\belowrulesep]\addlinespace[\belowbottomsep]
\end{tabular}
\vspace{0.15cm}
\captionof{table}{Degree control for doubles of holomorphic discs $u$ on $L_C$.}
\label{tabDegCont}
\end{center}

\subsection{The closed--open map I}
\label{sscCOI}

Recall that for a closed, connected, monotone, oriented, spin Lagrangian $L$ in a closed symplectic manifold $X$, and a ring $R$, there is a unital $\Z/2$-graded ring homomorphism, the closed--open string map
\[
\CO \mc QH^*(X; R) \rightarrow HF^*(L, L; R).
\]
We work with the pearl model of the codomain, and assume that discs in pearly trajectories are parametrised so that incoming flowlines enter discs at $-1$ and outgoing flowlines exit at $1$.  Then to compute $\CO$ on a given class $\phi \in QH^*(X; R)$, we fix a Poincar\'e dual cycle $\PD(\phi)$ and count rigid pearly trajectories from a chosen Morse cocycle representing the unit $1_L \in HF^*(L, L; R)$ to arbitrary critical points $y$, in which one of the discs maps $0$ to $\PD(\phi)$.  From now on we will assume that our Morse function on $L$ is chosen so that it has a unique minimum, which then represents the unit (we can do this since $L$ is connected).

For each $y$, the moduli space of ordinary pearly trajectories from the minimum to $y$, of index $\mu$ (i.e.~such that the sum of the indices of the discs is $\mu$), has virtual dimension $|y|-1+\mu$, where $|y|$ is the degree (Morse index) of $y$.  In this formula the reparametrisations of the discs fixing $\pm 1$ have been quotiented out, but when we introduce the interior marked point at $0$ in one of the discs we lose the freedom we had to reparametrise it, so the dimension increases by $1$.  Intersecting with $\PD(\phi)$ then cuts down the dimension by $|\phi|$, so the moduli space of `pearly trajectories with a disc mapping $0$ to $\PD(\phi)$' has virtual dimension $|y|+\mu-|\phi|$.  Therefore trajectories contributing to the coefficient $y$ in $\CO(\phi)$ have total index $\mu = |\phi|-|y|$.

For details of this construction and its properties see \cite[Section 2.5]{BCLQH} and \cite[Theorem 2.1.1(ii)]{BCQS}.  These papers describe a multiplication
\[
QH^i(X; R) \otimes HF^j(L, L; R) \rightarrow HF^{i+j}(L, L; R),
\]
making $HF^*(L, L; R)$ into a two-sided algebra over $QH^*(X; R)$ (meaning elements of $QH^*(X; R)$ graded-commute with everything in $HF^*(L, L; R)$), and the map $\CO$ is simply `multiplication by the unit $1_L$'.

There are several superficial differences between the approach taken by Biran--Cornea and the one we use here, which we briefly mention.  Firstly, they work over a Novikov ring, whereas we simply set the Novikov parameter to $1$ and collapse the grading to $\Z/2$.  Secondly, they work with coefficients modulo $2$, whilst we work over an arbitrary ground ring---the necessary orientation arguments are given in \cite[Appendix A]{BCEG}.  Thirdly, they work with homological grading, instead of the \emph{co}homological version we employ.  The latter simplifies gradings for product operations, and means that we flow \emph{up} the Morse function rather than down.  Fourth, they work directly with `quantum (co)homology' of $L$, which we notationally identify throughout with Floer (co)homology.  And, finally, they use the Morse model for $QH^*(X; R)$, whereas we use the singular model.

As a simple example, $\CO(1_X)$ involves only trajectories of index $0$ (so there is a unique disc and it is constant), with outputs of Morse index $0$.  The cycle $\PD(1_X)$ is the whole of $X$, so the disc is unconstrained, and we just need to count (increasing) Morse trajectories from the minimum of the Morse function to itself.  There is clearly a unique such trajectory---the constant one---and we see that $\CO(1_X)=1_L$.

The Auroux--Kontsevich--Seidel criterion \cite[Proposition 6.8]{Au}, which we now briefly review, is obtained by considering $\CO(c_1(X))$:

\begin{prop}\label{labAKS}  Let $X$ be a closed symplectic manifold, and $L \subset X$ a closed, connected, monotone, oriented and spin Lagrangian.  If the self-Floer cohomology $HF^*(L, L; k)$ is non-zero over a field $k$ then the (signed) count $\mathfrak{m}_0$ of index $2$ discs through a generic point of $L$ is an eigenvalue of quantum multiplication by the first Chern class $c_1(X) \qcup \mc QH^*(X; k) \rightarrow QH^*(X; k)$.
\end{prop}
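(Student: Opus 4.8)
The plan is to reduce the statement to two inputs: first, that the closed--open map $\CO \mc QH^*(X; k) \rightarrow HF^*(L, L; k)$ is a unital ring homomorphism, so that $HF^*(L, L; k)$ is a unital module over $QH^*(X; k)$ with $\phi \cdot 1_L = \CO(\phi)$ (this is exactly the Biran--Cornea structure recalled above); and second, the geometric identity $\CO(c_1(X)) = \mathfrak{m}_0 \cdot 1_L$. Granting these, the argument is purely algebraic. Since $X$ is closed, $QH^*(X; k)$ is a finite-dimensional commutative $k$-algebra, so an element is invertible precisely when multiplication by it is a bijective $k$-linear endomorphism of $QH^*(X; k)$. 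If $HF^*(L, L; k) \neq 0$ then its unit $1_L$ is non-zero, and from $\CO(c_1(X)) = \mathfrak{m}_0 \cdot 1_L$ we get $(c_1(X) - \mathfrak{m}_0 \cdot 1_X) \cdot 1_L = c_1(X) \cdot 1_L - \mathfrak{m}_0 \cdot 1_L = 0$. Were $a \coloneqq c_1(X) - \mathfrak{m}_0 \cdot 1_X$ a unit, then $1_L = (a^{-1}a) \cdot 1_L = a^{-1} \cdot (a \cdot 1_L) = 0$, a contradiction; hence $a$ is not a unit, so multiplication by $a$ on $QH^*(X; k)$ has non-trivial kernel, i.e.\ $\det\big( c_1(X) \qcup {} - \mathfrak{m}_0 \cdot \id \big) = 0$, which says precisely that $\mathfrak{m}_0$ is an eigenvalue of $c_1(X) \qcup$.

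The real content, and the part I expect to take the most work, is the identity $\CO(c_1(X)) = \mathfrak{m}_0 \cdot 1_L$. I would establish this from the definition of $\CO$ in the pearl model. By the dimension count recalled above, a rigid pearly trajectory contributing to the coefficient of a critical point $y$ in $\CO(c_1(X))$ has total disc index $\mu = |c_1(X)| - |y| = 2 - |y|$; since $L$ is monotone with minimal Maslov number $2$ (Lemma~\ref{labMonot}) and one of the discs must carry an interior marked point constrained to a cycle of real codimension $2$, the only surviving configurations have $\mu = 2$, realised by a single Maslov-$2$ disc, which forces $|y| = 0$, so the output is a multiple of the unit. To pin down the multiple I would take the Poincar\'e-dual representative $\PD(c_1(X))$ to be a smooth representative of the divisor whose class is $c_1(X)$ (in our application, $Y_C$): by Lemma~\ref{labIndInt} every Maslov-$2$ disc meets such a divisor exactly once and transversely, so the interior constraint is automatically satisfied at the unique pole, and the count of the remaining pearly configurations -- a single Maslov-$2$ disc with a Morse flowline in and out at the unit -- collapses to the signed count of Maslov-$2$ discs through a generic point of $L$, which is $\mathfrak{m}_0$ by definition.

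The main obstacle in making the second paragraph precise is excluding the potential \emph{classical} contributions -- pearly trajectories in which the marked point sits on a constant (Maslov-$0$) disc, so that a purely Morse-theoretic broken flowline is asked to pass through $\PD(c_1(X))$ -- and checking that the orientation signs match the definition of $\mathfrak{m}_0$; handling both uses monotonicity together with a generic choice of the Poincar\'e-dual representative, and is exactly the computation carried out by Auroux in \cite[Proposition 6.8]{Au}. In practice I would therefore simply invoke Auroux's result, after checking that our conventions for $\CO$, for $QH^*(X; k)$, and for $\mathfrak{m}_0$ agree with his, and then run the three-line algebraic argument of the first paragraph. One should also keep in mind that the statement presupposes the standard monotone framework in which $HF^*(L, L; k)$ and $\mathfrak{m}_0$ are defined (minimal Maslov number at least $2$), which is the case in all our examples by Lemma~\ref{labMonot}.
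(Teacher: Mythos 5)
Your proposal is correct and follows essentially the same route as the paper: the algebraic step (non-vanishing of $HF$ forces $c_1 - \mathfrak{m}_0\cdot 1_X$ to be non-invertible, since $\CO$ is unital and kills it) is identical, and the geometric identity $\CO(c_1)=\mathfrak{m}_0\cdot 1_L$ is established the same way, by representing $\PD(c_1)$ by a cycle $Y$ disjoint from $L$ dual to $\mu/2\in H^2(X,L)$, so that constant (index $0$) discs never meet $Y$ and each index $2$ disc meets it with algebraic multiplicity $1$, reducing the count to $\mathfrak{m}_0$. The paper likewise defers the fine points to Auroux's original argument, so there is no substantive difference.
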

\begin{proof}
Since $L$ is orientable its Maslov class $\mu \in H^2(X, L; \Z)$ is divisible by $2$, and by Poincar\'e duality we can pick a cycle $Y \subset X \setminus L$ representing $\mu/2$.  Note that since $\mu/2$ maps to $c_1(X)$ in $H^2(X; \Z)$ the cycle $Y$ represents $\PD(c_1(X))$.  We now compute $\CO(c_1)$ using this representative.

The contributions are either trajectories of index $0$ with outputs of index $2$ or trajectories of index $2$ with outputs of index $0$.  By construction, each index $2$ disc bounded by $L$ has intersection number $1$ with $Y$, so for the index $2$ trajectories we can just ignore the incidence condition with $Y$ and count things of the form `flow up from the minimum of the Morse function, enter an index $2$ disc, and exit at the minimum'.  Generically this amounts to simply counting index $2$ discs through the minimum, of which there are $\mathfrak{m}_0$, so $\CO(c_1)=\mathfrak{m}_0 \cdot 1_L$.  Each index $0$ disc, meanwhile, has intersection number $0$ with $Y$, so all index $0$ trajectories cancel.

We therefore have $\CO(c_1-\mathfrak{m}_0 \cdot 1_X)=0_L$, so if $HF^*(L, L; k)$ is non-zero then $c_1-\mathfrak{m}_0 \cdot 1_X$ cannot be invertible in $QH^*(X; k)$: $\CO$ maps invertibles in $QH(X; k)$ to invertibles in $HF^*(L, L; k)$, and if the latter is non-zero then $0_L$ is not invertible.  So quantum multiplication $(c_1-\mathfrak{m}_0 \cdot 1_X) \qcup$ is singular, and thus $\mathfrak{m}_0$ is an eigenvalue of $c_1 \qcup$.
\end{proof}

Strictly of course one needs to be careful about the genericity of the auxiliary data: the Morse function, metric and almost complex structure.  The machinery of Biran--Cornea (for example \cite[Proposition 3.1.2]{BCLQH}) allows one to fix the first two, and then choose a generic almost complex structure, so if one is using a particular integrable complex structure $J$ to compute $\mathfrak{m}_0$ it is enough to know that it gives the same answer as a generic (not necessarily integrable) one.  And for this it is enough to know that the index $2$ discs are regular for $J$ (i.e.~all of their partial indices are at least $-1$), and that there are no $J$-holomorphic index $2$ spheres passing through our Morse minimum on $L$.  A standard cobordism argument shows that the values of $\mathfrak{m}_0$ calculated using two such almost complex structures agree.

Now consider $X=X_C$, $Y=Y_C$ and $L=L_C$.  If $v_C$ denotes the number of vertices of the configuration $C$ (we called this $d$ before, and shall still continue to use the notation $d$ outside the context of disc counts) then from \protect \MakeUppercase {P}roposition\nobreakspace \ref {labInd2MS} we have $\mathfrak{m}_0=\pm v_C$, so $\CO(c_1)=\pm v_C \cdot 1_{L_C}$ and if $HF^*(L_C, L_C; k)$ is non-zero then $\pm v_C$ is an eigenvalue of $c_1 \qcup$.  This eigenvalue argument was used by Evans--Lekili \cite[Remark 1.2]{EL1} to show that over a field $k$ we can have $HF(L_\tri, L_\tri; k)\neq 0$ only if $\Char k = 5$ or $7$.

We can also compute the value of $\CO$ on the class dual to the curve $N_C$ when $C \neq I$:
\begin{prop} \label{labCONC} If $f_C$ denotes the number of faces of the configuration $C$ (recalling that the triangle $\tri$ is to be thought of as having two faces) then for $C=\tri$, $T$ or $O$ we have $\CO(\PD (N_C))=\pm f_C \cdot 1_{L_C}$.  In these cases, if $HF^*(L_C, L_C; k)\neq 0$ over a field $k$ then $\pm f_C$ is an eigenvalue of $\PD (N_C)\qcup$.
\end{prop}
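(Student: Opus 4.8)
\emph{Proof proposal for Proposition~\ref{labCONC}.}
The plan is to follow the proof of Proposition~\ref{labAKS} essentially verbatim, with the anticanonical cycle replaced by $N_C$ and the index~$2$ discs replaced by the index~$4$ discs of type $\xi_f$. Since $N_C\subset Y_C$ has complex codimension $2$, the class $\PD(N_C)$ lies in $H^4(X_C;\Z)$, and as $N_C\subset X_C\setminus L_C$ we may use $N_C$ itself as its Poincar\'e-dual cycle; I would then compute $\CO(\PD(N_C))$ in the pearl model. A pearly trajectory contributing to the coefficient of a critical point $y$, with one of its discs sending $0$ to $N_C$, has total disc index $\mu=|\PD(N_C)|-|y|=4-|y|$; orientability of $L_C$ makes all disc indices even and $\dim L_C=3$, so $|y|\in\{0,2\}$, and the case $|y|=2$ would require a single index~$2$ disc meeting $N_C$, which is impossible by Corollary~\ref{labInd2Pol}. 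Hence only the minimum contributes and $\mu=4$, so as our Morse function has a unique minimum representing $1_{L_C}$ the output is a multiple of $1_{L_C}$, and it remains to show the multiple is $\pm f_C$.

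The structural input is the disc classification. Since an index~$4$ trajectory consists of a single index~$4$ disc or of two index~$2$ discs, and index~$2$ discs miss $N_C$ (Corollary~\ref{labInd2Pol}), the disc carrying the $N_C$-incidence must be a single index~$4$ disc, with the remaining two flowlines running to the minimum. By Lemma~\ref{labIntNC} a clean intersection with $N_C$ contributes at least $2$ and a non-clean one at least $3$, so an index~$4$ disc meeting $N_C$ does so cleanly at a single point where it absorbs its whole index; by Corollary~\ref{labInd4Pol} such a disc is an $\SU(2)$-translate of $u_f$, meeting $N_C$ precisely at its unique pole. Reparametrising so that pole lies at $0\in D$, the incidence $0\mapsto N_C$ is automatically satisfied---just as each index~$2$ disc automatically hits the anticanonical cycle once in Proposition~\ref{labAKS}---and is transversally cut out, since $\ev_1^{\mathrm{int}}$ is a submersion at such a disc by Corollary~\ref{labInd4Trans}. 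Thus $\CO(\PD(N_C))$ equals, up to sign, the number of index~$4$ discs of type $\xi_f$ through the minimum, i.e.\ the degree of $\ev_1$ on the once-marked moduli space of such discs.

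To evaluate this degree I would argue exactly as in Proposition~\ref{labInd2MS}. The moduli space of unparametrised index~$4$ discs of type $\xi_f$ is diffeomorphic to $N_C\cong S^2$ via $u\mapsto u(0)$ (the position of the resulting $d$-fold point), the once-marked version is a circle bundle over it, and $\ev_1$ to $L_C$ is an $\SU(2)$-equivariant submersion, hence---the moduli space being compact in the cases at hand---a covering map. For generic $p\in L_C$ its fibre is counted by the index~$4$ discs of type $\xi_f$ through $p$: such a disc is $u_{\xi,p}$ for $\xi$ the suitably scaled generator of a rotation of the configuration $p$ about the centre of one of its faces (it meets $N_C$ at the antipodal point), and there are exactly $f_C$ faces, so the degree is $\pm f_C$ and $\CO(\PD(N_C))=\pm f_C\cdot 1_{L_C}$. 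The eigenvalue statement then follows word for word as in Proposition~\ref{labAKS}: $\CO$ is a unital ring homomorphism, so $\CO(\PD(N_C)-(\pm f_C)1_{X_C})=0_{L_C}$, and if $HF^*(L_C,L_C;k)\neq 0$ then $0_{L_C}$ is not invertible, so $\PD(N_C)-(\pm f_C)1_{X_C}$ is not invertible in $QH^*(X_C;k)$, i.e.\ $\pm f_C$ is an eigenvalue of $\PD(N_C)\qcup$.

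The main obstacle I expect is controlling degenerations (and signs) in this index~$4$ count, which is also exactly why the icosahedron is excluded. For $C=\tri$ or $T$ the minimal Chern number $l_C\geq 3$ means any bubbled sphere has Maslov index $\geq 6>4$, so no sphere bubbling occurs in a Maslov-$4$ configuration, while a Maslov-$4$ disc carrying the $N_C$-incidence cannot split into two index~$2$ discs since those miss $N_C$; for $C=O$ (where $l_O=2$) one must additionally discard the stratum of a constant disc carrying a Maslov-$4$ sphere bubble through the minimum that hits $N_O$, which has negative virtual dimension and so is generically empty. For $C=I$ ($l_I=1$), by contrast, a Maslov-$2$ sphere can bubble off a Maslov-$2$ disc at an interior point and this configuration does contribute, which is why Proposition~\ref{labCONC} does not cover the icosahedron. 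The remaining routine points are replacing the integrable complex structure by a generic almost complex structure (regularity of the discs involved is supplied by Corollary~\ref{labInd4Trans}) and pinning down the global sign, which is why only $\pm f_C$, rather than $f_C$, is asserted.
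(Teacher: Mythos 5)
Your overall strategy coincides with the paper's: reduce to a single index $4$ disc meeting $N_C$, classify such discs as $\SU(2)$-translates of $u_f$ via Corollary \ref{labInd4Pol} and Lemma \ref{labVertPol}, use Corollary \ref{labInd4Trans} for transversality of $\ev_1^{\mathrm{int}}$, identify the count with the degree $\pm f_C$ of boundary evaluation on an $S^2$'s worth of axial discs as in Proposition \ref{labInd2MS}, and conclude by the eigenvalue argument of Proposition \ref{labAKS}. The degree bookkeeping, the exclusion of disc bubbling (no index $0$ or $2$ disc meets $N_C$), the triangle and tetrahedron cases (where the minimal Chern number kills all sphere bubbling below index $6$), and your explanation of why the icosahedron is excluded are all as in the paper.

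The gap is in the octahedron case. The dangerous degeneration is a constant ghost disc at the Morse minimum carrying a Maslov index $4$ sphere bubble --- a projective line in $X_O$ --- at the interior marked point, joining the minimum to $N_O$. You discard this stratum because it has negative virtual dimension and "so is generically empty", but this configuration must be excluded at the end $t=0$ of the cobordism between the integrable $J$ and the generic $J'$, and $J$ is precisely \emph{not} generic: the whole reason the cobordism is needed is that the explicit classification of index $4$ discs is only available for the integrable structure. Negative virtual dimension does not imply emptiness for a non-generic almost complex structure, and the usual trick of using the infinitesimal group action to force transversality fails once the curve lies in the compactification divisor. The paper closes this by an explicit geometric computation: it exhibits a basis of $T_{O_v}X_O \subset \P S^6V$ at $O_v=[x^5y-xy^5]\in L_O$ via the infinitesimal $\mathfrak{sl}(2,\C)$-action, checks by hand that no point $[(ax+y)^6]$ of $N_O$ lies in this tangent space (so no line through $O_v$ and $N_O$ is even tangent to $X_O$ at $O_v$), and then invokes $\SU(2)$-homogeneity to conclude that no line in $X_O$ meets both $N_O$ and $L_O$. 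Without an argument of this kind, compactness of the cobordism at $t=0$, and hence the equality of the $J$- and $J'$-counts, is not established for the octahedron.
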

\begin{proof}
Fix a choice of $C$ from $\tri$, $T$ and $O$, and let $J'$ be a generic compatible almost complex structure on $X_C$.  Recall that $J$ denotes the standard integrable complex structure, and that $X_C$ has first Chern class $4H$, $3H$ and $2H$ for the three configurations $C$ respectively.  By \protect \MakeUppercase {L}emma\nobreakspace \ref {labIntNC} no $J$-holomorphic disc $u \mc (D, \pd D) \rightarrow (X_C, L_C)$ of index $0$ or $2$ meet $N_C$, so the same is true for $J'$-holomorphic discs if $J'$ is sufficiently close to $J$.  Otherwise there would exist a sequence $(J_n)_{n=1}^\infty$ tending to $J$, and a sequence $(u_n)_{n=1}^\infty$ of $J_n$-holomorphic discs of index at most $2$ which hit $N_C$.  By Gromov compactness some subsequence of the $u_n$ would converge to a $J$-holomorphic stable map of index at most $2$ which meets $N_C$, and no such maps exist (there can be no sphere bubbles since the minimal Chern number is greater than $2$, so any bad stable map would contain a disc component of index at most $2$ which hits $N_C$, of which there aren't any).

From \protect \MakeUppercase {C}orollary\nobreakspace \ref {labInd4Pol} and \protect \MakeUppercase {L}emma\nobreakspace \ref {labVertPol} the only $J$-holomorphic index $4$ discs hitting $N_C$ are the axial ones of type $\xi_f$ and order $1$.  By \protect \MakeUppercase {C}orollary\nobreakspace \ref {labInd4Trans} the interior marked point evaluation map $\ev_1^\mathrm{int}$ is transverse to $N_C$ at such discs, so we can form a cobordism from the space of $J$-holomorphic index $4$ discs mapping an interior marked point to $N_C$ to the corresponding space of $J'$-holomorphic discs, by picking a generic homotopy $J_t$ from $J_0 = J$ to $J_1 = J'$.  If this cobordism is compact then the signed count of such $J$-holomorphic discs which also map a boundary marked point to a fixed generic point of $L$ agrees with the $J'$-holomorphic count.

Assuming the homotopy is sufficiently generic, compactness over $t \in (0, 1]$ is ensured by the usual arguments---any bubbling gives rise to an element of a transversely cut out moduli space of negative virtual dimension.  However, at $t=0$ we need to be careful and rule out bubbled configurations by hand, since $J$ is not itself generic.  If discs or spheres bubble off anywhere other than the interior marked point, we can delete the bubbles and obtain a disc of index less than $4$ which has boundary on $L_C$ and meets $N_C$.  We have already seen that no such discs exist, so we are left to consider bubbling of spheres at the marked point.

For the triangle and tetrahedron this is ruled out immediately by the minimal Chern number: there simply are no non-constant $J$-holomorphic spheres of index at most $4$.  For the octahedron, however, index $4$ spheres \emph{do} exist, and a priori may appear as bubbles connecting $N_O$ to a constant `ghost' disc on $L_O$.  Since $c_1(X_O)$ is twice the hyperplane class, such a sphere would be a projective line inside $X_O$, passing through both $N_O$ and $L_O$.  We claim that no lines have this property.

To see this, consider the point $O_v \in L_O$ given in Appendix\nobreakspace \ref {secExplReps}, namely $[x^5y-xy^5] \in \P S^6V$.  The tangent space to $X_O$ at $O_v$ is given by the infinitesimal action of $\mathrm{SL}(2, \C)$.  Note that $\eta_H = \lb\begin{smallmatrix}1 & 0 \\ 0 & -1\end{smallmatrix}\rb \in \mathfrak{sl}(2, \C)$ acts by $x \mapsto x$, $y \mapsto -y$, whilst $\eta_X = \lb\begin{smallmatrix}0 & 1 \\ 0 & 0\end{smallmatrix}\rb$ acts by $x \mapsto 0$, $y \mapsto x$ and $\eta_Y = \lb\begin{smallmatrix}0 & 0 \\ 1 & 0\end{smallmatrix}\rb$ acts by $x \mapsto y$, $y \mapsto 0$.  Using the Leibniz rule we deduce that $T_{O_v}X_O$ is the linear span of
\begin{multline*}
[x^5y-xy^5] \text{, } [\eta_X \cdot (x^5y-xy^5)] = [x^5y+xy^5] \text{, } [\eta_X \cdot (x^5y-xy^5)] = [x^6-5x^2y^4] \text{,} \\ [\eta_Y \cdot (x^5y-xy^5)] = [5x^4y^2-y^6].
\end{multline*}
Points of $N_O$ are of the form $[(ax+y)^6]$ for $a \in \C\P^1$, and a simple calculation (by hand) shows that no such points are contained in this tangent space.  In other words, no line in $\P S^6 V$ through both $O_v$ and $N_O$ is tangent to $X_O$ at $O_v$, let alone contained in $X_O$.  By $\SU(2)$-homogeneity we deduce that the same holds with any other point of $L_O$ in place of $O_v$, and hence that no line in $X_O$ meets both $N_O$ and $L_O$, proving the claim and completing the proof of compactness of the cobordism.

Now consider the quantity $\CO(\PD(N_C))$ we wish to compute.  We use our generic $J'$ close to $J$.  For degree reasons we only need consider trajectories of index $0$, $2$ and $4$, and since there are no index $0$ or $2$ discs hitting $N_C$ we see that $\CO(\PD(N_C))$ has only index $0$ outputs, arising from trajectories containing a single disc of index $4$.  Therefore
\begin{multline*}
\CO(\PD(N_C)) = \# \{ u \mc (D, \pd D) \rightarrow (X, L) : u \text{ is } J' \text{-holomorphic of index } 4 \text{,} \\ \text{with } u(0) \in N_C \text{ and } u(1) = p \} \cdot 1_L,
\end{multline*}
where $p \in L_C$ is the minimum of our Morse function.

We have just seen that this count can be computed using $J$, for which we know explicitly that all index $4$ discs sending an interior marked point to $N_C$ are translates of $u_f$.  Arguing analogously to the case of unconstrained index $2$ discs in \protect \MakeUppercase {P}roposition\nobreakspace \ref {labInd2MS}, the moduli space of such discs is diffeomorphic to $S^2$ (by $\pi_C \circ (\text{evaluate at pole})$ again, although now the $\pi_C$ has no effect), and evaluation at a boundary marked point has degree $\pm f_C$.  The result now follows by arguing as in \protect \MakeUppercase {P}roposition\nobreakspace \ref {labAKS}, with $\PD(N_C)$ in place of $c_1$.
\end{proof}

In the case of the icosahedron, the compactness argument fails because bad bubbled configurations really do exist.  These consist of index $2$ discs with an index $2$ sphere bubble (a projective line) joining the interior marked point, which evaluates to $Y_I$, to $N_I$.  Note that in fact every point of $Y_I$ is connected by a line in $X_I$ to $N_I$.  The basic problem is that when curves or evaluation maps land in the compactification divisor we cannot use the infinitesimal group action to ensure transversality.

In order to apply the preceding result, we need to calculate $\PD (N_C)$:

\begin{lem} \label{labPDNC} We have
\[
\PD (N_C)=v_C E,
\]
where $E$ is the generator of $H^4(X_C)$ as in Section\nobreakspace \ref {sscBasProps}.
\end{lem}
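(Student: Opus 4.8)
We begin by noting that $N_C$ is a complex curve inside the complex threefold $X_C$, so it has real codimension $4$ and its Poincar\'e dual lies in $H^4(X_C; \Z)$. By the presentation of $H^*(X_C; \Z)$ recalled in Section~\ref{sscBasProps} we have $H^4(X_C; \Z) = \Z E$, so we may write $\PD(N_C) = mE$ for a unique integer $m$, and the task is to show that $m = v_C$.

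To identify $m$ I would pair both sides with the hyperplane class $H \in H^2(X_C; \Z)$. By the defining property of Poincar\'e duality, evaluation on the fundamental class gives
\[
m \cdot \langle H \cup E, [X_C] \rangle = \langle H \cup \PD(N_C), [X_C] \rangle = \langle H, [N_C] \rangle.
\]
The left-hand coefficient is $1$: in $H^6(X_C; \Z)$ we have $H^3 = H \cup H^2 = k_C \cdot (H \cup E)$, and $\langle H^3, [X_C] \rangle = k_C$ because this triple self-intersection counts the points of a transverse triple hyperplane section (see Section~\ref{sscBasProps}); since $k_C \neq 0$ this forces $\langle H \cup E, [X_C] \rangle = 1$. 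Hence $m = \langle H, [N_C] \rangle$, which is simply the degree of the curve $N_C$ in $\P S^d V$, since $H$ is the restriction of the hyperplane class of $\P S^d V$.

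Finally I would compute this degree. The curve $N_C$ consists of those $d$-point configurations in which all $d$ points coincide, so it is the image of the map $\C\P^1 \to \P S^d V$, $[a:b] \mapsto [(ax + by)^d]$; in standard coordinates this reads $[a:b] \mapsto \big[\binom{d}{0} b^d : \binom{d}{1} a b^{d-1} : \dots : \binom{d}{d} a^d\big]$. This map is given by degree $d$ forms in $[a:b]$ with no common zero and is injective onto its image, so $N_C$ is a curve of degree $d$ (after rescaling the coordinates by the nonzero binomial factors it is the standard rational normal curve of degree $d$). Therefore $m = d = v_C$, which is the desired identity $\PD(N_C) = v_C E$.

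The argument is essentially formal; the only genuinely geometric input is the recognition of $N_C$ as the degree $d$ rational normal curve, and the only point requiring a little care is the normalisation $\langle H \cup E, [X_C] \rangle = 1$, which is exactly what makes $E$, rather than some multiple of it, the relevant generator of $H^4(X_C; \Z)$.
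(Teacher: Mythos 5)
Your proposal is correct and follows essentially the same route as the paper: both reduce the computation to pairing $\PD(N_C)$ with $H$, using that $H\smile E$ is Poincar\'e dual to a point, and then identify the resulting number with the degree of $N_C$ in $\P S^dV$. The paper evaluates this degree by intersecting $N_C$ with the specific hyperplane section of configurations containing $[y]$ (a single intersection at $[y^d]$ of multiplicity $d$), whereas you recognise $N_C$ as the degree-$d$ rational normal curve; these are the same computation.
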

\begin{proof}
In $H^*(X_C)$ we have that $E\smile H$ is Poincar\'e dual to a point, so $\PD(N_C)=n_C E$ where $n_C$ is the intersection number of $N_C$ with a hyperplane section of $X_C$.  Taking our hyperplane section to be the set of $d$-point configurations in $\P V$ containing the point $[y]$, we see that there is a single intersection at $[y^d]$ with multiplicity $d=v_C$.
\end{proof}

We will see later (in \protect \MakeUppercase {C}orollary\nobreakspace \ref {labIHF}) that $HF^0(L_I, L_I; \Z)$ is isomorphic as a ring to $\Z/(8)$, from which it follows that $\CO(\PD(N_I))$ cannot possibly be $\pm f_I = \pm 20$.  Indeed, if this were the case then we would have $\CO(H)=\CO(12E) = 4 \in \Z/(8)$---so in particular $\CO(E)$ would be odd in $\Z/(8)$---and since $\CO$ is a ring homomorphism (all of our ring homomorphisms are implicitly unital) this is incompatible with the relation $E^2 = 2E + H + 4$ in $QH^*(X_I; \Z)$.

We can now put everything together:
\begin{cor} \label{labCO}  We have:
\begen
\item \label{COitm1} The closed--open map (over any ring) satisfies:
\begin{center}
\begin{tabular}{ccc} \toprule
$C$ & $\CO(c_1)$ & $\CO(\PD (N_C))$ \\ \midrule
$\tri$ & $\CO(4H)=\pm 3 \cdot 1_{L_C}$ & $\CO(3E)=\pm 2 \cdot 1_{L_C}$ \\
$T$ & $\CO(3H)=\pm 4 \cdot 1_{L_C}$ & $\CO(4E)=\pm 4 \cdot 1_{L_C}$ \\
$O$ & $\CO(2H)=\pm 6 \cdot 1_{L_C}$ & $\CO(6E)=\pm 8 \cdot 1_{L_C}$ \\
$I$ & $\phantom{12}\CO(H)=\pm 12 \cdot 1_{L_C}$ & \\
\bottomrule
\end{tabular}
\end{center}
\item \label{COitm2} If $HF^*(L_C, L_C; k) \neq 0$ over a field $k$ of characteristic $p$ then $p$ must be $5$ or $2$ for $C$ equal to $\tri$ or $T$ respectively, and $2$ or $19$ for $C=O$.  For $C=I$, $p$ must be $2$, $43$ or $571$.
\end{enumerate}
\end{cor}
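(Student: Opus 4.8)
For part~(i) the plan is simply to assemble results already established. The map $\CO$ is a unital $\Z/2$-graded ring homomorphism; applying Proposition~\ref{labAKS} with $X = X_C$, $L = L_C$ gives $\CO(c_1(X_C)) = \mathfrak{m}_0 \cdot 1_{L_C}$, while Proposition~\ref{labInd2MS} gives $\mathfrak{m}_0 = \pm v_C$. Since $c_1(X_C) = l_C H$ with $(l_C) = (4,3,2,1)$ and $(v_C) = (3,4,6,12)$ for $C = \tri, T, O, I$, this is the first column of the table. For $C = \tri, T, O$, Proposition~\ref{labCONC} gives $\CO(\PD(N_C)) = \pm f_C \cdot 1_{L_C}$, Lemma~\ref{labPDNC} gives $\PD(N_C) = v_C E$, and $(f_C) = (2,4,8)$ is the second column.

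For part~(ii), suppose $HF^*(L_C, L_C; k) \neq 0$ over a field $k$ of characteristic $p$. Then $1_{L_C} \neq 0$, so $\CO$ sends invertible elements of $QH^*(X_C; k)$ to invertible elements; in particular no invertible class lies in $\ker \CO$. By part~(i) the class $l_C H - \mathfrak{m}_0 \cdot 1_{X_C}$ lies in $\ker \CO$, hence is not invertible in the finite-dimensional commutative $k$-algebra $QH^*(X_C; k)$, which means $\mathfrak{m}_0 = \pm v_C$ is a root of the characteristic polynomial of multiplication by $l_C H$. The same argument, using Proposition~\ref{labCONC}, shows that for $C = \tri, T, O$ the integer $\pm f_C$ is a root of the characteristic polynomial of multiplication by $v_C E = \PD(N_C)$. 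Since $QH^*(X_C; \Z)$ is $\Z$-free, these characteristic polynomials are monic in $\Z[t]$ and reduce mod~$p$ to the ones over $k$, so — $\pm v_C$ and $\pm f_C$ being integers — the conditions become: $p$ divides one of $\chi_{l_C H}(v_C)$, $\chi_{l_C H}(-v_C)$, and (for $C \neq I$) one of $\chi_{v_C E}(f_C)$, $\chi_{v_C E}(-f_C)$, where $\chi_a(t) \in \Z[t]$ is the characteristic polynomial of multiplication by $a$ on $QH^*(X_C; \Z)$.

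It then remains to compute these characteristic polynomials and factor the resulting integers. Using the presentation $QH^*(X_C; \Z) = \Z[H,E]/(H^2 - k_C E - R_C,\ E^2 - Q_C)$ of Table~\ref{tabQH}, a free $\Z$-module of rank $4$ on $1, E, H, EH$, I would write down the $4 \times 4$ matrices of multiplication by $H$ and by $E$, take determinants to get $\chi_H, \chi_E$, and rescale via $\chi_{l_C H}(t) = l_C^{\,4}\chi_H(t/l_C)$, $\chi_{v_C E}(t) = v_C^{\,4}\chi_E(t/v_C)$. I expect the outcome to be: for $\tri$, $\chi_{4H}(\pm 3) = -5^2 \cdot 7$ and $\chi_{3E}(\pm 2) = 5^2$, so $p \in \{5,7\} \cap \{5\} = \{5\}$; for $T$, $\chi_{3H}(4) = -2^4 \cdot 11$, $\chi_{3H}(-4) = 2^4 \cdot 43$, $\chi_{4E}(4) = -2^8$, $\chi_{4E}(-4) = 2^8 \cdot 3$, so $p \in \{2,11,43\} \cap \{2,3\} = \{2\}$; for $O$, $\chi_{2H}(\pm 6) = -2^4 \cdot 19$ and $\chi_{6E}(8) = (2^2 \cdot 5)^2$, $\chi_{6E}(-8) = (2^2 \cdot 19)^2$, so $p \in \{2,19\} \cap \{2,5,19\} = \{2,19\}$; and for $I$, where Proposition~\ref{labCONC} supplies no $N_C$ constraint, $\chi_H(t) = t^4 - 4t^3 - 88t^2 - 300t - 304$ with $\chi_H(12) = -2^6 \cdot 43$ and $\chi_H(-12) = 2^5 \cdot 571$, so $p \in \{2,43,571\}$.

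There is no conceptual obstacle here; the care required is all organisational. One must keep in mind that the signs of $\mathfrak{m}_0$ and of $\CO(\PD(N_C))$ are not pinned down, so each eigenvalue constraint is genuinely a union over two sign choices and the final admissible set for $p$ is the intersection of these unions. The icosahedral case is by far the most laborious arithmetically — the corrections $R_I = 2H + 24$, $Q_I = 2E + H + 4$ produce the messiest characteristic polynomial, and one must verify that $571$ is prime — and it is simultaneously the weakest conclusion, precisely because the sharper $N_I$-based constraint is unavailable on account of the bad bubbled configurations discussed just before Lemma~\ref{labPDNC}.
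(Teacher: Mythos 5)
Your proposal is correct and follows essentially the same route as the paper: part (i) by assembling Propositions \ref{labAKS}, \ref{labInd2MS}, \ref{labCONC} and Lemma \ref{labPDNC}, and part (ii) by the eigenvalue/characteristic-polynomial argument with the intersection over both sign choices, and your arithmetic (including $\chi_H(12)=-2^6\cdot 43$ and $\chi_H(-12)=2^5\cdot 571$ for the icosahedron) agrees with the paper's table of characteristic polynomials. No gaps.
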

\begin{proof}
\ref{COitm1}  This follows from substituting the values of $c_1$, $v_C$ and $f_C$ into the preceding results.

\ref{COitm2}  The constraints on $p=\Char k$ come from eigenvalue considerations.  Explicitly, we have the following characteristic polynomials:

\begin{center}
\begin{tabular}{ccc} \toprule
$C$ & $\chi(c_1\qcup)$ & $\chi(\PD (N_C)\qcup)$ \\ \midrule
$\tri$ & $(\lambda^2-16)(\lambda^2+16)$ & $(\lambda^2-9)^2$ \\
$T$ & $\lambda(\lambda^3-108)$ & $\lambda(\lambda^3-128)$ \\
$O$ & $\lambda^4-44\lambda^2-16$ & $(\lambda^2-6\lambda-36)^2$ \\
$I$ & $(\lambda+4)(\lambda^3-8\lambda^2-56\lambda-76)$ & \\
\bottomrule
\end{tabular}
\end{center}
These are easily computed in Mathematica by using Table\nobreakspace \ref {tabQH} to express $c_1 \qcup$ and $\PD(N_C) \qcup$ in matrix form with respect to the basis $1$, $H$, $E$, $HE$ of $QH^*(X_C; k)$.  We know that for each configuration $C$, $\pm v_C$ is a root of the first polynomial (over $k$) and $\pm f_C$ is a root of the second (except when $C=I$).  Concretely this means that
\[
p \mid \chi(c_1\qcup)(v_C) \text{ or } \chi(c_1\qcup)(-v_C)
\]
and that
\[
p \mid \chi(\PD(N_C)\qcup)(f_C) \text{ or } \chi(\PD(N_C)\qcup)(-f_C).
\]

For $C=\tri$ the second condition forces $p=5$.  For $C=T$ the second condition implies $p=2$ or $3$, but $p=3$ is ruled out by the first.  For $C$ equal to $O$ or $I$ one just has to plug $\pm 6$ (respectively $\pm 12$) into $\chi(c_1\qcup)$ and see that the only prime factors appearing are $2$ and $19$ (respectively $2$, $43$ and $571$).
\end{proof}

The results for $O$ and $I$ are essentially just the Auroux--Kontsevich--Seidel criterion.  However, we will see later in \protect \MakeUppercase {P}roposition\nobreakspace \ref {RuleOutChars} that we can exploit the antiholomorphic involution to rule out $p=43$ and $571$ for the icosahedron, and in \protect \MakeUppercase {L}emma\nobreakspace \ref {labCOSign} that we actually only need consider the positive sign for $f_C$ in $\chi(\PD(N_C))$, which allows us to exclude $19$ for the octahedron.  In \cite{SmDCS} we study the orientations for the closed--open map in more detail, and show that in fact all of the signs appearing in \protect \MakeUppercase {C}orollary\nobreakspace \ref {labCO}\ref{COitm1} are positive for a `standard' spin structure (and the choice of spin structure on $L_O$ is irrelevant for $\CO(\PD(N_O))$).

\subsection{Bubbled configurations}
\label{sscBubConf}

In order to compute the self-Floer cohomology of $L_C$ we need to study the moduli space $M_4$ of index $4$ discs with $2$ boundary marked points, and the evaluation map $\ev_2 \mc M_4 \rightarrow L_C^2$.  In general this moduli space has boundary components comprising bubbled configurations, and when we compute the local degree of $\ev_2$ at a point $(q, p)\in L_C^2$ we need to ensure that this point does not lie in the image of the boundary.  This is so that the local degree is locally constant on a neighbourhood of $(q, p)$, and hence that we can perturb $p$ and $q$ if necessary to ensure transversality in the pearl complex.

So take distinct points $p, q \in L_C$ and suppose that there exists a bubbled configuration evaluating to $(q, p)$.  Either the two marked points are in a single index $2$ disc component of the bubble tree, or they are in adjacent index $2$ disc components.  In either case, there exists a point $r \in L_C$ and index $2$ discs $u_1$ and $u_2$ such that the boundary of $u_1$ passes through $p$ and $r$ whilst that of $u_2$ passes through $q$ and $r$ (in the first case we just take $u_1=u_2$).

By the classification of index $2$ discs, this means that there exist vertices $v_1$ and $v_2$ of the $d$-point configuration representing $r$ such that $p$ and $q$ are obtained from $r$ by rotating around $v_1$ and $v_2$ respectively.  So $v_1$ lies in the configuration representing $p$ whilst $v_2$ lies in that representing $q$.  In other words, there exist a vertex of $p$ and a vertex of $q$ whose angle (or, equivalently, distance) of separation coincides with the angle between two vertices of $C$, which need not be distinct.  And conversely if there are two such vertices then a bubbled configuration does exist.

Now note that if $w_1$ and $w_2$ are non-zero vectors in the fundamental representation $V$ of $\SU(2)$ then the angle $\theta$ between the points $[w_1]$ and $[w_2]$ on the sphere $\P V$ satisfies
\begin{equation}
\label{eqAngle}
\cos \frac{\theta}{2} = \frac{\lvert \ip{w_1}{w_2} \rvert}{\norm{w_1}\norm{w_2}}.
\end{equation}
This can be verified easily when $w_1 = (0, 1)$, and then the general result follows from the $\SU(2)$-invariance of both sides.  So there exists an index $4$ bubbled configuration through $p$ and $q$, if and only if the sets
\begin{equation}
\label{eqpqSet}
\left\{\frac{\lvert \ip{w_1}{w_2} \rvert^2}{\norm{w_1}^2\norm{w_2}^2} : [w_1] \text{ a vertex of } p \text{ and } [w_2] \text{ a vertex of } q \right\}
\end{equation}
and
\begin{equation}
\label{eqCSet}
\left\{\frac{\lvert \ip{w_1}{w_2} \rvert^2}{\norm{w_1}^2\norm{w_2}^2} : [w_1] \text{ and } [w_2] \text{ vertices of } C \right\}
\end{equation}
intersect.

For example, if $p, q \in L_\tri$ are represented by the triangles with vertices \textcolor{red}{$\blacktriangle$} and \textcolor{blue}{$\bullet$} respectively on the left-hand diagram in Fig.\nobreakspace \ref {figBubConfs} then there \emph{is} a bubbled configuration through $p$ and $q$ because the vertices in the southern hemisphere are distance $2\pi/3$ apart.  The third vertex of the configuration $r$ mentioned above would be at $\infty$.  In contrast, there is \emph{no} bubbled configuration through the points $p$ and $q$ shown in the right-hand diagram since in this case the distances between vertices of $p$ and vertices of $q$ are $\pi/3$ and $\pi$, neither of which appears as a distance between vertices in a single equilateral triangle.
\begin{figure}[ht]
\centering
\includegraphics[scale=1]{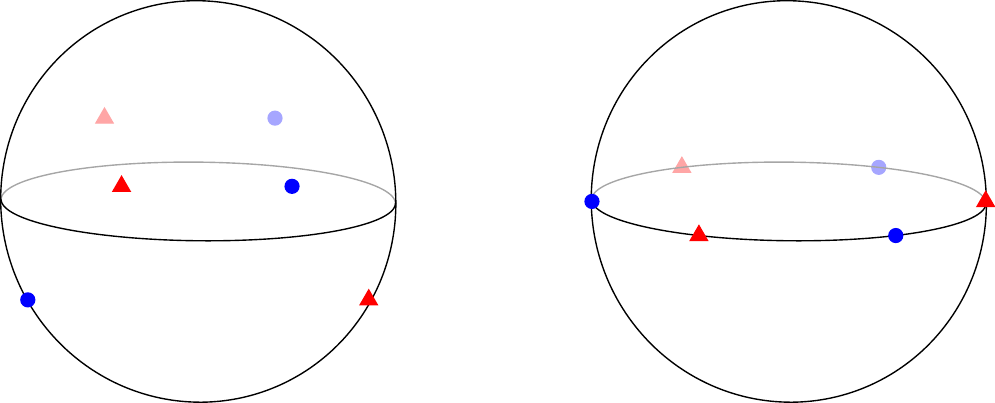}
\caption{Choices of $p, q \in L_\tri$ demonstrating existence and non-existence of bubbled configurations.\label{figBubConfs}}
\end{figure}
In terms of the two sets above, \eqref{eqCSet} is clearly $\{1/4, 1\}$ in both cases, whilst \eqref{eqpqSet} is easily seen to contain $1/4$ for the left-hand diagram but is given by $\{0, 3/4\}$ for the right-hand diagram.

\subsection{The antiholomorphic involution II}
\label{sscModInv}

In this subsection we explore antiholomorphic involutions in a slightly more general setting.  Let $X$ be a complex manifold, $Y \subset X$ an analytic subvariety, $L \subset X \setminus Y$ a totally real submanifold which is closed as a subset of $X$, and $\tau$ an antiholomorphic involution of $X \setminus Y$ which fixes $L$ pointwise.  Suppose moreover that $\tau$ enables us to reflect holomorphic discs with boundary on $L$, in the sense that for any holomorphic disc $u \mc (D, \pd D) \rightarrow (X, L)$ there exists a holomorphic disc $v$ on $L$ such that $v(z) = \tau \circ u (\conj{z})$ for all $z \in D$ with $\conj{z} \notin u^{-1}(Y)$.  Using this we can double any disc $u$ on $L$ to a sphere $\double{u}$.

We now introduce a new definition:

\begin{defn}\label{labStrongSimp}  A holomorphic disc $u \mc (D, \pd D) \rightarrow (X, L)$ is \emph{strongly simple} if its double $\double{u}$ is not multiply-covered.
\end{defn}

Holomorphic discs can always be replaced by strongly simple discs, in the following sense:

\begin{lem}\label{labAllStrongSimp}  Given a non-constant holomorphic disc $u$ with boundary on $L$, there exists a strongly simple disc $v$ on $L$ such that:
\begen
\item \label{simpitm1} $u(\pd D)\subset v(\pd D)$.
\item \label{simpitm2} $\double{u}(\C\P^1)=\double{v}(\C\P^1)$.
\item \label{simpitm3} If $u$ is not itself strongly simple and every non-constant holomorphic disc on $L$ has Maslov index at least $2$ then $\mu(v) \leq \mu(u) -2$.
\end{enumerate}
\end{lem}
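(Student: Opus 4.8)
The plan is to factor $\double{u}$ through its underlying simple curve and then cut this simple sphere back down to a disc. Since $\double{u} \mc \C\P^1 \rightarrow X$ is a non-constant holomorphic (hence algebraic) sphere, it factors as $\double{u} = w \circ \phi$ for some holomorphic branched cover $\phi \mc \C\P^1 \rightarrow \C\P^1$ of degree $N \geq 1$ and some simple (non-multiply-covered) holomorphic sphere $w \mc \C\P^1 \rightarrow X$. If $N=1$ then $u$ is already strongly simple and we take $v = u$, so assume $N \geq 2$. The first task is to arrange that $w$ itself arises as the double of a disc on $L$. Because $\double{u}$ intertwines the reflection $c \mc z \mapsto 1/\conj{z}$ with $\tau$, the curve $\double{u}$ is $(c,\tau)$-equivariant; I would like to transport this equivariance through the factorization, i.e. find a M\"obius-type antiholomorphic involution $\sigma$ of the source $\C\P^1$ of $w$ such that $\phi \circ c = \sigma \circ \phi$ and $w \circ \sigma = \tau \circ w$. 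The first of these is a statement about branched covers of $\C\P^1$ intertwining an antiholomorphic involution of the target; it should follow by uniqueness of the factorization (both $w\circ\phi\circ c$ and $w\circ(\phi$-postcomposed-with-something) present $\double{u}\circ c = \tau\circ\double{u}$ as a simple curve precomposed with a degree-$N$ cover, and simple factorizations are unique up to reparametrization of the intermediate $\C\P^1$). Conjugating so that $\sigma$ becomes the standard reflection $c$, we then get that $w$ intertwines $c$ and $\tau$, so $w$ is the double of its own restriction $v \coloneqq w|_D$, which is a holomorphic disc on $L$ (its boundary lies in $\mathrm{Fix}(\tau) \supseteq L$; one checks it lands in $L$ rather than merely the fixed locus, but $L$ is the relevant totally-real submanifold and $w|_{\pd D}$ maps into it since $u|_{\pd D}$ does and $\phi(\pd D) = \pd D$).

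Granting this, the three conclusions are essentially formal. For \ref{simpitm2}, $\double{v}(\C\P^1) = w(\C\P^1) = w(\phi(\C\P^1)) = \double{u}(\C\P^1)$ since $\phi$ is surjective. For \ref{simpitm1}, $\phi$ maps $\pd D$ onto $\pd D$ (it commutes with $c$, hence preserves $\mathrm{Fix}(c) = \pd D$, and is surjective on it), so $u(\pd D) = w(\phi(\pd D)) = w(\pd D) = v(\pd D)$. The curve $v$ is strongly simple by construction since $\double{v} = w$ is not multiply covered. For \ref{simpitm3}, assume $u$ is not strongly simple, so $N \geq 2$, and assume every non-constant disc on $L$ has Maslov index at least $2$. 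Maslov index is additive under doubling in the sense that $\mu(\double{u})$ counts, with multiplicity, the intersections of $\double{u}$ with the relevant divisor; more robustly, $\mu$ restricted to $H_2$ is $2c_1$, so $\mu(\double{u}) = 2\langle c_1(X), [\double{u}]\rangle = 2N\langle c_1(X),[w]\rangle = N\,\mu(\double{v})$, and likewise I want $\mu(\double{u}) = 2\mu(u)$ and $\mu(\double{v}) = 2\mu(v)$ — these are exactly the doubling relations developed for our $\tau$ in Section \ref{sscAntInv}/\ref{sscDegCont} (valid without quasi-axiality when $\tau$ is globally defined, and valid for quasi-axial discs otherwise; in the application $\tau$ will be suitably defined). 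Then $\mu(u) = \tfrac12\mu(\double{u}) = \tfrac{N}{2}\mu(\double{v}) = N\mu(v) \geq 2\mu(v) > 0$, and in particular $\mu(u) - \mu(v) \geq (N-1)\mu(v) \geq \mu(v) \geq 2$, giving $\mu(v) \leq \mu(u) - 2$.

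The main obstacle I anticipate is the equivariance-transport step: showing that the simple curve $w$ underlying the $(c,\tau)$-equivariant sphere $\double{u}$ can itself be chosen $(c,\tau)$-equivariant. The subtlety is that the intermediate $\C\P^1$ carrying $w$ is only well-defined up to a holomorphic automorphism, and I must pin down this ambiguity using the reflection. The clean way is: $\double{u}$ and $\tau\circ\double{u}\circ c$ are literally the same map, and both $w\circ\phi$ and $(\tau\circ w\circ \bar\phi)$ — where $\bar\phi$ is $\phi$ conjugated by $c$ on the source — are simple-times-degree-$N$ factorizations of it; by the uniqueness of such factorizations there is a unique automorphism $\alpha$ of the source $\C\P^1$ with $\tau\circ w = w\circ\alpha$ and $\bar\phi = \alpha^{-1}\circ\phi$, and since $\tau$ is an involution $\alpha$ is an antiholomorphic involution of $\C\P^1$; any such is conjugate to $c$, so after replacing $w$ and $\phi$ by suitable reparametrizations we may take $\alpha = c$, which is exactly $(c,\tau)$-equivariance of $w$. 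Everything else is routine once this is in place; one should also double-check the boundary-regularity point that $w|_{\pd D}$ genuinely lands in $L$ and that $v$ is non-constant (it is, since $\double{v} = w$ is non-constant), but these cause no real trouble given the hypotheses on $\tau$.
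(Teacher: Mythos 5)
Your structural argument---factor $\double{u}=w\circ\phi$ through a simple sphere, transport the $(c,\tau)$-equivariance to $w$, and take $v$ to be a hemisphere of $w$---is the same as the paper's, and the equivariance step is workable. Two small repairs there: not every antiholomorphic involution of $\C\P^1$ is conjugate to $c$ (the antipodal map has no fixed points), but your $\sigma$ does have fixed points since $\phi$ maps $\pd D$ into $\mathrm{Fix}(\sigma)$, so the conjugation is legitimate; and the claim that $w(\pd D)\subset L$ cannot be deduced from ``$\phi(\pd D)=\pd D$'' (which can fail when $\phi|_{\pd D}$ has degree zero)---one needs the open--closed argument: $w(\pd D)$ lies in $\mathrm{Fix}(\tau)$, of which $L$ is an isolated component, so $w^{-1}(L)\cap\pd D$ is open, closed and non-empty in $\pd D$.

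The genuine gap is in part \ref{simpitm3}. The identities $\mu(\double{u})=2\mu(u)$ and $\mu(\double{v})=2\mu(v)$ on which your index count rests are false in the generality in which the lemma is stated and used: for $L_\tri$ and $L_T$ the involution $\tau$ is not antisymplectic and reflection changes the Maslov index (Section\nobreakspace\ref{sscDegCont} gives $\mu(\double{u})=4\mu(u)$, resp.\ $3\mu(u)$, for discs missing $N_C$), and \protect\MakeUppercase{L}emma\nobreakspace\ref{labTrajStrong} applies the present lemma precisely to those Lagrangians. Relatedly, your relation $\mu(u)=N\mu(v)$ is not the right one. The sphere $w$ has \emph{two} hemisphere discs, $v_1=w|_D$ and $v_2=w\circ(z\mapsto 1/z)|_D$, which for $\tri$ and $T$ generally have different Maslov indices (their poles are of reflected types); the correct statement is $\psi_*[D]=d_1[D]+d_2[c(D)]$ in $H_2(\C\P^1,\pd D)$ with $d_1+d_2=\deg\phi$, hence $[u]=d_1[v_1]+d_2[v_2]$ and $\mu(u)=d_1\mu(v_1)+d_2\mu(v_2)$. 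When one $d_i$ vanishes, only the \emph{other} hemisphere is guaranteed to satisfy $\mu(v_i)\leq\mu(u)-2$, so the conclusion must be phrased as a choice between $v_1$ and $v_2$. Concretely, for $L_\tri$ take $u=u_v\circ\psi_2$, so $\mu(u)=4$ and $w=\double{u_v}$: one hemisphere is $u_v$ with index $2$, the other has a pole of type $\xi_e$ and index $6>\mu(u)-2$. Your normalisation only pins $w$ down up to automorphisms commuting with $c$, which include $z\mapsto 1/z$ and hence swap the two hemispheres, so your $v=w|_D$ may be the wrong one. Replacing the last paragraph of your argument with the relative-homology decomposition above (and selecting the appropriate $v_i$) closes the gap and removes any reliance on doubling formulae for the Maslov index.
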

\begin{proof}  If $u$ is already strongly simple then we can just take $v=u$, so suppose this is not the case.  Then $\double{u}$ is a non-simple sphere, and hence is given by $w \circ \psi$ for some branched cover $\psi \mc \C\P^1 \rightarrow \C\P^1$ of degree $d > 1$ and some simple holomorphic sphere $w \mc \C\P^1 \rightarrow X$.  Pick three points $a_1$, $a_2$ and $a_3$ in $\pd D$ whose images under $\psi$ are distinct injective points of $w$.  Reparametrising $w$ if necessary, and correspondingly modifying $\psi$, we may assume that $\psi(a_i) \in \pd D$ for each $i$.

As in Section\nobreakspace \ref {sscAntInv}, let $c \mc \C\P^1 \rightarrow \C\P^1$ denote the map $z \mapsto 1/\conj{z}$, with fixed-point set $\pd D$, and let $\conj{w}$ denote the reflection of $w$, i.e.~the holomorphic sphere given by $\tau \circ w \circ c$ whenever this is defined.  Note that $\conj{w}$ is simple (if not then it would be a multiple cover and hence $w=\tau \circ \conj{w} \circ c$ would be a multiple cover) and that
\[
\conj{w}(\C\P^1) = w(\C\P^1)=\double{u}(\C\P^1).
\]
In particular, $w$ and $\conj{w}$ are simple holomorphic spheres with the same image, and therefore differ by reparametrisation.  To see this, let $U$ and $\conj{U}$ be the cofinite subsets of their common image comprising the images of injective points of $w$ and $\conj{w}$ respectively.  Then $w^{-1} \circ \conj{w}$ defines a biholomorphism between the cofinite sets $\conj{w}^{-1}(U \cap \conj{U})$ and $w^{-1}(U \cap \conj{U})$, and considering the effect of this biholomorphism on the ends of these sets (it must pair them up) we deduce that it extends to an automorphism $\phi$ of $\C\P^1$ satisfying $\conj{w} = w \circ \phi$.

Now note that we have
\[
w \circ \psi = \double{u} = \tau \circ \double{u} \circ c = \conj{w} \circ c \circ \psi \circ c
\]
on the cofinite subset $\double{u}^{-1}(Y)$ of $\C\P^1$, and hence
\begin{equation}
\label{eqwpsi}
w \circ \psi = w \circ \phi \circ c \circ \psi \circ c
\end{equation}
on all of $\C\P^1$.  Applying this at our points $a_i$ we deduce that $\phi$ fixes the three points $\psi(a_i)$ and thus is the identity.  Then \eqref{eqwpsi} tells us that $\psi$ coincides with $c \circ \psi \circ c$ at injective points of $w$ and therefore everywhere.  In other words, we have shown that $\conj{w} = w$ and that $\psi$ commutes with $c$ (so $\psi(\pd D) \subset \pd D$).

Using this, we see that $w(\pd D)$ contains $u(\pd D)=w(\psi(\pd D))$ and lies in the fixed locus of $\tau$.  This fixed locus contains $L$ as an isolated component (locally about a fixed point of an antiholomorphic involution one can choose holomorphic coordinates in which the involution is given by complex conjugation), so $w^{-1}(L)$ is open in $\pd D$.  Since $L$ is closed in $X$, $w^{-1}(L)$ is also closed in $\pd D$, and hence $w(\pd D) \subset L$.  This means that $v_1\coloneqq w|_D$ and $v_2\coloneqq w \circ(z \mapsto 1/z)|_D$ are holomorphic discs on $L$ whose boundaries contain $u(\pd D)$.  Their doubles are $w$ and $\conj{w}$ respectively, so they are strongly simple and satisfy $\double{u}(\C\P^1)=\double{v}_i(\C\P^1)$.  If we can show that $\mu(v_i) \leq \mu (u) - 2$ for some $i$ then we can take $v$ to be this $v_i$ and we're done.

Well, in $H_2(\C\P^1, \pd D)$ we have $\psi_* ([D]) = d_1 [D] + d_2 [c(D)]$ for some non-negative integers $d_1$ and $d_2$, which sum to $d$ since $\psi$ commutes with $c$.  Then in $H_2(X, L)$ we have $[u] = d_1 [v_1] + d_2 [v_2]$, and hence
\[
\mu(u) = d_1 \mu(v_1) + d_2 \mu(v_2).
\]
Since each $\mu(v_i)$ is at least $2$ (by our assumption on Maslov indices of discs on $L$), and the sum of the $d_i$ is $d > 1$, we must have $\mu(v_i) \leq \mu(u)-2$ for some $i$, proving the lemma.
\end{proof}

Really our interest in disc analysis is through its application to Floer theory, so suppose now that in fact $X$ is a compact K\"ahler manifold, $L \subset X$ a closed, connected, monotone Lagrangian with minimal Maslov index not equal to $1$, and that every holomorphic disc $u$ bounded by $L$ has all partial indices non-negative; note that the partial indices, as well as the notion of holomorphicity of course, depend on the complex structure $J$ on $X$.  We are still assuming the existence of $Y$ (disjoint from $L$) and $\tau$ as above.  In particular, these conditions are satisfied by the Platonic family, with $Y=Y_C$ for $C=\tri$ or $T$ and $Y=\emptyset$ for $C=O$ or $I$.

If $f$ is a Morse function on $L$, and $g$ is a metric such that $(f, g)$ is Morse--Smale, \protect \MakeUppercase {P}roposition\nobreakspace \ref {labPearlCx} ensures that, possibly after replacing $f$ and $g$ by their pullbacks under a diffeomorphism of $L$, we may use the data $(f, g, J)$ to compute the self-Floer cohomology of $L$ using the pearl complex.  As usual, we assume our coefficient ring has characteristic $2$ unless we have fixed a choice of orientation and spin structure on $L$.  Using \protect \MakeUppercase {L}emma\nobreakspace \ref {labAllStrongSimp} we can restrict our attention to pearly trajectories in which all discs are strongly simple:

\begin{lem}\label{labTrajStrong}
In every pearly trajectory contributing to the differential on the pearl complex given by \protect \MakeUppercase {P}roposition\nobreakspace \ref {labPearlCx}, all holomorphic discs are strongly simple.
\end{lem}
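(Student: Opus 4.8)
The plan is to argue by contradiction: take a pearly trajectory that contributes to the differential but contains a disc which is not strongly simple, use \protect \MakeUppercase {L}emma\nobreakspace \ref {labAllStrongSimp} to replace that disc by a strongly simple one of strictly smaller Maslov index, iterate, and then derive a dimension contradiction from the transversality supplied by \protect \MakeUppercase {P}roposition\nobreakspace \ref {labPearlCx}.

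First I would fix a pearly trajectory $\gamma$, from a critical point $x$ to a critical point $y$, that contributes to the differential. Contributing means $\gamma$ is rigid, so the moduli space $\mathcal{P}$ of pearly trajectories from $x$ to $y$ whose discs have the same total Maslov index $\mu$ as those of $\gamma$ is zero-dimensional at $\gamma$; since every holomorphic disc is regular by homogeneity (\protect \MakeUppercase {L}emma\nobreakspace \ref {labDiscReg}) and the Morse data has been chosen generic via \protect \MakeUppercase {P}roposition\nobreakspace \ref {labPearlCx}, this moduli space is cut out transversally near $\gamma$, so its virtual dimension there equals $0$. Now suppose, for contradiction, that some disc component $u$ of $\gamma$ is not strongly simple.

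The next step is the replacement. Because $L_C$ is orientable and monotone with minimal Maslov index $2$ (\protect \MakeUppercase {L}emma\nobreakspace \ref {labMonot}), every non-constant disc on $L_C$ has Maslov index at least $2$, so \protect \MakeUppercase {L}emma\nobreakspace \ref {labAllStrongSimp} applied to $u$ yields a strongly simple disc $v$ on $L_C$ with $u(\pd D) \subseteq v(\pd D)$ and $\mu(v) \leq \mu(u) - 2$. The points of $\pd D$ at which the incoming and outgoing gradient flowlines meet $u$ are carried by $u$ into $u(\pd D) \subseteq v(\pd D)$, so after reparametrising $v$ by an automorphism of the disc I may attach those same flowlines, and hence the rest of $\gamma$, onto $v$ without change, producing a genuine pearly trajectory $\gamma'$ from $x$ to $y$ which agrees with $\gamma$ except that $u$ has been replaced by $v$. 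Its discs have total Maslov index $\mu' = \mu - \mu(u) + \mu(v) \leq \mu - 2$, and $\gamma'$ has one fewer non-strongly-simple disc component than $\gamma$. Iterating finitely often produces a pearly trajectory $\gamma''$ from $x$ to $y$, all of whose discs are strongly simple, with discs of total Maslov index $\mu'' \leq \mu - 2$.

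Finally I conclude: all discs of $\gamma''$ are again regular by \protect \MakeUppercase {L}emma\nobreakspace \ref {labDiscReg}, so \protect \MakeUppercase {P}roposition\nobreakspace \ref {labPearlCx} applies and the moduli space of pearly trajectories from $x$ to $y$ of total index $\mu''$ is, near $\gamma''$, a smooth manifold of dimension equal to its virtual dimension, namely $0 + (\mu'' - \mu) \leq -2$; a nonempty manifold cannot have negative dimension, which is the required contradiction, and hence every disc of $\gamma$ is strongly simple. I expect the only genuinely delicate point to be the replacement step---checking that swapping $u$ for $v$ still leaves a valid pearly trajectory---which is exactly what the inclusion $u(\pd D) \subseteq v(\pd D)$ in \protect \MakeUppercase {L}emma\nobreakspace \ref {labAllStrongSimp} is designed to handle; the strictness of the final dimension count rests on the Maslov drop $\mu(v) \leq \mu(u) - 2$, i.e.\ on the absence of Maslov-index-$1$ discs, which follows from orientability and monotonicity of $L_C$.
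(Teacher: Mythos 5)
Your argument is essentially the paper's proof: replace the non-strongly-simple disc using \protect \MakeUppercase {L}emma\nobreakspace \ref {labAllStrongSimp} and derive a contradiction from the transversality of the negative-virtual-dimension moduli spaces guaranteed by \protect \MakeUppercase {P}roposition\nobreakspace \ref {labPearlCx} (the iteration is harmless but unnecessary, since a single replacement already drops the index by $2$). The one point you gloss over is precisely the delicate step you flag: to reparametrise $v$ so that its two marked points land on $u(-1)$ and $u(1)$, you need two \emph{distinct} points of $\pd D$ in $v^{-1}(u(-1))$ and $v^{-1}(u(1))$, and the inclusion $u(\pd D)\subset v(\pd D)$ only guarantees this when $u(-1)\neq u(1)$. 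The paper disposes of the case $u(-1)=u(1)$ first, by deleting the disc $u$ outright and concatenating the two adjacent flowlines, which again yields a trajectory in negative virtual dimension; with that case excluded, distinctness of the two preimage points on $v$ is automatic and your replacement goes through.
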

\begin{proof}
Suppose for contradiction that there is a trajectory in which some disc $u$ is not strongly simple.  Let its two marked points be at $\pm 1$---note that $u(-1) \neq u(1)$, otherwise we could delete the disc $u$ from the trajectory and obtain a trajectory in negative virtual dimension, which is impossible (in the proof of \protect \MakeUppercase {P}roposition\nobreakspace \ref {labPearlCx} it is ensured that the relevant moduli space is transversely cut out).  By \protect \MakeUppercase {L}emma\nobreakspace \ref {labAllStrongSimp} there exists a holomorphic disc $v$, of index strictly less than $u$, with $u(\pd D)\subset v(\pd D)$.  In particular, we may reparametrise $v$ such that $v(\pm 1) = u(\pm 1)$, and then replace $u$ by $v$ to obtain again a trajectory in negative virtual dimension, giving the desired contradiction.
\end{proof}

This is particularly useful when $\tau$ extends to a global involution (as in the case of the octahedron and icosahedron), so we assume from now on that $Y$ is empty and that our coefficient ring $R$ has characteristic $2$.  In this situation we have:

\begin{prop}\label{labWide}  The Lagrangian $L$ is wide over $R$.  In other words, after collapsing the grading of $H^*(L; R)$ to $\Z/2$, we have an isomorphism of $\Z/2$-graded $R$-modules
\[
HF^*(L, L; R) \cong H^*(L; R).
\]
\end{prop}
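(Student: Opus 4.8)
The plan is to show that, over $R$, the pearl differential coincides with the Morse differential of $(f,g)$, so that the pearl complex computing $HF^*(L,L;R)$ is simply the Morse complex and hence has cohomology $H^*(L;R)$. The mechanism is a reflection involution on pearly trajectories built from the global antiholomorphic involution $\tau$ (which here is defined on all of $X$, since $Y=\emptyset$) that pairs up and cancels, modulo $2$, every trajectory containing a holomorphic disc.

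First I would introduce the reflection on discs. Writing $\mathrm{conj}$ for the automorphism $z \mapsto \conj{z}$ of $D$, which fixes $\pm 1$, I set $\sigma(u) \coloneqq \tau \circ u \circ \mathrm{conj}$ for a holomorphic disc $u \mc (D, \pd D) \rightarrow (X, L)$. As a composite of two antiholomorphic maps this is again $J$-holomorphic; its boundary lies in $\tau(L) = L$; and it satisfies $\sigma(u)(\pm 1) = u(\pm 1)$, since those points lie in $L = \mathrm{Fix}(\tau)$. Leaving the Morse flow segments of a pearly trajectory unchanged (they lie in $L$, which $\tau$ fixes pointwise) and replacing each disc $u_i$ by $\sigma(u_i)$ therefore defines an involution, again written $\sigma$, on the space of pearly trajectories between any two critical points of $f$. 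Because $\sigma$ acts by pre- and post-composition with homeomorphisms, it preserves the incidence conditions (evaluations at $\pm 1$, concatenation of flow segments) and the dimension of every stratum of the relevant moduli spaces; in particular it restricts to an involution of the finite set of rigid pearly trajectories from $x$ to $y$, for any critical points $x$ and $y$.

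Next I would identify the fixed points of $\sigma$. A rigid trajectory is $\sigma$-fixed exactly when each of its discs satisfies $\sigma(u) = u$, i.e.\ $u(z) = \tau\big(u(\conj{z})\big)$ for all $z \in D$. Feeding this relation into the definition of the double $\double{u}$ — which reflects in $\pd D$ via $z \mapsto 1/\conj{z}$, not via $\mathrm{conj}$ — one checks that $\double{u} \circ \iota = \double{u}$, where $\iota \mc z \mapsto 1/z$ is the order-two automorphism of $\C\P^1$ with fixed points $\pm 1$. Since the quotient $\C\P^1 \rightarrow \C\P^1/\iota \cong \C\P^1$ is a degree-two branched cover, $\double{u}$ factors through it and is therefore multiply covered, so $u$ is not strongly simple. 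But by \protect \MakeUppercase {L}emma\nobreakspace \ref {labTrajStrong} every disc appearing in a trajectory that contributes to the pearl differential is strongly simple, so no contributing trajectory containing a disc is $\sigma$-fixed; such trajectories from $x$ to $y$ therefore occur in $\sigma$-pairs, and their number is even.

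Since $R$ has characteristic $2$, these paired trajectories cancel, leaving only the ordinary Morse flow lines as contributions to the pearl differential. Hence the pearl differential equals the Morse differential of $(f,g)$, the pearl complex is the Morse complex of $(f,g)$ with coefficients in $R$, and its cohomology is $H^*(L;R)$; by the identification of pearl cohomology with Floer cohomology (\protect \MakeUppercase {P}roposition\nobreakspace \ref {labPearlCx}) this is $HF^*(L, L; R)$, which gives the asserted isomorphism of $\Z/2$-graded $R$-modules. I expect the only genuine work to be in the third step — verifying carefully that $\sigma$-invariance of a disc forces its double to factor through a double cover — together with the routine but necessary check that $\sigma$ respects all of the data of a pearly trajectory; the rest is formal.
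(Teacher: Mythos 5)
Your proof is correct and follows essentially the same route as the paper: reflect the discs of each pearly trajectory by $\tau$, observe that a fixed trajectory would contain a disc whose double is multiply covered (you obtain this from the factorisation through $z \mapsto 1/z$, the paper from the observation that the double hits every point of its image at least twice), contradict strong simplicity via \protect \MakeUppercase {L}emma\nobreakspace \ref {labTrajStrong}, and cancel modulo $2$. The only point to tighten is that, since the discs live in moduli spaces of unparametrised discs with two boundary marked points, a $\sigma$-fixed disc a priori only satisfies $u = \sigma(u)\circ\phi$ for some reparametrisation $\phi$ fixing $\pm 1$ rather than $u = \sigma(u)$ on the nose; the doubling argument still yields a nontrivial automorphism of $\C\P^1$ through which $\double{u}$ factors, so the conclusion is unaffected.
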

\begin{proof}  We argue analogously to Haug \cite{Ha}, and show that all positive index contributions to the pearl complex differentials (which we also refer to as `quantum corrections') occur in pairs and hence cancel over $R$.  This makes the self-Floer cohomology of $L$, as computed by the pearl complex, agree with the Morse cohomology, which is in turn isomorphic to the singular cohomology.

The way we pair up the positive index contributions is by constructing a fixed-point-free involution $\tau_*$ on the space of such trajectories.  For each sequence $(u_1, \dots, u_l)$ of non-constant holomorphic discs comprising a pearly trajectory (with $l \geq 1$), we define a new trajectory by
\[
\tau_* (u_1, \dots, u_l) = (\overline{u}_1, \dots, \overline{u}_l),
\]
where $\overline{u}_i$ is the disc given by $\overline{u}_i(z) =\tau \circ u_i (\conj{z})$ for all $z$.  If we can show that this `reflect the discs' map has no fixed points then we're done.

Well, if it \emph{did} have a fixed point $(u_1, \dots, u_l)$ then the disc $u_1$ would be equal to its reflection, up to reparametrisation.  In particular, its double $\double{u}_1$ would hit every point in its image at least twice (counting with multiplicity).  This forces $\double{u}_1$ to be a multiple cover (see \cite[Section 2.3]{McS}), contradicting the fact that $u_1$ is strongly simple.  Therefore $\tau_*$ has no fixed points, and thus we have the required cancellation.
\end{proof}

In Appendix\nobreakspace \ref {secTransPearl} we also establish (in \protect \MakeUppercase {P}roposition\nobreakspace \ref {labPearlCxProd}) that any triple $(f_j, g_j)_{j=1}^3$ of Morse--Smale pairs on $L$ can be perturbed in order to be used to define the product on self-Floer cohomology, by counting Y-shaped pearly trajectories with one Morse--Smale pair used on each leg, i.e.~each branch of the Y.  And by an argument analogous to \protect \MakeUppercase {L}emma\nobreakspace \ref {labTrajStrong} the discs appearing in the legs, rather than at the centre of the Y, are all strongly simple.  Using this we get:

\begin{prop}\label{labGrComm}  The product on $HF^*(L, L; R)$ is commutative.  The only positive index trajectories whose contributions do not cancel have a single disc, at the centre of the Y, and no others.
\end{prop}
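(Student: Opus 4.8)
The plan is to adapt the ``reflect the discs'' argument of \protect \MakeUppercase {P}roposition\nobreakspace \ref {labWide} to the Y-shaped pearly trajectories that compute the product, exploiting the fact (noted just before this proposition) that discs occurring on the \emph{legs} of such a trajectory are strongly simple. First I would fix the moduli set-up: by \protect \MakeUppercase {P}roposition\nobreakspace \ref {labPearlCxProd} we may choose Morse--Smale pairs $(f_1,g_1)$, $(f_2,g_2)$, $(f_3,g_3)$ on $L$ and compute the product $\mu_2$ on $HF^*(L,L;R)$ by counting rigid Y-shaped pearly trajectories, one pair used on each leg, with the central disc carrying three marked points held in a fixed equilateral configuration on $\pd D$ (one labelled as the outgoing point). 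For the reflection argument it is convenient to take $(f_1,g_1)=(f_2,g_2)$ on the two incoming legs, which is harmless since the induced product on cohomology is independent of these choices. Write $\mu_2 = \mu_2^{\mathrm{cl}} + \mu_2^{\mathrm{qu}}$, splitting off the purely Morse-theoretic index $0$ part $\mu_2^{\mathrm{cl}}$ from the positive-index contributions $\mu_2^{\mathrm{qu}}$.

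Next I would run the involution $\tau_*$, which reflects every disc $u$ in a trajectory to $z\mapsto\tau\circ u(\conj z)$, over the space of positive-index Y-trajectories, exactly as in \protect \MakeUppercase {P}roposition\nobreakspace \ref {labWide}. Since $\tau$ fixes $L$ pointwise, the Morse flow segments and all the evaluation points in $L$ are untouched, so $\tau_*$ does land back in the same moduli problem. The key dichotomy is: if a trajectory contains any disc lying on a leg, that disc is strongly simple, and then -- as in \protect \MakeUppercase {P}roposition\nobreakspace \ref {labWide}, because a disc equal to its own reflection up to reparametrisation has a multiply-covered double -- $\tau_*$ genuinely moves the trajectory, so all such trajectories cancel in pairs over $R$. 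This leaves exactly those positive-index trajectories consisting of a \emph{single} disc, sitting at the centre of the Y, with no other disc components; these need not be strongly simple and so are the only ones that can fail to cancel. This is the second assertion of the proposition.

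For the first assertion I would analyse how $\tau_*$ acts on these surviving single-central-disc trajectories. Reflection by $z\mapsto\conj z$ reverses the cyclic order of the three boundary marked points while fixing the outgoing one; after bringing the reflected central disc back to standard position by a reparametrisation in $\mathrm{PSL}(2,\R)$, the effect is precisely to interchange the two incoming legs. Since those legs carry the same Morse--Smale data, $\tau_*$ thus restricts to a bijection between the single-central-disc trajectories contributing to $\langle\mu_2^{\mathrm{qu}}(x,y),z\rangle$ and those contributing to $\langle\mu_2^{\mathrm{qu}}(y,x),z\rangle$, for all critical points $x,y,z$. Working over $R$ (characteristic $2$, so orientation signs may be ignored) this yields $\mu_2^{\mathrm{qu}}(x,y)=\mu_2^{\mathrm{qu}}(y,x)$ at chain level. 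Hence $\mu_2(x,y)-\mu_2(y,x)=\mu_2^{\mathrm{cl}}(x,y)-\mu_2^{\mathrm{cl}}(y,x)$, and since $\mu_2^{\mathrm{cl}}$ is the Morse-cochain cup product -- graded-commutative, hence commutative over $R$, on cohomology -- one concludes that $\mu_2$ descends to a commutative product on $HF^*(L,L;R)$.

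I expect the main obstacle to be the bookkeeping around the classical part in this last step: one must make precise that the chain-level identity $\mu_2^{\mathrm{qu}}(x,y)=\mu_2^{\mathrm{qu}}(y,x)$ plus homotopy-commutativity of the Morse cup product really forces commutativity of the induced product on $HF^*$, given that $\mu_2^{\mathrm{cl}}$ alone is not a chain map for the pearl differential (which itself has quantum corrections). The safe route is to argue entirely on cohomology, using independence of the product from the auxiliary data to justify the choice $(f_1,g_1)=(f_2,g_2)$, and to check carefully that after reparametrising the reflected central disc to standard position one obtains a bona fide rigid Y-trajectory for the chosen data with the two inputs swapped (rather than some degenerate configuration) before invoking characteristic $2$.
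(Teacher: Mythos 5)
Your strategy is essentially the paper's: reflect the leg discs (which are strongly simple by the Y-shaped analogue of Lemma~\ref{labTrajStrong}, so $\tau_*$ is fixed-point-free on trajectories containing them and those contributions cancel in pairs over $R$), and observe that reflecting the central disc reverses the cyclic order of its three boundary marked points, yielding the bijection between trajectories computing $x \qcup y$ and those computing $y \qcup x$. The second assertion of the proposition and the overall shape of the commutativity argument are therefore in order.

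The one step that does not work as written is the choice $(f_1,g_1)=(f_2,g_2)$ on the two incoming legs. Product transversality forces the three Morse--Smale pairs to be mutually generic: already for the classical trajectories (constant central disc) one needs $W^a_x$, $W^a_y$ and $W^d_z$ to be mutually transverse, which fails for $x=y$ when $f_1=f_2$ (one is then intersecting $W^a_x$ with itself), and the diffeomorphisms $\phi_1,\phi_2,\phi_3$ of Proposition~\ref{labPearlCxProd} are chosen independently precisely so that flowlines of the three functions can be made transverse to one another. So the chain-level count is not even defined with two legs carrying identical data, and ``independence of the choices'' cannot retroactively justify such a choice. The fix --- which is what the paper's proof intends --- is to note that reflection carries a rigid Y-trajectory for the data $(f_1,f_2,f_3)$ contributing to the coefficient of $z$ in $x\qcup y$ to a rigid Y-trajectory for the data $(f_2,f_1,f_3)$ contributing to the coefficient of $z$ in $y\qcup x$, and then to invoke independence of the cohomology-level product from the ordering of the auxiliary data. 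Since the leg discs have already been cancelled and a constant central disc is fixed pointwise by $\tau$, this bijection covers the classical and quantum contributions uniformly, which also dissolves your worry about $\mu_2^{\mathrm{cl}}$ failing to be a chain map on its own: there is no need to split the product, as the bijection is between the full chain-level counts.
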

\begin{proof}  By reflecting the leg discs, we see that trajectories with such discs cancel out.  Reflecting the disc at the centre of the Y reverses the order of the three boundary marked points, and we get a bijection between trajectories contributing to the coefficient of $z$ in $x \qcup y$ and those contributing to the coefficient of $z$ in $y \qcup x$.
\end{proof}

Note however that in general the product on $HF^*(L, L; R)$ is different from that on $H^*(L; R)$.  A simple example is provided by the equator in $\C\P^1$, whose self-Floer cohomology ring over $\Z/(2)$ is isomorphic to $\Z[x]/(2, x^2-1)$, whereas $H^*(S^1; \Z/(2)) \cong \Z[x]/(2, x^2)$ (and $|x|=1$ in both cases); these are not isomorphic as $\Z/2$-graded rings.  Fukaya et al.~proved a very similar result \cite[Corollary 1.6]{FOOOinv}, that the Floer cohomology ring is graded-commutative with rational Novikov coefficients, under hypotheses that ensure the Maslov index is trivial modulo $4$, so that one can control the signs of reflected discs.

We shall not use \protect \MakeUppercase {P}roposition\nobreakspace \ref {labGrComm} in what follows, but from \protect \MakeUppercase {P}roposition\nobreakspace \ref {labWide} we obtain:

\begin{prop} \label{labHFLOIk}  For a field $k$ of characteristic $2$ we have isomorphisms of $k$-vector spaces
\[
HF^0(L_O, L_O; k) \cong HF^1(L_O, L_O; k) \cong k^2
\]
and
\[
HF^0(L_I, L_I; k) \cong HF^1(L_I, L_I; k) \cong k.
\]
\end{prop}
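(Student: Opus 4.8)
The plan is to read this off from the wideness statement, \protect \MakeUppercase {P}roposition\nobreakspace \ref {labWide}, together with a computation of the ordinary cohomology of $L_O$ and $L_I$.

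First I would check that the hypotheses needed for \protect \MakeUppercase {P}roposition\nobreakspace \ref {labWide} are in place. For $C$ equal to $O$ or $I$ the antiholomorphic involution $\tau$ of \protect \MakeUppercase {P}roposition\nobreakspace \ref {labAntInv} extends to a global involution of $X_C$ fixing $L_C$ pointwise, so we are in the situation described just before \protect \MakeUppercase {P}roposition\nobreakspace \ref {labWide}, taking $Y = \emptyset$: by \protect \MakeUppercase {L}emma\nobreakspace \ref {labMonot} the Lagrangian $L_C$ is closed, connected and monotone with minimal Maslov index $2 \neq 1$; every holomorphic disc bounded by $L_C$ has non-negative partial indices, as reviewed in Section\nobreakspace \ref {sscParInd}; and $\tau$ enables us to reflect discs by \protect \MakeUppercase {C}orollary\nobreakspace \ref {labExt}. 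Since the coefficient field $k$ has characteristic $2$ no choice of orientation or spin structure is needed, and \protect \MakeUppercase {P}roposition\nobreakspace \ref {labWide} gives an isomorphism of $\Z/2$-graded $k$-vector spaces $HF^*(L_C, L_C; k) \cong H^*(L_C; k)$.

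Next I would compute $H^*(L_C; k)$. Recall from the proof of \protect \MakeUppercase {L}emma\nobreakspace \ref {labMonot} that $L_C \cong \SU(2)/\Gamma_C$ is a closed oriented $3$-manifold with integral cohomology $\Z$, $0$, $\Gamma_C^{\mathrm{ab}}$, $\Z$ in degrees $0$ to $3$. For the octahedron $\Gamma_O$ is the binary octahedral group, whose abelianisation is $\Z/2$, so $H^*(L_O; \Z)$ is $\Z$, $0$, $\Z/2$, $\Z$; for the icosahedron $\Gamma_I$ is the binary icosahedral group, which is perfect, so $\Gamma_I^{\mathrm{ab}} = 0$ and $L_I$ is an integral homology $3$-sphere (the Poincar\'e sphere). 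Applying the universal coefficient theorem over $k$, the Betti numbers of $L_O$ in degrees $0$ to $3$ are $1$, $1$, $1$, $1$ (the $2$-torsion class of $H^2(L_O; \Z)$ contributes over $k$ both in degree $2$ and, via the $\operatorname{Tor}$ term, in degree $1$), while those of $L_I$ are $1$, $0$, $0$, $1$. Collapsing the grading to $\Z/2$ then yields $H^0(L_O;k) \oplus H^2(L_O;k) \cong k^2$ and $H^1(L_O;k) \oplus H^3(L_O;k) \cong k^2$ for the octahedron, and $H^0(L_I;k) \cong k$, $H^3(L_I;k) \cong k$ for the icosahedron, which are the asserted isomorphisms.

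I do not anticipate any genuine obstacle, since all the substantive input (the pearl model, the doubling construction, and the reflection-cancellation argument underlying wideness) has already been established; what remains is the standard cohomology of spherical space forms together with the abelianisations of the binary polyhedral groups. The only point demanding a little care is the characteristic-$2$ universal coefficient calculation for $L_O$, where the $2$-torsion in $H^2(L_O;\Z)$ must be correctly propagated into $H^1(L_O;k)$ as well as $H^2(L_O;k)$; getting this wrong would spoil the rank count $k^2$ in each degree.
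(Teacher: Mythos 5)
Your proposal is correct and follows essentially the same route as the paper: apply \protect\MakeUppercase{P}roposition\nobreakspace\ref{labWide} to reduce to singular cohomology, identify $\Gamma_O^{\mathrm{ab}} \cong \Z/2$ and $\Gamma_I^{\mathrm{ab}} = 0$ for the binary polyhedral groups, and finish with the universal coefficient theorem in characteristic $2$ (your handling of the $\operatorname{Tor}$ contribution from the $2$-torsion in $H^2(L_O;\Z)$ is exactly right). The only difference is cosmetic: the paper sketches the abelianisation computations via the commutator subgroups of the rotation groups and the element $-I$, where you simply cite them as well known.
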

\begin{proof}  Applying \protect \MakeUppercase {P}roposition\nobreakspace \ref {labWide} to these Lagrangians, we reduce the problem to computing their singular cohomology.  In \protect \MakeUppercase {L}emma\nobreakspace \ref {labMonot} we calculated this to be $\Z$, $0$, $\Gamma_C^{\mathrm{ab}}$ and $\Z$ in degrees $0$ to $3$, so it is left to understand the abelianisations of $\Gamma_O$ and $\Gamma_I$.  These are well-known but we sketch a computation for completeness.

First note that these groups are respectively the binary octahedral and binary icosahedral groups, which project $2:1$ to the standard octahedral and icosahedral subgroups of $\mathrm{SO}(3)$, which we denote by $\overline{\Gamma}_O$ and $\overline{\Gamma}_I$.  The image of the commutator subgroup of $\Gamma_C$ in $\overline{\Gamma}_C$ is simply the commutator subgroup of $\overline{\Gamma}_C$.

It is easy to check that the commutator subgroup of $\overline{\Gamma}_O$ is the index $2$ subgroup containing only the rotations through angle $\pi$ (plus the identity of course).  Hence the commutator subgroup of $\Gamma_O$ has either index $2$ or index $4$.  In both cases we see that it is of even order, so contains an element of order $2$.  The only such element in $\SU(2)$ is $-I$, so the commutator subgroup of $\Gamma_O$ is the preimage of the commutator subgroup of $\overline{\Gamma}_O$.  In particular, it has index $2$ so $\Gamma_O^\mathrm{ab} \cong \Z/2$.  Thus $H^*(L_O; k)$ is isomorphic to $k$ degrees $0$ to $3$, by the universal coefficient theorem.

Turning now to the icosahedron, it is well-known that $\overline{\Gamma}_I$ is isomorphic to the alternating group $A_5$, which is simple.  So the commutator subgroup of $\Gamma_I$ is either the whole group---in which case $\Gamma_I^\mathrm{ab}$ is trivial---or is an index $2$ subgroup covering $\overline{\Gamma}_I$.  But the latter is impossible, since an index $2$ subgroup of $\Gamma_I$ would have even order and thus contain $-I$, and there is no proper subgroup of $\Gamma_I$ covering $\overline{\Gamma}_I$ and containing $-I$ ($\Gamma_I$ contains two lifts of each element of $\overline{\Gamma}_I$, and these differ by the action of $-I$, so any subgroup containing one lift and $-I$ contains both lifts).  Hence $\Gamma_I^\mathrm{ab}$ is trivial, and so $H^*(L_I; k)$ is $k$ in degrees $0$ and $3$ but vanishes in degrees $1$ and $2$.  In fact, $L_I$ is the well-known Poincar\'e $3$-sphere: a homology $3$-sphere not homotopy equivalent to $S^3$.
\end{proof}

Even outside characteristic $2$, $\tau$ is useful because it induces an involution on moduli spaces of discs.  The effect of this involution on orientations was computed in \cite[Theorem 1.3]{FOOOinv}: on the space of unparametrised index $2i$ discs with $j$ boundary marked points, the involution changes the orientation by a factor of $(-1)^{i+j}$.  For example, on the space of unmarked index $2$ discs, it is orientation-reversing.  This agrees with our earlier computation in \protect \MakeUppercase {P}roposition\nobreakspace \ref {labInd2MS} that this moduli space is diffeomorphic to the sphere $S^2$, where one can see directly that the involution acts as the antipodal map.

In the context of the pearl complex we are interested in discs with $2$ marked points.  In this case, the involution reverses orientations in index $2$ and preserves orientations in index $4$.  Hence, since we have shown that there are no contributing discs fixed by the involution, all trajectories in the pearl complex which contain an index $2$ disc cancel out, whilst the count of index $4$ discs through two points is always even.

Using this fact we can rule out characteristics $43$ and $571$ for the icosahedron:

\begin{prop}\label{RuleOutChars}
The only possible value of $p$ for $L_I$ in \protect \MakeUppercase {C}orollary\nobreakspace \ref {labCO}\ref{COitm2} is $2$.
\end{prop}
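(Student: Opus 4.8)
The plan is to run the Auroux--Kontsevich--Seidel argument twice --- once with $\mathfrak{m}_0 = +12$ and once with $\mathfrak{m}_0 = -12$ --- by exploiting the two possible relative spin structures on $(X_I, L_I)$, which the reflection cancellations established above force to yield the same self-Floer cohomology.

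First I would set up the topology. Since $L_I$ is the Poincar\'e homology sphere, $H^1(L_I; \Z/2) = H^2(L_I; \Z/2) = 0$, and since $X_I$ has Picard rank one, $H^2(X_I; \Z/2) \cong \Z/2$; as $H_1(L_I) = H_2(L_I) = 0$ the inclusion induces isomorphisms $H^2(X_I, L_I; \Z/2) \cong H^2(X_I; \Z/2) \cong \Z/2$ and $H_2(X_I, L_I; \Z) \cong H_2(X_I; \Z) \cong \Z$. Thus $(X_I, L_I)$ carries exactly two relative spin structures, differing by the non-trivial class $\theta \in H^2(X_I, L_I; \Z/2)$, while the generator $A$ of $H_2(X_I, L_I; \Z)$ is --- by \protect \MakeUppercase {L}emma\nobreakspace \ref {labMonot} and orientability of $L_I$ --- the unique class with $\mu(A) = 2$, and satisfies $\langle \theta, A \rangle = 1$. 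Changing the relative spin structure by $\theta$ multiplies the contribution of a holomorphic disc in class $mA$ to any of our counts by $(-1)^{\langle \theta, mA \rangle} = (-1)^m$ (see \cite{FOOObig} for the behaviour of orientations under such a change), hence multiplies the contribution of an index $2i$ disc by $(-1)^i$.

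Next I would check that $HF^*(L_I, L_I; k)$ is the same for the two relative spin structures. As in the discussion preceding this proposition, the reflection involution $\tau_*$ acts without fixed points on pearly trajectories (each disc being strongly simple, by \protect \MakeUppercase {L}emma\nobreakspace \ref {labTrajStrong}) and, by \cite[Theorem 1.3]{FOOOinv}, orientation-reversingly on those of total index $\equiv 2 \pmod 4$; hence the index $\equiv 2 \pmod 4$ part of the pearl differential vanishes. Writing the pearl differential for the standard relative spin structure as $d_M + \sum_i d_i$, with $d_i$ the index $2i$ part, we have $d_i = 0$ for odd $i$, while passing to the other relative spin structure replaces $d_i$ by $(-1)^i d_i$; so the differential, and hence the Floer cohomology, is unchanged. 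On the other hand, by \protect \MakeUppercase {P}roposition\nobreakspace \ref {labInd2MS} the signed count $\mathfrak{m}_0$ of index $2$ discs through a point of $L_I$ is $\pm 12$, and since index $2$ discs lie in the class $A$ the change of relative spin structure negates $\mathfrak{m}_0$; thus the two relative spin structures realise $\mathfrak{m}_0 = +12$ and $\mathfrak{m}_0 = -12$ respectively.

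Finally, if $HF^*(L_I, L_I; k) \neq 0$ for a field $k$ of characteristic $p$, then applying \protect \MakeUppercase {P}roposition\nobreakspace \ref {labAKS} with each relative spin structure in turn shows that both $+12$ and $-12$ are eigenvalues of $c_1(X_I) \qcup$ on $QH^*(X_I; k)$, so $p$ divides both $\chi(c_1 \qcup)(12)$ and $\chi(c_1 \qcup)(-12)$. From $\chi(c_1 \qcup)(\lambda) = (\lambda + 4)(\lambda^3 - 8\lambda^2 - 56\lambda - 76)$ (\protect \MakeUppercase {C}orollary\nobreakspace \ref {labCO}) these values are $-2^6 \cdot 43$ and $2^5 \cdot 571$, whose greatest common divisor is $2^5$, so $p = 2$. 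The step needing most care is the relative spin structure bookkeeping --- confirming that there are exactly two relative spin structures, that they differ by a class pairing to $1$ with $A$, and that switching between them multiplies an index $2i$ disc count by $(-1)^i$ --- since once this is in place the argument is just the reflection cancellation already established together with the displayed arithmetic.
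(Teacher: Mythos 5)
Your proposal is correct and follows essentially the same route as the paper: exploit the two relative spin structures on $(X_I,L_I)$, use the reflection cancellation of index-$2$ contributions to see that $HF^*(L_I,L_I;k)$ is independent of this choice while $\mathfrak{m}_0$ flips sign, and then intersect the prime divisors of $\chi(c_1\qcup)(\pm 12)$. The only difference is cosmetic — you spell out the computation of $H^2(X_I,L_I;\Z/2)$ and the pairing $\langle\theta,A\rangle=1$ via the long exact sequence, which the paper merely asserts.
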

\begin{proof}
Recall from the work of Fukaya--Oh--Ohta--Ono \cite[Chapter 8]{FOOObig} that in order to orient moduli spaces of discs on $L_I$ it actually suffices to choose a (stable conjugacy class of) \emph{relative} spin structure on $L_I$, and that such structures form a torsor for $H^2(X_I, L_I; \Z/2)$ \cite[Proposition 8.1.6]{FOOObig}.  Moreover, the effect of shifting relative spin structure by a class $\eps$ is to change the orientation on the moduli space of discs in class $A$ by $(-1)^{\ip{\eps}{A}}$ \cite[Proposition 8.1.16]{FOOObig} (building on work of de Silva \cite[Theorem Q]{VdS} and Cho \cite[Theorem 6.4]{ChoCl}).

One can easily compute that $H^2(X_I, L_I; \Z/2)$ is just $\Z/2$, and the pairing of the non-zero class $\eps$ with a disc class $A$ simply records the parity of $\mu(A)/2$.  Therefore changing relative spin structure reverses the signs of discs and trajectories of index $2$, and preserves the signs of those of index $4$.  We have just seen that all index $2$ contributions to the differential cancel out, so we conclude that (additively, at least) the self-Floer cohomology of $L_I$ is independent of the choice of relative spin structure.

Now recall the eigenvalue constraint used in \protect \MakeUppercase {C}orollary\nobreakspace \ref {labCO}\ref{COitm2}.  By the preceding discussion, any allowed prime $p$ must work for \emph{both} choices of relative spin structure, for which $\CO(\PD(Y_I))$ takes opposite signs.  This means that $p$ must divide $\chi(\PD(Y_I)\qcup))$ evaluated at both $12$ and $-12$.  The only common factor of the resulting numbers is $2$, eliminating the $p=43$ and $p=571$ cases.
\end{proof}

If one tries to apply the argument of \protect \MakeUppercase {P}roposition\nobreakspace \ref {labWide} to $L_\tri$ and $L_T$, the problem is that the reflection of a disc generally has different index from the original disc, so the map $\tau_*$ does not act on individual moduli spaces of trajectories.  Instead it mixes up moduli spaces of different virtual dimensions, and the argument falls apart.

\section{The Morse and pearl complexes}
\label{secMorsePearl}

\subsection{Stereographic projection}
\label{sscSterProj}

To understand the topology of the Lagrangian $L_C \cong \SU(2)/\Gamma_C$ we seek the fundamental domain for the action of $\SU(2)$ on $C$, in other words the set of points of $\SU(2)$ which are no further from the identity, $I$, than from any other element of $\Gamma_C$.  We think of $\SU(2)$ as the unit sphere in the quaternions $\mathbb{H}$, via
\[
\SU(2) = \left\{ \begin{pmatrix} u & -\conj{v} \\ v & \conj{u} \end{pmatrix} : u, v \in \C \text{ and } |u|^2+|v|^2=1 \right\},
\]
and identify $\mathbb{H}$ with $\R^4$ by $(u, v) \leftrightarrow (\re u, \im v, \re v, \im u)$.  The left- and right-multiplication actions of $\SU(2)$ on $\mathbb{H}$ clearly preserve the standard inner product, and hence the induced round metric on $\SU(2)$ is bi-invariant.  In particular, one-parameter subgroups of $\SU(2)$ are geodesics in the round metric.  We view the fundamental domain as a subset of $\R^3=\{0\} \times \R^3 \subset \R^4$ by stereographic projection from $-I$:
\begin{equation}
\label{labSterProj}
\begin{pmatrix} u & -\conj{v} \\ v & \conj{u} \end{pmatrix} \in \SU(2) \mapsto \frac{1}{1+\re u} (\im v, \re v, \im u).
\end{equation}

Recall from Section\nobreakspace \ref {sscCoords} the identification of $\C\P^1$ with the unit sphere in $\R^3$, which gives the action of $\SU(2)$ as rotations of the sphere that we have been using throughout.  With these conventions, the rotation through angle $\theta \in [0, \pi]$ about a unit vector $\mathbf{n} \in \R^3$ lifts to $\exp (\mathbf{n} \cdot i \theta \conj{\sigma} /2)$ in $\SU(2)$, where $\sigma$ is the vector of Pauli matrices 
\[
\sigma = \bigg( \begin{pmatrix} 0 & 1 \\ 1 & 0 \end{pmatrix}, \begin{pmatrix} 0 & -i \\ i & 0 \end{pmatrix}, \begin{pmatrix} 1 & 0 \\ 0 & -1 \end{pmatrix} \bigg).
\]
It also lifts to $-I$ times this, but we only need consider the representative closest to $I$.  Geodesics on $\SU(2)$ through $I$ correspond to intersections of $2$-planes through $\pm I$ in $\R^4$ with $\SU(2)$, and hence their stereographic projections are straight lines in $\R^3$.  For each fixed $\mathbf{n}$ the one-parameter subgroup $\theta \mapsto \exp(\mathbf{n} \cdot i\theta\conj{\sigma}/2)$ is therefore sent by \eqref{labSterProj} to a straight line in $\R^3$.  It is easy to check that the line is in the direction $\mathbf{n}$, and by restricting the stereographic projection to the $2$-plane through $\pm I$ containing this direction it is easy to compute that in fact $\exp(\mathbf{n} \cdot i \theta \conj{\sigma}/2)$ projects to $\tan(\theta/4)\mathbf{n}$---see Fig.\nobreakspace \ref {figSterProj}.
\begin{figure}[ht]
\centering
\includegraphics[scale=1]{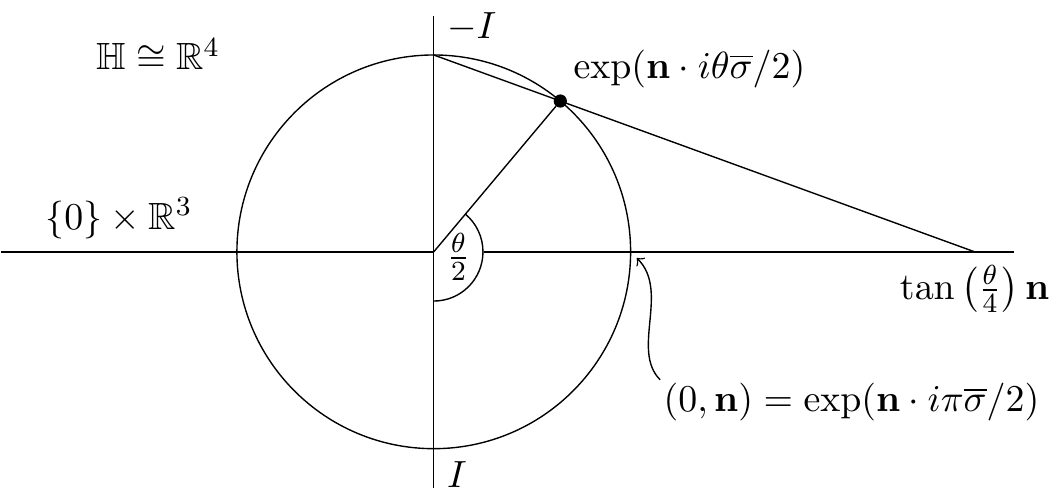}
\caption{Stereographic projection on $\SU(2)$.\label{figSterProj}}
\end{figure}

The set of points in $\SU(2)$ which are equidistant from $\exp(\mathbf{n} \cdot i \theta \conj{\sigma}/2)$ and the identity is an equatorial $2$-sphere on $\SU(2) \cong S^3$ which cuts the geodesic generated by $\mathbf{n} \cdot i \conj{\sigma}/2$ orthogonally at the points
\[
\exp(\mathbf{n} \cdot i (\theta/2) \conj{\sigma}/2) \text{ and } \exp(\mathbf{n} \cdot i (\theta/2+2\pi) \conj{\sigma}/2).
\]
Under stereographic projection this maps to a sphere in $\R^3$ which cuts the line in direction $\mathbf{n}$ orthogonally at $\tan(\theta/8)\mathbf{n}$ and $\tan(\theta/8+\pi/2)\mathbf{n}=-\cot(\theta/8)\mathbf{n}$.  Its centre is thus at
\[
\frac{1}{2}\lb\tan(\theta/8)-\cot(\theta/8)\rb \mathbf{n} = -\cot(\theta/4)\mathbf{n}
\]
and its radius is $(\tan(\theta/8) + \cot(\theta/8))/2 = \cosec(\theta/4)$.

In fact, the fundamental domain is contained in the unit ball in $\R^3$, since this corresponds to the set of points in $\SU(2)$ closer to $I$ than to $-I$, and it is convenient to compose \eqref{labSterProj} with the diffeomorphism $\mathbf{x} \mapsto \mathbf{2x}/(1-\norm{\mathbf{x}}^2)$ from the unit ball to the whole of $\R^3$; this corresponds to replacing the denominator in \eqref{labSterProj} by $\re u$, and results in $\exp(\mathbf{n}\cdot i \theta \conj{\sigma}/2)$ projecting to $\tan(\theta/2)\mathbf{n}$ rather than $\tan(\theta/4)\mathbf{n}$.  If $\mathbf{x}$ denotes the coordinate on the unit ball, and $\mathbf{y}$ the coordinate on the codomain $\R^3$, the ball
\[
\big\{\norm{\mathbf{x}+\cot(\theta/4)\mathbf{n}}^2\leq\cosec^2(\theta/4)\big\} = \big\{2\cot(\theta/4)\mathbf{x}\cdot\mathbf{n}\leq1-\norm{\mathbf{x}}^2\big\}
\]
(or strictly its intersection with the unit ball) is sent to the half-space
\begin{equation}
\label{labHalfSp}
\big\{\mathbf{y}\cdot\mathbf{n} \leq \tan(\theta/4)\big\}.
\end{equation}
For any $\alpha$, the right action of $\exp(\mathbf{n} \cdot i\theta\conj{\sigma}/2)$ on $\SU(2)$ sends $\{\mathbf{y} \cdot \mathbf{n} = \tan\alpha\}$ to $\{\mathbf{y} \cdot \mathbf{n} = \tan(\alpha+\theta/2)\}$, by translation in the direction $\mathbf{n}$ combined with a left-handed rotation about $\mathbf{n}$ through angle $\theta/2$.  This is easy to check when $\mathbf{n}=(0, 0, 1)$, and the general case then follows from the fact that the projection intertwines the action of $\SU(2)$ on itself by conjugation with its action on $\R^3$ by rotation (this in turn is easy to check infinitesimally).  We refer to the map
\[
\begin{pmatrix} u & -\conj{v} \\ v & \conj{u} \end{pmatrix} \in \SU(2) \mapsto \frac{1}{\re u} (\im v, \re v, \im u)
\]
as \emph{modified stereographic projection}, and will usually use coordinates $(x, y, z)$ on $\R^3$ (instead of the $\mathbf{y}$ used above).

This projection also has the advantage that boundaries of axial discs are sent to straight line segments in the fundamental domain.  To see this, note that the boundary of an axial disc is (the image in $L_C$ of) a right translate of one-parameter subgroup of $\SU(2)$.  We have seen that such a path is a geodesic on $\SU(2)$, i.e.~the intersection of $\SU(2) \cong S^3 \subset \R^4$ with a $2$-plane through the origin.  We can therefore write it as the intersection of two equatorial $2$-spheres, which we know project to planes.

\subsection{Triangle}
\label{sscMPTri}

The constructions of the fundamental domain, the Heegaard splitting and the Morse function are based on \cite[Section 5]{EL1}, though are not identical.  In particular, our projection leads to left-handed face identifications rather than right-handed.  For convenience of comparison, we employ matching notation.  We fix our choice of representative configuration $\tri$ as $[x^3+y^3]$, i.e.~$\tri_f$ from Appendix\nobreakspace \ref {secExplReps}.  Then the stabiliser of $\tri$ under the $\SU(2)$-action consists of (the lifts to $\SU(2)$ of) rotations about $(0, 0, 1)$ through angle $\pm 2 \pi/3$, and rotations about $(1, 0, 0)$, $(-1/2, \sqrt{3}/2, 0)$ and $(-1/2, -\sqrt{3}/2, 0)$ through angle $\pm \pi$.

Plugging these into \eqref{labHalfSp}, we see that under modified stereographic projection the image of the fundamental domain is $H \times [-1/\sqrt{3}, 1/\sqrt{3}]$, where $H$ is the regular hexagon with vertices at $2/\sqrt{3}$ times the sixth roots of $-1$.  Each square face of this hexagonal prism is identified with the opposite face via a left-handed rotation through angle $\pi/2$, whilst the hexagonal faces are identified by a left-handed rotation through angle $\pi/3$.

We take a genus $3$ Heegaard splitting of $L_\tri$ whose handlebodies are a thickening of the edges and hexagonal faces of the prism and a thickening of the three lines joining the centres of opposite square faces.  Figure\nobreakspace \ref {figHex1} shows the prism with these two sets marked in the left and right diagrams respectively.  It also shows the critical points of the Morse function built by Evans--Lekili from this splitting: the maximum is at $m$, the index $2$ critical points at $x_1$, $x_2$ and $x_3$, the index $1$ critical points at $x_1'$, $x_2'$ and $x_3'$ and the minimum at $m'$.

\begin{figure}[ht]
\centering
\includegraphics[scale=1]{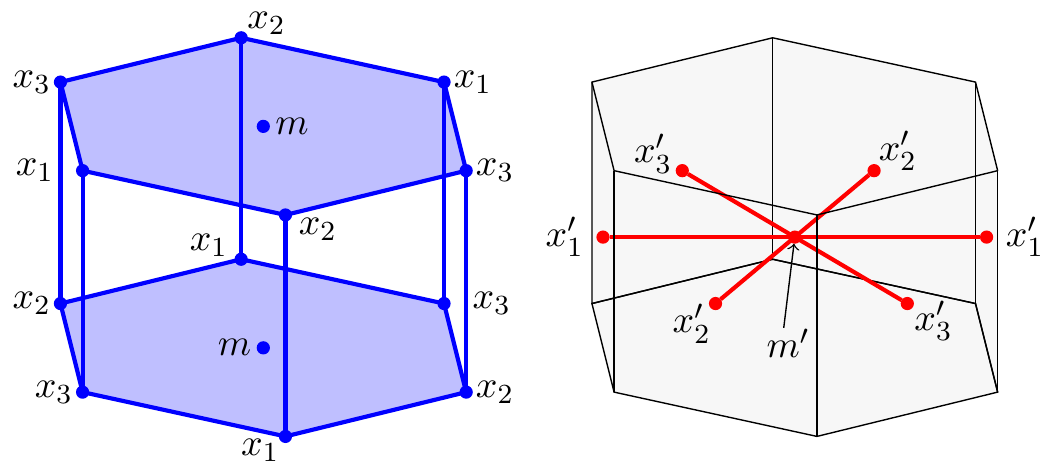}
\caption{The fundamental domain, Heegaard splitting and critical points for $C=\tri$.\label{figHex1}}
\end{figure}

Figure\nobreakspace \ref {figAsc} shows the point $x_3'$, the front face of the prism centred on it, and the shape of trajectories in its ascending manifold close to this face.  The solid trajectories belong to the descending manifolds of $x_1$, $x_2$ (twice) and $x_3$, the dotted trajectories flow into the fundamental domain towards $m$, whilst the dashed trajectories flow out of the domain, and so back into the opposite face with a twist of $\pi/2$, again towards $m$.  Of course we really need to choose a metric on $L_\tri$ in order to talk about trajectories, but we have a natural choice: that induced from the standard round metric on $\SU(2) \cong S^3$.

\begin{figure}[ht]
\centering
\includegraphics[scale=1]{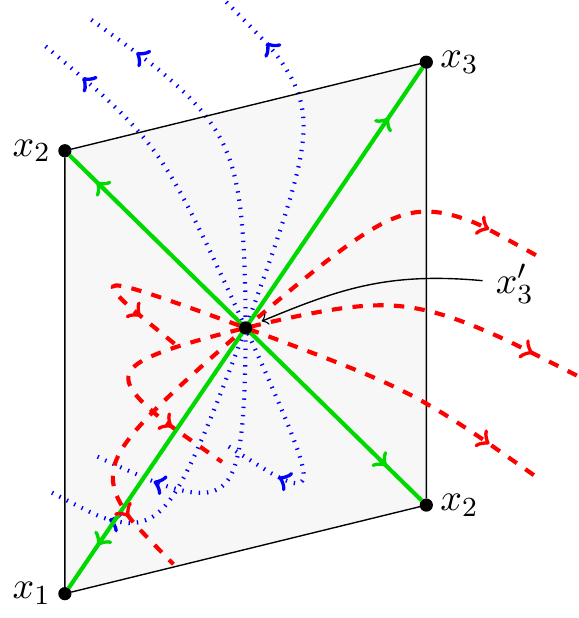}
\caption{Trajectories in the ascending manifold of $x_3'$.\label{figAsc}}
\end{figure}

The point $m'$ at the centre of the fundamental domain represents the identity in $\SU(2)$.  We saw earlier that our modified stereographic projection sends the one-parameter subgroup of $\SU(2)$ comprising (the lifts of) rotations about an axis $l \subset \R^3$ to $l$ itself.  The boundaries of the three index $2$ discs through $m'$ correspond to the one-parameter subgroups of (lifts of) rotations of $\tri$ about vertices, and therefore project to the axes through the vertices.  These are the lines joining the centres of the opposite square faces of the fundamental domain, i.e.~the core circles of the second handlebody.  The point $m$ represents the rotation of $\tri$ through angle $\pi/3$ about a vertical axis, so the lifts to $\SU(2)$ of the boundaries of the index $2$ discs through $m$ are obtained from those through $m'$ by multiplying on the right by (a lift of) this rotation.  We have seen that this right-multiplication action corresponds to translating $m'$ to $m$ and rotating by angle $\pi/6$ about a vertical axis, so the boundaries of the index $2$ discs through $m$ are the diagonals of the hexagonal faces of the fundamental domain.

The rotational symmetry group of the triangle $\tri$ in $\mathrm{SO}(3)$ acts on the fundamental domain respecting the Heegaard splitting.  This corresponds to the action of $\Gamma_C$ on $\SU(2)$ by conjugation, so preserves the round metric, and we may assume that it also preserves the Morse function.  It therefore permutes the descending manifolds, and we may choose orientations on them which are invariant under this action.  To see this note that it trivially preserves the orientations of the descending manifolds of $m'$ and $m$, which are a point and a dense open subset of $L_C$ respectively, and by inspection we can choose invariant orientations on the descending manifolds of the $x_i'$, which we can take to be the boundaries of the index $2$ discs through $m'$ (minus the point $m'$ itself).  Similarly we can choose invariant orientations on the \emph{ascending} manifolds of the $x_i$, which we take to be the boundaries of the index $2$ discs through $m$ (minus the point $m$).  This gives invariant coorientations on their descending manifolds, and since the orientation of $L_C$ is itself invariant, these invariant coorientations can be turned into invariant orientations.

In order to ensure transversality in the pearl complex we perturb the auxiliary data.  \protect \MakeUppercase {P}roposition\nobreakspace \ref {labPearlCx} shows that we can pull back the Morse function and metric by a diffeomorphism $\phi$ arbitrarily $C^\infty$-close to the identity in order to achieve the necessary genericity, so from now on we assume this has been done.  We take $\phi$ sufficiently close to $\id_{L_\tri}$ that the later arguments involving intersections of discs with various ascending and descending manifolds, and the index $4$ count in Section\nobreakspace \ref {sscInd4Tri}, are valid.  Although the Morse function and metric may themselves no longer be invariant under the action of $\Gamma_C$, they are small perturbations of symmetric data, and the perturbations do not affect the symmetry of the orientations.

Choosing the invariant orientations on the descending manifolds appropriately, the Morse differentials $\diff_M$ are
\begin{align*}
\diff_M m'&=0
\\ \diff_M x_i'&= x_i + x_{i+1} + 2 x_{i+2}
\\ \diff_M x_i&=0,
\end{align*}
with subscripts understood modulo $3$.  The coefficient $2$ in $\diff_M x_i'$ corresponds to the two solid flowlines towards $x_2$ in Fig.\nobreakspace \ref {figAsc}.  They both count with the same sign as they differ by rotation about the axis through the centre of the face shown in the figure, whilst the overall differential is invariant under cycling the $i$ because this corresponds to rotations through angle $2\pi/3$ about a vertical axis.  There is no quantum correction to $\diff_M m'$ in the Floer (pearl) differential $\diff m'$ for degree reasons, whilst that to $\diff_M x_i'$ is $X_i m'$, where $X_i$ counts flows upward from $x_i'$ and then along an index $2$ disc to $m'$ (in other words, intersections of index $2$ discs through $m'$ with the ascending manifold of $x_i'$).  There is one such trajectory for each $i$, so each $X_i$ is $\pm 1$, and by the symmetry we can replace all of the $X_i$ by a single $X \in \{\pm 1\}$.

The correction for $m$ can be written (again using approximate cyclic symmetry) in the form $Y(x_1+x_2+x_3)+\hat{Z}m'$, for $Y, \hat{Z} \in \Z$.  These count respectively the index $2$ trajectories $m \leadsto x_i$ and the index $4$ trajectories $m \leadsto m'$.  The former comprise intersections of the index $2$ discs through $m$ with the descending manifolds of the $x_i$; there is one of these for each $i$ so $Y \in \{ \pm 1 \}$.  The latter comprise index $4$ discs through $m$ and $m'$, which we count in Section\nobreakspace \ref {sscInd4Tri}.  There are no index $4$ contributions of the form `index $2$ disc through $m$, flow, index $2$ disc through $m'$', since the boundaries of these index $2$ discs stay within their respective handlebodies and the Morse flow goes from the the $m'$ handlebody to the $m$ handlebody and not vice versa.  Note that Evans--Lekili write $2Z$ for the count we are calling $\hat{Z}$.

Putting everything together, the $\Z/2$-graded Floer (pearl) cochain complex is
\[
CF^0(L_\tri, L_\tri; \Z)=\lspan{m', x_1, x_2, x_3} \text{ and } CF^1(L_\tri, L_\tri; \Z)=\lspan{x_1', x_2', x_3', m}
\]
(where $\lspan{\cdot}$ now indicates the free $\Z$-module generated by $\cdot$), and the Floer differentials $\diff^0 \mc CF^0 \rightarrow CF^1$ and $\diff^1 \mc CF^1 \rightarrow CF^0$ are given in these bases by
\[
\diff^0 = \begin{pmatrix} 0 & & & \\ 0 &  & \smash{\raisebox{-1.5ex}{{\makebox[0pt][c]{\Huge{$A$\ }}}}}  &  \\ 0 &  &  &  \\ 0 & 0 & 0 & 0 \end{pmatrix} \text{ and } \diff^1 = \begin{pmatrix} X & X & X & \hat{Z} \\ 1 & 2 & 1 & Y \\ 1 & 1 & 2 & Y \\ 2 & 1 & 1 & Y \end{pmatrix}.
\]
The matrix $A$ describes the as yet unknown index $2$ corrections to $\diff_M x_i$, but from the fact that $\diff^1 \circ \diff^0 = 0$ it is easy to see that actually $A=0$.  The Smith normal form of $\diff^1$ is thus diagonal with entries $1$, $1$, $1$ and $\det \diff^1 = 3XY-4\hat{Z}$.  This is essentially the argument given by Evans--Lekili.

\subsection{Tetrahedron}
\label{sscMPT}

We follow a similar strategy for the tetrahedron, taking
\[
T=[x^4+2\sqrt{3}x^2y^2-y^4],
\]
with vertices at $\pm \sqrt{2}/(\sqrt{3}-1)$ and $\pm (\sqrt{3}-1)i/\sqrt{2}$---this is exactly $T_e$ from Appendix\nobreakspace \ref {secExplReps}.  Under modified stereographic projection there are $14$ possible planes contributing to the boundary of the fundamental domain: $8$ from rotations about vertices (or equivalently about the centres of faces) through angle $\pm 2\pi/3$, and $6$ from rotations about the mid-points of edges through angle $\pm \pi$.  In fact, only the former are needed.  To see this note that the constraint coming from rotation about the bottom edge is $z \geq -1$ (applying \eqref{labHalfSp}), whilst rotations about the two lower vertices give $z \pm \sqrt{2} x \geq -1$.  Adding the latter two inequalities gives the edge rotation inequality for free.

The fundamental domain is therefore a regular octahedron, with vertices at
\[
(1/\sqrt{2}, \pm 1/\sqrt{2}, 0)\text{, } (-1/\sqrt{2}, \pm 1/\sqrt{2}, 0) \text{ and } (0, 0, \pm 1).
\]
Opposite faces are identified by a left-handed rotation through angle $\pi/3$, and in particular all of the vertices are identified.  We take a genus $4$ Heegaard splitting with handlebodies given by thickening the edges of the octahedron and the line segments joining opposite pairs of faces, shown in Fig.\nobreakspace \ref {figTetFD} in the left and right diagrams respectively.  From this splitting we construct a Morse function with maximum at $m$, index $2$ critical points at $x_1$, $x_2$, $x_3$ and  $x_4$, index $1$ critical points at $x_1'$, $x_2'$, $x_3'$ and $x_4'$ and minimum at $m'$.

\begin{figure}[ht]
\centering
\includegraphics[scale=1]{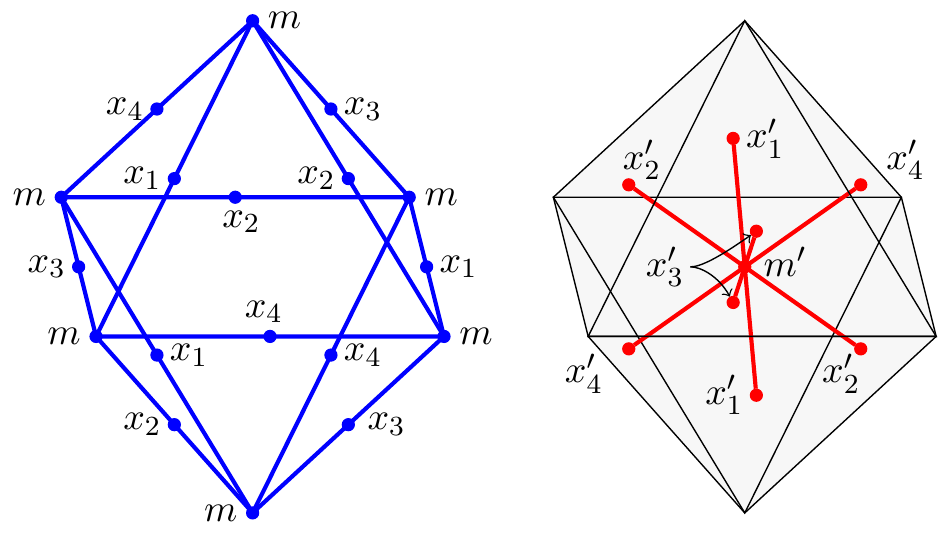}
\caption{The fundamental domain, Heegaard splitting and critical points for $C=T$.\label{figTetFD}}
\end{figure}

The ascending manifolds of the $x_i'$ are the faces of the octahedron, whilst the ascending manifolds of the $x_i$ are the edges.  The descending manifolds of the $x_i$ are, locally, small discs orthogonal to the edges.  Looking down from above onto the top half of the octahedron, we orient the faces and discs as indicated in Fig.\nobreakspace \ref {figOrs} by the dotted and dashed arrows respectively.  As for $\tri$ we perturb the Morse function and metric to ensure transversality, by pulling them back along a diffeomorphism near the identity.

\begin{figure}[ht]
\centering
\includegraphics[scale=1]{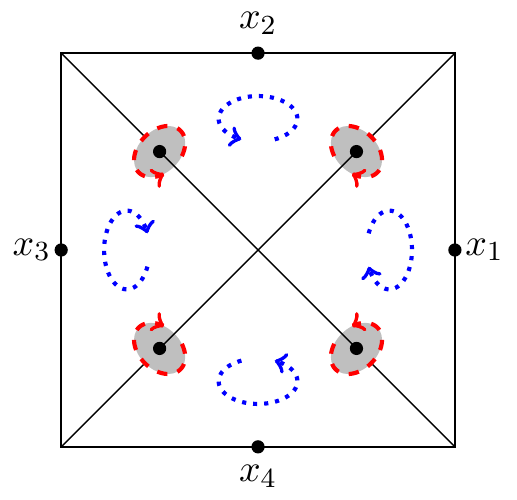}
\caption{Orientations of the ascending manifolds of the $x_i'$ (dotted) and of the descending manifolds of the $x_i$ (dashed).\label{figOrs}}
\end{figure}

Up to an overall sign depending on the chosen orientation of $L_C$, which we can eliminate if necessary by reversing the orientations of the faces defined above, the Morse differentials $\diff_M$ are then
\begin{align*}
\diff_M m'&=0
\\ \diff_M x_i'&= x_{i+1} + x_{i+2} + x_{i+3}
\\ \diff_M x_i&=0,
\end{align*}
with subscripts modulo $4$.

The quantum correction to $\diff_M x_i$ vanishes by the same $\diff^1 \circ \diff^0 = 0$ argument we used earlier, and there is no correction to $\diff_M m'$ for degree reasons.  The index $2$ correction to $\diff_M x_i'$ counts upward flows from $x_i'$ into index $2$ discs through $m'$, i.e.~intersections between index $2$ discs through $m'$ and the ascending manifold of $x_i'$: there is one such intersection for each $i$, given by the thick lines hitting the faces of the octahedron in the right-hand diagram Fig.\nobreakspace \ref {figTetFD}.  So
\[
\diff x_i' = x_{i+1} + x_{i+2} + x_{i+3} + X m'
\]
for some $X \in \{\pm 1\}$.  The reason why the intersections all carry the same sign is that the relative orientations of the disc boundaries match up with the relative orientations of the faces.

Finally, the correction to $\diff_M m$ counts index $4$ discs through $m$ and $m'$---of which there are $Z$, say---and intersections between index $2$ discs through $m$ with the descending manifolds of the $x_i$ ($Z$ is analogous to the count we called $\hat{Z}$ for the triangle, but we drop the $\hat{}$ to reduce clutter).  There are no index $4$ trajectories of the form `index $2$, flow, index $2$' since index $2$ discs through $m$ and $m'$ remain in their respective handlebodies as before.  The count of index $2$ discs through $m$ hitting the descending manifold of $x_i$ is $\pm 1$ for each $i$, so we have
\[
\diff m = Y(x_1+x_2+x_3+x_4) + Z m'
\]
for some $Y \in \{\pm 1\}$.  Again the index $2$ contributions all carry the same sign, because the relative orientations of disc boundaries and descending manifolds match up.

Thus the Floer cochain complex is
\[
CF^0(L_T, L_T; \Z)=\lspan{m', x_i} \text{ and } CF^1(L_T, L_T; \Z)=\lspan{x_i', m},
\]
and with respect to these bases the Floer differentials $\diff^0$ and $\diff^1$ are given by $\diff^0 = 0$ and
\begin{equation*}
\diff^1 = \begin{pmatrix} X & X & X & X & Z \\ 0 & 1 & 1 & 1 & Y \\ 1 & 0 & 1 & 1 & Y \\ 1 & 1 & 0 & 1 & Y \\ 1 & 1 & 1 & 0 & Y \end{pmatrix}.
\end{equation*}
The Smith normal form of $\diff^1$ is diagonal with entries $1$, $1$, $1$, $1$ and $4XY-3Z$.

\subsection{Octahedron and icosahedron}
\label{sscMPOI}

For $C$ equal to $O$ or $I$ one could similarly construct Heegaard splittings and Morse functions under stereographic projection, but we don't need to do this explicitly.  We just need to take a Morse function and metric on $L_C$, with a unique local maximum (at $m$, say) and a unique local minimum (at $m'$), and replace them with appropriate pullbacks as given by \protect \MakeUppercase {P}roposition\nobreakspace \ref {labPearlCx}.  For the icosahedron we ask that after perturbation $(m, m')$ lies close to the pair $(q, p)$ constructed in Section\nobreakspace \ref {sscInd4I}, which we can do by making this so for our original Morse function and then choosing the perturbation to be sufficiently small.

We now focus on the octahedron.  Let the index $2$ critical points be at $x_1, \dots, x_k$, and the index $1$ critical points be at $x_1', \dots, x_k'$; there are equal numbers of each by considering the Euler characteristic of the Morse complex.  Then the Floer cochain complex is
\[
CF^0(L_O, L_O; \Z)=\lspan{m', x_i} \text{ and } CF^1(L_O, L_O; \Z)=\lspan{x_i', m},
\]
and the Floer differentials $\diff^0$ and $\diff^1$ have the form
\begin{equation*}
\diff^0=\begin{pmatrix} 0 & & & \\ \raisebox{0.3ex}[3ex][0ex]{\vdots} &  & \smash{\raisebox{-1ex}{{\makebox[0pt][c]{\ \Huge{$A$\ }}}}}  &  \\ 0 &  &  &  \\ 0 & 0 & \cdots & 0 \end{pmatrix} \text{ and } \diff^1=\begin{pmatrix} b_1 & \cdots & b_k & D \\ & & & c_1 \\ & \smash{\raisebox{0ex}{{\makebox[0pt][c]{\ \ \ \Huge{$M$\ }}}}}  & & \raisebox{0.3ex}[3ex][0ex]{\vdots}  \\ &  &  & c_k \end{pmatrix},
\end{equation*}
where $M$ is the Morse differential $\lspan{x_i'} \rightarrow \lspan{x_i}$, the matrix $A$ and the vectors $B=(b_i)$ and $C=(c_i)$ represent index $2$ corrections, and the number $D$ (not to be confused with the unit disc!) is the index $4$ correction to $\diff_M m$.  By the comments at the end of Section\nobreakspace \ref {sscModInv}, $A$, $B$ and $C$ all vanish, whilst the count $D$ involves only index $4$ discs and is even.  The cokernel of $M$ is exactly the Morse cohomology group $H^2(L_O; \Z) \cong \Z/2$, so its Smith normal form has diagonal entries $1, \dots, 1, 2$.  The Smith normal form of $\diff^1$ therefore has diagonal entries $1, \dots, 1, 2, D$.

The argument for $X_I$ is completely analogous, except now the Smith normal form of $\diff^1$ has diagonal entries $1, \dots, 1, D$, where $D$ is the (even) count of index $4$ discs.

\section{Index $4$ discs and computation of Floer cohomology}
\label{secCompHF}

\subsection{The closed--open map II}
\label{sscCOII}

In this subsection we revisit the closed--open map and say as much as we can about the self-Floer cohomology of $L_C$ with the information we have so far.  Note that we have only had to do computations with axial discs to get this information.

From Section\nobreakspace \ref {secMorsePearl} we know that the ring $HF^*(L_C, L_C; \Z)$ is concentrated in degree $0$ (since the differential $\diff^1$ has non-zero determinant), and is cyclic except for the octahedron, where there are two diagonal entries of the Smith normal form of $\diff^1$ not equal to $1$.  In the latter case, the unit (represented by the critical point $m'$) generates a subring $\Z/(D)$, but $HF^0$ has an extra piece generated by a variable $x$, say, represented by a linear combination of the $x_i$, satisfying $2x=0$ and some quadratic relation $x^2 = \alpha x + \beta$, with $\alpha$ in $\Z/2$ and $\beta$ in $\Z/(D)$.

Since $HF^*(L_\tri, L_\tri; k)$ can only be non-zero over a field $k$ if $\Char k =5$, by \protect \MakeUppercase {C}orollary\nobreakspace \ref {labCO}\ref{COitm2}, the quantity $3XY-4\hat{Z}$ appearing as the determinant of $\diff^1$ for the triangle must be plus or minus a power of $5$ (possibly $5^0$).  Therefore $HF^0(L_\tri, L_\tri; \Z) \cong \Z/(5^n)$ for some non-negative integer $n$.  Similarly $HF^0(L_T, L_T; \Z) \cong \Z/(2^n)$ for some non-negative $n$.  By an analogous argument, now also using \protect \MakeUppercase {P}roposition\nobreakspace \ref {RuleOutChars}, we see that $HF^0(L_I, L_I; \Z) \cong \Z/(2^n)$, but now $n$ must be strictly positive since the index $4$ count $D$ is even (because of the antiholomorphic involution).  For the octahedron $D$ is (plus or minus) a product of powers of $2$ and $19$, and again the exponent of $2$ is positive.

We now show that for $\tri$, $T$ and $I$ there are only a few possibilities for $HF^0(L_C, L_C; \Z)$:
\begin{lem} \label{labCOConstr1}  We have ring isomorphisms:
\begen
\item \label{COCitm1} $HF^0(L_\tri, L_\tri; \Z) \cong 0$ or $\Z/(5)$.
\item \label{COCitm2} $HF^0(L_T, L_T; \Z) \cong 0$ or $\Z/(2)$ or $\Z/(4)$.
\item \label{COCitm3} $HF^0(L_I, L_I; \Z) \cong \Z/(2)$ or $\Z/(4)$ or $\Z/(8)$.
\end{enumerate}
\end{lem}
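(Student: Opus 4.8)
The plan is to reduce each claim to a question in elementary number theory and then resolve it using that the closed--open map is a ring homomorphism. From Section~\ref{secMorsePearl} we know that for $C$ equal to $\tri$, $T$ or $I$ the Floer differential has $\diff^0 = 0$ and $\diff^1$ a square integer matrix whose Smith normal form is $\mathrm{diag}(1, \dots, 1, \det \diff^1)$ with $\det \diff^1 \neq 0$; hence $HF^1(L_C, L_C; \Z) = 0$ and $HF^0(L_C, L_C; \Z)$ is the cyclic ring $\Z/(m_C)$ generated by its unit, where $m_C := \lvert \det \diff^1 \rvert$. Reducing mod a prime $p$ gives $HF^0(L_C, L_C; \F_p) = \coker(\diff^1 \otimes \F_p)$ and $HF^1(L_C, L_C; \F_p) = \ker(\diff^1 \otimes \F_p)$, so $HF^*(L_C, L_C; \F_p) \neq 0$ precisely when $p \mid m_C$. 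By Corollary~\ref{labCO}\ref{COitm2} this forces $m_\tri$ to be a power of $5$ and $m_T$ a power of $2$; using in addition Proposition~\ref{RuleOutChars}, $m_I$ is a power of $2$, and it is moreover even because the index~$4$ count is even (the remarks at the end of Section~\ref{sscModInv}). So it remains only to prove $m_\tri \mid 5$, $m_T \mid 4$, and $m_I \mid 8$.

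To bound these, I would push the ring presentations of $QH^*(X_C; \Z)$ from Table~\ref{tabQH} through the unital ring homomorphism $\CO \colon QH^*(X_C; \Z) \to \Z/(m_C)$, feeding in the values of $\CO$ on $H$ and $E$ from Corollary~\ref{labCO}\ref{COitm1}. For the triangle $QH^*(\C\P^3; \Z) = \Z[H, E]/(H^2 = E,\ E^2 = 1)$, so writing $a := \CO(H)$ we have $\CO(E) = a^2$ and $a^4 = 1$; since $\CO(3E) = 3a^2 = \pm 2$, squaring gives $9 = 9 a^4 = 4$ in $\Z/(m_\tri)$, hence $m_\tri \mid 5$. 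For the tetrahedron $QH^*(X_T; \Z) = \Z[H, E]/(H^2 = 2E,\ E^2 = H)$, so writing $e := \CO(E)$ we have $\CO(H) = e^2$ and $e^4 = 2e$; now $\CO(3H) = 3e^2 = \pm 4$ gives $18 e = 9 e^4 = (3 e^2)^2 = 16$, while $\CO(4E) = 4e = \pm 4$ gives $36 e = \pm 36$, and since $36 e = 2 \cdot 18 e = 32$ we get $32 \equiv \pm 36 \pmod{m_T}$; as $m_T$ is a power of $2$ this yields $m_T \mid 4$.

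The icosahedron needs an extra idea, because with $\CO(H) = +12$ the assignment $\CO(E) = 0$ satisfies both relations of Table~\ref{tabQH} modulo $16$, so the ring relations alone do not exclude $\Z/(16)$. Here I would invoke Proposition~\ref{RuleOutChars}: the additive group $\Z/(m_I)$ underlying $HF^0(L_I, L_I; \Z)$ does not depend on the relative spin structure, whereas the two relative spin structures realise the two signs in $\CO(H) = \pm 12$. So I may compute with the relative spin structure for which $\CO(H) = -12$; then the relation $H^2 = 22E + 2H + 24$ forces $22 e = 144$ and the relation $E^2 = 2E + H + 4$ forces $e^2 - 2e + 8 = 0$ in $\Z/(m_I)$. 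Reducing both modulo $16$ (so $22 e \equiv 6 e \equiv 0$ forces $e \equiv 0 \pmod 8$, and then $e^2 - 2e + 8 \equiv 8 \not\equiv 0 \pmod{16}$ for either residue $e \in \{0, 8\}$) shows this system has no solution modulo $16$, so $m_I \mid 8$. Combined with $m_I$ even, $m_I \in \{2, 4, 8\}$, giving part (iii); parts (i) and (ii) follow from $m_\tri \in \{1, 5\}$ and $m_T \in \{1, 2, 4\}$ since $\Z/(1) = 0$.

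The main obstacle is exactly this last manoeuvre: for a single fixed spin structure one only knows $\CO(H) = \pm 12$ without control of the sign, and on the ``$+12$'' branch the quantum relations genuinely permit $\Z/(16)$, so the argument must route through the relative-spin-structure invariance of the \emph{additive} structure in order to use the more restrictive sign. A smaller but essential point is the equivalence $p \mid m_C \iff HF^*(L_C, L_C; \F_p) \neq 0$, needed so that Corollary~\ref{labCO}\ref{COitm2} and Proposition~\ref{RuleOutChars} actually control the prime factors of $m_C$; this is immediate from the explicit form of $\diff^0$ and $\diff^1$ in Section~\ref{secMorsePearl}.
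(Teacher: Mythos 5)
Your proposal is correct and follows essentially the same route as the paper: reduce to bounding the cyclic order $m_C=\lvert\det\diff^1\rvert$ using \protect \MakeUppercase {C}orollary\nobreakspace \ref {labCO}, then push the quantum relations of Table\nobreakspace \ref {tabQH} through the unital ring homomorphism $\CO$, invoking the relative-spin-structure invariance from \protect \MakeUppercase {P}roposition\nobreakspace \ref {RuleOutChars} to access the $\CO(H)=-12$ branch for the icosahedron exactly as the paper does. The only cosmetic difference is in the tetrahedron case, where the paper argues by parity ($\theta(H)$ even, $\theta(E)$ odd, contradicting $E^2=H$ in $\Z/(8)$) while you carry out an equivalent explicit computation.
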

\begin{proof} \ref{COCitm1}  We have seen that
\[
HF^0(L_\tri, L_\tri; \Z) \cong \Z/(5^n)
\]
for some non-negative integer $n$, so suppose for contradiction that $n \geq 2$.  Composing $\CO$ with the quotient map $\Z/(5^n) \rightarrow \Z/(25)$, we get a unital ring homomorphism $\theta \mc QH^*(X_\tri; \Z) \rightarrow \Z/(25)$, and from \protect \MakeUppercase {C}orollary\nobreakspace \ref {labCO}\ref{COitm1} we know that $\theta$ satisfies $\theta(3E)=\pm 2$.  Squaring and applying $E^2=1$ we deduce that $9 = 4$ in $\Z/(25)$, which is the desired contradiction.  This proves the result.

\ref{COCitm2}  The argument is analogous except now we suppose for contradiction that we have a unital ring homomorphism $\theta \mc QH^*(X_T; \Z) \rightarrow \Z/(8)$ which satisfies $\theta(3H) = 4$ and $\theta(4E)=4$.  These two equalities force $\theta(H)$ to be even and $\theta(E)$ to be odd.  But this is impossible because $E^2=H$.

\ref{COCitm3}  This time suppose for contradiction that we have unital ring homomorphisms
\[
\theta_\pm \mc QH^*(X_I; \Z/(16)) \rightarrow \Z/(16)
\]
satisfying $\theta_\pm(H) = \pm 12$.  These correspond to the two choices of relative spin structure from the proof of \protect \MakeUppercase {P}roposition\nobreakspace \ref {RuleOutChars}.  Applying $\theta_\pm$ to $H^2=22E+2H+24$ we see that $6\theta_\pm(E) = 0$, and hence that $\theta_\pm(E)$ is divisible by $8$.  In the case of $\theta_-$ this is inconsistent with $E^2=2E+H+4$.
\end{proof}

These results for the triangle, tetrahedron and icosahedron are not used in the direct computations of Floer cohomology in the following subsections, although it is interesting to note that the rings turn out to be as large as is allowed by the above restrictions.  For the octahedron we can go further and actually pin down the Floer cohomology.  First we need to use the explicit calculation for the triangle from Section\nobreakspace \ref {sscInd4Tri} in order to determine a sign:

\begin{lem}\label{labCOSign}  Assuming \protect \MakeUppercase {C}orollary\nobreakspace \ref {labTriHF}, the value $\pm f_C \cdot 1_L$ of $\CO(\PD(N_C))$ calculated in \protect \MakeUppercase {P}roposition\nobreakspace \ref {labCONC} (for $\tri$, $T$ and $O$) takes the positive sign.
\end{lem}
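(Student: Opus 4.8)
The plan is to pin the sign down for the triangle, where Corollary~\ref{labTriHF} gives us the whole ring $HF^0(L_\tri,L_\tri;\Z)$, and then transfer the answer to the tetrahedron and octahedron using the fact that the sign in Proposition~\ref{labCONC} has a configuration-independent description. Recall from the proof of Proposition~\ref{labCONC} that $\CO(\PD(N_C))=\epsilon_C f_C\cdot 1_{L_C}$, where $\epsilon_C\in\{\pm 1\}$ is the sign of the degree of the boundary evaluation map on the moduli space of index~$4$ axial discs of type $\xi_f$; this moduli space is a covering of $L_C=\SU(2)/\Gamma_C$, described exactly as in Proposition~\ref{labInd2MS} (it is $\SU(2)$ modulo the cyclic group generated by $e^{2\pi\xi_f}$, with the evaluation map the natural projection). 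The Fukaya--Oh--Ohta--Ono orientation on it is manufactured from the standard (group-invariant) spin structure on $L_C$ by a recipe that is natural under the $\SU(2)$-action, so $\epsilon_C$ records only the discrepancy between this orientation and the one induced by a bi-invariant volume form on $\SU(2)$, for which the covering is tautologically orientation-preserving. Since that discrepancy is determined by local orientation data and dimensions, which coincide in all three cases, we have $\epsilon_\tri=\epsilon_T=\epsilon_O$; call the common value $\epsilon$.

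It therefore suffices to compute $\epsilon=\epsilon_\tri$, which I would do over a field $k$ of characteristic~$5$. By Corollary~\ref{labTriHF} the group $HF^0(L_\tri,L_\tri;k)\cong k$ is nonzero, so $\CO\colon QH^*(X_\tri;k)\to k$ is a nonzero unital ring homomorphism. From Corollary~\ref{labCO}\ref{COitm1} we have $\CO(c_1)=\CO(4H)=4\,\CO(H)=\pm 3$ in $k=\F_5$; as $4\equiv -1$, this forces $\CO(H)\in\{2,3\}$, hence $\CO(H)^2=4$. Since $R_\tri=0$, the quantum relation on $X_\tri$ reads $H^2=E$, so $\CO(E)=\CO(H)^2=4$ and
\[
\CO(\PD(N_\tri))=\CO(3E)=3\cdot 4=2\quad\text{in }\F_5.
\]
Comparing with $\CO(\PD(N_\tri))=\epsilon f_\tri\cdot 1_{L_\tri}=2\epsilon$ and recalling that $\epsilon=\pm 1$ as an integer, we conclude $\epsilon=1$, and hence $\CO(\PD(N_C))=+f_C\cdot 1_{L_C}$ for $C=\tri$, $T$ and $O$.

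The delicate point is the uniformity statement in the first paragraph: making precise that $\epsilon_C$ is genuinely independent of $C$ requires careful bookkeeping of the FOOO orientation conventions on these homogeneous moduli spaces and a verification that, with the standard spin structures, the resulting orientation differs from the bi-invariant one by a sign depending only on data common to $\tri$, $T$ and $O$ (this is the detailed orientation analysis carried out in \cite{SmDCS}). Everything else uses only the axial-disc enumeration already in hand together with the input of Corollary~\ref{labTriHF}, and in particular there is no circularity, since the constraint $p=5$ for the triangle in Corollary~\ref{labCO}\ref{COitm2} does not depend on the sign of $f_\tri$.
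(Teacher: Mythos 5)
Your characteristic-$5$ computation for the triangle is exactly the paper's: both arguments use Corollary~\ref{labTriHF} to get a nonzero unital ring homomorphism $\CO\colon QH^*(X_\tri;k)\to HF^0(L_\tri,L_\tri;k)\cong k$, deduce $\CO(3E)=3\,\CO(4H)^2=27=2$ in $\F_5$ from the relation $E=H^2$, and conclude the sign is positive because $2\neq-2$ in $\F_5$. That half is fine.

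The gap is in the transfer to $T$ and $O$, which is the actual content of the lemma and of the paper's proof. You assert that $\epsilon_C$ is independent of $C$ because the FOOO orientation is ``natural under the $\SU(2)$-action'' and the discrepancy with a bi-invariant orientation ``is determined by local orientation data and dimensions, which coincide in all three cases'' --- but the moduli spaces being compared live over \emph{different} ambient manifolds $X_\tri$, $X_T$, $X_O$ with different Lagrangians, so there is no group action or naturality relating them; the comparison has to be made at the level of the linearized problems. The paper does this concretely: it takes the explicit splitting $v_1,v_2,v_3$ of the Riemann--Hilbert pair of a type-$\xi_f$ axial disc from Lemma~\ref{labInd4AxPars} (partial indices $1,1,2$), writes down an explicit basis of $\ker\conj{\pd}$, checks that the evaluation map sends it to positively oriented bases of $T_{u(0)}X_C/T_{u(0)}N_C\oplus T_{m'}L_C$, and then shows that the obvious frame-preserving isomorphism $h$ between the Riemann--Hilbert pairs for different $C$ carries the spin-structure-induced homotopy class of trivialisation of $F$ to the corresponding one for $F_\tri$ --- by expressing the tautological frame $\alpha\cdot u,\beta\cdot u,\gamma\cdot u$ in terms of the $v_j$ and observing the coefficients are identical for all $C$. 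None of this is supplied by your appeal to naturality, and deferring it to \cite{SmDCS} leaves the lemma unproved within the paper. A second omission: the lemma makes no reference to a choice of spin structure, so one must check the sign of $\CO(\PD(N_C))$ is independent of that choice; the paper does this by noting that the boundary of a type-$\xi_f$ disc is twice a vertex-rotation class in $H_1(L_C;\Z)$ (the two vertex rotations composing to a face rotation are conjugate in $\Gamma_C$), hence pairs trivially with any class in $H^1(L_C;\Z/2)$. Your argument silently fixes ``the standard'' spin structure and does not address this.
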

\begin{proof}
Let $k$ be a field of characteristic $5$.  In $QH^*(X_\tri; k)$ we have $E = H^2 = (4H)^2$, so
\[
\CO(\PD(N_\tri)) = \CO(3E) = 3 \CO(4H)^2.
\]
Using $\CO(4H) = \pm 3 \cdot 1_L$ we therefore deduce that $\CO(\PD(N_\tri)) = 27 \cdot 1_L$.  By \protect \MakeUppercase {C}orollary\nobreakspace \ref {labTriHF} we know that $HF^0(L_\tri, L_\tri; k) \cong k$, and in $k$ we have $27 = 2 \neq -2$, so we have proved the lemma in the case of the triangle.

We now directly compare the orientations on the relevant moduli spaces to show that the discs contributing to $\CO(\PD(N_C))$ for different choices of $C$ all count with the same sign.  The key result of \cite[Chapter 8]{FOOObig} we shall use is the following: given a Riemann--Hilbert pair $(E, F)$ and a homotopy class of trivialisation of $F$, there is an induced orientation on $\ker \conj{\pd}$; applying this to $(E, F) = (u^*TX_C, u|_{\pd D}^*TL_C)$ as $u$ ranges over holomorphic discs of given index, with the homotopy class of trivialisation taken to be that arising from a choice of orientation and spin structure on $L_C$, we obtain a coherent orientation on the moduli space of parametrised holomorphic discs of this index.  We fix the orientation of $\mathfrak{su}(2)$ in which the generators of right-handed rotations about a triple of right-handed axes form a positively oriented basis, and combine this with the canonical identification $TL_C \cong L_C \times \mathfrak{su}(2)$ given by the infinitesimal group action to obtain a homotopy class of trivialisation of $TL_C$, and hence an orientation and spin structure on $L_C$.

In fact this suffices to deal with all other choices of orientation and spin structure, since changing the orientation on $L_C$ cancels out in the definition of the closed--open map, whilst modifying the spin structure by a class $\eps \in H^1(L_C; \Z/2)$ changes the sign attached to a disc $u$ according to the parity of the pairing of $\eps$ with the boundary $\pd u$ (this is a special case of the corresponding result for relative spin structures which we used in \protect \MakeUppercase {P}roposition\nobreakspace \ref {RuleOutChars}, via the connecting homomorphism $H^1(L_C) \rightarrow H^2(X_C, L_C)$).  The discs we are interested in have boundaries which sweep out the rotation of $C$ about the centre of a face, which can be realised as the composition of the rotations about two adjacent vertices.  These two vertex rotations are conjugate in $\Gamma_C=\pi_1(L_C)$, so they become equal when we pass to the abelianisation $H_1(L_C; \Z)$, and we therefore see that the boundaries of interest are multiples of $2$ in $H_1(L_C; \Z)$.  This means that they pair to $0 \in \Z/2$ with any class in $H^1(L_C; \Z/2)$, and hence that the signs of the discs are unaffected by changes of spin structure.

So consider a trajectory contributing to $\CO(\PD(N_C))$ for $C$ equal to $\tri$, $T$ or $O$.  We know from \protect \MakeUppercase {P}roposition\nobreakspace \ref {labCONC} that the trajectory comprises a single disc $u \mc (D, \pd D) \rightarrow (X_C, L_C)$, which is axial of type $\xi_f$ and order $1$, mapping $0$ to $N_C$ and the outgoing marked point to the Morse minimum $m'$.  Let $(E, F)$ be the corresponding Riemann--Hilbert pair, and $V$ the kernel of its Cauchy--Riemann operator, carrying the orientation defined by the orientation and spin structure on $L_C$.  Note that $V$ has (real) dimension $\dim L_C + \mu(u) = 7$.  From \cite[Section A.2.3]{BCEG} we see that the sign with which this trajectory counts is given---up to some overall sign independent of $C$---by the orientation sign of the map
\[
\ev \mc V \rightarrow T_{u(0)}X_C/T_{u(0)}N_C \oplus T_{m'}L_C,
\]
which takes a holomorphic section $v$ to its evaluation at the normal space to $N_C$ at the pole and to the tangent space to $L_C$ at $m'$.

In \protect \MakeUppercase {L}emma\nobreakspace \ref {labInd4AxPars} we saw a splitting of $(E, F)$ by a holomorphic frame $v_1$, $v_2$, $v_3$.  A basis for $V$ is then given by
\begin{multline}
\label{eqSecBasis}
i(1-z)v_1 \text{, } i(1-z)v_2 \text{, } (1-z)^2v_3 \text{, } i(1-z^2)v_3 \text{, } (1+z)v_1+i(1-z)v_2 \text{,} \\ i(1-z)v_1-(1+z)v_2 \text{ and } zv_3.
\end{multline}
The first four elements evaluate to $0$ in $T_{m'}L_C$ and to a positively oriented basis in $T_{u(0)}X_C / T_{u(0)}N_C$ (equipped with the complex orientation).  The final three elements, meanwhile, evaluate to $0$ in the latter space and to $4\alpha \cdot m'$, $4\beta \cdot m'$ and $\gamma \cdot m'$ respectively in the former.  Recall from the proof of \protect \MakeUppercase {L}emma\nobreakspace \ref {labInd4AxPars} that $\alpha$, $\beta$ and $\gamma$ represent infinitesimal right-handed rotations about a right-handed set of axes, so form a positively oriented basis of $\mathfrak{su}(2)$, and hence \eqref{eqSecBasis} is sent by $\ev$ to a positively oriented basis of the codomain.  In other words, $\ev$ is orientation-preserving if and only if \eqref{eqSecBasis} is positively oriented as a basis of $V$.

Now fix a reference disc $u_\tri$ contributing to $\CO(\PD(N_\tri))$, with associated Riemann--Hilbert pair $(E_\tri, F_\tri)$ and $\conj{\pd}$ kernel $V_\tri$.  There is an obvious isomorphism $h$ between $(E, F)$ and $(E_\tri, F_\tri)$, given by sending the frame $v_1$, $v_2$, $v_3$ for $E$ to the corresponding frame for $E_\tri$, and this induces an isomorphism $H \mc V \rightarrow V_\tri$ which sends the basis \eqref{eqSecBasis} to the obvious corresponding basis for $V_\tri$.  We want to show that $u$ contributes to $\CO(\PD(N_C))$ with the same sign as $u_\tri$ contributes to $\CO(\PD(N_C))$, and from the above argument this happens if and only if $H$ is orientation-preserving.  This will follow if we can show that $h$ maps the homotopy class of trivialisation of $F$ induced by the orientation and spin structure on $L_C$ to that of $F_\tri$ induced by the orientation and spin structure on $L_\tri$.

To prove this, note that the frame $\alpha \cdot u|_{\pd D}$, $\beta \cdot u|_{\pd D}$, $\gamma \cdot u|_{\pd D}$ for $F$ is tautologically in the homotopy class induced by the orientation and spin structure on $L_C$.  Similarly for the corresponding frame $\alpha \cdot u_\tri|_{\pd D}$, $\beta \cdot u_\tri|_{\pd D}$, $\gamma \cdot u_\tri|_{\pd D}$ for $F_\tri$.  We can express these frames in terms of the $v_j$ (restricted to $\pd D$, but we drop this from the notation to reduce clutter) as
\[
\frac{(1+z)v_1 + i(1-z)v_2}{4} \text{, } \frac{i(1-z)v_1 - (1+z)v_2}{4} \text{ and } zv_3,
\]
and hence they are carried to each other by $h$, proving that $H$ is orientation-preserving and thus completing the proof of the lemma.
\end{proof}

No such result could hold for the index $2$ discs contributing to $\CO(c_1\qcup)$ without restricting the spin structure, since changing it would reverse the sign.

From this we deduce:

\begin{cor}\label{labCOLO}  If $HF^*(L_O, L_O; k) \neq 0$ over a field $k$ of characteristic $p$ then $p$ must be $2$.  The index $4$ count $D$ for the octahedron is a power of $2$.
\end{cor}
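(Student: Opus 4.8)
The plan is to feed the now sign‑determined value of $\CO(\PD(N_O))$ into the eigenvalue machinery of Proposition \ref{labAKS} and intersect the resulting congruence on $p$ with the one already obtained in Corollary \ref{labCO}\ref{COitm2}. First I would record that, by Lemma \ref{labPDNC}, $\PD(N_O) = v_O E = 6E$ and $f_O = 8$, and that Proposition \ref{labCONC} together with Lemma \ref{labCOSign} gives $\CO(\PD(N_O)) = +8 \cdot 1_{L_O}$ over any ring, so that $\CO(\PD(N_O) - 8 \cdot 1_{X_O}) = 0_{L_O}$. Running the argument of Proposition \ref{labAKS} (equivalently, the proof of Proposition \ref{labCONC}) verbatim with $\PD(N_O)$ in place of $c_1$: if $HF^*(L_O, L_O; k) \neq 0$ over a field $k$ of characteristic $p$ then $0_{L_O}$ is not invertible, so $\PD(N_O) \qcup - 8 \cdot \id$ is singular on $QH^*(X_O; k)$, hence $8$ is a root modulo $p$ of the characteristic polynomial $\chi(\PD(N_O)\qcup) = (\lambda^2 - 6\lambda - 36)^2$ computed in Corollary \ref{labCO}\ref{COitm2}. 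Thus $p$ divides $(8^2 - 6 \cdot 8 - 36)^2 = (-20)^2 = 2^4 5^2$, so $p \in \{2, 5\}$; combined with $p \in \{2, 19\}$ from Corollary \ref{labCO}\ref{COitm2}, this forces $p = 2$.

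For the statement about $D$, I would invoke the explicit form of the pearl complex from Section \ref{sscMPOI}: over $\Z$ one has $\diff^0 = 0$ and $\diff^1$ is a square integer matrix whose Smith normal form has diagonal entries $1, \dots, 1, 2, D$, so $\det \diff^1 = \pm 2D$ and $HF^*(L_O, L_O; \Z)$ is concentrated in even degree. The same complex reduced modulo any prime $q$ computes $HF^*(L_O, L_O; \F_q)$, and since $\diff^0$ still vanishes while $\diff^1$ is square, $HF^*(L_O, L_O; \F_q) \neq 0$ if and only if $\diff^1$ is singular mod $q$, i.e.\ if and only if $q \mid 2D$. By the first part the only such $q$ is $2$, so $2D$, and therefore $D$, has no odd prime factor; that is, $D$ is, up to sign, a power of $2$. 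Since Section \ref{sscCOII} already records that $D$ is $\pm$ a product of powers of $2$ and $19$ with positive $2$‑exponent, this pins $D$ down to $\pm 2^m$ with $m \geq 1$.

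There is no serious obstacle once Lemma \ref{labCOSign} is in hand; the whole argument hinges on the sign in $\CO(\PD(N_O))$. Had we only known $\CO(\PD(N_O)) = -8 \cdot 1_{L_O}$, the eigenvalue condition would read $p \mid (8^2 + 6 \cdot 8 - 36)^2 = 76^2 = 2^4 19^2$, which yields nothing beyond Corollary \ref{labCO}\ref{COitm2}. This is precisely why Lemma \ref{labCOSign}, and hence the explicit triangle computation Corollary \ref{labTriHF} on which it rests, is a genuine ingredient rather than a formality.
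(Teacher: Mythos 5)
Your proposal is correct and follows essentially the same route as the paper: Lemma~\ref{labCOSign} fixes the sign so that $p$ must divide $\chi(\PD(N_O)\qcup)(8)=(-20)^2=2^45^2$, which intersected with the constraint $p\in\{2,19\}$ from $\chi(c_1\qcup)$ forces $p=2$, and then the Smith-normal-form description of $\diff^1$ (already recorded in Section~\ref{sscMPOI} and Section~\ref{sscCOII}) shows $D$ can have no odd prime factor. The only cosmetic difference is that you re-derive the "$HF\neq 0$ over $\F_q$ iff $q\mid 2D$" equivalence explicitly, whereas the paper simply cites the earlier observation that $D$ is a product of powers of $2$ and $19$.
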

\begin{proof}
We argue as in \protect \MakeUppercase {C}orollary\nobreakspace \ref {labCO}\ref{COitm2}, but now we know that $p$ divides $\chi(\PD(N_C)\qcup) (8)$---the $-8$ alternative is ruled out by \protect \MakeUppercase {L}emma\nobreakspace \ref {labCOSign}.  This forces $p$ to be $2$ or $5$, and we have already seen that $5$ is not permitted by considering $\chi(c_1\qcup)$.  This in turn means that $D$ cannot be divisible by $19$, so it is just a power of $2$.
\end{proof}

Now we can compute the self-Floer cohomology of $L_O$:

\begin{prop} \label{labCOConstr2}  We have an isomorphism of unital rings
\[
HF^0(L_O, L_O; \Z) \cong \Z [x]/(2, x^2+x+1).
\]
\end{prop}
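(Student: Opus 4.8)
The plan is to extract everything we already know about the ring $HF^0(L_O, L_O; \Z)$ and then pin down the two remaining unknowns — the index $4$ count $D$ and the structure constants $\alpha, \beta$ — using the closed--open map. Recall from Section~\ref{sscMPOI} that, since $\diff^0 = 0$, the group $HF^0(L_O, L_O; \Z)$ is the cokernel of $\diff^1$, whose Smith normal form has diagonal entries $1, \dots, 1, 2, D$, so additively it is $\Z/2 \oplus \Z/(D)$; and recall from the start of Section~\ref{sscCOII} that, as a ring, it has the presentation $\Z\langle 1, x\rangle$ modulo $D\cdot 1 = 0$, $2x = 0$ and $x^2 = \alpha x + \beta\cdot 1$, with $\alpha \in \Z/2$, $\beta \in \Z/(D)$, the unit being $1$ and $x$ a combination of the $x_i$. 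Finally recall from Corollary~\ref{labCOLO} that $D$ is a power of $2$, so $D = 2^m$ with $m \geq 1$. The target is to prove $m = 1$ and $\alpha = \beta = 1$, for then the ring is exactly $\Z[x]/(2, x^2 + x + 1)$.

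First I would note an elementary constraint: since $2x = 0$ we have $0 = 2x^2 = 2\beta\cdot 1$, so $\beta \in \{0, D/2\}$ in $\Z/(D)$. Now I would feed the relation $E^2 = E + 1$ in $QH^*(X_O; \Z)$ (Table~\ref{tabQH}) through the unital ring homomorphism $\CO \mc QH^*(X_O; \Z) \to HF^0(L_O, L_O; \Z)$. Writing $\CO(E) = p\cdot 1 + q x$ with $p \in \Z/(D)$, $q \in \Z/2$, and expanding $\CO(E)^2 = \CO(E) + 1$ using $2x = 0$, $q^2 = q$ and $x^2 = \alpha x + \beta$, one obtains $q\alpha \equiv q \pmod 2$ and $p^2 + q\beta \equiv p + 1 \pmod D$. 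If $q = 0$ the latter reads $p^2 - p - 1 \equiv 0 \pmod D$; reducing modulo $2$ (legitimate since $D$ is even) and noting that $t^2 - t - 1 \equiv t^2 + t + 1$ has no root in $\F_2$, this is impossible, so $q = 1$. Then the first congruence forces $\alpha = 1$, and the second gives $\beta \equiv p + 1 - p^2 \pmod D$, whose mod-$2$ reduction is $\beta \equiv 1 \pmod 2$. Since $\beta \in \{0, D/2\}$ this means $\beta = D/2$ with $D/2$ odd; as $D$ is a power of $2$, $D = 2$ and $\beta = 1$.

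Putting this together, $HF^0(L_O, L_O; \Z) = \Z\langle 1, x\rangle / (2\cdot 1,\; 2x,\; x^2 - x - 1) = \Z[x]/(2,\; x^2 - x - 1) = \Z[x]/(2, x^2 + x + 1)$, the field $\F_4$. As a sanity check this is consistent with Proposition~\ref{labHFLOIk}, since reducing mod $2$ returns a $2$-dimensional $\F_2$-algebra. (Alternatively one could identify $HF^0(L_O, L_O; \F_2)$ first: it is a $2$-dimensional commutative unital $\F_2$-algebra, hence one of $\F_2 \times \F_2$, $\F_2[t]/(t^2)$ or $\F_4$, and the equation $t^2 = t + 1$ coming from $\CO(E)$ has a solution only in the last; then transfer to $\Z$ coefficients via universal coefficients, using $HF^1(L_O, L_O; \Z) = 0$.)

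All the substantive work is upstream — the evenness of the index $4$ count via the antiholomorphic involution, the Smith normal form of $\diff^1$ that pins the group to $\Z/2 \oplus \Z/(D)$, and Corollary~\ref{labCOLO} constraining $D$ to a power of $2$. Given these, the deduction above is short; the only point needing a moment's care is verifying that the case $q = 0$ is genuinely excluded, i.e. that the golden-ratio relation $t^2 = t + 1$ has no solution modulo $2$. There is no real obstacle here.
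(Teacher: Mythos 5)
Your proof is correct, and it reaches the conclusion by a genuinely different (and slightly leaner) route than the paper. Both arguments share the first step: feeding $E^2=E+1$ through the unital map $\CO$ and reducing mod $2$ to force the $x$-coefficient of $\CO(E)$ to be odd, $\alpha=1$, and $\beta$ odd (the paper phrases this as injectivity of $\CO$ on the $\F_4$-subring generated by $E$; your "no root of $t^2-t-1$ in $\F_2$" computation is the same fact). The divergence is in how $D=\pm2$ is pinned down. The paper argues by contradiction assuming $4\mid D$, imports the explicit values $\CO(2H)=\pm6$ and $\CO(6E)=8$ from Corollary~\ref{labCO}\ref{COitm1} to fix the parities of the unit-coefficients of $\CO(H)$ and $\CO(E)$, and then derives incompatible parities for the $x$-coefficient of $\CO(H)$ from the relation $H^2=5E+3$. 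You instead observe the elementary torsion constraint $2\beta=0$ (from $2x^2=(2x)x=0$ and $2\alpha x=0$), so $\beta\in\{0,D/2\}$, which together with $\beta$ odd and $D$ a power of $2$ (Corollary~\ref{labCOLO}) forces $D=2$ outright. Your version never touches $H$ or the relation $H^2=5E+3$, and uses the closed--open map only through the single relation $E^2=E+1$; the trade-off is that it leans on Corollary~\ref{labCOLO} in exactly the same way the paper does, so nothing upstream is saved. The only small caveat is the parenthetical "universal coefficients" alternative at the end, which is sketchier than the main line, but the main argument is complete without it.
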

\begin{proof}  We have shown already that $HF^0(L_O, L_O; \Z) \cong \Z [x] /(D, 2x, x^2-\alpha x-\beta)$ for some $D$  in $\{\pm2, \pm4, \pm8, \dots\}$, some $\alpha$ in $\Z/2$, and some $\beta$ in $\Z/(D)$, and that $\CO(2H)=\pm6$ and $\CO(6E)=8$.  Let $\CO(H)=ax+b$ and $\CO(E)=cx+d$, for $a, c \in \Z/2$ and $b, d \in \Z/(D)$.

Consider the reduction modulo $2$ of $\CO$, restricted to the subring of $QH^*(X_O; \Z/2)$ generated by $E$.  This ring is $\Z[E]/(2, E^2-E-1)$, isomorphic to the field $\F_4$ of four elements, so the restriction of $\CO$ to it must be injective.  In particular, since the codomain
\[
HF^0(L_O, L_O; \Z/2) \cong \Z[x]/(2, x^2-\alpha x-\beta)
\]
also has four elements we see that it too must be isomorphic to $\F_4$, and hence that $\alpha$ and $\beta$ are both odd.  Moreover, the coefficient $c$ of $x$ in $\CO(E)$ must be odd.

If we can show that $D=\pm2$ then the above argument gives the claimed form of $HF^0(L_O, L_O; \Z)$, so suppose for contradiction that $D$ is a multiple of $4$.  Since $\CO(6E)=8$, we deduce that the coefficient $d$ of the unit in $\CO(E)$ must be even.  Similarly, since $\CO(2H)=\pm6$ the coefficient $b$ must be odd.  Now applying $\CO$ to the relation $H^2 = 5E+3$ we obtain
\[
(a^2\alpha-5c)x+(a^2\beta+b^2-5d-3)=0.
\]
From the coefficient of $x$ we see that $a$ must be odd, whilst from the coefficient of the unit we see that $a$ must be even, giving the desired contradiction.  Therefore $D=\pm2$ and $HF^0(L_O, L_O; \Z)$ is as claimed.
\end{proof}

The fact that the signed count $D$, of index $4$ discs through two generic points $p$ and $q$ of $L_O$, is $\pm2$ can be understood as follows.  From Section\nobreakspace \ref {sscDegCont} we know that any such disc completes to a rational curve of degree $2$, which must therefore be contained in some $2$-plane in $\P S^6V$.  This curve passes through $p$ and $q$ tangent to $X_O$ (since the whole curve is contained in $X_O$), and generically these two tangent $3$-planes $T_pX_O, T_qX_O \subset \P S^6V$ meet in a single point, $r$.  The plane of the curve is then spanned by $p$, $q$ and $r$.  The fact that the count is $\pm 2$ tells us that the intersection of the plane $\lspan{p, q, r}$ with $X_O$ is indeed a degree $2$ curve with equator on $L_O$.  We already remarked in Section\nobreakspace \ref {sscModInv} that the two hemispheres should count with the same sign.

For the triangle, tetrahedron and icosahedron, some work is required to compute the index $4$ contribution to the Floer differential $\diff^1$ and hence evaluate the Floer cohomology.  This is the subject of the remainder of the paper.

\subsection{Triangle}
\label{sscInd4Tri}

Let $p=[x^3+y^3]$ and $q=[x^3-y^3]$, representing equilateral triangles on the equator of $\P V$ which differ by a rotation through angle $\pi$ about a vertical axis.  These are the (unperturbed) points $m'$ and $m$ respectively from Section\nobreakspace \ref {sscMPTri}.

\begin{prop} \label{labInd4Tripq}  $(q, p)$ is a regular value of the two-point index $4$ evaluation map $\ev_2 \mc M_4 \rightarrow L_\tri^2$, with exactly two preimages.
\end{prop}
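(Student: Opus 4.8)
The plan is to combine the bubbling criterion of Section~\ref{sscBubConf}, the classification of index $4$ discs in Corollary~\ref{labInd4Pol}, and the transversality result of Corollary~\ref{labInd4Trans}. First I would verify that $(q,p)$ lies off the image of the Gromov boundary, so that $\ev_2^{-1}(q,p)$ is compact and the local degree of $\ev_2$ is locally constant near $(q,p)$. The vertices of $p=[x^3+y^3]$ are the cube roots of $1$ and those of $q=[x^3-y^3]$ the cube roots of $-1$, so the set \eqref{eqpqSet} of squared normalised inner products of a vertex of $p$ against a vertex of $q$ is $\{0,3/4\}$, while the set \eqref{eqCSet} for pairs of vertices of $\tri$ is $\{1/4,1\}$; as these are disjoint, Section~\ref{sscBubConf} shows no index $4$ bubbled configuration evaluates to $(q,p)$.

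Next I would enumerate $\ev_2^{-1}(q,p)$ using Corollary~\ref{labInd4Pol}: an index $4$ disc is (a) one with two poles of type $\xi_v$ of order $1$, (b) an $\SU(2)$-translate of $u_v\circ\psi_2$, or (c) an $\SU(2)$-translate of $u_f$. Case (b) cannot occur, since the boundary of such a disc is the boundary circle of an index $2$ disc (traversed twice), and by Corollary~\ref{labInd2Pol} every configuration on the boundary of an index $2$ disc contains the vertex at the top of the relevant rotation axis, whereas $p$ and $q$ share no vertex. In case (c) the boundary circle is an $\SU(2)$-translate of the circle of all equilateral triangles lying in a fixed plane, determined by the normal axis of that plane; for it to contain both equatorial triangles $p$ and $q$ the axis must be the common vertical axis, so any such disc has boundary $u_f(\pd D)$. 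Up to reparametrisation the axial discs of type $\xi_f$ with this boundary are $u_f=u_{\xi_f,\tri}$ and $u_{-\xi_f,\tri}=\gamma\cdot u_f$ for a vertex rotation $\gamma\in\Gamma_\tri$; these traverse the common circle with opposite orientations. A direct check from Definition~\ref{labubull} shows both send $1\mapsto p$ and $-1\mapsto q$, and since $u_{\pm\xi_f,\tri}$ restrict to the common boundary circle homeomorphically with opposite orientations, $[u_f,-1,1]$ and $[u_{-\xi_f,\tri},-1,1]$ are distinct points of $M_4$ (an automorphism of $D$, being orientation-preserving on $\pd D$, cannot interchange them); each contributes exactly once.

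The \emph{main obstacle} is to show that case (a) contributes nothing: no disc $u$ with two order-$1$ poles of type $\xi_v$ has both $q$ and $p$ on its boundary. Such a $u$ misses $N_\tri$ by Lemma~\ref{labVertPol}, so by Section~\ref{sscDegCont} (Table~\ref{tabDegCont}) its double $\double u$ is a degree $2$ rational curve in $X_\tri\cong\C\P^3$ through $p$ and $q$, meeting the discriminant quartic $Y_\tri$ simply at each of its two poles and, since the reflected poles are of type $-\xi_v=\xi_e$ by Lemma~\ref{labRigQA}, with multiplicity $3$ at each of their two reflections — so these four points account for all of $\double u\cdot Y_\tri=8$. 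If $\double u$ were a double cover of a line, Lemma~\ref{labAllStrongSimp} would give an index $2$ disc $v$ with $u(\pd D)\subset v(\pd D)$, forcing $p$ and $q$ onto the boundary of an index $2$ disc and hence to share a vertex; so $\double u$ is an irreducible plane conic. It remains to rule out an irreducible conic through $p$ and $q$ with equator on $L_\tri$ and the above contact pattern with $Y_\tri$. I would attack this in standard coordinates on $\P S^3V$: write $\double u$ as a conic in the plane it spans and impose the contact conditions against $Y_\tri=\{\Delta=0\}$; these should over-determine the curve, so that the resulting system can be checked to have no solution (by hand, or with the paper's Mathematica notebook). A cleaner alternative would be an intrinsic argument that a conic through $p$ and $q$ with equator on $L_\tri$ must lie in a coordinate line and hence degenerate.

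With case (a) excluded, $\ev_2^{-1}(q,p)=\{[u_f,-1,1],[u_{-\xi_f,\tri},-1,1]\}$. Both discs are axial of type $\xi_f$, so Corollary~\ref{labInd4Trans} shows $\ev_2$ is a submersion at each of them; hence $(q,p)$ is a regular value with precisely two preimages, as asserted.
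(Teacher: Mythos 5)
Your overall architecture matches the paper's: the two axial discs of type $\xi_f$ account for the two preimages and \protect \MakeUppercase {C}orollary\nobreakspace \ref {labInd4Trans} gives regularity there; the type-$\xi_v$-order-$2$ case dies because $p$ and $q$ share no vertex; and the real work is excluding a disc with two order-$1$ poles of type $\xi_v$. Your reduction of that last case is also sound as far as it goes, including your alternative route (via \protect \MakeUppercase {L}emma\nobreakspace \ref {labAllStrongSimp} and the classification of index $2$ discs) for excluding the double cover of a line --- the paper instead just observes that the line through $p$ and $q$ misses $Y_\tri\setminus N_\tri$. The bubbling computation and the identification of the two $\xi_f$-discs are likewise correct.

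The gap is that you stop exactly at the step you yourself flag as the main obstacle: you never actually rule out a smooth conic through $p$ and $q$ with equator on $L_\tri$, and the plan you propose (impose the tangency/contact pattern of the conic against the quartic $Y_\tri=\{\Delta=0\}$ and hope the system is overdetermined) is not the argument that works. Contact conditions against the discriminant are awkward to write down and it is not clear they overdetermine anything. What the paper does instead is exploit the pole classification you already have in hand: the two poles of $u$ evaluate to points $R,S\in Y_\tri\setminus N_\tri$, their reflections are poles of type $\xi_e$ evaluating to points $P=[(ax+y)^3]$ and $Q=[(bx+y)^3]$ of $N_\tri$ with $\tau(R)=P$ and $\tau(S)=Q$, and all six of $p,q,P,Q,R,S$ lie on the conic and hence in a single $2$-plane of $\P S^3V$. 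Coplanarity of $p,q,P,Q$ alone forces $a=b$ (contradicting injectivity of the embedded conic) unless one of $a,b$ is $0$ or $\infty$; taking $a=\infty$, writing $R$ and $S$ explicitly in terms of $b,c,d$ via $\tau$, and imposing coplanarity with $R$ forces $b=0$, after which $P,Q,R,S$ have standard coordinates $[1:0:0:0]$, $[0:0:0:1]$, $[0:0:1:c]$, $[d:1:0:0]$ and are visibly not coplanar. So the missing idea is to convert the degree-$2$ condition into linear dependence among explicitly known points on the curve (the poles and their $\tau$-reflections), rather than into contact conditions with $Y_\tri$; without that, your case (a) is not closed and the count of preimages is not established.
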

\begin{proof}  There are two axial discs of type $\xi_f$ and order $1$ passing through $p$ and $q$, and by \protect \MakeUppercase {C}orollary\nobreakspace \ref {labInd4Trans} $\ev_2$ is a submersion at these discs.  They are reflections of each other and their boundaries sweep the rotation from $p$ to $q$ to $p$ about a vertical axis in either direction, through a total angle of $2\pi/3$.  We now check that there are no other index $4$ discs through $p$ and $q$.  It is easy to see that there can be no axial discs of type $\xi_v$ and order $2$ (since the configurations $p$ and $q$ do not have a common vertex), so by \protect \MakeUppercase {C}orollary\nobreakspace \ref {labInd4Pol} we are left to rule out discs with two poles of type $\xi_v$ and order $1$.

Suppose for contradiction then that $u \mc (D, \pd D) \rightarrow (X_\tri, L_\tri)$ is such a disc passing through $p$ and $q$.  Its double $\double{u}$ is a rational curve in $X_\tri$ of degree $2$ (by the results of Section\nobreakspace \ref {sscDegCont}), so is either a double cover of a line or is a smooth conic.  If the former, the image of $\double{u}$ would be the line through $p$ and $q$, but this line does not intersect $Y_\tri \setminus N_\tri$.  We know, however, that $\double{u}$ \emph{does} meet this set, at the poles of $u$, so this case is impossible.  Therefore $\double{u}$ must be a smooth conic, and hence, in particular, an embedding.

The poles of $u$ reflect to poles of $\double{u}$ of type $\xi_e$ (and order $1$), and evaluate to points
\[
P \coloneqq [(ax+y)^3] \text{ and } Q \coloneqq [(bx+y)^3] \in N_\tri
\]
for some $a, b \in \C\P^1$, which must be distinct as $\double{u}$ is injective.  Since $\double{u}$ has degree $2$, its image is contained in a $2$-plane, and hence the points $p$, $q$, $P$ and $Q$ must be coplanar.  If $a$ and $b$ are finite and non-zero then applying this condition we see that $a=b$, contradicting the fact that they must be distinct.  We may therefore assume without loss of generality that $a=\infty$ and that $b$ is finite.  The case where one of $a$ and $b$ is zero is analogous.  Then the poles of type $\xi_v$ evaluate to
\[
R \coloneqq [(x+cy)y^2] \text{ and } S \coloneqq [((bx+y)+d(x-\conj{b}y))(x-\conj{b}y)^2] \in Y_C \setminus N_C,
\]
for some $c, d \in \C$, using that $R$ and $S$ lie in $Y_\tri \setminus N_C$ and satisfy $\tau(R)=P$ and $\tau(S)=Q$.

From coplanarity of $p$, $q$, $P$, $Q$ and $R$ (all lie in the image of $\double{u}$) we deduce that $b=0$.  But then $P$, $Q$, $R$ and $S$ have standard coordinates $[1:0:0:0]$, $[0:0:0:1]$, $[0:0:1:c]$ and $[d:1:0:0]$, so cannot be coplanar, giving a contradiction.  Hence no such two-pole index $4$ disc $u$ can exist, and we're done.
\end{proof}

Note also that there is no index $4$ bubbled configuration through $p$ and $q$; in fact this is precisely the right-hand example given in Fig.\nobreakspace \ref {figBubConfs} in Section\nobreakspace \ref {sscBubConf}.  Hence there is an open neighbourhood $U$ of $(q, p)$ in $L_\tri^2$ such that each point in $U$ is a regular value of $\ev_2$, and the local degree (i.e.~signed count of preimages) remains constant on $U$.

We can now compute:

\begin{cor}\label{labTriHF}  The index $4$ count $\hat{Z}$ appearing in Section\nobreakspace \ref {sscMPTri} is $\pm 2$, and the determinant $3XY-4\hat{Z}$ of the Floer differential $\diff^1$ is $\pm 5$.  The self-Floer cohomology ring of $L_\tri$ over $\Z$ satisfies
\[
HF^0(L_\tri, L_\tri; \Z) \cong \Z/(5) \text{ and } HF^1(L_\tri, L_\tri; \Z) = 0.
\]
If $k$ is a field of characteristic $5$ then we have additive isomorphisms
\[
HF^0(L_\tri, L_\tri; k) \cong HF^1(L_\tri, L_\tri; k) \cong k.
\]
\end{cor}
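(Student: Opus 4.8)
The genuinely hard input — Proposition~\ref{labInd4Tripq} and its accompanying remark that there is no index~$4$ bubbled configuration through $(q,p)$ (this is the right-hand case of Figure~\ref{figBubConfs}) — is already available, so the corollary is a matter of feeding it into the pearl complex of Section~\ref{sscMPTri}. First I would note that, by the set-up of Section~\ref{sscMPTri}, the diffeomorphism used to perturb the Morse data is taken small enough that the perturbed maximum and minimum $(m,m')$ stay inside the neighbourhood $U$ of $(q,p)$ on which $\ev_2$ is a submersion with constant local degree. Since Section~\ref{sscMPTri} also shows that the only positive-index contribution to the coefficient of $m'$ in $\diff m$ comes from single index~$4$ discs through $m$ and $m'$ (there are no ``index~$2$, flow, index~$2$'' trajectories), the integer $\hat{Z}$ equals the local degree of $\ev_2$ at $(q,p)$. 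By Proposition~\ref{labInd4Tripq} this is a signed sum of exactly two terms, so $\hat{Z}\in\{-2,0,2\}$.

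Next I would combine this with the computation $\det\diff^1 = 3XY - 4\hat{Z}$ of Section~\ref{sscMPTri}, where $X,Y\in\{\pm1\}$. Running through the cases, $|\det\diff^1| \in \{3,5,11\}$. Because $\diff^0 = 0$ (the vanishing $A=0$ recorded in Section~\ref{sscMPTri}) and $\diff^1$ is a $4\times4$ integer matrix with Smith normal form $\mathrm{diag}(1,1,1,\det\diff^1)$, we have $HF^0(L_\tri,L_\tri;\Z)\cong\Z/(\det\diff^1)$ and $HF^1(L_\tri,L_\tri;\Z)\cong\ker\diff^1$. But Lemma~\ref{labCOConstr1}\ref{COCitm1} already forces $HF^0(L_\tri,L_\tri;\Z)$ to be $0$ or $\Z/(5)$, i.e.\ $|\det\diff^1|\in\{1,5\}$; intersecting with $\{3,5,11\}$ gives $|\det\diff^1| = 5$, hence $\hat{Z}=\pm2$ and $3XY-4\hat{Z}=\pm5$. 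Therefore $HF^0(L_\tri,L_\tri;\Z)\cong\Z/(5)$ as an abelian group, and since any unital ring with underlying group $\Z/5$ is the field $\F_5$, this is an isomorphism of rings; moreover $\det\diff^1\neq0$ makes $\diff^1$ injective over $\Z$, so $HF^1(L_\tri,L_\tri;\Z)=0$.

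Finally, for a field $k$ of characteristic $5$ the elementary divisors $1,1,1,5$ of $\diff^1$ reduce to $1,1,1,0$, so $\diff^1\otimes k$ has rank $3$; with $\diff^0=0$ this gives $\dim_k HF^0(L_\tri,L_\tri;k) = 4-3 = 1$ and $\dim_k HF^1(L_\tri,L_\tri;k) = \dim_k\ker(\diff^1\otimes k) = 1$, which is the claimed pair of additive isomorphisms. The only point that requires care is the very first one — that the perturbation keeps $(m,m')$ in $U$ and introduces no spurious pearly trajectories, so that the honest count $\hat{Z}$ really is the local degree of $\ev_2$ at $(q,p)$ — and this is precisely the genericity condition already imposed in Section~\ref{sscMPTri}; everything downstream is linear algebra together with the bookkeeping of Lemma~\ref{labCOConstr1}. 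In particular I would emphasise that the sign of $\hat{Z}$ (equivalently, whether the two discs of Proposition~\ref{labInd4Tripq} count with the same sign) never needs to be determined: the case $\hat{Z}=0$ is excluded not by an orientation computation but because it would give $|\det\diff^1|=3$, which is incompatible with Lemma~\ref{labCOConstr1}\ref{COCitm1}.
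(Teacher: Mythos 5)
Your proposal is correct and follows essentially the same route as the paper: Proposition~\ref{labInd4Tripq} plus the absence of bubbling gives $\hat{Z}\in\{0,\pm2\}$, the closed--open constraint eliminates all cases except $3XY-4\hat{Z}=\pm5$, and the Smith normal form from Section~\ref{sscMPTri} then yields the cohomology. The only (harmless, non-circular) difference is that you invoke Lemma~\ref{labCOConstr1}\ref{COCitm1}, whereas the paper uses only the weaker statement from Section~\ref{sscCOII} that $3XY-4\hat{Z}$ is $\pm$ a power of $5$, which already excludes $3$ and $11$.
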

\begin{proof}  By \protect \MakeUppercase {P}roposition\nobreakspace \ref {labInd4Tripq}, and the absence of bubbled configurations, we have $\hat{Z}=\pm 2$ (if the two discs count with the same sign) or $0$ (if they count with opposite signs).  And we saw in Section\nobreakspace \ref {sscCOII} that $3XY-4\hat{Z}$ must be a power of $5$.  Recalling that $X, Y \in \{\pm 1\}$, the only possibility is that $\hat{Z}\in\{\pm2\}$ and $3XY-4Z\in \{\pm5\}$.  Plugging the latter into the Smith normal form of the Floer differential calculated in Section\nobreakspace \ref {sscMPTri} gives the claimed cohomology.
\end{proof}

\subsection{Tetrahedron}
\label{sscInd4T}

Now let $p=[x^4+2\sqrt{3}x^2y^2-y^4]$ and $q=[x^4-2\sqrt{3}x^2y^2-y^4]$, representing regular tetrahedra with an opposite pair of horizontal edges, differing by rotation through angle $\pi/2$ about a vertical axis.  These are the points $m'$ and $m$ respectively from Section\nobreakspace \ref {sscMPT}.

\begin{prop} \label{labInd4Tpq}  $(q, p)$ is not in the image of of the two-point index $4$ evaluation map $\ev_2 \mc M_4 \rightarrow L_T^2$ (and so is vacuously a regular value).
\end{prop}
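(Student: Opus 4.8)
The plan is to mimic the proof of Proposition~\ref{labInd4Tripq}, taking advantage of the fact that $p$ and $q$ are more spread out here than the corresponding two points were for the triangle. By Corollary~\ref{labInd4Pol}, any index~$4$ disc $u$ through $p$ and $q$ is of one of three kinds: (a) $u$ has two poles of type $\xi_v$ and order $1$; (b) $u$ has one pole of type $\xi_v$ and order $2$, so is an $\SU(2)$-translate of $u_v \circ \psi_2$; or (c) $u$ has one pole of type $\xi_f$ and order $1$, so is an $\SU(2)$-translate of $u_f$. First I would record that, by Section~\ref{sscMPT}, $q = g \cdot p$ where $g$ is the rotation through $\pi/2$ about the vertical axis (an edge axis of $T = T_e$), write down the vertices of $p$ explicitly (the roots $\pm(\sqrt3+1)/\sqrt2$, $\pm i(\sqrt3-1)/\sqrt2$ of the associated quartic $1 + 2\sqrt3 T^2 - T^4$) and hence those of $q$ (multiply by $i$), and observe that the two vertex sets are disjoint.

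Case (b) then follows at once: a translate of $u_v\circ\psi_2$ is a double cover of an index~$2$ disc, whose boundary is the circle of configurations obtained by rotating a fixed configuration about one of its vertices, so if it passed through $p$ and $q$ this vertex would be common to $p$ and $q$. Case (c), together with the sub-case of (a) in which $\double{u}$ is a double cover of a line, I would rule out simultaneously via degree control: by Section~\ref{sscDegCont}, for $C = T$ a disc with all poles of type $\xi_v$ has $\mu(\double{u}) = 3\mu(u)$ and hence $\deg\double{u} = \mu(u)/2$, whereas a disc of type $\xi_f$ has $\mu(\double{u}) = 6$ and hence $\deg\double{u} = 1$; so in both situations $\double{u}$ would contain a line $\ell \subset X_T$ through $p$ and $q$. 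But the line joining $p = [x^4 + 2\sqrt3 x^2y^2 - y^4]$ and $q = [x^4 - 2\sqrt3 x^2y^2 - y^4]$ in $\P S^4 V$ consists of the configurations $[s x^4 + t x^2 y^2 - s y^4]$, $[s:t]\in\C\P^1$, whose associated quartic in $x/y$ is $s z^4 + t z^2 - s$ (or, for $s = 0$, the form $t x^2 y^2$); this has four distinct roots unless $t = \pm 2 i s$, when it has two double roots, and never has a point of multiplicity $\ge 3$, so $\ell$ is disjoint from $Y_T$. Since $Y_T$ is anticanonical, $Y_T \sim 3H$, and a line contained in $X_T$ must meet $Y_T$ (it cannot lie inside $Y_T$, since it meets $L_T$ at $p$ and $q$), so $\ell \not\subseteq X_T$; there is thus no line in $X_T$ through $p$ and $q$, which disposes of (c) and of the double-line sub-case of (a).

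It remains to treat case (a) with $\double{u}$ a smooth conic, and this is the part that requires real work, following Proposition~\ref{labInd4Tripq} closely. Since the smooth quadric threefold $X_T$ contains no $2$-plane, $\double{u}$ is the conic $\Pi \cap X_T$ for the plane $\Pi$ it spans, and is an embedding; its two poles lie in $Y_T \setminus N_T$ and, by Lemma~\ref{labRigQA} together with the relation $-\xi_v = \xi_f$ (up to $\Gamma_T$) from Section~\ref{sscDegCont}, reflect under $\tau$ to two distinct poles $P = [(ax+y)^4]$ and $Q = [(bx+y)^4]$ on $N_T$, with $a, b \in \C\P^1$, while the actual poles $R, S \in Y_T \setminus N_T$ are pinned down by $\tau(R) = P$, $\tau(S) = Q$ up to finitely many parameters, exactly as in that proof. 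All six points $p, q, P, Q, R, S$ lie on the conic $\double{u}$, hence in the plane $\Pi$; imposing coplanarity (a rank-$\le 3$ condition on the matrix of their homogeneous coordinates in $\P S^4 V$) and eliminating the auxiliary parameters should force $a = b$, contradicting the distinctness of $P$ and $Q$, with the configurations having $a$ or $b$ equal to $0$ or $\infty$ handled separately as there. The hard part is carrying out this final elimination cleanly in $\P^4$ rather than $\P^3$; I would expect, however, that because $p$ and $q$ are further apart than in the triangle case there are fewer exceptional configurations to check and the contradiction comes out more quickly. Once this sub-case is closed off, no index~$4$ disc through $p$ and $q$ exists, proving that $(q,p) \notin \img \ev_2$.
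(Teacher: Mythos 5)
Your treatment of the easy cases is complete and correct, and in two places it deviates pleasantly from the paper: for the type-$\xi_f$ axial discs (and the double-cover-of-a-line subcase) you use the degree control of Table \ref{tabDegCont} to reduce everything to the non-existence of a line in $X_T$ through $p$ and $q$, which you establish by showing the line $[sx^4+tx^2y^2-sy^4]$ never acquires a root of multiplicity $\geq 3$ and hence misses the ample divisor $Y_T$. The paper instead rules out type-$\xi_f$ discs by observing that no face of $p$ is carried to a face of $q$ by a rotation about its centre, and rules out the double line by noting the line contains $[x^2y^2]\notin X_T$; your route is equally valid and arguably more uniform.

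The gap is in the only hard case, the two-pole disc with $\double{u}$ a smooth conic, where your argument stops at ``imposing coplanarity \dots should force $a=b$ \dots I would expect the contradiction comes out more quickly.'' This elimination is the entire content of the proposition and it is not carried out; moreover the predicted outcome is not what the algebra gives. Coplanarity of the four points $p$, $q$, $P=[(ax+y)^4]$, $Q=[(bx+y)^4]$ does \emph{not} force $a=b$: it leaves two genuine families of solutions, namely $\{a,b\}=\{0,\infty\}$ and $a=-b$ with $a^4=-1$, and these must then be killed by bringing in the reflected poles $R,S\in Y_T\setminus N_T$ (with $\tau(R)=P$, $\tau(S)=Q$) and imposing their coplanarity with the rest, exactly as in the final step of the triangle argument. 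Your proposal does list $R$ and $S$ among the six coplanar points, so the right ingredients are on the table, but until the rank condition is actually computed and the residual solutions $a=-b$, $a^4=-1$ are explicitly identified and eliminated, the case is open --- and it is the case where, a priori, a non-axial index $4$ disc through $p$ and $q$ could exist and change the count $Z$ from $0$ to $\pm2$, which would alter $HF^*(L_T,L_T;\Z)$.
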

\begin{proof}  Since the configurations $p$ and $q$ have no vertex in common there are no axial discs of type $\xi_v$ passing through them both.  Similarly since there is no face of $p$ which differs from a face of $q$ by rotation about its centre there are no axial discs of type $\xi_f$ passing through $p$ and $q$.  So now suppose for contradiction that $u \mc (D, \pd D) \rightarrow (X_T, L_T)$ is a two-pole index $4$ disc through $p$ and $q$ with double $\double{u}$.  This time the two poles of type $\xi_v$ from $u$ reflect to poles of type $\xi_f$.  Again $\deg \double{u} = 2$ but now we can rule out the double cover of a line for more trivial reasons: the line in $\P S^4 V$ through $p$ and $q$ does not lie in $X_T$ (it contains the point $[x^2y^2]$ for example).  Hence $\double{u}$ is an embedding.

Considering the points
\[
P \coloneqq [(ax+y)^4] \text{ and } Q \coloneqq [(bx+y)^4] \in N_T,
\]
to which the poles of type $\xi_f$ evaluate, and the fact that they must be coplanar with $p$ and $q$, we get either $a=0$ and $b=\infty$ (or vice versa) or that $a$ is equal to $-b$ and is a fourth root of $-1$.  Each of these cases leads to a contradiction by looking at the possible reflections of $P$ and $Q$, as with $C=\tri$.
\end{proof}

Again there is no bubbled configuration through $p$ and $q$.  To see this recall from Section\nobreakspace \ref {sscMPT} that the vertices of $p$ are at $\pm \sqrt{2}/(\sqrt{3}-1)$ and $\pm (\sqrt{3}-1)i/\sqrt{2}$.  Those of $q$ differ by multiplication by $i$ (rotation by $\pi/2$ about a vertical axis), so we can explicitly compute the sets \eqref{eqpqSet} and \eqref{eqCSet} from Section\nobreakspace \ref {sscBubConf}.  The former is $\{0, 2/3\}$, whilst the latter is $\{1/3, 1\}$, and these are clearly disjoint.  We can therefore perturb $p$ and $q$ slightly without introducing any preimages.  We get:

\begin{cor}\label{labTHF}  The index $4$ count $Z$ in Section\nobreakspace \ref {sscMPT} is $0$, and the determinant $4XY-3Z$ of $\diff^1$ is $\pm 4$.  The self-Floer cohomology ring of $L_T$ over $\Z$ satisfies
\[
HF^0(L_T, L_T; \Z) \cong \Z/(4) \text{ and } HF^1(L_T, L_T; \Z) = 0.
\]
If $k$ is a field of characteristic $2$ then we have additive isomorphisms
\[
HF^0(L_T, L_T; k) \cong HF^1(L_T, L_T; k) \cong k.
\]
\end{cor}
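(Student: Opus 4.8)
The plan is to read this off from Proposition \ref{labInd4Tpq} together with the Smith normal form of $\diff^1$ computed in Section \ref{sscMPT}, along the same lines as Corollary \ref{labTriHF} but more directly: this time the relevant preimage is empty rather than merely transverse, so there is no sign ambiguity to resolve.

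First I would fix the index $4$ count $Z$. By Proposition \ref{labInd4Tpq} the pair $(q, p)$ does not lie in the image of $\ev_2 \mc M_4 \rightarrow L_T^2$, and the explicit computation of the sets \eqref{eqpqSet} and \eqref{eqCSet} following that proposition shows there is no index $4$ bubbled configuration through $p$ and $q$ either. Hence some neighbourhood of $(q, p)$ in $L_T^2$ consists of regular values of $\ev_2$ with empty preimage and no bubbled configurations, so the perturbation of the Morse function and metric carried out in Section \ref{sscMPT} can be taken small enough that the count of index $4$ pearly trajectories from $m$ to $m'$ is still zero; that is, $Z = 0$.

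Next I would compute the determinant of the Floer differential. From Section \ref{sscMPT} we have $X, Y \in \{\pm 1\}$, so with $Z = 0$ the determinant of $\diff^1$ is $4XY - 3Z = 4XY = \pm 4$; substituting into the Smith normal form found in Section \ref{sscMPT}, the diagonal entries of $\diff^1$ are $1$, $1$, $1$, $1$, $\pm 4$. (We do not even need the constraint from Corollary \ref{labCO}\ref{COitm2} here, though $\pm 4$ is of course consistent with it.) Finally I would extract the cohomology: since $\diff^0 = 0$ and $\diff^1$ is a $5 \times 5$ integer matrix of determinant $\pm 4$, it is injective with cokernel $\Z/(4)$, so $HF^1(L_T, L_T; \Z) = \ker \diff^1 = 0$ and $HF^0(L_T, L_T; \Z) = \coker \diff^1 \cong \Z/(4)$; this is a ring isomorphism because $HF^*(L_T, L_T; \Z)$ is concentrated in degree $0$ and cyclic (generated by the unit), as recorded at the start of Section \ref{sscCOII}. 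Over a field $k$ of characteristic $2$ the reduction of $\diff^1$ has rank $4$, hence one-dimensional kernel and cokernel, giving the additive isomorphisms $HF^0(L_T, L_T; k) \cong HF^1(L_T, L_T; k) \cong k$.

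There is no real obstacle at the level of the corollary itself: all the content is already in Proposition \ref{labInd4Tpq}, whose proof uses the degree control of Section \ref{sscDegCont} to force the double of a hypothetical two-pole index $4$ disc to be a smooth conic and then derives a contradiction from coplanarity of the configurations and their reflections. The only point to be careful about in deducing the corollary is that shrinking the Morse perturbation really does preserve the empty preimage, which is exactly what the absence of bubbled configurations guarantees.
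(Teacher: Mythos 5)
Your argument is correct and is essentially the paper's own proof: $Z=0$ follows from Proposition \ref{labInd4Tpq} together with the absence of bubbled configurations, so $\det\diff^1 = 4XY = \pm4$, and the cohomology is read off from the Smith normal form computed in Section \ref{sscMPT}. Your observation that the eigenvalue constraint from Corollary \ref{labCO} is not needed here (unlike for the triangle) is also accurate.
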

\begin{proof}  The value of $Z$ follows immediately from \protect \MakeUppercase {P}roposition\nobreakspace \ref {labInd4Tpq} (plus the absence of bubbled configurations), and then, recalling that $X, Y \in \{\pm 1\}$, the determinant must be $\pm 4$.  Substituting into the Floer differential in Section\nobreakspace \ref {sscMPT} gives the cohomology.
\end{proof}

\subsection{Icosahedron}
\label{sscInd4I}

Let $p$ and $q$ be given by $[x^{12}\mp 11\sqrt{5} x^9y^3 - 33x^6y^6\pm 11\sqrt{5} x^3y^9 + y^{12}]$, representing regular icosahedra with an opposite pair of horizontal faces, differing by rotation through angle $\pi$ about a vertical axis.  Note that $p$ is the configuration $I_f$ from Appendix\nobreakspace \ref {secExplReps} whilst $q$ is obtained from $p$ by the rotation $x \mapsto -x$, $y \mapsto y$.

\begin{prop} \label{labInd4Ipq}  $(q, p)$ is a regular value of the two-point index $4$ evaluation map $\ev_2 \mc M_4 \rightarrow L_I^2$, with signed count of preimages $\pm 8$.
\end{prop}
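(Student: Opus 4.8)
The plan is to classify, with signs, all index~$4$ holomorphic discs through $p$ and $q$, following the strategy of Propositions~\ref{labInd4Tripq} and~\ref{labInd4Tpq} but accounting for the genuinely new phenomena that occur for the icosahedron.

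First I would invoke Corollary~\ref{labInd4Pol}: any index~$4$ disc through $(q,p)$ is axial of type $\xi_v$ and order~$2$, axial of type $\xi_f$ and order~$1$, or non-axial with two simple poles of type $\xi_v$. The first case is ruled out immediately, since $p$ and $q$ are both icosahedra of type $I_f$ differing by the half-turn about the vertical axis, and their only common face axis is that vertical one, which carries faces rather than vertices; hence $p$ and $q$ share no vertex. For the axial $\xi_f$ discs, such a disc through $p$ and $q$ must be (the restriction to $D$ of) a face-rotation orbit through both configurations, and since $q = R(\pi/3)\,p$ lies on the orbit of $p$ about the top face (using $R(2\pi/3)\,p = p$), I would identify these discs explicitly. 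By Corollary~\ref{labInd4Trans}, $\ev_2$ is a submersion at each of them, and since they are exchanged by the reflection $\tau$, which preserves orientations on the index~$4$ moduli spaces (Section~\ref{sscModInv}), they contribute with equal sign; an orientation computation analogous to those in Proposition~\ref{labInd2MS} and Lemma~\ref{labCOSign} pins this sign down.

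The non-axial discs are the crux, and I expect this to be the main obstacle. The key difference from the triangle and tetrahedron is that, by Section~\ref{sscDegCont} and Table~\ref{tabDegCont}, the double $\double{u}$ of such a disc is a rational curve of degree~$4$ rather than a conic, so the cheap ``line or conic'' dichotomy is unavailable. Instead I would exploit the projectivised group derivative $[\D\double{u}]$ of Section~\ref{sscGrpDer}: the curve $\double{u}$ has four quasi-axial poles (the two $\xi_v$-poles of $u$ together with their $\tau$-reflections, of type $-\xi_v$, by Lemma~\ref{labRigQA}\ref{Rigitm3}), so by Lemma~\ref{labGrpDerProps}\ref{grpitm2} the map $[\D\double{u}]$ has degree~$2$ unless $\double{u}$ degenerates into a proper linear subspace; its residues at the poles and their reflections are prescribed by Lemma~\ref{labGrpDerRes} and Lemma~\ref{labGrpDerProps}\ref{grpitm3}, and the requirement that $\double{u}$ have degree~$4$ forces strong incidence (coplanarity-type) relations among the two poles, their two reflections, and the points $p$ and $q$ in $\P S^{12}V$. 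Combining these reduces the enumeration to a finite algebraic system, which I would solve in coordinates --- this is the content of Appendix~\ref{secInd4Icos}, where the explicit representatives $I_v$, $I_e$, $I_f$ of Appendix~\ref{secExplReps} and the symmetries of the icosahedron are essential. Lemma~\ref{labInd4Pars} then shows that every disc so obtained has partial indices $1,1,2$, so $\ev_2$ is a submersion at each and $(q,p)$ is a regular value.

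Finally, to make the local degree of $\ev_2$ well defined near $(q,p)$, I would check that there is no index~$4$ bubbled configuration through $(q,p)$ by computing the two sets~\eqref{eqpqSet} and~\eqref{eqCSet} from Section~\ref{sscBubConf} for these particular configurations and verifying that they are disjoint. Adding up the signed contributions of the axial and non-axial discs --- using that $\tau$ pairs discs with equal signs in index~$4$ --- then yields the claimed total $\pm 8$. The hard part, as anticipated, is the explicit coordinate enumeration and sign bookkeeping for the non-axial discs: this is where the actual geometry of the Mukai--Umemura threefold enters, and where a direct, computer-assisted calculation seems unavoidable.
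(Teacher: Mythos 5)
Your outline matches the paper's strategy almost exactly: the trichotomy from Corollary~\ref{labInd4Pol}, the elimination of the order-$2$ $\xi_v$ case, the two axial $\xi_f$ discs handled by Corollary~\ref{labInd4Trans} and the involution, the degree-$4$ double spanning a $4$-plane with incidence constraints among $p$, $q$, the poles and their reflections, the use of the group derivative and its residues, the reduction to a computer-checked polynomial system, and the bubbling check via the sets \eqref{eqpqSet} and \eqref{eqCSet}. (The one technical device you don't mention, but which is the workhorse of the actual enumeration, is that the polynomial constraints force the pole parameter $k$ to be real or of unit modulus after quotienting by the $\pi/3$ rotational symmetry, which is what allows $\conj{k}$ to be eliminated and keeps the system algebraic.)

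There is, however, one genuine gap: your final step, ``adding up the signed contributions \ldots using that $\tau$ pairs discs with equal signs in index $4$,'' does not yield $\pm 8$. The involution $\tau$ and the rotations through multiples of $\pi/3$ only compare discs \emph{within} each family --- they exchange the two axial discs with each other and permute the six non-axial discs among themselves, but no symmetry carries an axial disc to a non-axial one. Symmetry alone therefore leaves open the possibility that the axial pair and the non-axial sextet contribute with opposite signs, giving a local degree of $\pm(6-2)=\pm 4$ rather than $\pm 8$ (and $\pm 4$ would change the answer for $HF^*(L_I,L_I;\Z)$ from $\Z/(8)$ to $\Z/(4)$). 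Closing this requires a direct comparison of orientations across the two families: both kinds of disc have partial indices $1,1,2$ (Lemmas~\ref{labInd4AxPars} and~\ref{labInd4Pars}), so one can build an explicit isomorphism of the associated Riemann--Hilbert pairs sending the splitting frame $v_1,v_2,v_3$ of one to that of the other, and then check that this isomorphism respects the homotopy classes of trivialisation of the boundary bundles induced by the spin structure --- concretely, that the frame $\xi_R\cdot u$, $\xi_I\cdot u$, $iz\,\D u(z)\cdot u$ on the non-axial disc is positively oriented relative to the frame $\alpha\cdot u$, $\beta\cdot u$, $\gamma\cdot u$ on the axial one. This is a separate, nontrivial computation (it comes down to the sign of an explicit $3\times 3$ determinant built from the residue of $\D\double{u}$ at a pole), and without it the count $\pm 8$ is not established.
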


\begin{proof}  The argument is rather technical so is relegated to Appendix\nobreakspace \ref {secInd4Icos}.  The reason this computation is difficult is that in addition to the two obvious axial discs of type $\xi_f$, whose boundaries rotate $p$ about a vertical axis to $q$ and then on to $p$ again (just as for the triangle), there are $6$ other discs that occur in the preimage.  Constructing these discs, proving that there can be no others, and showing that all $8$ discs count with the same sign, is not easy.
\end{proof}

Once again there is no bubbled configuration through $p$ and $q$---see \protect \MakeUppercase {L}emma\nobreakspace \ref {INoBub}---so the local degree is constant near $(q, p)$.  We have thus proved:

\begin{cor}\label{labIHF}  The index $4$ count $D$ for the icosahedron in Section\nobreakspace \ref {sscMPOI} is $\pm 8$, and hence the self-Floer cohomology ring of $L_I$ over $\Z$ satisfies
\[
HF^0(L_I, L_I; \Z) \cong \Z/(8) \text{ and } HF^1(L_I, L_I; \Z) = 0.
\]
\end{cor}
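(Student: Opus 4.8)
The plan is to deduce this directly from Proposition~\ref{labInd4Ipq} together with the Smith normal form computation of Section~\ref{sscMPOI}. Recall that there the Morse data on $L_I$ were arranged (using Proposition~\ref{labPearlCx}) so that, after perturbation, the unique maximum $m$ and unique minimum $m'$ lie arbitrarily close to the pair $(q,p)$ considered here, and that the Floer differential $\diff^1$ was shown to have Smith normal form with diagonal entries $1,\dots,1,D$, where $D$ is the signed count of index $4$ discs through $m$ and $m'$. By the remarks at the end of Section~\ref{sscModInv} this is the only positive-index contribution surviving in the pearl differential, since index $2$ contributions cancel in pairs under the antiholomorphic involution $\tau$ (and no sphere bubbling interferes, the minimal Maslov index being $2$). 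Thus the corollary will follow once $D$ is identified with the signed count $\pm 8$ of preimages of $(q,p)$ under $\ev_2$ supplied by Proposition~\ref{labInd4Ipq}.

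First I would invoke Lemma~\ref{INoBub}: there is no index $4$ bubbled configuration through $p$ and $q$, so the local degree of $\ev_2$ is constant on a neighbourhood of $(q,p)$ in $L_I^2$. This is exactly what licenses replacing $(q,p)$ by the nearby regular value $(m,m')$ without changing the count, so $D=\pm 8$. Substituting into the Smith normal form from Section~\ref{sscMPOI}, the cokernel of $\diff^1$ is $\Z/(8)$ and its kernel is trivial (the determinant being $\pm 8\neq 0$); since $\diff^0=0$ this gives $HF^0(L_I,L_I;\Z)\cong\Z/(8)$ and $HF^1(L_I,L_I;\Z)=0$. The ring structure is then automatic: as noted in Section~\ref{sscCOII}, $HF^*(L_I,L_I;\Z)$ is concentrated in degree $0$ and cyclic, generated by the unit $1_{L_I}$ (represented by $m'$), so as a ring it is simply $\Z/(8)$.

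The genuine content is all in Proposition~\ref{labInd4Ipq}, and its proof is the real obstacle — this is why it is deferred to Appendix~\ref{secInd4Icos}. Unlike the triangle and tetrahedron, where coplanarity of the poles and their reflections with $p$ and $q$ forces an immediate contradiction, for the icosahedron the relevant configurations of poles genuinely occur, and one must: (i) construct the six non-obvious index $4$ discs in the preimage, in addition to the two visible axial discs of type $\xi_f$ whose boundaries rotate $p$ to $q$ and back about a vertical axis; (ii) show, using the pole classification of Corollary~\ref{labInd4Pol}, the degree control of Table~\ref{tabDegCont}, and an explicit coordinate analysis of the possible degree $2$ rational curves through $p$, $q$ and their poles, that no further discs occur and that $\ev_2$ is a submersion at each of the eight; and (iii) check, via the orientation conventions of Section~\ref{sscModInv} and \cite{FOOObig}, that all eight discs count with the same sign, so that the signed total is $\pm 8$ rather than something smaller. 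Step (iii), combined with the explicit construction in step (i), is where the difficulty lies.
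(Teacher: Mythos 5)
Your proposal is correct and follows exactly the paper's own route: Proposition~\ref{labInd4Ipq} plus Lemma~\ref{INoBub} (no bubbled configurations, so the local degree of $\ev_2$ is constant near $(q,p)$ and can be read off at the nearby $(m,m')$) identify the index $4$ count $D$ with $\pm 8$, and substituting into the Smith normal form of $\diff^1$ from Section~\ref{sscMPOI} gives the stated groups and ring structure. The real content is indeed deferred to Appendix~\ref{secInd4Icos}, and your summary of what that appendix must accomplish is accurate.
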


\appendix
\section{Transversality for the pearl complex}
\label{secTransPearl}

\subsection{Preliminaries}  In this appendix we discuss the transversality results required to set up the pearl complex.  This is mainly to show how to work with a \emph{fixed} complex structure, but also includes a brief review of Biran--Cornea's foundational work in \cite{BCQS} based on \emph{generic} almost complex structures, in order to compare and contrast the two approaches and show that they give the same (co)homology.

We begin by recalling some basic notions in differential topology.  A standard reference is \cite[Chapter 2]{Hir}, on which we base our terminology.  For topological spaces $P$ and $Q$ let $C^0(P, Q)$ denote the space of continuous maps from $P$ to $Q$, equipped with the compact-open topology.  If $P$ and $Q$ are actually smooth manifolds then for each $r \in \{1, 2, \dots, \infty\}$ we can form the $r$-jet space $J^r(P, Q)$, and the space $C^r(P, Q)$ of $r$-times continuously differentiable maps from $P$ to $Q$ (or smooth maps in the case $r = \infty$) embeds in $C^0(P, J^r(P, Q))$ via the prolongation map $j^r$.  This gives a natural topology on $C^r(P, Q)$, which we call the (weak) $C^r$-topology, which is induced by a (non-canonical) complete metric $d_r$.  When $P$ is compact this topology coincides with the strong $C^r$-topology, sometimes called the Whitney topology (although the reader is warned that terminology varies between different authors), and the set of (smooth) diffeomorphisms $\Diff(P)$ is open in $C^\infty(P, P)$.

Given manifolds $P$ and $Q$, a submanifold $R \subset Q$, subsets $A \subset P$, $B \subset R$, and a smooth map $f \mc P \rightarrow Q$, let $f \trans{A}{B} R$ denote that $f$ is transverse to $R$ along $A \cap f^{-1}(B)$.  In other words, for all points $p$ in $A$ such that $f(p)$ is in $B$ we have
\[
\im D_p f + T_{f(p)}R = T_{f(p)}Q.
\]
Note this differs from the notation used in \cite{Hir}, where $f \pitchfork_K R$ denotes what we are calling $f \trans{K}{R} R$.

Fix now a closed $n$-manifold $L$.  Later this will be our Lagrangian, but for our present purposes this is irrelevant.  For a positive integer $s$ let $\Delta_{L, s}$ denote the big diagonal
\[
\{ (p_1, \dots, p_s) \in L^s : p_j = p_k \text{ for some } j\neq k\} \subset L^s.
\]
The key result in differential topology we shall use is the following:

\begin{lem}\label{labPullTrans2}  For any countable collections of manifolds $(M_j)$, positive integers $(s_j)$, submanifolds $(N_j\subset L^{s_j} \setminus \Delta_{L, s_j})$, and smooth maps $(f_j \mc M_j \rightarrow L^{s_j})$, there exists a diffeomorphism $\phi$ of $L$, arbitrarily $C^\infty$-close to $\id_L$, such that for all $j$ the map $f_j$ is transverse to $\phi^{\times s_j}(N_j)$.
\end{lem}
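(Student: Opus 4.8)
The plan is to prove this by a parametric transversality argument in which the reparametrisation group $\Diff(L)$ itself serves as the space of perturbations. The key structural input is the hypothesis $N_j \subset L^{s_j}\setminus\Delta_{L,s_j}$: at any point of $L^{s_j}$ with pairwise distinct coordinates, the infinitesimal action of $\Gamma(TL)$ --- the ``Lie algebra'' of $\Diff(L)$ --- already surjects onto the whole tangent space $\bigoplus_k T_{p_k}L$, since a vector field on $L$ can be prescribed arbitrarily (via bump functions with disjoint supports) near finitely many distinct points. So genuine perturbations of the maps $f_j$, or of the complex structure, are unnecessary.

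First I would reformulate the problem: since $(\phi^{-1})^{\times s_j}$ is a diffeomorphism of $L^{s_j}$ carrying $\phi^{\times s_j}(N_j)$ onto $N_j$, one has $f_j \pitchfork \phi^{\times s_j}(N_j)$ if and only if $G_j^\phi := (\phi^{-1})^{\times s_j}\circ f_j$ is transverse to $N_j$. Thus it suffices to find $\phi$ close to $\id_L$ making all the $G_j^\phi$ transverse to the $N_j$. Next I would set up a Baire-category framework: $\Diff(L)$ is open in $C^\infty(L,L)$ and completely metrizable, hence a Baire space. Exhausting each $M_j$ by relatively compact open sets and covering each $N_j$ by open subsets of $L^{s_j}$ in which it is a closed submanifold, the problem reduces to showing that, for a single smooth $f\mc M\to L^s$, a closed submanifold $N\subset L^s\setminus\Delta_{L,s}$, and a compact $K\subset M$, the set $\mathcal U$ of $\phi$ such that $G^\phi=(\phi^{-1})^{\times s}\circ f$ is transverse to $N$ along $K$ is open and dense; the required $\phi$ is then any point of the (residual, hence dense) countable intersection of these sets lying in the prescribed neighbourhood of $\id_L$.

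Openness of $\mathcal U$ is the standard stability of transversality along a compact set, together with the continuity of $\phi\mapsto G^\phi$ from $\Diff(L)$ to $C^\infty(M,L^s)$. For density --- the substantive point --- I would introduce the universal map $G\mc\Diff(L)\times M\to L^s$, $(\phi,m)\mapsto(\phi^{-1})^{\times s}(f(m))$, and observe that at every point of $G^{-1}(N)$ the partial derivative of $G$ in the $\Diff(L)$ direction is already surjective, by the bump-function argument above applied to the distinct points $\phi^{-1}(q_1),\dots,\phi^{-1}(q_s)$ where $f(m)=(q_1,\dots,q_s)$. Hence $G$ is transverse to $N$, and the parametric transversality theorem gives that $\{\phi : G^\phi \pitchfork N\}$ is residual in $\Diff(L)$, so in particular $\mathcal U$ is dense.

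The main obstacle is that $\Diff(L)$ is infinite-dimensional, so neither Sard's theorem nor the classical Thom transversality theorem applies verbatim to this last step. I would resolve this in one of the two usual ways. One option: work over the separable Banach manifold $\Diff^r(L)$ of $C^r$-diffeomorphisms for large finite $r$, where the submersion property above persists; the projection $G^{-1}(N)\to\Diff^r(L)$ is then Fredholm (its domain has finite codimension in $\Diff^r(L)\times M$ because $M$ is finite-dimensional), the Sard--Smale theorem provides a residual set of regular values, which are precisely the good $\phi$, and one upgrades density from $\Diff^r(L)$ to $\Diff(L)$ by the standard argument that smooth diffeomorphisms are $C^r$-dense and --- $K$ being compact --- that $C^r$-smallness for large $r$ keeps one inside the open set $\mathcal U$. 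The other option: fix a target $\psi_0\in\Diff(L)$, use compactness of $K$ and the pointwise submersion property to select finitely many vector fields $\xi_1,\dots,\xi_N$ on $L$ so that the finite-dimensional family obtained by composing $\psi_0$ with the time-one flows of $t_1\xi_1,\dots,t_N\xi_N$ makes the restricted universal map a submersion over $K$, and then apply the ordinary finite-dimensional parametric transversality theorem to this family. Everything else --- the exhaustion bookkeeping and the verification of the continuity and stability statements --- is routine.
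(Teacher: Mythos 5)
Your proposal is correct, and your second option for the density step---using compactness to select finitely many vector fields whose flows give a finite-dimensional family of diffeomorphisms, then applying ordinary parametric transversality---is essentially the paper's argument; the paper just localises further, working one pair $(p,q) \in M_j \times N_j$ at a time so that a single tubular-neighbourhood projection turns transversality into ``$\mathbf{t}$ is a regular value of $\widetilde{\pi}\circ F$'' and classical Sard applies directly. The reformulation via $(\phi^{-1})^{\times s_j}\circ f_j$, the Baire-category reduction over a countable cover, and openness via stability of transversality all match the paper. One caution on the bookkeeping you call routine: transversality to $N$ ``along $K$'' is not an open condition unless you also localise in the target to a compact subset of $N$ (a sequence of intersection points can escape to $\overline{N}\setminus N$), which is exactly why the paper works with the relative notion $f \trans{A}{B} R$ and chooses the compact neighbourhoods $V_{j,p,q}$ and $T'$ with $L^s \setminus T'$ bounded away from $V_{j,p,q}$. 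If you instead pursue your Sard--Smale option, note that inversion is continuous but not differentiable on $\Diff^r(L)$, so you should parametrise the universal map by $\psi = \phi^{-1}$ directly rather than by $\phi$.
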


This will follow from:

\begin{lem}\label{labDenseOpen}  In the setup of \protect \MakeUppercase {L}emma\nobreakspace \ref {labPullTrans2}, for all $j$ and all $p \in M_j$ and $q \in N_j$ there exist neighbourhoods $U_{j, p, q}$ of $p$ in $M_j$ and $V_{j, p, q}$ of $q$ in $N_j$, such that the set
\[
W_{j, p, q} \coloneqq \{ \phi \in \Diff(L) : (\phi^{-1})^{\times s_j} \circ f \trans{U_{j, p, q}}{V_{j, p, q}} N_j\}
\]
is open and dense in $\Diff(L)$ in the $C^\infty$-topology.
\end{lem}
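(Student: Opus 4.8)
The plan is to obtain the transversality by perturbing the maps $f_j$ with diffeomorphisms of $L$ supported near the (necessarily distinct) components $q_1,\dots,q_{s_j}$ of $q=(q_1,\dots,q_{s_j})\in N_j\subset L^{s_j}\setminus\Delta_{L,s_j}$, and then invoking the parametric transversality theorem. First I would localise: fix a coordinate chart on $M_j$ about $p$, and, using that $q\notin\Delta_{L,s_j}$, pairwise disjoint coordinate balls $B_1,\dots,B_{s_j}$ on $L$ around $q_1,\dots,q_{s_j}$; in the product chart $B_1\times\dots\times B_{s_j}$ on $L^{s_j}$ straighten $N_j$ to a linear subspace. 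I would then choose \emph{relatively compact} neighbourhoods $U_{j,p,q}\ni p$ and $V_{j,p,q}\ni q$ (with $\overline{V_{j,p,q}}$ a compact subset of $N_j$ lying in the chart), small enough that $f_j(\overline{U_{j,p,q}})$ lies in the chart too and that the incidence loci below behave properly.

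The crucial input is the non-diagonal hypothesis $N_j\cap\Delta_{L,s_j}=\emptyset$. Because the $q_i$ are distinct, for each $i$ I can build a smooth family $\lambda^{(i)}\mapsto\phi^{(i)}_{\lambda^{(i)}}$ of diffeomorphisms of $L$, parametrised by a small ball in $\R^n$, supported in $B_i$, with $\phi^{(i)}_0=\id_L$, such that $\lambda^{(i)}\mapsto(\phi^{(i)}_{\lambda^{(i)}})^{-1}(w)$ is a submersion at $\lambda^{(i)}=0$ for $w$ near $q_i$; since the supports are disjoint, composing these for different $i$ gives a finite-dimensional family $\{\phi_\lambda\}_{\lambda\in\Lambda}$, $\Lambda$ a small ball in $\R^{s_jn}$, with $\phi_0=\id_L$ and with $\lambda\mapsto(\phi_\lambda^{-1})^{\times s_j}(w)$ a submersion at $\lambda=0$ for $w$ near $q$. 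Given an arbitrary $\phi_*\in\Diff(L)$, near which we wish to produce points of $W_{j,p,q}$, I would consider the universal map
\[
F\mc U_{j,p,q}\times\Lambda\longrightarrow L^{s_j},\qquad F(p',\lambda)=(\phi_\lambda^{-1})^{\times s_j}\bigl((\phi_*^{-1})^{\times s_j}(f_j(p'))\bigr).
\]
At any $(p',\lambda)$ with $F(p',\lambda)\in V_{j,p,q}$ the components of $F(p',\lambda)$ are distinct, so $(\phi_*^{-1})^{\times s_j}(f_j(p'))$ lies near $q$; hence, $U_{j,p,q}$, $V_{j,p,q}$ and $\Lambda$ being small, $F$ factors through the evaluation map above and is a submersion there, in particular transverse to $V_{j,p,q}$.

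Density of $W_{j,p,q}$ then follows from the finite-dimensional parametric transversality (Sard) theorem applied to $F$: for a residual, hence dense, set of $\lambda\in\Lambda$ the map $F(\cdot,\lambda)=((\phi_*\circ\phi_\lambda)^{-1})^{\times s_j}\circ f_j$ is transverse to $V_{j,p,q}$ along $U_{j,p,q}$, i.e.\ $\phi_*\circ\phi_\lambda\in W_{j,p,q}$; since $\lambda\mapsto\phi_*\circ\phi_\lambda$ is $C^\infty$-continuous and sends $0$ to $\phi_*$, every $C^\infty$-neighbourhood of $\phi_*$ meets $W_{j,p,q}$. Openness of $W_{j,p,q}$ is the standard stability of transversality to a submanifold along a compact set under $C^1$-perturbations of the map, combined with the continuity of $\phi\mapsto(\phi^{-1})^{\times s_j}\circ f_j$ in the relevant topologies and the relative compactness of $\overline{U_{j,p,q}}$, $\overline{V_{j,p,q}}$ (having chosen the neighbourhoods so that no incidence point can escape to their boundaries under a small perturbation of $\phi$).

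The step I expect to be the main obstacle is reconciling everything with the $C^\infty$-topology on $\Diff(L)$, which is a Fréchet and not a Banach manifold, so that Sard–Smale does not apply to $\Diff(L)$ itself. The construction above circumvents this by working with the explicit finite-dimensional family $\{\phi_\lambda\}$, which is rich enough to displace the points $f_j(p')_i$ independently precisely because $N_j$ avoids the diagonal; it is exactly this feature that makes the perturbation-by-diffeomorphism strategy viable at all (if two of the $q_i$ coincided, moving one would move the other). An alternative, which would also prove \protect\MakeUppercase{L}emma\nobreakspace\ref{labPullTrans2} directly, is to run the Sard–Smale argument on the Banach manifold $\Diff^r(L)$ for large finite $r$ and upgrade to $C^\infty$ by the usual exhaustion/approximation trick; the finite-dimensional route above is cleaner for the purely local statement recorded here.
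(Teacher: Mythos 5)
Your argument is correct and follows essentially the same route as the paper's: both exploit the hypothesis $N_j\cap\Delta_{L,s_j}=\emptyset$ to build a finite-dimensional family of diffeomorphisms near the identity that moves the $s_j$ components of points near $q$ independently, deduce density from Sard's theorem applied to that family, and deduce openness from the stability of transversality along compact sets under $C^1$-perturbations of the map. The only (immaterial) difference is that the paper's family has dimension $\codim N_j$, being generated by vector fields whose product action spans a complement to $T_qN_j$, whereas yours has dimension $s_j\dim L$ and makes the full evaluation $\lambda\mapsto(\phi_\lambda^{-1})^{\times s_j}(w)$ a submersion onto $L^{s_j}$.
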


To deduce \protect \MakeUppercase {L}emma\nobreakspace \ref {labPullTrans2} from \protect \MakeUppercase {L}emma\nobreakspace \ref {labDenseOpen}, for each $j$ we simply take a countable subcover $\{U_{j, p_k, q_k} \times V_{j, p_k, q_k}\}_k$ of the cover $\{U_{j, p, q} \times V_{j, p, q}\}_{(p, q)}$ of $M_j \times N_j$ and then consider the intersection
\[
\bigcap_{j, k} W_{j, p_k, q_k} \subset \Diff(L).
\]
Since $\Diff(L)$ is an open subset of the complete metric space $C^\infty(L, L)$, this intersection is dense in $\Diff(L)$ by the Baire category theorem, so in particular it contains elements arbitrarily $C^\infty$-close to $\id_L$.  Such elements provide the $\phi$ of \protect \MakeUppercase {L}emma\nobreakspace \ref {labPullTrans2}.

\begin{proof}[Proof of \protect \MakeUppercase {L}emma\nobreakspace \ref {labDenseOpen}]
Fix arbitrary $j$, $p$ and $q$, and metrics on $L$ and $M_j$.  From now on we shall drop all $j$'s from the notation, and just refer to $M_j$, $s_j$, $N_j$ and $f_j$ as $M$, $s$, $N$ and $f$ respectively.  Let $\pi_1, \dots, \pi_s \mc L^s \rightarrow L$ denote the projections onto the factors, and choose vectors $v_1, \dots, v_a \in T_q L^s$ which form a basis for a complement to $T_qN$.  For each $i$ use cutoff functions to construct a smooth vector field $V_i$ on $L$ whose value at $\pi_k(q)$ coincides with the projection $D_q\pi_k(v_i)$ for all $k$.

Now consider the map
\begin{align*}
\psi \mc \R^a &\rightarrow C^\infty(L, L)
\\ \mathbf{t} &\mapsto \exp\lb \sum_i t_i V_i \rb,
\end{align*}
which sends a vector $\mathbf{t}$ to the time $1$ flow of $\sum t_i V_i$.  It is easy to check that given three topological spaces $P$, $Q$ and $R$, and a continuous map $h \mc P \times Q \rightarrow R$, the map $h_{\ev} \mc P \rightarrow C^0(Q, R)$ given by $x \mapsto h(x, \cdot)$ is continuous.  Since the map
\begin{align*}
\R^a \times L &\rightarrow L
\\(\mathbf{t}, x) &\mapsto \psi(\mathbf{t})(x)
\end{align*}
is smooth, and hence defines a continuous map $j^\infty_L \psi \mc \R^a \times L \rightarrow J^\infty(L, L)$ by prolongation along the $L$ factor, we deduce that $\psi = (j^\infty_L \psi)_{\ev}$ is continuous.

By construction of the $v_i$ and $V_i$, the map
\begin{align*}
\Psi \mc \R^a \times N &\rightarrow L^s
\\ (\mathbf{t}, x) &\mapsto \psi(\mathbf{t})^{\times s}(x)
\end{align*}
is a submersion at the point $(0, q)$, and along $\{0\} \times N$ it is simply the inclusion of $N$.  There therefore exist an open ball $B$ in $N$ about $q$ and a positive $\eps$ such that $\Psi$ gives a diffeomorphism from $B^a_0(\eps) \times B$ onto an open tubular neighbourhood $T$ of $B$ in $L^s$, where $B^a_0(\eps)$ is the open ball of radius $\eps$ about $0$ in $\R^a$.  Let $\pi \mc T \rightarrow B^a_0(\eps)$ be the composition of the inverse diffeomorphism $\Psi^{-1}$ with projection onto the first factor.  Now pick a smooth cutoff function $\rho$ on $L^s$ which has compact support contained in $T$ and takes the value $1$ on a compact neighbourhood $T'$ of $q$ in $L^s$.  Let $V$ be a compact neighbourhood of $q$ in $N$ such that $T'$ contains a neighbourhood of $V$ in $L^s$.  Figure\nobreakspace \ref {figTransSetup} shows this setup.
\begin{figure}[ht]
\centering
\includegraphics[scale=1]{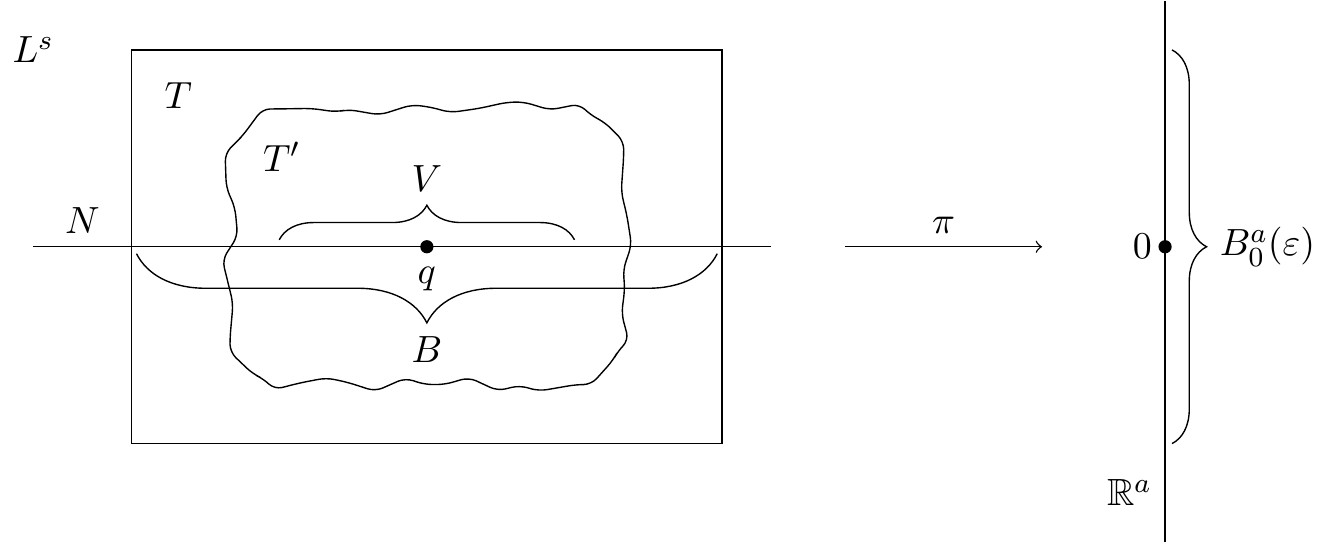}
\caption{The tubular neighbourhood $T$ and projection $\pi$.\label{figTransSetup}}
\end{figure}

Let $U$ be an arbitrary compact neighbourhood of $p$ in $M$.  We claim that $U_{j, p, q}=U$ and $V_{j, p, q}=V$ have the desired properties, so let
\[
W = \{ \phi \in \Diff(L) : (\phi^{-1})^{\times s} \circ f \trans{U}{V} N\}.
\]
We need to show that this set is open and dense in $\Diff(L)$.

First we prove it is open, so take a diffeomorphism $\phi \in W$ and let $F=(\phi^{-1})^{\times s} \circ f$.  We wish to show that if $\widehat{\phi}$ is sufficiently $C^\infty$-close to $\phi$ then $\widehat{\phi}$ is also in $W$.  In fact we will prove the stronger statement that if a smooth map $G \mc M \rightarrow L^s$ is sufficiently $C^1$-close to $F$ then $G \trans{U}{V} N$.  Let $U' = U \cap F^{-1} (T')$ be the preimage of $T'$ in $U$.  Note that $U'$ is a closed subset of the compact set $U$, so is itself compact.  Since $L^s \setminus T'$ is bounded away from $V$, if $G$ is sufficiently $C^0$-close to $F$ then $G(U \setminus U')$---which is contained in $G (F^{-1}(L^s \setminus T'))$---is disjoint from $V$.  To show that $G \trans{U}{V} N$ in this case it therefore suffices to check that $G \trans{U'}{V} N$.

Given such a map $G$ ($C^0$-close to $F$) and a point $x$ in $U'$, consider the derivative $D_x(\pi \circ G) \mc T_xM \rightarrow \R^a$ of $\pi \circ G$ at $x$.  This sends the unit ball in $T_xM$ (with respect to our metric on $M$) to a subset $S_x$ of $\R^a$ containing $0$.  Let
\[
r_G(x) = \sup \{ r \in \R_{\geq 0} : B^a_0(r) \subset S_x\}
\]
be the supremum of the radii of the balls about $0$ in $\R^a$ which are contained in this subset.  Note that the map $r_G \mc U' \rightarrow \R_{\geq 0}$ is continuous, and satisfies $r_G(x) > 0$ if and only if $D_x (\pi \circ G)$ is surjective.  Now consider the map
\begin{align*}
R_G \mc U' &\rightarrow \R_{\geq 0}^2
\\ x &\mapsto (r_G(x), d(G(x), V)),
\end{align*}
where $d(G(x), V)$ denotes the distance (with respect to the metric on $L^s$ coming from our metric on $L$) between the point $G(x)$ and the set $V$.  This map is continuous and vanishes precisely at points of $U'$ where $G \trans{U'}{V} N$ fails.  Crucially $R_G$ is also continuous in $G$ in the $C^1$-topology.  Since $F \trans{U'}{V} N$ by hypothesis, $R_F$ is nowhere zero and thus by compactness of $U'$ its image is bounded away from zero.  Therefore the same is also true of $R_G$ for $G$ sufficiently $C^1$-close to $F$.  In other words, such $G$ satisfy $G \trans{U'}{V} N$, proving our openness claim.

We now show that $W$ is dense in $\Diff(L)$, so take any diffeomorphism $\phi$ of $L$ and again let $F=(\phi^{-1})^{\times s} \circ f$.  We need to construct a diffeomorphism $\widehat{\phi}$, arbitrarily $C^\infty$-close to $\phi$, which is contained in $W$.  Equivalently, we need a $\widehat{\phi}$ arbitrarily $C^\infty$-close to $\id_L$ such that $F \trans{U}{\widehat{\phi}^{\times s}(V)} \widehat{\phi}^{\times s}(N)$.  Since the map $\psi \mc \R^a \rightarrow C^\infty(L, L)$ is continuous, it is enough to show that $F \trans{U}{V_{\mathbf{t}}} N_\mathbf{t}$ for arbitrarily small choices of $\mathbf{t}$, where $N_\mathbf{t} = \psi(\mathbf{t})^{\times s}(N)$ and $V_\mathbf{t} = \psi(\mathbf{t})^{\times s}(V)$.

Define a smooth map $\widetilde{\pi} \mc L^s \rightarrow B^a_0(\eps)$ by
\[
\widetilde{\pi} (x) = \begin{cases} \rho(x) \pi(x) & \text{if } x \in T \\ 0 & \text{otherwise,} \end{cases}
\]
where $\rho$ is our cutoff function on $L^s$.  This coincides with $\pi$ on a neighbourhood of $V$, so for $\mathbf{t}$ sufficiently small and $x \in V_\mathbf{t}$ we have that $\widetilde{\pi}(x) = \mathbf{t}$ and $T_x N_\mathbf{t} = \ker D_x \widetilde{\pi}$ (by definition of $\Psi$, the map $\psi(\mathbf{t})^{\times s}$ on $B \subset N$ corresponds under $\Psi^{-1}$ to translation by $\mathbf{t}$).  In particular, for $\mathbf{t}$ small $F$ is transverse to $N_\mathbf{t}$ along $V_\mathbf{t}$ if and only if $\mathbf{t}$ is a regular value of $\widetilde{\pi} \circ F$.  By Sard's theorem, such regular values exist arbitrarily close to $0$, completing the proof of density and thus of \protect \MakeUppercase {L}emma\nobreakspace \ref {labDenseOpen}.
\end{proof}

\subsection{Constructing the complex}  Suppose $X$ is a closed symplectic manifold and $L \subset X$ is a closed, connected, monotone Lagrangian with minimal Maslov number $N_L \neq 1$.  Fix a coefficient ring $R$; if the characteristic of $R$ is not $2$ then we also assume that $L$ admits an orientation and spin structure, and fix a choice of these.

Let $(f, g)$ be a Morse--Smale pair on $L$, and $J$ an $\omega$-compatible almost complex structure on $X$.  For a tuple $\mathbf{A} = (A_1, \dots, A_r) \in H_2(X, L; \Z)^r$, with $r > 0$, let $z(\mathbf{A})$ be the number of $j$ for which $A_j = 0$.  Let $W^a_x$ and $W^d_y$ be respectively the ascending and descending manifolds of critical points $x$ and $y$ of $f$, let $\Phi_t$ denote the time $t$ flow of $\nabla f$, and let $Q$ be the submanifold of $L \times L$ given by
\[
Q = \{ (p, q) \in L \times L : p \notin \crit(f) \text{, } q = \Phi_t(p) \text{ for some } t > 0\} \cong \lb L \setminus \crit(f) \rb \times \R_{>0}.
\]
For a class $A \in H_2(X, L; \Z)$, recall that $\mtw(A)$ denotes the moduli space of unparametrised $J$-holomorphic discs $u \colon (D, \partial D) \rightarrow (X, L)$ representing $A$, with two (distinct) boundary marked points.  Note, however, that in contrast with most of the rest of the paper the $J$ here need not be integrable.  Let the evaluation maps at the two marked points be $\ev_{\pm}(A) \colon \mtw(A) \rightarrow L$.

Now define, for $x$, $y$ and $\mathbf{A}$ as above, the pearly trajectory (or \emph{string of pearls}) moduli space
\begin{multline}
\label{eqPearly}
\mathcal{P}(x, y, \mathbf{A}) = \Big(\big(\ev_-(A_1) \times \ev_+(A_1) \times \ev_-(A_2) \times \dots \\ \times \ev_+(A_r)\big)^{-1}\lb W^a_x \times Q^{r-1} \times W^d_y\rb\Big)\Big/\R^{z(\mathbf{A})},
\end{multline}
where the $\R^{z(\mathbf{A})}$ acts by translation of constant discs (corresponding to classes $A_j=0$) along flowlines.  None of the flowlines is actually doubly infinite in time so strictly each constant disc can only be translated by a subinterval of $\R$, but we overlook this slight notational imprecision.  Such configurations are illustrated in Fig.\nobreakspace \ref {figStringPearls}; the arrows depict the flowlines of $\nabla f$.
\begin{figure}[ht]
\centering
\includegraphics[scale=1]{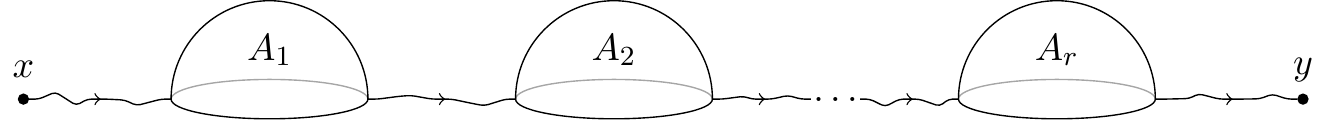}
\caption{A pearly trajectory, or string of pearls.\label{figStringPearls}}
\end{figure}
Really we may restrict our attention to \emph{reduced} strings, for which $A_j \neq 0$ for all $j$ unless $r=1$, and transversality for these spaces automatically gives transversality for the same spaces with extra constant discs inserted (the case $r=1$, $A_1=0$ gives rise to standard Morse trajectories).  However, it is notationally convenient to allow any number of constant discs.

We shall also need moduli spaces of strings of pearls with \emph{loose ends}, of the form
\begin{equation}
\label{eqLE1} W^a_x(\mathbf{A}) \coloneqq \Big(\big(\ev_-(A_1) \times \ev_+(A_1) \times \ev_-(A_2) \times \dots \times \ev_-(A_r)\big)^{-1}\lb W^a_x \times Q^{r-1}\rb \Big)\Big/\mathbb{R}^{z^a(\mathbf{A})}
\end{equation}
\begin{equation}
\\ \label{eqLE2} W^d_y(\mathbf{A}) \coloneqq \Big(\big(\ev_+(A_1) \times \ev_-(A_2) \times \ev_+(A_2) \times \dots \times \ev_+(A_r)\big)^{-1}\lb Q^{r-1} \times W^d_y\rb \Big)\Big/\R^{z^d(\mathbf{A})},
\end{equation}
where $z^a(\mathbf{A})$ is the number of $j \leq r-1$ with $A_j=0$ and $z^d(\mathbf{A})$ is the number of $j \geq 2$ with $A_j=0$---we quotient out by translation of constant discs except those which are at the ends of trajectories.  These configurations are illustrated in Fig.\nobreakspace \ref {figLooseEnds}.
\begin{figure}[ht]
\centering
\includegraphics[scale=1]{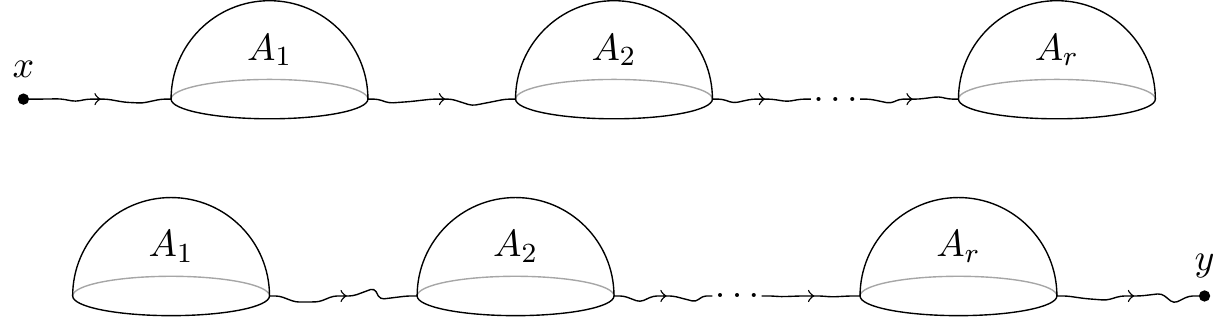}
\caption{Strings of pearls with loose ends.\label{figLooseEnds}}
\end{figure}
Note that these spaces carry evaluation maps $\ev(x, \mathbf{A})$ and $\ev(\mathbf{A}, y)$ at the loose marked point of the end disc.  Again we could restrict to reduced trajectories, where all discs but the end one are non-constant, but we shall not do so for now.

In order to define the pearl complex for $L$ using the data $(f, g, J)$, we need the following:
\begen
\item\label{JisReg} $J$ is regular, meaning that the moduli spaces $\mtw(A)$ are cut out transversely.  This ensures that the $\mtw(A)$ are all smooth manifolds of the correct dimension.
\item\label{StringTrans0} The moduli spaces \eqref{eqPearly} of virtual dimension $0$ are cut out transversely, so that the spaces used to define the differential $\diff$ (as described below) are smooth manifolds of the correct dimension.
\item\label{StringTrans1} The same requirement as \ref{StringTrans0} but in virtual dimension $1$, so that we can construct the moduli spaces used to prove $\diff^2=0$ (again, see below).  We'll unimaginatively call these `$\diff^2=0$ moduli spaces'.
\item\label{BoundPts} The moduli spaces \eqref{eqPearly}, with some of the $Q$ factors (possibly none or all of them) replaced by copies of the diagonal $\Delta_L \in L \times L$, are cut out transversely whenever their virtual dimension is at most $0$.  This means that the moduli spaces in \ref{StringTrans0} are compact, and that those in \ref{StringTrans1} can be compactified by introducing strings of pearls which are degenerate in exactly one of the following ways: a single Morse flowline has broken, a single disc has bubbled into two (with one marked point in each component), or a single flowline has shrunk to zero.
\item\label{PearlBrokGlue} Given a broken string of pearls $\gamma$ in virtual dimension $0$ (in which a single flowline is broken, but which is otherwise non-degenerate), as illustrated in Fig.\nobreakspace \ref {figPearlyMorseBr}, we need the loose end spaces
\[
W^a_x(\mathbf{A'}=(A_1, \dots, A_k)) \text{ and } W^d_y(\mathbf{A''}=(A_{k+1}, \dots, A_r))
\]
to be transversely cut out, so that they are smooth manifolds of the correct dimension (note we may always assume that $k$ is not $0$ or $r$, by introducing extra classes equal to zero at the start and end of $\mathbf{A}$).
\begin{figure}[ht]
\centering
\includegraphics[scale=1]{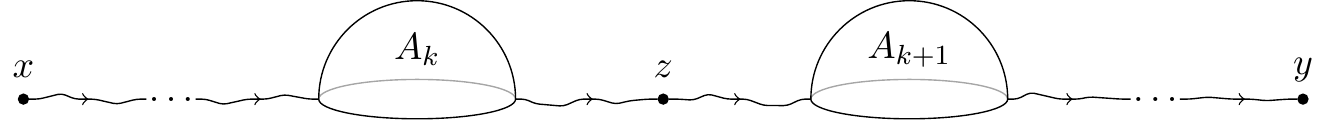}
\caption{A broken string of pearls.\label{figPearlyMorseBr}}
\end{figure}
Given this, we automatically have by \ref{StringTrans0} and by \ref{StringTrans1} that $\ev(x, \mathbf{A'}) \times \ev(\mathbf{A''}, y)$ is transverse to $W^d_z \times W^a_z$ and to $Q$ respectively.  Standard Morse-theoretic gluing arguments, as given in \cite[Proposition 3.2.8]{AuDam} for example, then show that every such broken string $\gamma$ occurs as a unique boundary point in the compactification of the $\diff^2=0$ moduli spaces.  In fact, we only need the loose end spaces to be cut out transversely in neighbourhoods of the points appearing in $\gamma$, viewed as an element of
\[
\big(\ev(x, \mathbf{A'}) \times \ev(\mathbf{A''}, y)\big)^{-1}\lb W^d_z \times W^a_z\rb \subset W^a_x(\mathbf{A'}) \times W^d_y(\mathbf{A''}).
\]
\item\label{PearlBubGlue} Given a bubbled string of pearls $\gamma$ in virtual dimension $0$, with a single disc---the $k$th---bubbled into two (of classes $A_k'$ and $A_k''=A_k-A_k'$) but otherwise non-degenerate, as illustrated in Fig.\nobreakspace \ref {figPearlyDiscBub}, we need the loose end spaces
\[
W^a_x(\mathbf{A'}=(A_1, \dots, A_{k-1}, 0)) \text{ and } W^d_y(\mathbf{A''}=(0, A_{k+1}, \dots, A_r))
\]
to be cut out transversely.  By \ref{BoundPts}, the maps
\[
\ev(x, \mathbf{A'}) \times (\text{inclusion of }\Delta_L \subset L \times L) \times \ev(\mathbf{A''}, y)
\]
and
\[
\ev_-(A_k') \times \ev_+(A_k') \times \ev_-(A_k'') \times \ev_+(A_k'')
\]
are transverse, so by the gluing theorem for $J$-holomorphic discs \cite[Theorem 4.1.2]{BCQS} each such bubbled string occurs as a unique boundary point in the compactification of the $\diff^2=0$ moduli spaces.  Again, we only need this transversality in neighbourhoods of the points of $W^a_x(\mathbf{A'})$ and $W^d_y(\mathbf{A''})$ appearing in $\gamma$.
\begin{figure}[ht]
\centering
\includegraphics[scale=1]{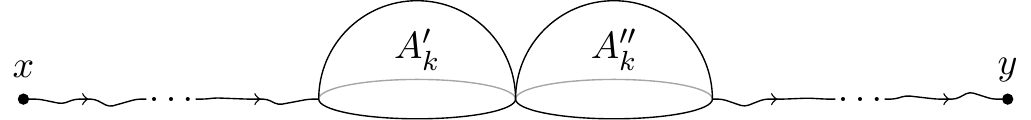}
\caption{A once-bubbled string of pearls.\label{figPearlyDiscBub}}
\end{figure}
\end{enumerate}

If all of these conditions are satisfied then the differential can be defined by counting rigid reduced strings of pearls, which form compact zero-dimensional manifolds (compactness for fixed $\mathbf{A}$ follows from \ref{BoundPts}, whilst the fact that only finitely many reduced choices of $\mathbf{A}$ give non-empty moduli spaces of virtual dimension $0$ follows from Gromov compactness).  The relation $\diff^2=0$ is proved by considering one-dimensional moduli spaces of pearly trajectories, which can be compactified to compact one-manifolds by adding in boundary points as described in \ref{BoundPts}.  The first two types of boundary point appear exactly once each, by \ref{PearlBrokGlue} and \ref{PearlBubGlue}, whilst the third type appear once by the transversality already provided by \ref{BoundPts}.  Explicitly, collapsing a flowline to zero corresponds to replacing a copy of $Q$ in \eqref{eqPearly} by $\Delta_L$, for which we have achieved transversality, and the end of the $\diff^2=0$ moduli space which exhibits this collapsing can be seen by instead replacing $Q$ by the manifold-with-boundary $Q_0$, defined by
\[
Q_0 = \{ (p, q) \in L \times L : p \notin \crit(f) \text{, } q = \Phi_t(p) \text{ for some } t \geq 0\} = (Q \cup \Delta_L) \setminus \Delta_{\crit(f)}.
\]

Before discussing how to achieve the necessary transversality, we first introduce some further terminology and notation.  Recall that a pseudoholomorphic disc $u$ is \emph{simple} if its set of injective points
\[
\{z \in D : u^{-1}(u(z))=\{z\} \text{ and } u'(z) \neq 0\}
\]
contains a dense open subset of $D$, and that a sequence $(u_1, \dots, u_r)$ of discs is \emph{absolutely distinct} if for all $j$ we have
\[
u_j(D) \not\subset \bigcup_{k \neq j} u_k(D).
\]
For an $r$-tuple $\mathbf{B}$ of homology classes we define $\mtw(\mathbf{B})$ to be the moduli space
\begin{equation}
\label{eqMtwBDef}
\big(\ev_+(B_1) \times \ev_-(B_2) \times \ev_+(B_2) \times \dots \times \ev_-(B_r)\big)^{-1} \lb\Delta_L^{r-1}\rb
\end{equation}
of bubbled chains of discs, illustrated in Fig.\nobreakspace \ref {figChainDisc}.
\begin{figure}[ht]
\centering
\includegraphics[scale=1]{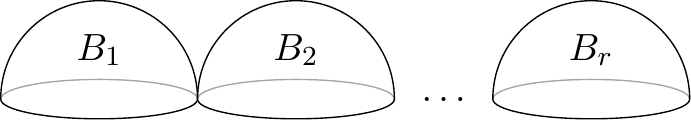}
\caption{A bubbled chain of discs.\label{figChainDisc}}
\end{figure}
These spaces carry evaluation maps $\ev_\pm(\mathbf{B})$ at the two end marked points.  We shall be interested in moduli spaces defined by \eqref{eqPearly} but with each $A_j$ now an $r_j$-tuple $\mathbf{B}^j=(B^j_1, \dots, B^j_{r_j})$.  We denote this modification by \eqref{eqPearly}', and refer to these configurations as \emph{generalised strings of pearls} or \emph{generalised pearly trajectories}.  A generalised string is reduced if each disc is non-constant, or there is only one disc.  Note that transversality for \eqref{eqPearly} with copies of $Q$ replaced by $\Delta_L$ can be expressed in terms of transversality for the $\mtw(\mathbf{B})$ (i.e.~for \eqref{eqMtwBDef}) and for \eqref{eqPearly}'.

We'll say a generalised string of pearls $\gamma$ is \emph{diagonal-avoiding} if the evaluation maps in \eqref{eqPearly}' at $\gamma$ miss the big diagonal $\Delta_{L, 2r}$.
\begin{figure}[ht]
\centering
\includegraphics[scale=1]{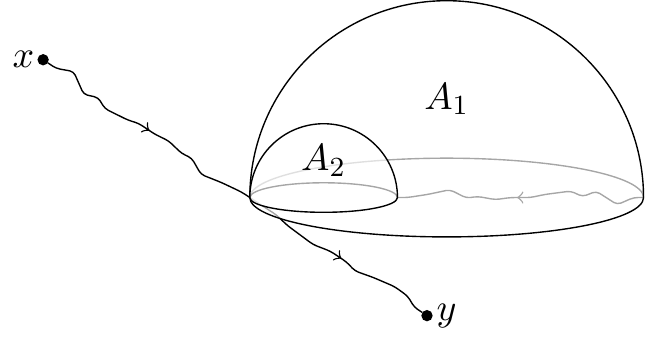}
\caption{A non-diagonal-avoiding pearly trajectory.\label{figNonDiag}}
\end{figure}
A trajectory $\gamma$ which is \emph{not} diagonal-avoiding is shown in Fig.\nobreakspace \ref {figNonDiag}: the entry point of the first disc is equal to the exit point of the second disc, or in other words $\ev_-(A_1)$ and $\ev_+(A_2)$ coincide at $\gamma$.  The notion of diagonal avoidance clearly also applies to the loose end moduli spaces \eqref{eqLE1} and \eqref{eqLE2} in an obvious way.  To be clear, in this case the diagonal avoidance condition applies only to the $2r-1$ evaluation maps appearing in each of \eqref{eqLE1} and \eqref{eqLE2}, not to the evaluation map at the loose end marked point.

We are now ready to describe the attainment of transversality in various settings, so suppose that $X$ is in fact K\"ahler with integrable complex structure $J$, and that all $J$-holomorphic discs in $X$ with boundary on $L$ have all partial indices non-negative (recall that Evans--Lekili showed that these hypotheses are satisfied when $(X, L)$ is $K$-homogeneous, in the proof of \cite[Lemma 3.2]{EL1}).  In particular \ref{JisReg} is automatically satisfied and for all classes $A \in H_2(X, L)$ the evaluation maps $\ev_\pm(A) \mc \mtw(A) \rightarrow L$ are submersions.  This in turn means that for all tuples of classes $\mathbf{B} = (B_1, \dots, B_r)$ the evaluation map
\[
(\ev_+(B_1), \ev_-(B_2), \dots, \ev_+(B_{r-1}), \ev_-(B_r))
\]
is transverse to $\Delta_L^{r-1}$ (i.e.~\eqref{eqMtwBDef} is transverse), so the moduli spaces $\mtw(\mathbf{B})$ of bubbled chains are transversely cut out and thus form smooth moduli spaces of the correct dimension.  Given any Morse--Smale pair $(f, g)$, we shall show that it can be pulled back by a diffeomorphism $\phi$ of $L$, which is $C^\infty$-close to $\id_L$, so that the moduli spaces of diagonal-avoiding generalised strings of pearls and of diagonal-avoiding strings with loose ends are transversely cut out.

It is then enough to show that all reduced generalised strings of pearls in virtual dimension at most $1$ are diagonal-avoiding.  This immediately gives \ref{StringTrans0}--\ref{BoundPts}, even for non-reduced strings, although the latter aren't actually needed.  For the loose end spaces in \ref{PearlBrokGlue} and \ref{PearlBubGlue} recall that we only need transversality near to the points which actually appear in the degenerate trajectories.  In particular, once we have shown that the only degenerate trajectories which occur are diagonal-avoiding, the only trajectories with loose ends which we need consider are those which are also diagonal-avoiding.  (Our assumption that all partial indices of holomorphic discs are non-negative actually implies that the loose end moduli spaces here are automatically transversely cut out, even if they are not diagonal-avoiding.  We do not make use of this fact in our argument though, as it does not extend to all of the Y-shaped loose end trajectories used later for the Floer product.)

Strictly there \emph{are} non-diagonal-avoiding reduced generalised strings of pearls in virtual dimension at most $1$, namely those trajectories with a single disc, which is constant, but we shall show that these are the only exceptions.  This issue does not affect the argument for \ref{PearlBrokGlue} and \ref{PearlBubGlue}, and the only potential issue it causes for \ref{StringTrans0}--\ref{BoundPts} is with transversality for standard Morse trajectories.  However, we assumed that the pair $(f, g)$ we started with was already Morse--Smale, and pulling back by a diffeomorphism does not affect this property, so this potential issue does not actually arise.

We have therefore reduced the problem of constructing a pearl complex using the integrable $J$ to the following two results:

\begin{lem}\label{labPullTrans}  For any Morse--Smale pair $(f, g)$ there exists a diffeomorphism $\phi$ of $L$, arbitrarily $C^\infty$-close to $\id_L$, such that the moduli spaces of diagonal-avoiding generalised strings of pearls and of diagonal-avoiding strings with loose ends for auxiliary data $(\phi^*f, \phi^*g, J)$ are transversely cut out.
\end{lem}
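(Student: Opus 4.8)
The plan is to package every transversality requirement in the statement as a single application of Lemma~\ref{labPullTrans2}, using the diagonal-avoidance hypothesis to push the relevant submanifolds off the big diagonal so that the lemma applies.

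First I would fix combinatorial data for a generalised pearly trajectory: a pair $x, y$ of critical points of $f$, the number $r$ of pearls, the numbers $r_1, \dots, r_r \geq 1$ of discs in each pearl, and for each $l$ a tuple $\mathbf{B}^l = (B^l_1, \dots, B^l_{r_l})$ of \emph{non-zero} classes in $H_2(X, L; \Z)$. By the standing hypothesis that all holomorphic discs have non-negative partial indices, each evaluation map $\ev_\pm$ is a submersion, so the moduli space $\mtw(\mathbf{B}^l)$ of chains of discs (as in \eqref{eqMtwBDef}) is a smooth manifold of the expected dimension, cut out transversely and independently of $(f, g)$. Writing $M = \prod_{l=1}^{r} \mtw(\mathbf{B}^l)$ and letting $F \mc M \to L^{2r}$ be the product of the end-point evaluation maps $\ev_-(\mathbf{B}^1), \ev_+(\mathbf{B}^1), \dots, \ev_+(\mathbf{B}^r)$, the generalised pearly trajectory space for this data is $F^{-1}(W^a_x \times Q^{r-1} \times W^d_y)$, and being ``cut out transversely'' is literally $F \pitchfork (W^a_x \times Q^{r-1} \times W^d_y)$. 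The loose-end moduli spaces \eqref{eqLE1} and \eqref{eqLE2} have the same shape inside $L^{2r-1}$, with target submanifolds $W^a_x \times Q^{r-1}$ and $Q^{r-1} \times W^d_y$.

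Next I would note that $W^a_x$, $W^d_y$ and $Q \cong (L \setminus \crit(f)) \times \R_{>0}$ are submanifolds of $L$, $L$ and $L \times L$ respectively, so $W^a_x \times Q^{r-1} \times W^d_y$ is a submanifold of $L^{2r}$, and hence so is its intersection $N$ with the open complement $L^{2r} \setminus \Delta_{L, 2r}$ of the big diagonal; crucially $N$ misses $\Delta_{L, 2r}$. Since we only ask for transversality at diagonal-avoiding points it is enough to arrange $F \pitchfork N$ (and likewise inside $L^{2r-1}$ for the loose-end spaces, where the diagonal-avoidance condition involves only the $2r-1$ evaluation maps of \eqref{eqLE1}, \eqref{eqLE2}, not the loose-end map). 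Ranging over the countably many choices of data --- finitely many pairs $x, y$, countably many tuples of classes since $H_2(X, L; \Z)$ is countable --- gives a countable family $(M_j, s_j, N_j, f_j)$ with $N_j \subset L^{s_j} \setminus \Delta_{L, s_j}$, and Lemma~\ref{labPullTrans2} then hands us a diffeomorphism $\psi$ arbitrarily $C^\infty$-close to $\id_L$ with $f_j \pitchfork \psi^{\times s_j}(N_j)$ for all $j$. Taking $\phi = \psi^{-1}$ (also close to $\id_L$), pulling $(f, g)$ back along $\phi$ transports $W^a_x$, $W^d_y$ and $Q$ by $\phi^{-1} = \psi$, while $F$ is unchanged, so the target submanifolds for the $(\phi^* f, \phi^* g, J)$-moduli spaces are exactly the $\psi^{\times s_j}(N_j)$ and the required transversality holds by construction.

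Finally I would record two bookkeeping points. A diagonal-avoiding generalised string contains no constant disc, since the two marked points of a constant disc evaluate to the same point of $L$ and hence produce a repeated coordinate in the $L^{s_j}$-tuple; so on these moduli spaces there is no $\R^{z}$-quotient by translations of constant discs to worry about, and transversality of $F^{-1}(N)$ is exactly transversality of the moduli space. (The Morse trajectories, being the single-constant-disc case, are thus outside the scope of this lemma; their transversality is the Morse--Smale property of $(f, g)$, which pullback by a diffeomorphism preserves. The configurations obtained by replacing a $Q$ by $\Delta_L$ are also not diagonal-avoiding, and in any case reduce to the cases above by concatenating the two glued chains into one.) I expect the only genuinely non-formal point to be this recognition that diagonal-avoidance is precisely what makes Lemma~\ref{labPullTrans2} applicable --- and that nothing is lost by it, because the excluded configurations are exactly the ones containing constant discs; the remainder, including the smoothness and dimension count of the fibre products and their $\R^z$-quotients, is routine.
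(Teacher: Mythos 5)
Your proposal is correct and is essentially the paper's own proof, which consists of the single instruction to apply Lemma~\ref{labPullTrans2} with the $M_j$ taken to be products of the chain moduli spaces $\mtw(\mathbf{B})$, the $f_j$ the corresponding product evaluation maps, and the $N_j$ products of copies of $Q$ with ascending and descending manifolds; your write-up just makes explicit the points the paper leaves implicit (intersecting the target submanifolds with the complement of the big diagonal so that the hypothesis $N_j \subset L^{s_j}\setminus\Delta_{L,s_j}$ holds, countability of the data, and the $\phi=\psi^{-1}$ bookkeeping). The only nitpick is that diagonal-avoidance rules out constant discs occurring as whole pearls (which is all you need to kill the $\R^{z}$-quotient), not constant discs buried inside a bubbled chain, but this does not affect the argument.
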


\begin{proof}
Apply \protect \MakeUppercase {L}emma\nobreakspace \ref {labPullTrans2}, taking the $M_j$ to be products of moduli spaces $\mtw(\mathbf{B})$ of bubbled chains of discs, the $f_j$ to be products of the corresponding evaluation maps, and the $N_j$ to be products of copies of $Q$ with ascending and descending manifolds of critical points of $f$.
\end{proof}

\begin{lem}\label{labDiagAvoid}  If $(f, g)$ is a Morse--Smale pair for which all moduli spaces of diagonal-avoiding generalised strings of pearls are transversely cut out, then all reduced generalised strings of pearls in virtual dimension at most $1$ are diagonal-avoiding unless they have a single disc, which is constant.
\end{lem}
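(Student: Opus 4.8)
\emph{Proof proposal.} The plan is to argue by contradiction, working with a minimal counterexample. Suppose there exists a reduced generalised string of pearls $\gamma$ from a critical point $x$ to a critical point $y$, of virtual dimension at most $1$, which fails to be diagonal-avoiding and is not a single constant disc; among all such $\gamma$ fix one with the fewest discs. Write $\gamma$ as an alternating sequence of gradient flow segments and disc-bubble-chains $\mathbf{B}^1,\dots,\mathbf{B}^r$. Because all partial indices of $J$-holomorphic discs are non-negative, the single-point evaluation maps $\ev_\pm$ are submersions, and one gets the dimension formula
\[
\operatorname{vdim}\gamma = \mu_{\mathrm{tot}} + \operatorname{ind}(y) - \operatorname{ind}(x) - \bigl((\#\text{discs}) - r + 1\bigr),
\]
where $\mu_{\mathrm{tot}}$ is the sum of the Maslov indices of all the discs. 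Since $\gamma$ is reduced and not a single constant disc, every disc in it is non-constant, so each chain $\mathbf{B}^j$ has Maslov index at least $2$ times its number of discs (each non-constant disc contributes at least $N_L \geq 2$).

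Next I localise the failure of diagonal-avoidance: two of the $2r$ boundary marked points of $\gamma$ (the entry $e_j$ and the exit $f_j$ of each chain $\mathbf{B}^j$) have the same image $p \in L$. The coinciding pair cannot be $(f_j, e_{j+1})$, since these are joined by a flow segment of strictly positive time and the gradient flow has no non-constant periodic orbits; so the pair consists either of the two endpoints $e_j = f_j$ of a single chain, or of two endpoints of two different, non-consecutive chains. In every case I will surger $\gamma$ to a new generalised string of pearls $\gamma'$ with strictly fewer discs: delete the offending chain (respectively, delete all chains and flow segments lying strictly between the two coinciding points, merging the two flanking chains into one when their shared boundary point is forced to $p$) and splice the two flow segments that met at $p$ into a single positive-time flow segment through $p$; if one of the coinciding points is an extreme one, this splicing instead absorbs the deleted piece into $W^a_x$ or $W^d_y$, and if everything is deleted one is left with a pure Morse trajectory from $x$ to $y$. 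The numerical point is that removing a chain of $k$ non-constant discs lowers $\mu_{\mathrm{tot}}$ by at least $2k$ while lowering $\#\text{discs} - r + 1$ by at most $k - 1$, so each surgery strictly decreases the virtual dimension, by at least $2$; hence $\operatorname{vdim}\gamma' \leq \operatorname{vdim}\gamma - 2 \leq -1 < 0$.

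Two outcomes are possible. If $\gamma'$ still contains a disc, then all its discs are non-constant (they are among those of $\gamma$), so $\gamma'$ is again a reduced generalised string of pearls, of negative virtual dimension, not a single constant disc; by minimality of $\gamma$ it must be diagonal-avoiding, but then it lies in a diagonal-avoiding moduli space which, by hypothesis, is cut out transversely, hence empty because its virtual dimension is negative — a contradiction. If $\gamma'$ is a pure Morse trajectory from $x$ to $y$, then its virtual dimension $\operatorname{ind}(y) - \operatorname{ind}(x) - 1$ is negative, so by the Morse--Smale condition no such trajectory exists unless $x = y$; and the dimension bound from the previous paragraph forces the case $x = y$ to occur only when $\gamma$ is a single non-constant disc of Maslov index $2$ with its two marked points mapping to the critical point $x$ and trivial flow segments at both ends, a configuration excluded directly once one uses that the single-point evaluation at the remaining marked point is a submersion together with the classification of minimal-index discs (which have no two marked points with the same image). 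All cases being exhausted, no counterexample $\gamma$ exists.

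The main obstacle is the bookkeeping in the case analysis of the previous two paragraphs: arranging the surgeries so that the spliced flow segment is genuinely a positive-time, non-constant gradient trajectory (or is legitimately absorbed into an ascending or descending manifold), and, crucially, verifying that the virtual dimension drops \emph{strictly} below zero in every case rather than merely to zero — that strictness is what makes the diagonal-avoiding transversality hypothesis bite. The delicate points are exactly when the coinciding marked points sit at the very ends of $\gamma$ or when $x = y$; there the argument cannot use the diagonal-avoiding hypothesis (single-point submersivity does \emph{not} propagate to chains, so that hypothesis is not redundant) and must instead fall back on direct submersivity of single-point evaluations and on the absence of minimal-index discs with coincident marked-point images.
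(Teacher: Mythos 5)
Your argument is, at its core, the same as the paper's: locate a coinciding pair of evaluation points, delete the bubble chains lying between them (merging the two flanking chains into a single chain when the surviving exit and entry are forced together --- which is exactly what the generalised/bubbled-chain framework is set up to permit), observe that the deleted discs are non-constant so the virtual dimension drops by at least $N_L \geq 2$ and becomes negative, and then invoke transversality of the diagonal-avoiding moduli spaces to conclude that the surgered configuration cannot exist. Your minimal-counterexample wrapper replaces the paper's explicit downward iteration on the number of coincidences $N(\gamma)$, but the two are interchangeable. One cosmetic point: the coinciding pair need not involve ``non-consecutive'' chains --- the entries of two adjacent chains can coincide, for instance --- and the only pair excluded a priori is the specific exit/entry pair joined by a positive-time flow segment, since that pair lies in $Q$, which is disjoint from $\Delta_L$. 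The surgery you describe does cover the adjacent cases, so only the wording needs fixing.

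The genuine gap is in your endgame. You reduce the last case to a single index-$2$ disc whose two boundary marked points both map to the critical point $x = y$, and you exclude it by appealing to ``the classification of minimal-index discs (which have no two marked points with the same image)''. No such classification is available in the generality in which the lemma is stated: the hypotheses are only that $X$ is K\"ahler, $L$ is closed, connected and monotone with $N_L \neq 1$, and all partial indices are non-negative, and nothing there prevents an index-$N_L$ disc from having a non-injective boundary. (Injectivity of boundaries of index-$2$ discs is a special feature of the Platonic examples, established via the axial classification, and cannot be imported into this appendix.) The paper instead treats this case uniformly with the others: delete the disc(s) and observe that the remnant is a Morse trajectory from $x$ to $y$ through the coinciding point $p \in W^a_x \cap W^d_y$, living in a moduli space of negative virtual dimension which is transverse by the Morse--Smale hypothesis and hence empty when $x \neq y$. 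You should replace the appeal to a classification by that argument. (The residual sub-case $x = y$, where $W^a_x \cap W^d_x = \{x\}$ and the remnant is the constant trajectory at $x$, is genuinely delicate and is also passed over quickly in the paper, so I do not count it against you --- but neither proof addresses it explicitly.)
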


\begin{proof}
Suppose that we are given a generalised string of pearls
\[
\gamma = \big(( u^1_1, \dots, u^1_{s_1}), \dots, (u^r_1, \dots, u^r_{s_r})\big) \in \mathcal{P}(x, y, \mathbf{A}=(\mathbf{B}^1, \dots, \mathbf{B}^r)) \subset \mtw(\mathbf{B}^1) \times \dots \times \mtw(\mathbf{B}^r)
\]
in virtual dimension $d \leq 1$ which has no constant discs.  Under $\ev_-(\mathbf{B}^1) \times \dots \times \ev_+(\mathbf{B}^r)$ this evaluates to some point $(p_1, \dots, p_{2r}) \in L^{2r}$; let $N(\gamma)$ be the number of pairs $(j, k)$ with $1 \leq j < k \leq 2r$ and $p_j = p_k$.

If the count $N(\gamma)$ is zero then by definition $\gamma$ is diagonal-avoiding, so we're done.  Otherwise, pick a pair $(j, k)$ contributing to this count.  Our aim is to show that we can delete some of the discs to form a new trajectory satisfying the same hypotheses as $\gamma$, but now in negative virtual dimension (since the deleted discs are non-constant, and hence of index at least $2$) and with a strictly smaller $N$-value.  Repeating this until $N$ reaches $0$ we obtain a diagonal-avoiding trajectory---which is therefore cut out transversely---in negative virtual dimension, which is impossible.  We thus conclude that $\gamma$ was diagonal-avoiding to begin with.

There are several cases to consider.  First suppose that $j$ and $k$ are both odd---say $j=2a-1$ and $k=2b-1$, with $a < b$.  In this case we delete the discs $u^l_m$ for $a \leq l < b$, and obtain a generalised string of pearls in virtual dimension
\[
d - \sum_{a \leq l < b} \mu(\mathbf{B}^l) \leq d - N_L < 0.
\]
Similarly if $j=2a$ and $k=2b$ (with $a < b$) or $j=2a-1$ and $k=2b$ (with $a \leq b$) then we delete $u^l_m$ for $a < l \leq b$ or $a \leq l \leq b$ respectively.  Finally, if $j=2a$ and $k=2b-1$ then we must have $b > a+1$ (otherwise $(p_j, p_k)$ lies in $Q$, which does not meet $\Delta_L$), and we delete $u^l_m$ for $a < l < b$.
\end{proof}

As an aside, we remark that it is possible that one could also achieve transversality by an argument similar to Haug's in \cite[Section 7.2]{Ha}, which is itself adapted from the proof of genericity of Morse--Smale metrics for a given Morse function using the Sard--Smale theorem.  Rather than restricting to diagonal-avoiding trajectories---which are trajectories whose flowlines don't intersect where they meet discs---one would instead work with trajectories whose flowlines don't intersect at all, and would then have to check that Haug's analysis can all be made to work in this setting.  The approach to transversality given above seems preferable however, as it is more concrete and gives better control on the resulting ascending and descending manifolds.

Now return to the case of general $(X, L)$, where there is no longer a preferred choice of almost complex structure.  The approach of Biran--Cornea is to \emph{fix} $(f, g)$ and instead choose $J$ to achieve transversality.  Standard arguments with universal moduli spaces show that \ref{JisReg}--\ref{PearlBubGlue} are satisfied for a generic (second category) choice of $J$, as long as we restrict to trajectories in which the discs are simple and absolutely distinct.  This is to ensure that for any trajectory we can perturb $J$ independently on a neighbourhood of (the image of) an injective point of each disc, and is analogous to the restriction to diagonal-avoiding trajectories in our argument.

One then has to show that, generically, the only trajectories which occur in virtual dimension at most $1$ automatically satisfy the simple and absolutely distinct conditions.  This is to ensure that imposing these conditions does not destroy compactness: a priori a limit of simple discs need not be simple, for example.  Here the argument, described in \cite[Section 3.2]{BCQS}, splits into two cases ($\dim L \geq 3$ and $\dim L \leq 2$), but both amount to showing that for a second category set of almost complex structures certain additional evaluation map transversality conditions are satisfied (namely \cite[Equation (9)]{BCQS} and the two bullet points in the proof of \cite[Proposition 3.4.1]{BCQS} in the two cases respectively).

\subsection{Invariance of the cohomology}\label{sscInvCoh}

Having constructed the pearl complex, we would like to show that the resulting cohomology is independent of the choice of auxiliary data, up to canonical isomorphism.  There are at least three different variants of this independence which one may be interested in.  Firstly, if we have a choice of integrable $J$ for which all holomorphic discs have all partial indices non-negative then one may want to prove that the cohomology for this fixed $J$ is independent of the choice of perturbed $(f, g)$ constructed above.  Secondly, in the general setting considered by Biran--Cornea one may want to prove that the whole triple $(f, g, J)$ can be varied.  And thirdly, one may want to show that the complex we constructed for special $J$ gives the same cohomology as that constructed by Biran--Cornea with generic $J$.  Of course, the latter really supersedes the first variant, but there are situations where one may want to work at all times with the special $J$ and then the first method genuinely is needed.

The first two of these arguments use the method of Morse cobordisms, introduced in \cite{CoRa} and used by Biran--Cornea to prove the second variant of independence in \cite[Section 5.1.2]{BCQS}.  The idea is as follows.  Given two choices $(f_0, g_0, J_0)$ and $(f_1, g_1, J_1)$ of auxiliary data, we choose a Morse cobordism $(F, G)$ comprising a Morse--Smale pair on $L \times [0, 1]$ which coincides with $(f_0, g_0)$ on $L \times \{0\}$ and $(f_1+C, g_1)$ on $L \times \{1\}$, where $C \gg 0$ is a positive constant, and such that the flow of $\nabla F$ is tangent to the boundary $L \times \{0, 1\}$ and points in the direction of strictly increasing $t$ (which denotes the coordinate on the $[0, 1]$ factor) in the interior.  We also choose an appropriate homotopy $J_t$ of almost complex structures from $J_0$ to $J_1$.

We then consider moduli spaces of discs in $X \times [0, 1]$ with boundary on $L \times [0, 1]$, which are constant---$T$, say---on the $[0, 1]$ factor, and $J_T$-holomorphic in the $X$ factor, and carry evaluation maps at two boundary marked points.  Using these moduli spaces and the data $(F, G)$ we can build moduli spaces of strings of pearls on $L \times [0, 1]$.  Counting rigid strings from $x \in \crit(f_0) \times \{0\} \subset \crit(F)$ to $y \in \crit(f_1) \times \{1\} \subset \crit(F)$ we obtain a map between the corresponding pearl complexes.  The fact that this is a chain map follows from considering one-dimensional moduli spaces of such trajectories, and the boundaries of their compactifications.  To prove that this chain map is a quasi-isomorphism, and is independent of the choice of $(F, G, J_t)$ up to chain homotopy, we use Morse cobordisms on $L \times [0, 1]^2$ defined in a similar way.

The key property of all of these constructions, where we work over a parameter space $P$ which is a manifold with corners (equal to $[0, 1]$ or $[0, 1]^2$), is that trajectories (excluding their end points) live entirely in one stratum of $P$.  For example, if $\gamma$ is a pearly trajectory on $L \times [0, 1]^2$ which contains a disc lying over a point $p \in \{0\} \times (0, 1) \subset [0, 1]^2$ then the whole trajectory, apart from its end points, lies over $\{0\} \times (0, 1)$.  The reason for this is that the flow of the Morse cobordism is tangent to each boundary stratum.

Now, to prove the first variant of independence, where $J_0$ and $J_1$ are both equal to our special integrable $J$ for which all partial indices are non-negative, we choose an arbitrary Morse cobordism $(F, G)$ and take $J_t=J$ for all $t$.  By construction of $(f_0, g_0)$ and $(f_1, g_1)$ we already have transversality for strings of pearls and loose end spaces which lie over $0$ or $1$, so we only need worry about transversality for the moduli spaces comprising discs over the interior of $[0, 1]$.  And for these we can use the same arguments as for \protect \MakeUppercase {L}emma\nobreakspace \ref {labPullTrans} and \protect \MakeUppercase {L}emma\nobreakspace \ref {labDiagAvoid} to perturb $(F, G)$ on $L \times (0, 1)$ to achieve transversality.  The same approach works for the cobordisms over $[0, 1]^2$, where we start out with transversality for trajectories contained in the boundary strata and perturb the cobordism over the interior.

The second variant is proved by taking an arbitrary cobordism $(F, G)$ and choosing a generic path $J_t$ of almost complex structures from $J_0$ to $J_1$ in order to achieve transversality for trajectories of discs which are simple and absolutely distinct (with the images of discs now viewed as subsets of $X \times [0, 1]$ rather than just $X$), and the additional conditions needed to ensure all trajectories in virtual dimension at most $1$ are of this form.  Again we can do this since we only need to consider trajectories living over the interior of $[0, 1]$, where we have the freedom to perturb $J_t$.  The $[0, 1]^2$ cobordisms are dealt with similarly.

For the third variant we can actually take a slightly simpler approach.  We need not vary the Morse data, and instead can just consider one-parameter families of moduli spaces of strings of pearls in which the almost complex structure varies along a generic path starting at our special $J$ and ending at some generic $J_1$.  The one-dimensional such moduli spaces can be compactified and all boundary points occurring in the interior of the path $J_t$ cancel out.  We are left with boundary points occurring at the end of the path---which are pearly trajectories for $J_1$---and those at the beginning----which are trajectories for $J$ but counted with a minus sign.  The $J$- and $J_1$-complexes are thus isomorphic.

Combining the results of the appendix so far with Biran--Cornea's proof that pearl complex (co)homology is self-Floer (co)homology \cite[Section 5.6]{BCQS}, we have proved the following:

\begin{prop}\label{labPearlCx}  Suppose $X$ is a compact K\"ahler manifold with complex structure $J$, and $L \subset X$ is a closed, connected, monotone Lagrangian with minimal Maslov number $N_L \neq 1$, equipped with a Morse--Smale pair $(f, g)$ and---if the coefficient ring $R$ has characteristic not equal to $2$---a choice of orientation and spin structure.  If every $J$-holomorphic disc in $X$ with boundary on $L$ has all partial indices non-negative then there exists a diffeomorphism $\phi$ of $L$, arbitrarily $C^\infty$-close to the identity, such that the pearl complex can be defined using the auxiliary data $(\phi^*f, \phi^*g, J)$, and computes the self-Floer cohomology of $L$.
\end{prop}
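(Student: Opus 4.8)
The plan is to assemble the ingredients already developed in this appendix. First I would record what the hypothesis buys us for free: since every $J$-holomorphic disc on $L$ has all partial indices non-negative, condition~\ref{JisReg} holds automatically, and, as noted above, each evaluation map $\ev_\pm(A)\colon \mtw(A)\to L$ is a submersion. Consequently every moduli space $\mtw(\mathbf{B})$ of bubbled chains of discs is transversely cut out and of the expected dimension, with no genericity on $J$ required. The upshot is that the disc side needs no perturbation at all; the only transversality still to be arranged concerns the matching conditions where flowlines meet discs in a pearly trajectory, and those are controlled by the Morse data, not by $J$.

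Next I would apply Lemma~\ref{labPullTrans} to the given Morse--Smale pair $(f,g)$: it produces a diffeomorphism $\phi$ of $L$ arbitrarily $C^\infty$-close to $\id_L$ such that, for the data $(\phi^*f,\phi^*g,J)$, all moduli spaces of diagonal-avoiding generalised strings of pearls and of diagonal-avoiding strings with loose ends are transversely cut out. Lemma~\ref{labDiagAvoid} then shows that every reduced generalised string of pearls of virtual dimension at most $1$ is in fact diagonal-avoiding, the only exception being a single constant disc; since pulling back by a diffeomorphism preserves the Morse--Smale property, the honest Morse trajectories remain transverse, so this exception is harmless. Together these give conditions \ref{StringTrans0}--\ref{BoundPts}, and for \ref{PearlBrokGlue} and \ref{PearlBubGlue} one only needs transversality near the degenerate trajectories that actually appear, which are diagonal-avoiding by the same reasoning. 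Hence the pearl differential $\diff$ is well defined by counting rigid reduced strings, the relevant zero-dimensional moduli spaces are compact, and the standard analysis of the one-dimensional moduli spaces — a broken flowline, a disc bubbling into two, or a flowline collapsing, each glued back in via the Morse-theoretic and $J$-holomorphic gluing theorems cited in \ref{PearlBrokGlue} and \ref{PearlBubGlue} — yields $\diff^2=0$. So $(\phi^*f,\phi^*g,J)$ defines a cochain complex.

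To finish I would invoke the invariance discussion of Section~\ref{sscInvCoh}: using Morse cobordisms on $L\times[0,1]$, and on $L\times[0,1]^2$ for the homotopies, one checks that the cohomology of this complex is independent, up to canonical isomorphism, of the perturbation $\phi$, and moreover agrees with the pearl cohomology computed from a generic (possibly non-integrable) almost complex structure in Biran--Cornea's sense. The latter is identified with $HF^*(L,L;R)$ in \cite[Section 5.6]{BCQS}, which gives the asserted conclusion; the orientation and spin structure enter only to make the signed counts well defined when $\Char R\neq 2$, following the conventions recalled earlier.

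I expect the genuine technical heart to be Lemma~\ref{labPullTrans}, and behind it the differential-topology input Lemma~\ref{labPullTrans2} (via Lemma~\ref{labDenseOpen}): because $J$ is held fixed we cannot perturb the almost complex structure near an injective point of each disc, so transversality for the full pearl moduli spaces has to be extracted by perturbing only the Morse data, which is precisely why one must route everything through the notion of diagonal avoidance and then separately exclude non-diagonal-avoiding low-dimensional configurations by a dimension count. The second place where care is needed, though more routine, is checking that the Morse-cobordism invariance arguments survive with a fixed integrable $J$: one has to see that the transversality needed for trajectories living over the interior of the parameter interval can again be obtained by perturbing the cobordism rather than $J$, exactly as in Lemma~\ref{labPullTrans} and Lemma~\ref{labDiagAvoid}.
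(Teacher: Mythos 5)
Your proposal is correct and follows essentially the same route as the paper: automatic regularity of discs and submersivity of evaluation maps from the partial-index hypothesis, then Lemma~\ref{labPullTrans} to perturb only the Morse data, Lemma~\ref{labDiagAvoid} to reduce to diagonal-avoiding trajectories (with the single-constant-disc exception handled by the Morse--Smale assumption), and finally the Morse-cobordism invariance argument together with the comparison to a generic almost complex structure and Biran--Cornea's identification with Floer cohomology. You have also correctly located the technical heart in Lemma~\ref{labPullTrans} and its differential-topology input Lemma~\ref{labPullTrans2}.
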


\subsection{The Floer product using special $J$}

The pearl complex also carries extra algebraic structures which one may want to compute using a special integrable $J$ (for which all partial indices are non-negative, as above), and we now consider the case of the Floer product.  This is defined by taking three Morse--Smale pairs $(f_1, g_1)$, $(f_2, g_2)$ and $(f_3, g_3)$, which are generic in a sense to be made precise later, and defining a map
\[
 * \mc C_1^p \otimes C_2^q \rightarrow C_3^{p+q}
\]
(where $C_j$ is the pearl complex constructed using $(f_j, g_j, J)$) which satisfies the Leibniz rule and hence induces a product on cohomology.  This product then has to be shown to be associative and independent of the various choices made.

The arguments involved are fundamentally the same as those used in the preceding subsections, so we focus on the features which require modification.  The moduli spaces are more numerous than before and it would be rather cumbersome and unenlightening to express them all individually as fibre products analogous to \eqref{eqPearly}, \eqref{eqLE1} and \eqref{eqLE2}, so we instead describe them in words and illustrate them with diagrams of examples, which are hopefully easier to digest.  It is easy to translate back and forth between these diagrams and fibre product expressions as needed.

The product itself is defined by counting Y-shaped configurations, as shown in Fig.\nobreakspace \ref {figYTraj}.
\begin{figure}[ht]
\centering
\includegraphics[scale=1]{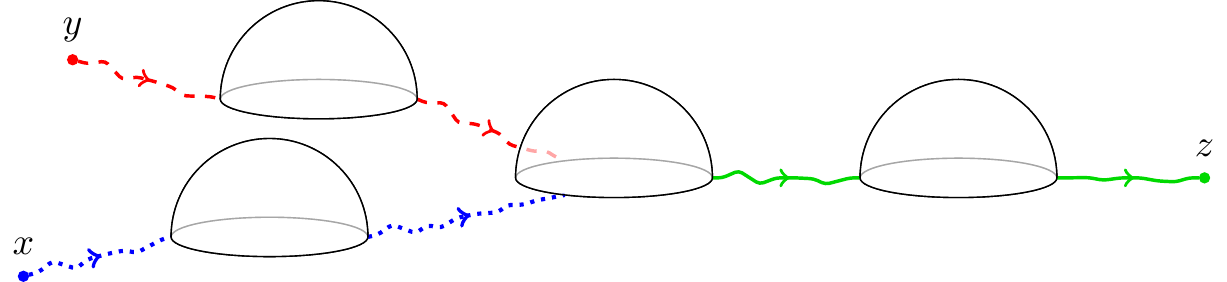}
\caption{A Y-shaped pearly trajectory.\label{figYTraj}}
\end{figure}
In the diagram the dotted lines denote flowlines of $\nabla f_1$, whilst dashed is used for $f_2$ and solid for $f_3$.  Blobs on flowlines denote critical points of the corresponding Morse function.  The number of discs shown is purely illustrative: each branch of the Y may have any number of discs, which we may assume to be non-constant, including zero.  The central disc \emph{is} allowed to be constant, and terms defining the standard cup product on the Morse cohomology of $L$ come from trajectories in which this is the case and there are no other discs.  These moduli spaces have an obvious description analogous to \eqref{eqPearly}, in which the central disc carries three marked points, which must be in the order indicated in the diagram (going round clockwise we must have the incoming dotted flowline, then the incoming dashed flowline, and finally the outgoing solid flowline).  This restriction on the order of the marked points leads to the failure of the product to be graded-commutative in general.

Just as for the ordinary strings of pearls, it is helpful to consider more general moduli spaces in which we allow discs to be replaced by bubbled chains of discs, or by bubbled Y-shaped configurations of discs at the centre.  Some of these bubbled Y-shaped configurations are illustrated in Fig.\nobreakspace \ref {figBubY}: the top left diagram shows a non-constant disc with a single bubble at each marked point; the top right shows a constant central disc, bubbled at each marked point; the bottom diagram shows a non-constant central disc which carries a single bubble at one marked point and a chain of two bubbles at another.  Note that since the discs are assumed to have all partial indices non-negative these bubbled Y-shaped configurations are also cut out transversely so form smooth moduli spaces of the correct dimension.  We shall call trajectories in which discs may be bubbled \emph{generalised Y-shaped strings of pearls} or \emph{generalised Y-shaped pearly trajectories}.
\begin{figure}[ht]
\centering
\includegraphics[scale=1]{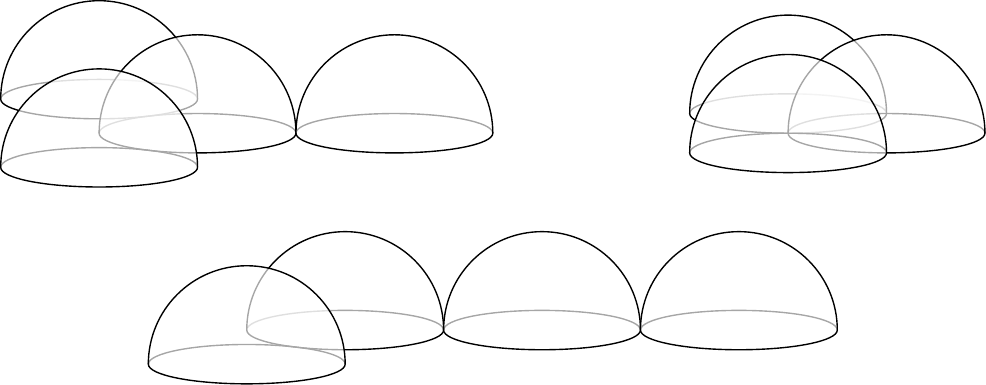}
\caption{Bubbled Y-shaped configurations of discs.\label{figBubY}}
\end{figure}

We shall also need the corresponding loose end moduli spaces, where there may now be one or two loose ends, as illustrated in Fig.\nobreakspace \ref {figYLoose}.
\begin{figure}[ht]
\centering
\includegraphics[scale=1]{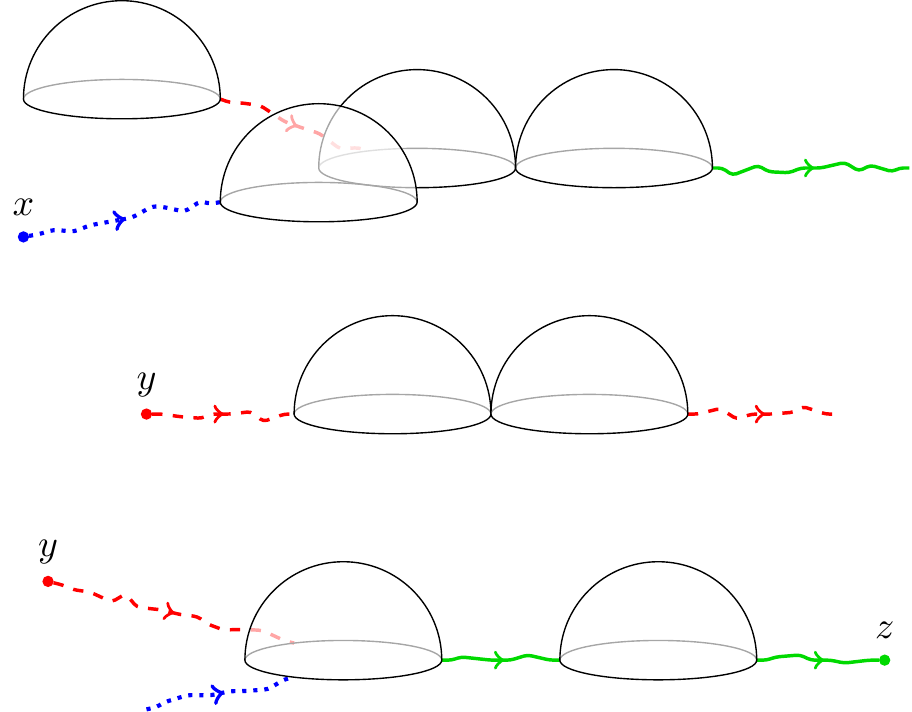}
\caption{Y-shaped strings of pearls with loose ends.\label{figYLoose}}
\end{figure}
As before, we insert a constant disc at any loose end point which is bare (i.e.~without a disc) to keep track of its position.  We say a Y-shaped pearly trajectory, possibly with loose ends, is \emph{reduced} if the only constant discs are at the centre of a Y or at loose ends.

The notion of diagonal-avoidance has to be slightly modified for these spaces of trajectories involving multiple sets of Morse data.  Each evaluation map into $L$ from a moduli space of discs, or more generally of bubbled configurations of discs, comes labelled with a $1$, $2$ or $3$ depending on which function's gradient flow joins up with that evaluation map.  For example, in the bottom trajectory in Fig.\nobreakspace \ref {figYLoose} the three evaluation maps on the central disc are labelled $1$, $2$ and $3$ clockwise from bottom left (and this will always be the case), the other non-constant disc has both marked points labelled $3$ (they join solid flowlines, indicating $\nabla f_3$), whilst the constant disc at the loose end has its evaluation map labelled $1$ (as it joins a dotted flowline, meaning $\nabla f_1$).  The modified diagonal-avoidance condition is then that the evaluation maps carrying the same label avoid the big diagonal in their corresponding $L$ factors.

With these definitions in place, the transversality we require is that: moduli spaces of diagonal-avoiding generalised strings of pearls and the associated diagonal-avoiding loose end spaces are transversely cut out for each $(f_j, g_j)$; moduli spaces of diagonal-avoiding generalised Y-shaped strings of pearls and the loose end versions are transversely cut out.  We'll call these conditions `product transversality'.  Using an obvious modification of \protect \MakeUppercase {L}emma\nobreakspace \ref {labPullTrans2} these can be achieved by pulling back the $(f_i, g_i)$ by diffeomorphisms $\phi_j$ of $L$ which are $C^\infty$-close to the identity.  The first condition ensures that each $(f_j, g_j)$ defines a valid pearl complex, and the second condition lets us define the product by counting rigid reduced Y-shaped strings of pearls.  By considering compactifications of moduli spaces of reduced Y-shaped pearly trajectories of virtual dimension $1$ we obtain the Leibniz property
\[
\diff(x * y) = (\diff x) * y + (-1)^{|x|} x * (\diff y),
\]
which means that the product descends to cohomology.  The reduced moduli spaces in virtual dimension at most $1$ are all automatically diagonal-avoiding, by applying the argument of \protect \MakeUppercase {L}emma\nobreakspace \ref {labDiagAvoid} to each leg of the Y.  Note that the exceptional non-diagonal-avoiding case that occurs for the basic (i.e.~non-Y-shaped) trajectories, namely that of standard Morse trajectories, does not occur in the Y-shaped case, since the Morse product trajectories actually \emph{are} diagonal-avoiding in our modified sense (the three flowlines which meet correspond to distinct Morse--Smale pairs which can be perturbed independently).

The only new phenomenon that occurs is bubbling of the thrice-marked central disc at the boundary of one-dimensional moduli spaces, which is taken care of by gluing results analogous to those for twice-marked discs, as in \cite[Section 5.2]{BCQS}.  Note that convergence of two of the marked points (which can be viewed as bubbling off of a constant `ghost' disc), is cancelled out by the shrinking of a Morse flowline from a constant central disc to a non-constant disc, as shown from above in Fig.\nobreakspace \ref {figGhostBub}.
\begin{figure}[ht]
\centering
\includegraphics[scale=1]{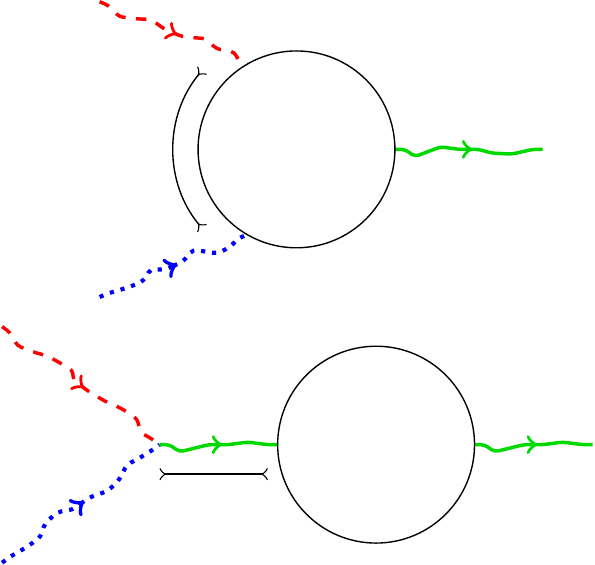}
\caption{Convergence of marked points cancels shrinking of a flowline.\label{figGhostBub}}
\end{figure}
In particular, if we allow the marked points to appear in both orders around the boundary circle then the degenerate configuration occurs at \emph{three} ends of the corresponding one-dimensional moduli space (twice from the convergence of marked points---once in either order---and once from the shrinking of a flowline) and hence does not cancel out.  This is why the order of the marked points has to be fixed.

Suppose we replace $(f_3, g_3)$, say, with another Morse--Smale pair $(f_3', g_3')$.  From now on we'll drop explicit mention of the metrics.  The key idea for proving invariance of the product is:
\begin{lem}
We can perturb $f_3'$ by a diffeomorphism $C^\infty$-close to the identity in order to achieve product transversality for $f_1$, $f_2$ and $f_3'$.
\end{lem}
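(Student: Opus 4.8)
The plan is to reduce the statement to the single--Morse--function machinery of Lemma~\ref{labPullTrans} and Lemma~\ref{labDiagAvoid}, exploiting the fact that replacing $(f_3,g_3)$ by $(f_3',g_3')$ only affects those moduli spaces in which $f_3'$ actually appears. Unwinding the definition, product transversality for $f_1$, $f_2$, $f_3'$ demands transversality of: (i) the diagonal-avoiding generalised strings of pearls and their loose-end spaces for each of $(f_1,g_1)$, $(f_2,g_2)$, $(f_3',g_3')$ separately; and (ii) the diagonal-avoiding generalised Y-shaped strings of pearls, and their loose-end versions, with one leg governed by each $f_j$. The spaces in (i) attached to $f_1$ and to $f_2$ involve no $f_3$-data whatsoever, so they remain transverse with exactly the perturbation already in force for the original product transversality of $f_1$, $f_2$, $f_3$; nothing is required of them.

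First I would dispose of the $f_3'$-only spaces in (i). Since $(f_3',g_3')$ is Morse--Smale and, in our setting, every $J$-holomorphic disc in $X$ with boundary on $L$ has all partial indices non-negative, the proof of Lemma~\ref{labPullTrans} (and of the underlying Lemma~\ref{labPullTrans2} and Lemma~\ref{labDenseOpen}) goes through verbatim with $(f_3',g_3')$ in place of $(f,g)$: it produces a diffeomorphism $\phi$ of $L$, arbitrarily $C^\infty$-close to $\id_L$, making the diagonal-avoiding generalised strings of pearls and loose-end spaces for $(\phi^*f_3',\phi^*g_3',J)$ transversely cut out, and by Lemma~\ref{labDiagAvoid} this suffices. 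I would feed into the same application of Lemma~\ref{labPullTrans2} the loose-end moduli spaces arising as the $f_3'$-legs of Y-shaped configurations (these are themselves generalised strings of pearls with a loose end for $f_3'$), so that the \emph{one} diffeomorphism $\phi$ handles all of these at once.

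The substantive point is (ii): that perturbing the single function $f_3'$ already forces the mixed Y-shaped spaces to be transverse. Here I would observe that a Y-shaped trajectory is obtained by gluing to the (possibly constant) thrice-marked central disc $u_0$ three legs---a loose-end string for $f_1$ at the first mark, one for $f_2$ at the second, one for $f_3'$ at the third---and that one can peel these legs off one at a time. Each leg containing at least one disc has its loose-end evaluation map into $L$ a submersion (because the disc evaluation maps are submersions, by the partial-index hypothesis), so gluing such a leg on is always a transverse fibre-product operation; the central disc itself need only be a smooth moduli space of the expected dimension, which holds for the same reason. When the central disc is non-constant the three legs, disc-containing or not, can therefore be attached successively with no further constraints. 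The only case needing separate handling is a constant central disc with one or more disc-free legs: the extreme instance is the pure Morse cup-product term, where transversality reduces to $W^a_x(f_1)$ meeting $W^a_y(f_2)$ transversely---inherited---and then their intersection meeting $W^d_z(f_3')$ transversely, which is arranged by perturbing $f_3'$ exactly as in the classical proof that a triple of Morse--Smale pairs can be perturbed to compute the cup product; this condition too is folded into the choice of $\phi$.

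Finally I would note, running the argument of Lemma~\ref{labDiagAvoid} along each leg of the Y separately, that every reduced Y-shaped trajectory of virtual dimension at most $1$ is automatically diagonal-avoiding---the lone exception in the non-Y case, a standard Morse trajectory, does not arise here, since the three flowlines meeting a Y belong to distinct Morse--Smale pairs which may be perturbed independently---so the diagonal-avoiding transversality just obtained is all that is needed. The main obstacle is precisely the verification in step (ii) that a perturbation of $f_3'$ alone suffices for the mixed spaces; the resolution is that submersivity of the disc evaluation maps makes the $f_1$- and $f_2$-directions impose no extra transversality once the $f_1$- and $f_2$-legs are themselves transverse, which they are by the unchanged original product transversality.
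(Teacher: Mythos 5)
Your overall reduction is the right one---only the moduli spaces in which $f_3'$ actually appears need attention, and the $f_3'$-free data are already transverse from the product transversality of $f_1$, $f_2$, $f_3$---but the central claim in your step (ii) is false, and it is precisely the point the proof must address. You assert that a leg ending in a non-constant disc has submersive loose-end evaluation map ``because the disc evaluation maps are submersions''. The partial-index hypothesis only makes each \emph{single} evaluation map $\ev_\pm(A) \mc \mtw(A) \rightarrow L$ a submersion; the joint evaluation at two marked points of one disc is a submersion only when all partial indices are at least $1$, which already fails for index $2$ discs (partial indices $0$, $0$, $2$), and a fortiori the composite evaluation at the loose end of a multi-component trajectory, after its other marked points have been constrained to flow spaces, need not be a submersion. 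If it were, the fibre products \eqref{eqPearly} would be automatically transverse and Lemma~\ref{labPullTrans} would be unnecessary. Consequently ``gluing a leg on'' is not automatically a transverse operation; nor can disc-free $f_1$- and $f_2$-legs be attached ``successively with no further constraints'' to a non-constant central disc, since that requires the \emph{joint} evaluation at two of its marked points to be transverse to a product of ascending/descending manifolds, which single-point submersivity does not give.

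The repair is to organise the fibre product the other way around. Given a mixed moduli space, delete all flowlines of $\nabla f_3'$. The trajectory breaks into pieces, each of which is either a moduli space of (possibly bubbled, possibly Y-shaped) discs or a loose-end trajectory involving only $f_1$ and $f_2$; note in particular that the Y-shaped loose-end space whose loose end sits on the $f_3$-leg contains no $f_3$-data whatsoever, so its transversality for the triple $f_1$, $f_2$, $f_3$ transfers verbatim to the new triple. All these pieces are therefore transversely cut-out manifolds of the correct dimension. The original moduli space is then the fibre product of the evaluation maps of these pieces over the flow spaces ($W^a$, $W^d$ and $Q$) of $\nabla f_3'$, and one applies Lemma~\ref{labPullTrans2} with the pieces as the $M_j$, their evaluation maps as the maps $f_j$, and the $f_3'$-flow spaces as the $N_j$: pulling back by the resulting diffeomorphism perturbs only $f_3'$ and achieves transversality for every such space at once. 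Your treatment of the $f_3'$-only spaces, of the pure Morse cup-product term, and of diagonal-avoidance is correct and is subsumed by this argument.
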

\begin{proof}
Fix a moduli space $\mathcal{M}$ of trajectories for which we need to achieve transversality.  For example, $\mathcal{M}$ could be the space of diagonal-avoiding trajectories of the shape shown in the third diagram in Fig.\nobreakspace \ref {figYLoose} (with $f_3'$ in place of $f_3$), with specified homology classes for the discs.  Deleting all flowlines of $\nabla f_3'$, the trajectory breaks into pieces which are either moduli spaces of discs (possibly bubbled chains or bubbled Y-shaped configurations), or loose end trajectories involving only $f_1$ and $f_2$.  Since we have already attained product transversality for $f_1$, $f_2$ and $f_3$, these loose end trajectories for $f_1$ and $f_2$ are transversely cut out.

We can therefore describe $\mathcal{M}$ as a fibre product analogous to \eqref{eqPearly}.  Instead of taking a product of moduli spaces of discs over the flow spaces (meaning the ascending/descending manifolds, or the space $Q$) for $\nabla f$, we take a product of moduli spaces of discs (possibly bubbled) or of transversely cut out loose end trajectories for $f_1$ and $f_2$, over the flow spaces for $\nabla f_3'$.  This is illustrated for our example in Fig.\nobreakspace \ref {figPertThird}, where the downward arrows represent the evaluation maps at the marked points.
\begin{figure}[ht]
\centering
\includegraphics[scale=1]{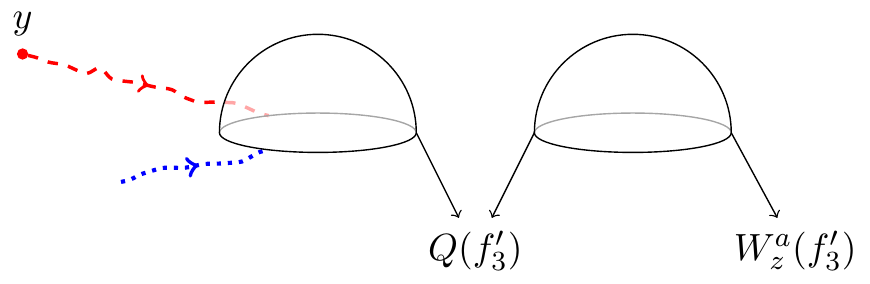}
\caption{Expressing $\mathcal{M}$ as a fibre product.\label{figPertThird}}
\end{figure}
Once all of the moduli spaces $\mathcal{M}$ are described in this way, we can use \protect \MakeUppercase {L}emma\nobreakspace \ref {labPullTrans} as before to see that transversality can be achieved by perturbing $f_3'$.
\end{proof}

Similarly, if we are given transverse triples of data $f_1$, $f_2$, $f_3$ and $f_1$, $f_2$, $f_3'$ then we can perturb any Morse cobordism from $f_3$ to $f_3'$ as in Appendix\nobreakspace \ref {sscInvCoh} to ensure the transversality required to get a comparison map from the $f_3$ complex to the $f_3'$ complex and for this comparison map to respect the product on cohomology.  Clearly the same is true if we change either of the other Morse--Smale pairs $(f_1, g_1)$ or $(f_2, g_2)$ instead.

Of course, to prove that the product is independent of the choices of Morse data in general we need a way to compare the products induced by two arbitrary triples $f_1$, $f_2$, $f_3$ and $f_1'$, $f_2'$, $f_3'$.  To do this we introduce an auxiliary triple $f_1''$, $f_2''$, $f_3''$, and Morse cobordisms from each $f_j$ to $f_j''$ and from $f_j''$ to $f_j'$, and perturb them all so that we get transverse triples and comparison maps as follows
\[
(f_1, f_2, f_3) \leadsto (f_1, f_2, f_3'') \leadsto (f_1, f_2'', f_3'') \leadsto (f_1'', f_2'', f_3'') \leadsto (f_1'', f_2'', f_3') \leadsto (f_1'', f_2', f_3') \leadsto (f_1', f_2', f_3').
\]

In order to show the product we have just defined using the special integrable $J$ coincides with the product defined using a generic almost complex structure (which obviously gives an indirect proof of the invariance of the former) we proceed as in the third variant of Appendix\nobreakspace \ref {sscInvCoh}: we introduce one-parameter moduli spaces of Y-shaped strings of pearls in which the almost complex structure is allowed to vary along a generic path, and consider the boundaries of the moduli spaces of virtual dimension $1$.  In Biran--Cornea's work, the same metric is used on each leg of the Y-shaped trajectories, but this is not necessary.

The upshot of this discussion is:

\begin{prop}\label{labPearlCxProd}  In the setting of \protect \MakeUppercase {P}roposition\nobreakspace \ref {labPearlCx}, but now given three Morse--Smale pairs $(f_j, g_j)_{j=1}^3$ on $L$, there exist diffeomorphisms $(\phi_j)_{j=1}^3$ of $L$, arbitrarily $C^\infty$-close to the identity, such that the Floer product can be computed using the pearl model with auxiliary data $(\phi_j^*f_j, \phi_j^*g_j, J)$.
\end{prop}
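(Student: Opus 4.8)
The plan is to mirror, in the Y-shaped setting, the strategy already used for the ordinary pearl complex: achieve transversality for diagonal-avoiding configurations by perturbing the Morse data, check that all low-dimensional reduced trajectories are automatically diagonal-avoiding, and then feed this into the standard gluing and cobordism machinery to obtain the Leibniz rule, invariance, and the comparison with the generic-$J$ construction.

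First I would apply the evident analogue of Lemma~\ref{labPullTrans2}, taking the manifolds $M_j$ to be finite products of moduli spaces of bubbled chains of discs $\mtw(\mathbf{B})$ and of bubbled Y-shaped configurations of discs, the maps $f_j$ to be products of the associated evaluation maps (each carrying a label $1$, $2$ or $3$ recording which of $f_1,f_2,f_3$ flows into it), and the submanifolds $N_j$ to be products of copies of $Q$, of $\Delta_L$, and of ascending/descending manifolds, with the big diagonal now taken separately in the $L$-factors belonging to evaluation maps of the same label. Since every $J$-holomorphic disc on $L$ has all partial indices non-negative (by \cite[Lemma 3.2]{EL1} in the homogeneous case), the spaces of bubbled chains and of bubbled Y-configurations of discs are themselves transversely cut out of the expected dimension, so the input of Lemma~\ref{labPullTrans2} is available. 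This produces diffeomorphisms $\phi_1,\phi_2,\phi_3$ arbitrarily $C^\infty$-close to $\id_L$ such that, for the data $(\phi_j^*f_j,\phi_j^*g_j,J)$, all diagonal-avoiding generalised Y-shaped strings of pearls and their loose-end variants — as well as the ordinary strings of pearls and loose ends for each $(\phi_j^*f_j,\phi_j^*g_j)$ individually — are transversely cut out; this is what was called product transversality above.

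Next I would verify that every reduced generalised Y-shaped pearly trajectory in virtual dimension at most $1$ is diagonal-avoiding, by running the deletion argument of Lemma~\ref{labDiagAvoid} on each of the three legs of the Y separately: a collision of two equally-labelled evaluation maps lets one delete the intervening non-constant discs (each of index $\geq N_L$) to produce a trajectory of the same type in negative virtual dimension, which is impossible once it is diagonal-avoiding and hence transversely cut out. The exceptional non-diagonal-avoiding case that occurs for ordinary strings — standard Morse trajectories — does not arise here, because the three flowlines meeting at the centre of a Y belong to distinct Morse--Smale pairs perturbed independently, and the trajectories with a constant central disc and no other discs simply recover the ordinary Morse cup product. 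With this in hand, the product $*\colon C_1^p\otimes C_2^q\to C_3^{p+q}$ is defined by counting rigid reduced Y-shaped trajectories, and compactifying the one-dimensional such moduli spaces yields the Leibniz rule $\diff(x*y)=(\diff x)*y+(-1)^{|x|}x*(\diff y)$; the only genuinely new boundary phenomenon, bubbling of the thrice-marked central disc — including the cancellation of ghost-bubbling of two converging marked points against the shrinking of a flowline out of a constant central disc — is handled exactly as in \cite[Section 5.2]{BCQS}, and it is precisely this cancellation count ($2$ versus $1$) that forces the cyclic order of the three central marked points to be fixed.

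Finally, for independence of the induced cohomology product from all choices, I would perturb Morse cobordisms as in Appendix~\ref{sscInvCoh}: to change one pair, say $(f_3,g_3)\rightsquigarrow(f_3',g_3')$, one takes a cobordism on $L\times[0,1]$, perturbs it over the open interior (where the flow gives room to perturb while keeping transversality over the endpoints), and counts rigid Y-shaped trajectories on $L\times[0,1]$ to get a comparison chain map respecting the products; a $[0,1]^2$-cobordism shows it is canonical up to homotopy, and two arbitrary triples are compared by interpolating through an auxiliary triple, changing one pair at a time. To identify this product with Biran--Cornea's, defined using a generic almost complex structure, I would use one-parameter moduli spaces of Y-shaped pearly trajectories in which the almost complex structure runs along a generic path from the special integrable $J$ to a generic $J_1$, as in the third variant of Appendix~\ref{sscInvCoh}; interior boundary contributions cancel, leaving a product-compatible isomorphism of the $J$- and $J_1$-complexes, which together with \cite[Section 5.6]{BCQS} gives the statement. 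The main obstacle is really the first step — organising the proliferating loose-end and bubbled Y-shaped moduli spaces precisely enough that the transversality input of Lemma~\ref{labPullTrans2} can be applied uniformly to all of them; once that bookkeeping is set up, everything else is a routine transcription of the arguments already given for the ordinary pearl complex.
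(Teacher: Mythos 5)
Your proposal matches the paper's argument essentially step for step: the same modification of Lemma~\ref{labPullTrans2} applied to bubbled chains and bubbled Y-shaped configurations with the label-wise diagonal-avoidance condition, the same leg-by-leg application of the deletion argument of Lemma~\ref{labDiagAvoid} (including the observation that the Morse-product exception disappears because the three central flowlines belong to independently perturbed pairs), the same treatment of central-disc bubbling via \cite[Section 5.2]{BCQS}, and the same cobordism and varying-$J$ arguments for invariance and comparison. This is the paper's proof.
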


\section{Index $4$ count for the icosahedron}
\label{secInd4Icos}

The purpose of this appendix is to perform the necessary analysis of index $4$ discs with boundary on $L_I$ passing through the points $p$ and $q$ considered in Section\nobreakspace \ref {sscInd4I}.  In particular, we show that $(q, p)$ is a regular value of the two-point index $4$ evaluation map $\ev_2 \mc M_4 \rightarrow L_I^2$, the signed count of preimages is $\pm 8$, and there are no bubbled configurations.

There are two axial discs of type $\xi_f$ through $p$ and $q$.  By \protect \MakeUppercase {C}orollary\nobreakspace \ref {labInd4Trans} they are regular points of $\ev_2$, and by the comments at the end of Section\nobreakspace \ref {sscModInv} they count with the same sign.  There are clearly no axial discs of type $\xi_v$ and order $2$ through $p$ and $q$ since they have no vertices in common, so we are left to deal with the two-pole discs.  Unfortunately this is rather involved.

\begin{alem}\label{ind4NonAxCount} There are exactly six two-pole index $4$ discs passing through $p$ and $q$.  They are regular points of $\ev_2$ and all count towards $\deg \ev_2$ with the same sign.
\end{alem}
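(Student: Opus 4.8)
The plan is to reduce the count to a finite algebraic problem about the doubled curves, solve it, and then read off regularity and signs from the machinery of Sections~\ref{sscDegCont}, \ref{sscGrpDer} and \ref{sscModInv}.

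First I would fix a parametrisation: if $u\mc(D,\pd D)\to(X_I,L_I)$ is a two-pole index $4$ disc with $u(1)=p$ and $u(-1)=q$, then by Corollary~\ref{labInd4Pol} both poles are of type $\xi_v$ and order $1$, and by Table~\ref{tabDegCont} its double $\double u$ is a rational curve of degree $4$ in $X_I$ with four poles, all of type $\xi_v$ and order $1$ --- the two poles of $u$, say at $a_1,a_2\in D\setminus\pd D$, and their reflections at $1/\conj{a}_1$ and $1/\conj{a}_2$ (here one uses Lemma~\ref{labRigQA}\ref{Rigitm3} together with the fact that $\tau$ is the antipodal map on $X_I$, so $-\xi_v$ is $\Gamma_I$-conjugate to $\xi_v$). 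The main tool is the group derivative $\D\double u$: by Lemma~\ref{labGrpDerRes} it has a simple pole whose residue is an $\SL(2,\C)$-conjugate of $-i\xi_v$ at each pole of $\double u$, by Lemma~\ref{labGrpDerProps}\ref{grpitm3} it satisfies $\Res_{1/\conj{a}_j}\D\double u=-(\Res_{a_j}\D\double u)^\dagger$, and by Lemma~\ref{labGrpDerProps}\ref{grpitm2} either $[\D\double u]$ has degree $4-2=2$ or the image of $\double u$ spans a linear subspace of $\P S^{12}V$ of dimension less than $4$.

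I would dispose of the degenerate (low-dimensional) case first, showing by a direct calculation with the explicit coordinates of $p$ and $q$ from Appendix~\ref{secExplReps}, and the fact that $p$ and $q$ share no vertex, that no degree-$4$ rational curve in $X_I$ spanning a proper linear subspace and meeting $Y_I$ only in $Y_I\setminus N_I$ can pass through both $p$ and $q$; this is in the spirit of Propositions~\ref{labInd4Tripq} and \ref{labInd4Tpq}, though the case analysis is considerably longer. In the remaining case $\D\double u$ has exactly four simple poles and, after reparametrising so that $\infty$ is not a pole (using that a full-dimensional degree-$4$ rational curve is the embedded rational normal quartic), a double zero at $\infty$; writing $\D\double u(z)=\sum_{j=1}^4 R_j/(z-a_j)$, the requirements that each $R_j$ be an order-$1$ $\SL(2,\C)$-conjugate of $-i\xi_v$, that $\sum_j R_j=0$, that $R_3=-R_1^\dagger$ and $R_4=-R_2^\dagger$, that the monodromies of $\double u'=\D\double u\cdot\double u$ lie in $\Gamma_I$, and that the resulting curve pass through $p$ at $z=1$ and $q$ at $z=-1$ form a finite polynomial system in $a_1$, $a_2$ and the conjugating matrices. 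The structural input that makes this manageable is that $p$ and $q$ differ by the half-turn $z\mapsto-z$ about a common axis, which forces the admissible pole data to be invariant under a finite group generated by this half-turn and the equatorial reflection, collapsing the system to a short list of cases that can be matched against configurations built from the vertices, edge-midpoints and face-centres of $I$; solving it (with Mathematica, as flagged in Section~\ref{sscAck}) yields exactly six solutions, each then checked to have all four poles in $Y_I\setminus N_I$. Controlling the global monodromy constraint and proving that no solutions are missed is the technical heart of the argument, and the step I expect to be hardest.

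With the six discs in hand, regularity of $\ev_2$ at each follows from Corollary~\ref{labInd4Trans} via Lemma~\ref{labInd4Pars}: each double lies in no proper linear subspace (just established), and writing the residue of $\D\double u$ at one of its poles as $\xi_R+i\xi_I$ with $\xi_R,\xi_I\in\mathfrak{su}(2)$ one checks that $\xi_R$ and $\xi_I$ are linearly independent, so $[\D\double u]$ is a smooth conic rather than a double cover of a line, and Lemma~\ref{labInd4Pars} gives partial indices $1,1,2$. For the signs I would combine the orientation behaviour of $\tau$ recalled at the end of Section~\ref{sscModInv} with explicit comparisons of Riemann--Hilbert data in the style of Lemma~\ref{labCOSign}. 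The involution $\tau$ acts on the six discs and preserves orientation on the moduli space of index-$4$ discs with two boundary marked points, so $\tau$-related discs count alike. Then, using in addition the rotations of $\SU(2)$ fixing the pair $(p,q)$ --- which act orientation-preservingly since $\SU(2)$ is connected --- together with a bounded number of further orientation comparisons built from the Riemann--Hilbert splittings of Lemmas~\ref{labInd4AxPars} and \ref{labInd4Pars}, one sees that all six discs, and the two axial discs of type $\xi_f$, count with a single common sign, completing the proof.
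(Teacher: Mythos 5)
Your setup (both poles of type $\xi_v$ and order $1$, the double $\double{u}$ of degree $4$ with four simple poles of $\D\double{u}$, the reflection relation between residues) matches the paper, as do the regularity step via Lemma~\ref{labInd4Pars} and Corollary~\ref{labInd4Trans} and the broad shape of the sign argument (the $\tau$-orientation computation from Section~\ref{sscModInv} together with the $\pi/3$-rotations relating the six solutions). The gap is in the enumeration itself. You propose to solve a ``finite polynomial system'' in the domain pole positions $a_1,a_2$ and the conjugating matrices, with one of the constraints being that the monodromy of $\double{u}'=\D\double{u}\cdot\double{u}$ lies in $\Gamma_I$. That constraint is not polynomial in the residue data: it is the monodromy of a Fuchsian system, obtained by integrating an ODE, and imposing it is an isomonodromy-type problem (precisely the direction the paper explicitly declines to pursue in Section~\ref{sscGrpDer}). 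Moreover, your proposed shortcut --- that the half-turn symmetry collapses the admissible pole data to configurations built from vertices, edge-midpoints and face-centres of $I$ --- is false: the eleven-fold points of the limit configurations at the poles of the six non-axial solutions sit at roots of $8-11k^6+8k^{12}$, which are not special points of the icosahedron (those would give axial discs, already excluded).

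The paper's enumeration runs through the \emph{target} rather than the domain: since $\double{u}$ has degree $4$ its image spans at most a $4$-plane $\Pi\subset\P S^{12}V$, so $\dim\lspan{T_pX_I,T_qX_I,T_PX_I}\le 10$ where $P=[(kx+y)^{11}(lx+y)]$ is the image of a pole. This is a genuinely polynomial condition on $(k,l)$, reducible (after eliminating $l$) to polynomials in $k^6$ whose roots are all real or of unit modulus --- which is exactly what lets one express $\conj{k}$ algebraically and bring the reflected pole $\tau(P)$ into further coplanarity constraints. Only after the pole images $P,Q,R,S$ and the value of $[\D\double{u}]$ at $p$ and $q$ are pinned down does one recover the domain positions $\pm a$, write the curve explicitly by interpolation through the six known points, and \emph{verify} the integrability relation $U'=\D\double{u}\cdot U+\lambda U$ a posteriori for that single candidate. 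To repair your argument you would need either to replace the monodromy constraint by something algebraic of this kind, or to carry out the isomonodromy analysis, which is a substantially harder and different problem.
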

\begin{proof}
Suppose $u \mc (D, \pd D) \rightarrow (X_I, L_I)$ is such a disc, with double $\double{u}$.  All of the poles of $\double{u}$ are of type $\xi_v$, and $\deg \double{u}=4$.  Let one of the poles of $u$ evaluate to $P \coloneqq [(kx+y)^{11}(lx+y)]$, with $k, l \in \C\P^1$ distinct.

The argument is essentially completely elementary: we use the fact that $\double{u}$ has degree $4$ to force linear dependencies between various points and directions on it, and exploit these to deduce the possible positions of the poles.  The difficulty comes from the significant amounts of algebraic manipulation involved, for which we use Mathematica.  Our main strategy is to reduce the constraints to polynomials in $k$, which are easily handled by a computer (one can rule out solutions by showing that various polynomial constraints have no common factor for example).  Unfortunately, the argument also requires us to consider the reflections of points under the antiholomorphic involution, which introduces complex conjugate terms into our polynomials.  The key observation is that we can get away with only considering values of $k$ which are real or of unit modulus, for which $\conj{k}$ can be re-expressed as $k$ or $1/k$.

Let us now begin the proof proper.  The image of $\double{u}$ lies in a $4$-plane $\Pi \subset \P S^{12}V$, and we have three points in $\Pi$: $p$, $q$ and $P$.  We also know, thinking of $T_pX_I$ as a subspace of $\P S^{12}V$, that $\dim (\Pi \cap T_pX_I) \geq 1$ since this space contains the tangent line to $\double{u}$ at $p$ (if the derivative of $\double{u}$ vanishes where $\double{u}$ passes through $p$ then we take the lowest order derivative which doesn't vanish; this is a well-defined direction, which we note can also be expressed as $[\D u]|_p \cdot p$, where $[\D u]|_p$ is shorthand for the value of $[\D u]$ at $u^{-1}(p)$).  This in turn means that $\dim(\Pi' \cap T_pX_I) \leq 1$, where $\Pi'$ is an arbitrary complement to $\Pi$.  Similarly $\dim (\Pi' \cap T_qX_I)$ and $\dim (\Pi' \cap T_PX_I)$ are at most $1$.  We deduce that
\begin{equation}
\label{labLinDep}
\dim \lspan{T_pX_I, T_qX_I, T_PX_I} \leq \dim \lspan{\Pi, \Pi' \cap T_pX_I, \Pi' \cap T_qX_I, \Pi' \cap T_PX_I} \leq 10,
\end{equation}
whereas generically three $3$-planes in $\P S^{12}V$ would have $11$-dimensional span.

We can explicitly compute $T_pX_I$ and $T_qX_I$ using the infinitesimal action of $\mathfrak{sl}(2, \C)$, and write $T_PX_I$ as
\begin{align*}
\begin{pmatrix} k & l \\ 1 & 1 \end{pmatrix} \cdot T_{[x^{11}y]}X_I &= \begin{pmatrix} k & l \\ 1 & 1 \end{pmatrix} \cdot \lspan{ x^{12}, x^{11}y, x^{10}y^2, x^6y^6}
\\ &= \lspan{(kx+y)^{12}, (kx+y)^{11}(lx+y), (kx+y)^{10}(lx+y)^2, (kx+y)^6(lx+y)^6}.
\end{align*}
The condition \eqref{labLinDep} can then be expressed by finding a basis for the $4$-plane $\lspan{T_pX_I, T_qX_I}^\perp \subset \P S^{12}V^*$, applying these five functionals to the above collection of four polynomials spanning $T_PX_I$, and asking that the resulting $5 \times 4$ matrix has rank $3$.  The particular basis we use---for no particular reason other than that the expressions involved are fairly short---is
\[
[33\eta^{12}+2\eta^6\theta^6+33\theta^{12}]\text{, } [2\eta^7\theta^5+33\eta\theta^{11}]\text{, } [33\eta^{11}\theta+2\eta^5\theta^7] \text{, } [\eta^8\theta^4+3\eta^2\theta^{10}] \text{ and } [3\eta^{10}\theta^2+\eta^4\theta^8],
\]
where $\eta$, $\theta$ is the basis of $V^*$ dual to $x$, $y$.

It is straightforward to check that if $k$ and $l$ are both finite and one of them is zero then so is the other, contradicting our assumption that they are distinct.  One can also check that if $k=\infty$ then $l=0$, so $\double{u}$ passes through $[x^{11}y]$ and hence also $[xy^{11}]$ by reflection.  Similarly if $l=\infty$ then $k=0$ so again $\double{u}$ passes through both $[x^{11}y]$ and $[xy^{11}]$.  In this case we see that
\begin{equation}
\label{eqReflSpan}
\dim \lspan{T_pX_I, T_qX_I, [x^{11}y], [xy^{11}]} \leq \dim \lspan{\Pi, \Pi' \cap T_pX_I, \Pi' \cap T_qX_I} \leq 8,
\end{equation}
but this dimension can be explicitly calculated to be $9$.  We conclude that both $k$ and $l$ are finite and non-zero.

The $4 \times 4$ minors of the matrix we know to have rank $3$ give five polynomials in $k$ and $l$ which must all vanish.  Each is divisible by $9801(k-l)^6$, which we can cancel off since $k$ and $l$ are distinct, to leave polynomials of degree $3$ in $l$, which can then be written as $f_{i0}+f_{i1}l+f_{i2}l^2+f_{i3}l^3$ for polynomials $f_{ij}$ in $k$ (for $i=1, \dots, 5$ and $j=0, \dots, 3$).  We thus have the constraint
\[
\begin{pmatrix} f_{10} & \dots & f_{13} \\ \vdots & \ddots & \vdots \\ f_{50} & \dots & f_{53} \end{pmatrix} \begin{pmatrix} 1 \\ l \\ l^2 \\ l^3 \end{pmatrix} = 0
\]
on $k$ and $l$.

The matrix $F=(f_{ij})$ has rank $2$: the $3 \times 3$ minors are all identically zero, whilst there is no common root to all of the $2 \times 2$ minors.  Therefore the (right) kernel of $F$ is $2$-dimensional.  Letting $g_1 = 194 k^{30}+9031 k^{24}-59344 k^{18}-59692 k^{12}-2426 k^6+20$, and assuming for now that this is non-zero, an explicit basis for this kernel is given by the vectors
\def\scale{0.9}
\begin{equation}
\label{eqExplBas1}
\scalebox{\scale}{
$\begin{pmatrix}
k^3g_1 \\
0 \\
-3 k^5 \lb149 k^{24}-7423 k^{18}-22434 k^{12}-7423 k^6+149\rb \\
217 k^{30}-18361 k^{24}-144086 k^{18}-65987 k^{12}+4868 k^6+22
\end{pmatrix}$
}
\end{equation}
and
\begin{equation}
\label{eqExplBas2}
\scalebox{\scale}{
$\begin{pmatrix}
0 \\
k^4g_1 \\
-k^5\lb22 k^{30}+4868 k^{24}-65987 k^{18}-144086 k^{12}-18361 k^6+217\rb \\
2 k^{36}+2208 k^{30}-51207 k^{24}-238046 k^{18}-51207 k^{12}+2208 k^6+2
\end{pmatrix}$
}.
\end{equation}
Dividing by $k^3g_1$ and $k^4g_1$ respectively, we write this basis as $(1, 0, \alpha, \beta)$ and $(0, 1, \gamma, \delta)$.

We know that $(1, l, l^2, l^3)$ is a linear combination of these two vectors, from which it is easy to see that
\begin{equation}
\label{labABCD}
l^2 = \alpha + l \gamma \text{ and } l^3 = \beta + l \delta.
\end{equation}
We thus have
\[
\beta + l \delta = l(\alpha + l \gamma) = l \alpha + (\alpha+l \gamma)\gamma,
\]
and hence
\[
l(\alpha+\gamma^2-\delta) = \beta - \alpha \gamma.
\]
The coefficient of $l$ in the latter cannot vanish, otherwise the right-hand side must also vanish, and these two rational functions in $k$ have no common root.  We thus have
\begin{equation}
\label{lval}
l=\frac{\beta - \alpha \gamma}{\alpha+\gamma^2-\delta},
\end{equation}
and we can substitute this back into \eqref{labABCD}, clear denominators, and take the greatest common divisor of the resulting polynomials in $k$ to get
\begin{multline}
\label{defg2}
320 k^{96}-224128 k^{90}-1467885 k^{84}+5601117772 k^{78}+42700276243 k^{72}
\\-623885336112 k^{66}+2513717360270 k^{60}-5265619809592 k^{54}+6655153864734 k^{48}
\\ -5265619809592 k^{42}+2513717360270 k^{36}-623885336112 k^{30}+42700276243 k^{24}
\\ +5601117772 k^{18}-1467885 k^{12} -224128 k^6+320 = 0.
\end{multline}

Let $g_2$ denote the left-hand side of this equation.  Note that both $g_1$ and $g_2$ are polynomials in $k^6$.  This reflects the fact that the problem of finding discs through $p$ and $q$ is invariant under rotations through $\pi/3$ about a vertical axis, corresponding to multiplication by a sixth root of unity---although each of $p$ and $q$ is only invariant under rotations through $2\pi/3$, rotating through half this angle swaps $p$ and $q$ over.  In order to prove the existence or non-existence of discs with a given value of $k$, we only need to deal with a single representative of each orbit of this symmetry.  Let $h_1$ and $h_2$ be the polynomials defined by $h_j(k^6)=g_j(k)$, of degrees $5$ and $16$ respectively.  (One may also notice that $g_2$---or equivalently $h_2$---is palindromic, in the sense that reading its coefficients from highest power of $k$ to lowest gives the same list as reading from lowest to highest.  This reflects the fact that our problem is invariant under the automorphism of $\C\P^1$ which sends $z$ to $1/z$, which swaps the configurations $p$ and $q$.  The polynomial $g_1$ does not have this property as we broke the $z \mapsto 1/z$ symmetry by choosing a basis for $F$ in the form \eqref{eqExplBas1} and \eqref{eqExplBas2}.)

Now suppose for contradiction that $g_1$ vanishes; we asserted earlier that this is not the case.  It is easy to check---by plotting a graph and counting changes of sign, for example---that all roots of $h_1$ are real, and hence by the above comment on the rotational symmetry we may assume that our root $k$ of $g_1$ is real.  We claim that $l$ is also real.  If not, the vector $(1, l, l^2, l^3)$ and its conjugate are linearly independent and thus span $\ker F$.  In particular we deduce that no non-zero element of $\ker f$ vanishes in both of the first two components.  However, the basis vectors in \eqref{eqExplBas1} and \eqref{eqExplBas2} both provide counterexamples (a computer calculation shows that $g_1$ has no common roots with the third and fourth components of each vector, so if $g_1$ vanishes then neither vector is zero), proving that $l$ is indeed real.

The reflection of the pole at $P=[(kx+y)^{11}(lx+y)]$ is therefore at $\tau(P)=[(x-ky)^{11}(x-ly)]$, and we obtain
\[
\dim \lspan{T_pX_I, T_qX_I, P, \tau(P)} \leq 8
\]
analogously to \eqref{eqReflSpan}.  To exploit this we proceed as for \eqref{labLinDep}, by applying our basis of $\lspan{T_pX_I, T_qX_I}^\perp$ to $P$ and $\tau(P)$, setting the $2 \times 2$ minors equal to zero, writing this as the vanishing of a $10 \times 3$ matrix times $(1, l, l^2)$ and then computing the greatest common divisor $g_3$ of the $3 \times 3$ minors of this last matrix.  Our root $k$ of $g_1$ must also be a root of $g_3$, but one can check that the two polynomials have no common factor, giving the desired contradiction and completing our argument that $g_1$ is non-zero.

Now we can return to the main thread of the argument, recalling that the polynomial $g_2$, given by the left-hand side of \eqref{defg2}, must vanish.  Its reduction $h_2$, obtained by replacing the variable $k^6$ by $z$ say, factorises as
\begin{multline*}
h_2=(8 - 11 z + 8 z^2)(1 - 671 z + 2301 z^2 - 671 z^3 + z^4)(1 -    185 z + 357 z^2 - 185 z^3 + z^4) 
\\ (40 + 6279 z + 81132 z^2 -    178264 z^3 + 81132 z^4 + 6279 z^5 + 40 z^6).
\end{multline*}
We claim all roots are real or of unit modulus.  To see this note that the second and fourth factors have only real roots by counting sign changes.  The first factor, meanwhile, has two complex conjugate roots and is invariant under $z \leftrightarrow 1/z$, so the roots have unit modulus.  Finally, the third factor has at least two real roots (it is negative at $z=0$ and positive for large real $z$) and is again invariant under $z \leftrightarrow 1/z$, so the other two roots are either real or conjugate complex numbers of unit modulus.

Using the $\pi/3$ rotational symmetry, we may therefore assume that our solution value of $k$ is either real or of unit modulus.  In the former case it is clear, this time from \eqref{lval}, that $l$ is also real.  We can repeat the argument we used above to show that $g_1\neq0$ in order to see that $k$ is a common root of $g_2$ and $g_3$.  Their greatest common divisor is
\[
(k^4-3 k^3-k^2+3 k+1)(k^4+3 k^3-k^2-3 k+1)(2 k^4+k^2+2),
\]
and we can immediately rule out the third factor as it has no real roots.  We claim that the other factors do not give solution curves either, so suppose for contradiction that $k$ is a root of one of them.

Using \eqref{lval} one can calculate that $l$ is antipodal to $k$ (i.e.~$l=-1/k$ since $k$ is real).  In other words, the point $P=[(kx+y)^{11}(x-ky)]$ is the pole of an axial disc of type $\xi_v$.  Moreover one can show that $k$ is a vertex of the icosahedron representing $p$ or that representing $q$, and hence this disc $v$ passes through one of these points---say it passes through $p$.  We shall show that $[\D\double{u}]$ has degree $2$ but agrees with the constant map $[\D\double{v}]$ at $p$, $P$ and $\tau(P)$, giving the contradiction we seek.

To see that $\deg[\D\double{u}]=2$ we simply compute that
\begin{equation}
\label{pqPReq}
\dim\lspan{T_pX_I, T_qX_I, P, \tau(P)} = 8,
\end{equation}
so the image of $\double{u}$ is not contained in a $3$-plane, and then apply the result of \protect \MakeUppercase {L}emma\nobreakspace \ref {labGrpDerProps}\ref{grpitm2}.  It is also easy to see that $[\D\double{u}]$ and $[\D\double{v}]$ agree at $P$, since both must represent the kernel of the infinitesimal action there, and similarly at $\tau(P)$.  We are left to show that they coincide at $p$.

Well, from \eqref{pqPReq} we see that there is a unique linear dependence between $P$, $\tau(P)$ and the elements of $T_pX_I$ and $T_qX_I$, and we know that this dependence is actually between $p$, $q$, $P$, $\tau(P)$, $[\D\double{u}]|_p \cdot p$ and $[\D\double{u}]|_q \cdot q$.  So if $p'$ and $q'$ are points in $T_pX_I$ and $T_qX_I$ respectively, such that $P$, $\tau(P)$, $p'$ and $q'$ are linearly dependent (and the coefficient of $p'$ in this dependence is non-zero), then $p'$ must lie in the tangent line $\lspan{p, [\D\double{u}]|_p \cdot p}$ to $\double{u}$ at $p$.  Since $P$, $\tau(P)$ and the tangent line $\lspan{p, [\D\double{v}]|_p \cdot p}$ to $\double{v}$ at $p$ lie in the $2$-plane containing $\double{v}$, we deduce that there is some point $p''$ in this tangent line such that $P$, $\tau(P)$ and $p''$ are linearly dependent.  By the preceding comment, this forces $p''$ to be in $\lspan{p, [\D\double{u}]|_p \cdot p}$.  It is easy to see that $p''$ is not equal to $p$ (since $p$, $P$ and $\tau(P)$ are linearly independent) so
\[
\lspan{p, [\D\double{u}]|_p \cdot p} = \lspan{p, p''} = \lspan{p, [\D\double{v}]|_p \cdot p}.
\]
We know, moreover, that $[\D\double{u}]|_p \cdot p$ and $[\D\double{v}]|_p \cdot p$ are orthogonal to $p$ since $p$ is in the zero set of the moment map \eqref{eqMomMap}.  Therefore $[\D\double{u}]$ must coincide with $[\D\double{v}]$ at $p$.  This completes the argument ruling out real values of $k$.

We are left to consider the case that $k$ is a root of $g_2$ of unit modulus.  In this situation one can check from \eqref{lval} that $l$ also has unit modulus, by computing $l \conj{l}-1$, substituting $1/k$ for $\conj{k}$, and showing that the numerator of the resulting rational expression in $k$ is divisible by $g_2$.  The reflection $\tau(P)$ of $P$ is thus at $[(-kx+y)^{11}(-lx+y)]$, and we can argue analogously to the construction of $g_3$ in the real case to see that $k$ must be a root of the greatest common divisor $g_4$ of the $3\times 3$ minors of a certain matrix.  Since $k$ is also a root of $g_2$, it must in fact be a root of
\[
g_5 \coloneqq \gcd(g_2, g_4) = 8 - 11k^6 + 8k^{12}.
\]
We shall see that the $12$ solutions are precisely the $k$-values of the twelve poles appearing across the six claimed non-axial discs.

Let us now rename the $k$-value of our pole $P$ to $k_P$ in order to distinguish it from the corresponding quantity $k_Q$ for the second pole $Q$ of our disc (which we have been ignoring so far).  Let the corresponding $l$-values be $l_P$ and $l_Q$, and let the reflections of $P$ and $Q$ be at $R$ and $S$ respectively.  Using $g_5=0$ to simplify \eqref{lval}, we have
\[
l_\bullet = \frac{k(64k_\bullet^6-221)}{183}.
\]
A calculation then shows that
\begin{equation}
\label{eqspan8}
\dim\lspan{T_pX_I, T_qX_I, P, R} = 8,
\end{equation}
so $\double{u}$ spans a $4$-plane and is thus an embedding (it's the rational normal curve in this $4$-plane).  In particular, the points $P$, $Q$, $R$ and $S$ are distinct.  Moreover, we can compute the unique linear dependence coming from \eqref{eqspan8} and deduce that
\begin{equation}
\label{eqDup}
[\D\double{u}]|_p = \left[\begin{pmatrix}
 2 \sqrt{5} k_P^2 (19 k_P^6-17) & 16 (k_P^6+28) \\
 -16 k_P^4 (20 k_P^6+11) & -2 \sqrt{5} k_P^2 (19 k_P^6-17) \\
\end{pmatrix}\right].
\end{equation}
The same formula clearly also holds with $k_Q$ in place of $k_P$.  We therefore have
\begin{equation}
\label{kPkQeqn}
\frac{k_P^2(19k_P^6-17)}{k_Q^2(19k_Q^6-17)} = \frac{k_P^6+28}{k_Q^6+28} = \frac{k_P^4(20k_P^6+11)}{k_Q^4(20k_Q^6+11)},
\end{equation}
noting that all denominators are non-zero since $k_Q^6$ is a root of $8-11z+8z^2$ and thus is irrational.

From this string of equalities we immediately see that if $k_P^6=k_Q^6$ then $k_P^2=k_Q^2$ and thus $k_P = \pm k_Q$.  This forces $P$ to coincide with $Q$ or $S$, which is impossible, so we conclude that $k_P^6 \neq k_Q^6$.  Without loss of generality (swapping $P$ and $Q$ if necessary) we may thus assume that
\[
k_P^6 = \frac{11+3\sqrt{15}i}{16} \text{ and } k_Q^6 = \frac{11-3\sqrt{15}i}{16}.
\]
Plugging these values into \eqref{kPkQeqn}, we get
\[
k_Q = \pm \frac{1-\sqrt{15}i}{4}k_P.
\]
We therefore see that for each choice of a sixth root of $(11+3\sqrt{15}i)/16$ for $k_P$ there are at most two possible values for $k_Q$.  Given one particular choice, all others are obtained by applying the $\pi/3$ rotational symmetry.

For questions of existence and uniqueness of curves $\double{u}$ realising each possible $(k_P, k_Q)$, and transversality of the relevant evaluation map, we need only consider one representative of each orbit under this symmetry.  From now on we may therefore fix a choice
\[
k_P = \frac{i\sqrt{1+\sqrt{15}i}}{2},
\]
with the corresponding choice
\[
k_Q = \pm\frac{i\sqrt{1-\sqrt{15}i}}{2}.
\]
We may parametrise $u$ so that the poles evaluating to $P$ and $Q$ occur at points $a \in (0, 1)$ and $-a$ in the domain respectively.  Note that the boundary marked points evaluating to $p$ and $q$ may not be at $\pm 1$ in this parametrisation.  The residues of $\D\double{u}$ at each pole can be computed using \protect \MakeUppercase {L}emma\nobreakspace \ref {labGrpDerRes}, and we can then assemble these to give an expression for $\D\double{u}$:
\[
\D\double{u} = \frac{\Res_a \D\double{u}}{z-a} + \frac{\Res_{1/a} \D\double{u}}{z-1/a} + \frac{\Res_{-a} \D\double{u}}{z+a} + \frac{\Res_{-1/a} \D\double{u}}{z+1/a}.
\]
To see that this expression is correct, note that the difference between the two sides is holomorphic on $\C$ and decays at $\infty$ (by performing a change of variables $z \leftrightarrow 1/z$) so is identically zero.

Substituting the value of $k_P$ into \eqref{eqDup} we get
\begin{equation}
\label{Dupeq}
[\D\double{u}]|_p = \left[\begin{pmatrix}
\sqrt{5}i & 16i \\
16i & -\sqrt{5}i
\end{pmatrix}\right].
\end{equation}
Similarly we have
\begin{equation}
\label{Duqeq}
[\D\double{u}]|_q = \left[\begin{pmatrix}
\sqrt{5}i & -16i \\
-16i & -\sqrt{5}i
\end{pmatrix}\right].
\end{equation}
We therefore see that the off-diagonal entries of $\D\double{u}$ coincide at two distinct points on $\pd D$, namely the marked points mapping to $p$ and $q$.  This immediately rules out the plus version of $k_Q$---for which the off-diagonal entries only agree at $0$---and shows that in the minus case (to which we now restrict our attention) the marked points are at $\pm i$ in some order.

Substituting $z=i$ into our expression for $\D\double{u}$ we obtain
\[
[\D\double{u}(i)] = \left[ \begin{pmatrix} 4\sqrt{2}ai & 5\sqrt{5}(a^2-1)i \\ 5\sqrt{5}(a^2-1)i & -4\sqrt{2}ai \end{pmatrix} \right].
\]
The top left-hand entry is positive imaginary, whilst the top right-hand entry is negative imaginary.  Comparing with \eqref{Dupeq} and \eqref{Duqeq}, this means that $i$ must evaluate to $q$ rather than $p$, and that
\[
\frac{5\sqrt{5}(1-a^2)}{4\sqrt{2}a} = \frac{16}{\sqrt{5}}.
\]
Since $a \in (0, 1)$, this gives $a=(9\sqrt{33}-32\sqrt{2})/25$.  Plugging this back into $\D\double{u}$, we obtain
\begin{equation}
\label{explDuval}
\D\double{u} = \frac{a(1+a^2)i}{450\sqrt{3}(z^2-a^2)(a^2z^2-1)} \begin{pmatrix} 9\sqrt{5}(1-z^2) & 16(\sqrt{55} + 18iz + \sqrt{55}z^2) \\ -16(\sqrt{55} - 18iz + \sqrt{55}z^2) & -9\sqrt{5}(1-z^2) \end{pmatrix}.
\end{equation}

Now that $a$ is determined, $\double{u}$ is a degree four rational curve whose value is known at six points (it maps $-i$, $i$, $a$, $-a$, $1/a$ and $-1/a$ to $p$, $q$, $P$, $Q$, $R$, and $S$ respectively), so it is uniquely determined if it exists.  We know that the six target points span a $4$-plane, and it straightforward to check that all proper subsets are linearly independent by lifting them to vectors $v_1, \dots, v_6$ in $\C^{13}$, computing the linear dependence $\sum \lambda_i v_i = 0$, and noting that each $\lambda_i$ is non-zero.  We can therefore explicitly write down the unique degree four rational curve $[U]$ in $\C\P^{12}$ with the required incidence properties, as the projectivisation of a holomorphic map $U \mc \C \rightarrow \C^{13} \setminus \{0\}$ given (up to scaling) by
\[
U(z) = \sum_i \lambda_iv_i \prod_{j\neq i} (z-a_j),
\]
where $a_1, \dots, a_6$ represent the six points in the domain $\C\P^1$ mapping to the known target points.  In our case we get
\[
U(z) = 
\begin{pmatrix} 29 ( 29 \sqrt{5} z^4-20 i \sqrt{11} z^3-338 \sqrt{5} z^2-20 i \sqrt{11} z+29 \sqrt{5})  \\ 928 \sqrt{55} ( z^4-1)  \\ 2552 ( z^2+1)  ( 3 \sqrt{5} z^2+10 i \sqrt{11} z+3 \sqrt{5})  \\ -1595 \sqrt{11} ( z^2-1)  ( \sqrt{5} z^2-18 i \sqrt{11} z+\sqrt{5})  \\ -38280 ( z^2+1)  ( \sqrt{5} z^2-2 i \sqrt{11} z+\sqrt{5})  \\ -15312 \sqrt{55} ( z^4-1)  \\ -319 \sqrt{5} ( 169 z^4-850 z^2+169)  \\ -15312 \sqrt{55} ( z^4-1)  \\ -38280 ( z^2+1)  ( \sqrt{5} z^2+2 i \sqrt{11} z+\sqrt{5})  \\ -1595 \sqrt{11} ( z^2-1)  ( \sqrt{5} z^2+18 i \sqrt{11} z+\sqrt{5})  \\ 2552 ( z^2+1)  ( 3 \sqrt{5} z^2-10 i \sqrt{11} z+3 \sqrt{5})  \\ 928 \sqrt{55} ( z^4-1)  \\ 29 ( 29 \sqrt{5} z^4+20 i \sqrt{11} z^3-338 \sqrt{5} z^2+20 i \sqrt{11} z+29 \sqrt{5}) \end{pmatrix}.
\]

To see that this really gives a solution for $\double{u}$, we just need to check that $[U]$ maps $\C$ into $X_I$ and the unit circle $\pd D$ into $L_I$.  Since $[U]$ sends $-i$ to $L_I$, it is enough to check that the derivative of $[U]$ is given at each point (or at least at each point of a dense subset) by the action of an element of $\mathfrak{sl}(2, \C)$ on $[U]$, and that this element can be taken to lie in $\mathfrak{su}(2)$ along $\pd D$.  But we have already computed exactly what this element must be---namely $\D\double{u}$---and it is clear from \eqref{explDuval} that this is proportional to an element of $\mathfrak{su}(2)$ along $\pd D$.  One can calculate by computer that
\[
U' = \D\double{u} \cdot U + \frac{4 z (625 z^2-4721)}{625 z^4-9442 z^2+625} U,
\]
and thus that $[U]'$ is indeed given by $\D\double{u} \cdot [U]$ on the dense set where the denominator in the coefficient of $U$ is non-zero.  This completes the proof that $[U]$ is a valid solution, and is the only one for this choice of $k_P$.  \protect \MakeUppercase {C}orollary\nobreakspace \ref {labInd4Trans} guarantees that this solution is a regular point of $\ev_2$.

The five other choices of $k_P$ give five other solutions, obtained from $[U]$ by rotations about a vertical axis through multiples of $\pi/3$.  This gives obvious orientation-preserving isomorphisms between their corresponding Riemann--Hilbert pairs and the kernels of the associated Cauchy--Riemann operators.  These isomorphisms commute with the evaluation map $\ev_2$ for rotations through even multiples of $\pi/3$, and intertwine it with the map swapping the two marked points for odd multiples.  In particular we see that all six discs are regular points of $\ev_2$, and that those differing by even multiples of $\pi/3$ count with the same sign.  Discs differing by odd multiples of $\pi/3$ can be obtained from each other by reflection, followed by rotation through an even multiple of $\pi/3$, both of which are orientation-preserving, and hence all six discs in fact carry the same sign, proving the lemma.
\end{proof}

Combining these with the axial discs mentioned at the start, we get an overall local degree of $\pm 8$ if the two families of discs---one comprising the two axial discs, the other comprising the six non-axials---count with the same sign, and $\pm 4$ otherwise.

\begin{alem}  The two families count with the same sign, and hence the local degree of $\ev_2$ is $\pm 8$.
\end{alem}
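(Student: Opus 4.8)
Since the two axial discs of type $\xi_f$ through $p$ and $q$ contribute with the same sign (by the comments at the end of Section~\ref{sscModInv}), and by Lemma~\ref{ind4NonAxCount} so do the six non-axial discs, it suffices to compare the sign attached to one axial disc $u_a$ with that attached to one non-axial disc $u_b$; for $u_b$ the plan is to use the explicit curve $[U]$ constructed in the proof of Lemma~\ref{ind4NonAxCount}. The comparison would be a direct orientation computation in the style of Lemma~\ref{labCOSign}. In both cases the Riemann--Hilbert pair $(E,F)=(u^*TX_I, u|_{\pd D}^*TL_I)$ has partial indices $1,1,2$, with an explicit holomorphic frame $v_1,v_2,v_3$ splitting the pair --- given by Lemma~\ref{labInd4AxPars} for $u_a$ and by Lemma~\ref{labInd4Pars} for $u_b$ --- and in both $v_3=iu'$ spans the partial-index-$2$ summand and generates the reparametrisation direction. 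Hence the kernel $V$ of the Cauchy--Riemann operator carries, in both cases, a basis of the shape \eqref{eqSecBasis}, and the orientation class of $F$ fixed by the orientation and spin structure on $L_I$ is represented by the frame $\alpha\cdot u,\beta\cdot u,\gamma\cdot u$ for a positively oriented basis $\alpha,\beta,\gamma$ of $\mathfrak{su}(2)$ (generators of right-handed rotations about right-handed axes).

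Arguing as in Lemma~\ref{labCOSign}, up to an overall sign independent of the disc, $u$ contributes to the index~$4$ count through $(q,p)$ --- and hence to $D$ --- with the orientation sign of the evaluation map from $V$ to $T_qL_I\oplus T_pL_I$; this descends to an isomorphism on $V$ modulo the reparametrisation direction, which is one of the vectors in \eqref{eqSecBasis}, and the resulting sign is independent of where the boundary marked points are placed, by Corollary~\ref{labInd4Trans}. Let $h\colon (E_a,F_a)\to (E_b,F_b)$ be the isomorphism sending $v_j^a$ to $v_j^b$; it induces $H\colon V_a\to V_b$ carrying the basis \eqref{eqSecBasis} of $V_a$ to that of $V_b$ and therefore respecting the reparametrisation direction and intertwining the two evaluation maps up to a fixed change of basis of the target. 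So $u_a$ and $u_b$ count with the same sign exactly when $H$ is orientation-preserving, and this holds as soon as $h$ carries the spin-structure trivialisation class of $F_a$ to that of $F_b$: equivalently, as soon as the change-of-frame matrix on $\pd D$ from $(v_1^b,v_2^b,v_3^b)$ to $(\alpha\cdot u_b,\beta\cdot u_b,\gamma\cdot u_b)$ is homotopic, through loops in $\mathrm{GL}_+(3,\R)$, to the matrix $\bigl(\tfrac{(1+z)v_1+i(1-z)v_2}{4},\tfrac{i(1-z)v_1-(1+z)v_2}{4},zv_3\bigr)$ appearing for $u_a$ in Lemma~\ref{labCOSign}.

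To verify this I would use the explicit formula \eqref{explDuval} for $\D\double{u_b}$, together with the expression for $U'$ at the end of the proof of Lemma~\ref{ind4NonAxCount}: these make $u_b'=\D\double{u_b}\cdot u_b$, the residue $\xi_R+i\xi_I=\Res_0\D\double{u_b}$ (after moving the pole to $0$), and hence the frame $v_1^b,v_2^b,v_3^b$ of Lemma~\ref{labInd4Pars} completely explicit; choosing $\alpha,\beta$ inside $\lspan{\xi_R,\xi_I}$ and $\gamma$ completing a positively oriented basis, one writes out the change-of-frame matrix on $\pd D$ and checks --- a finite computation, to be carried out in the accompanying Mathematica notebook --- that it does not wind into the non-identity component of $\mathrm{GL}(3,\R)$, so that $H$ is orientation-preserving. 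The main obstacle is precisely this last check: unlike in Lemma~\ref{labCOSign}, where both discs were axial of the same type and the two frames obeyed identical formulas, the non-axial frame mixes the infinitesimal $\mathfrak{su}(2)$-action with the transcendental datum $iu_b'$, so one genuinely has to descend to the explicit solution $[U]$ to rule out an extra half-twist in the loop of trivialisations. Granting this, all eight discs in $\ev_2^{-1}(q,p)$ contribute with a common sign, and the local degree of $\ev_2$ at $(q,p)$ is $\pm 8$.
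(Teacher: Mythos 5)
Your proposal reproduces the architecture of the paper's argument: reduce to comparing one axial disc $u_{\mathrm{a}}$ with one non-axial disc $u_{\mathrm{n}}$, transport the comparison through the frame-preserving isomorphism $h$ of Riemann--Hilbert pairs supplied by \protect \MakeUppercase {L}emma\nobreakspace \ref {labInd4AxPars} and \protect \MakeUppercase {L}emma\nobreakspace \ref {labInd4Pars}, and conclude provided $h$ matches up the spin-structure homotopy classes of trivialisation of the boundary bundles. The handling of the reparametrisation direction is also the paper's (one small caveat: the independence of the sign from the positions of the marked points does not follow from \protect \MakeUppercase {C}orollary\nobreakspace \ref {labInd4Trans} alone; it comes from the connectedness of the space of admissible marked-point configurations together with the partial-index condition, which the paper implements by homotoping $\theta_{\mathrm{a}}, \phi_{\mathrm{a}}$ to $\theta_{\mathrm{n}}, \phi_{\mathrm{n}}$ inside the explicit kernel basis \eqref{eqKerBas}).

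The genuine gap is exactly the step you identify as the main obstacle. Deciding whether a loop of change-of-frame matrices in $\mathrm{GL}_+(3, \R)$ is null-homotopic is a $\Z/2$-valued question, since $\pi_1(\mathrm{GL}_+(3, \R)) \cong \Z/2$ (it is not about which connected component the loop lies in), and ``write out the matrix and check that it does not wind'' is not yet a procedure: one would have to lift the loop to the double cover and see whether it closes up, which no amount of symbolic simplification does automatically. The paper sidesteps this entirely. Pushing the frame $\alpha \cdot u_{\mathrm{a}}$, $\beta \cdot u_{\mathrm{a}}$, $\gamma \cdot u_{\mathrm{a}}$ through $h$ gives $\xi_R \cdot u_{\mathrm{n}}$, $\xi_I \cdot u_{\mathrm{n}}$, $iz \D u_{\mathrm{n}}(z) \cdot u_{\mathrm{n}}$, and the observation is that $iz\D u_{\mathrm{n}}(z)$ is a loop in $\mathfrak{su}(2)$ which never enters the plane $\lspan{\xi_R, \xi_I}$; it therefore stays in one convex component of the complement of that plane and can be frozen at a single value $w = \phi(1)$ by a straight-line homotopy through frames. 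After this both trivialisations of $F_{\mathrm{n}}$ are given by \emph{constant} bases of $\mathfrak{su}(2)$, so there is no winding left to detect and the whole comparison collapses to the sign of one $3 \times 3$ real determinant---that of $\xi_R$, $\xi_I$, $i\phi(1)\D u_{\mathrm{n}}(\phi(1))$ relative to \eqref{abcBasis}, computed from \eqref{explDuval} and the residue at $P$. You need this freezing step (or an equivalent way of computing the $\Z/2$ invariant); without it the proposed Mathematica check is not actually specified.
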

\begin{proof}
To show this we have to look at the relative orientations on the moduli spaces of discs.  The argument is fairly similar to the proof of \protect \MakeUppercase {L}emma\nobreakspace \ref {labCOSign}, and we fix the analogous orientation and spin structure on $L_I$ to the one used there on $L_\tri$, $L_T$ and $L_O$.  Again changing the orientation on the Lagrangian doesn't affect the relative sign we are interested in (only an overall sign on both families of discs, which cancels out in the final definition of the pearl complex differential), and there are no other spin structures to worry about as $H^1(L_I; \Z/2)=0$.

Suppose now that $(E, F)$ is a rank $3$ Riemann--Hilbert pair corresponding to a holomorphic disc $u$ with boundary on $L_I$, and that $(E, F)$ admits a splitting given by a holomorphic frame $v_1$, $v_2$, $v_3 = iu'$, with respect to which the partial indices are $1$, $1$ and $2$.  Suppose that we are also given boundary marked points $e^{i\theta}$ and $e^{i\phi}$ with $\theta < \phi < \theta + 2 \pi$; we think of these as outgoing and incoming respectively, playing the roles of $1$ and $-1$ in the usual picture of pearly trajectories.  Note that since we are working with upward Morse flows, rather than downward, our notion of incoming and outgoing should really be opposite to Biran--Cornea's for orientation purposes.  However, since we are only interested in \emph{relative} signs we ignore this issue.

Defining $f(z) = i(e^{i\theta/2}-e^{-i\theta/2}z)$ and $g(z) = i(e^{i\phi/2}-e^{-i\phi/2}z)$, a basis of the kernel of the Cauchy--Riemann operator is given by
\begin{equation}
\label{eqKerBas}
fv_1 \text{, } fv_2 \text{, } f^2v_3 \text{, } gv_1 \text{, } gv_2 \text{, } g^2v_3 \text{ and } fgv_3.
\end{equation}
Let $\delta \in \{\pm 1\}$ be the orientation sign of this basis with respect to our choice of orientation and spin structure on $L_I$, and let $R$ denote the one-dimensional space of infinitesimal reparametrisations of the disc fixing the two marked points.  The section $fgv_3$ spans $R$ and generates an automorphism of the disc which moves in the direction from $e^{i \theta}$ towards $e^{i \phi}$, so (still viewing $e^{i \theta}$ and $e^{i\phi}$ as outgoing and incoming) the conventions of \cite[Appendix A.1]{BCEG} mean that the basis
\begin{equation}
\label{eqQuotBas}
fv_1 \text{, } fv_2 \text{, } f^2v_3 \text{, } gv_1 \text{, } gv_2 \text{ and } g^2v_3
\end{equation}
of $(\ker \conj{\pd}) / R$ also carries orientation sign $\delta$.

For each $\psi$ we have a basis of $T_{e^{i\psi}}L_I$ given by
\[
e^{i\psi/2}v_1(e^{i\psi}) \text{, } e^{i\psi/2}v_2(e^{i\psi}) \text{ and } e^{i\psi}v_3(e^{i\psi}),
\]
and we can ask whether it is positively oriented.  This is unchanged under continuous variations of $\psi$ so is independent of the value of $e^{i\psi}$.  Let the orientation of this basis be $\eps \in \{\pm1\}$.  The infinitesimal evaluation map
\[
D \ev_2 \mc (\ker \conj{\pd}) / R \rightarrow T_{e^{i\phi}}L_I \oplus T_{e^{i\theta}}L_I
\]
sends the basis \eqref{eqQuotBas} to
\begin{multline*}
(f(e^{i \phi})v_1(e^{i\phi}), 0) \text{, } (f(e^{i \phi})v_2(e^{i\phi}), 0) \text{, } (f(e^{i \phi})^2v_3(e^{i\phi}), 0) \text{,}
\\ (0, g(e^{i\theta})v_1(e^{i\theta})) \text{, } (0, g(e^{i\theta})v_2(e^{i\theta})) \text{ and } (0, g(e^{i\theta})^2v_3(e^{i\theta})),
\end{multline*}
which is homotopic to
\begin{multline*}
(e^{i \phi/2}v_1(e^{i\phi}), 0) \text{, } (e^{i \phi/2}v_2(e^{i\phi}), 0) \text{, } (e^{i \phi}v_3(e^{i\phi}), 0) \text{,}
\\ (0, -e^{i\theta/2}v_1(e^{i\theta})) \text{, } (0, -e^{i\theta/2}v_2(e^{i\theta})) \text{ and } (0, e^{i\theta}v_3(e^{i\theta})).
\end{multline*}
The map thus carries orientation sign $\delta \eps^2 = \delta$.

Now let $u_\mathrm{a}$ be one of the two axial index $4$ discs through $p$ and $q$, parametrised so the pole is at $0$, with corresponding Riemann--Hilbert pair $(E_\mathrm{a}, F_\mathrm{a})$.  Similarly let $u_\mathrm{n}$ be the non-axial disc constructed in \protect \MakeUppercase {L}emma\nobreakspace \ref {ind4NonAxCount}, reparametrised so that the pole evaluating to $P$ is at $0$, with Riemann--Hilbert pair $(E_\mathrm{n}, F_\mathrm{n})$.  From the proofs of \protect \MakeUppercase {L}emma\nobreakspace \ref {labInd4AxPars} and \protect \MakeUppercase {L}emma\nobreakspace \ref {labInd4Pars} (whose notation we follow) we see that both of these Riemann--Hilbert pairs are of the form just considered.  Let $\delta_\mathrm{a}$ and $\delta_\mathrm{n}$ be the respective orientations signs.  Take the marked points $e^{i\theta_\bullet}$ and $e^{i \phi_\bullet}$ so that $u_\bullet(e^{i\theta_\bullet}) = p$ and $u_\bullet(e^{i\phi_\bullet}) = q$ for $\bullet$ equal to $\mathrm{a}$ and $\mathrm{n}$.  We are interested in the relative signs of the evaluation maps $D\ev_2$, i.e.~$\delta_\mathrm{a}$ versus $\delta_{\mathrm{n}}$.

There is an obvious isomorphism $h \mc (E_\mathrm{a}, F_\mathrm{a}) \rightarrow (E_\mathrm{n}, F_\mathrm{n})$, defined simply to preserve the holomorphic frames $v_j$ of the $E_\bullet$, which induces an isomorphism $H$ between the kernels of their Cauchy--Riemann operators.  Taking the basis for $\ker \conj{\pd}_\mathrm{a}$ corresponding to \eqref{eqKerBas}, applying $H$, and homotoping $\theta_\mathrm{a}$ to $\theta_\mathrm{n}$ and $\phi_\mathrm{a}$ to $\phi_\mathrm{n}$, we obtain the respective basis for $\ker \conj{\pd}_\mathrm{n}$.  We thus see that $H$ is orientation-preserving if and only if $\delta_\mathrm{a} = \delta_\mathrm{n}$.  In other words, the two families of discs count with the same sign if and only if $H$ is orientation-preserving.

Note that the basis $\alpha$, $\beta$, $\gamma$ of $\mathfrak{su}(2)$ appearing in the proof of \protect \MakeUppercase {L}emma\nobreakspace \ref {labInd4AxPars} is defined by the property that the kernel of the infinitesimal $\mathfrak{sl}(2, \C)$-action at the pole of $u_\mathrm{a}$ is spanned by $\alpha+i\beta$ and $\gamma$.  Assuming that $\alpha$, $\beta$, $\gamma$ is positively oriented as a basis of $\mathfrak{su}(2)$, the homotopy class of trivialisation of $F_\mathrm{a}$ induced by our orientation and spin structure on $L_I$ is tautologically represented by the frame $\alpha \cdot u_\mathrm{a}$, $\beta \cdot u_\mathrm{a}$, $\gamma \cdot u_\mathrm{a}$.  Under $h$ this frame is carried to $\xi_R \cdot u_\mathrm{n}$, $\xi_I \cdot u_\mathrm{n}$, $iz \D u_\mathrm{n} \cdot u_\mathrm{n}$, and since $iw\D u_\mathrm{n}(w)$ is linearly independent of $\xi_R$ and $\xi_I$ at each point $w \in \pd D$ we see that this frame is homotopic to $\xi_R \cdot u_\mathrm{n}$, $\xi_I \cdot u_\mathrm{n}$, $iw\D u_\mathrm{n}(w) \cdot u_\mathrm{n}$ for any given $w$.  We claim that the basis $\xi_I$, $\xi_R$, $iw \D u_\mathrm{n}(w)$ of $\mathfrak{su}(2)$ is positively oriented (meaning that it carries the same orientation as $\alpha$, $\beta$, $\gamma$), and hence the latter frame represents the homotopy class of trivialisation of $F_\mathrm{n}$ induced by our orientation and spin structure.  This in turn implies that $H$ is orientation-preserving, so the two families of discs count with the same sign.

To compute the relative orientations of the bases $\alpha$, $\beta$, $\gamma$ and $\xi_R$, $\xi_I$, $iw\D u_\mathrm{n}(w)$, first recall from the proof of \protect \MakeUppercase {L}emma\nobreakspace \ref {labInd4AxPars} that $\alpha$, $\beta$ and $\gamma$ represent infinitesimal rotations about a right-handed set of orthogonal axes, so up to an orientation-preserving transformation we may assume that they are
\begin{equation}
\label{abcBasis}
\begin{pmatrix} 0 & i \\ i & 0 \end{pmatrix} \text{, } \begin{pmatrix} 0 & -1 \\ 1 & 0 \end{pmatrix} \text{ and } \begin{pmatrix} i & 0 \\ 0 & -i \end{pmatrix}.
\end{equation}
The disc $u_\mathrm{n}$ coincides with the disc $u$ from \protect \MakeUppercase {L}emma\nobreakspace \ref {ind4NonAxCount} up to reparametrisation, so we have $u = u_\mathrm{n} \circ \phi$ for some biholomorphism $\phi \mc D \rightarrow D$.  Then $\D u(1) = \phi'(1) \D u_\mathrm{n} (\phi(1))$, and $\phi'(1)$ is a positive real multiple of $\phi(1)$, so for $w$ equal to $\phi(1)$ the expression $iw\D u_\mathrm{n}(w)$ is positively proportional to the value of $i\D u(1)$ computed using \eqref{explDuval}.  Up to a positive real scale factor (which is irrelevant), we deduce that $i\phi(1)\D u_\mathrm{n}(\phi(1))$ is given by
\[
\begin{pmatrix} 0 & 9i+\sqrt{55} \\ 9i-\sqrt{55} & 0 \end{pmatrix}.
\]
Meanwhile $\xi_R$ and $\xi_I$ can be computed as the real and imaginary parts of the residue at $P$ from \protect \MakeUppercase {L}emma\nobreakspace \ref {ind4NonAxCount} (with respect to our usual real and imaginary splitting of $\mathfrak{sl}(2, \C)$).  The results, again ignoring positive real scalars, are
\[
\xi_R = \begin{pmatrix} -i & 0 \\ 0 & i \end{pmatrix} \text{ and } \xi_I = \begin{pmatrix} 0 & -\sqrt{1785+825 \sqrt{15}i} \\ \sqrt{1785-825 \sqrt{15}i} & 0 \end{pmatrix}.
\]
In terms of the basis \eqref{abcBasis}, the new basis $\xi_R$, $\xi_I$, $i\phi(1)\D u_\mathrm{n}(\phi(1))$ is given by
\[
\begin{pmatrix} 0 \\ 0 \\ -1 \end{pmatrix} \text{, } \begin{pmatrix} \im \sqrt{1785-825 \sqrt{15}i} \\ \re \sqrt{1785-825 \sqrt{15}i} \\ 0 \end{pmatrix}  \text{ and } \begin{pmatrix} 9 \\ -\sqrt{55} \\ 0 \end{pmatrix}.
\]
The matrix with these three columns has positive determinant, so the change of basis is orientation-preserving, completing the proof that both families of discs count with the same sign.
\end{proof}

Combining the two preceding lemmas proves \protect \MakeUppercase {P}roposition\nobreakspace \ref {labInd4Ipq}.

Next we verify the absence of bubbled configurations:

\begin{alem}\label{INoBub}  There are no index $4$ bubbled configurations through $p$ and $q$.
\end{alem}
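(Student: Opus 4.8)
The plan is to invoke the criterion from Section~\ref{sscBubConf}: an index~$4$ bubbled configuration through $p$ and $q$ exists if and only if the two finite sets \eqref{eqpqSet} and \eqref{eqCSet} (taken for $C=I$) meet. So I would compute both sets explicitly and check that they are disjoint, exactly as was done for $\tri$ and $T$.

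Computing \eqref{eqCSet} is immediate: two distinct non-antipodal vertices of a regular icosahedron are separated by an angle $\theta$ with $\cos\theta=\pm1/\sqrt5$, so by \eqref{eqAngle} the numbers $|\langle w_1,w_2\rangle|^2/(\|w_1\|^2\|w_2\|^2)$ for vertices $w_1,w_2$ of $I$ are $(1\pm1/\sqrt5)/2=(5\pm\sqrt5)/10$, together with $1$ (coincident case) and $0$ (antipodal case); thus \eqref{eqCSet} $=\{0,(5-\sqrt5)/10,(5+\sqrt5)/10,1\}$.

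For \eqref{eqpqSet} I would first extract the vertices. The vertices of $p$ are the twelve roots in $\C\cup\{\infty\}$ of $x^{12}-11\sqrt5\,x^9y^3-33x^6y^6+11\sqrt5\,x^3y^9+y^{12}$; dehomogenising and setting $s=z^3$ reduces this to the quartic $s^4-11\sqrt5\,s^3-33s^2+11\sqrt5\,s+1=0$ (the substitution is legitimate because the whole question is invariant under rotation by $\pi/3$ about the vertical axis, which swaps $p$ and $q$). Solving the quartic and taking cube roots yields the vertices of $p$ in closed form over $\mathbb{Q}(\sqrt5)$, and those of $q$ are their images under $z\mapsto -z$. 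One evaluates $|\langle w_1,w_2\rangle|^2/(\|w_1\|^2\|w_2\|^2)$ for $w_1$ a vertex of $p$ and $w_2$ a vertex of $q$; the $D_3$-symmetry common to $p$ and $q$ about the vertical axis reduces this to a short list of representative pairs. A convenient reformulation: writing $w_1=[z:1]$ and $w_2=[-w:1]$ with $z,w$ vertices of $p$, the quantity in question is $|1-z\bar w|^2/\big((1+|z|^2)(1+|w|^2)\big)$, i.e.\ the expression governing the vertices of $I$ itself but with $1+z\bar w$ replaced by $1-z\bar w$; since $|1+z\bar w|^2+|1-z\bar w|^2=2(1+|z|^2|w|^2)$, and the norms $|z|^2$ take only the finitely many values prescribed by the vertex latitudes of $p$, one can read off \eqref{eqpqSet} from the data already assembled for $C=I$.

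The final step is to observe that \eqref{eqpqSet} contains none of $0$, $(5-\sqrt5)/10$, $(5+\sqrt5)/10$, $1$. Here $1\notin$\eqref{eqpqSet} because $p$ and $q$ have no common vertex (already noted in the appendix), and $0\notin$\eqref{eqpqSet} because no vertex of $p$ is antipodal to a vertex of $q$; this follows from the symmetries of the defining polynomials --- they have real coefficients, so their root sets are conjugation-closed, and these root sets are interchanged by $z\mapsto1/z$, so the antipode of the vertex $-z$ of $q$ (with $z$ a vertex of $p$) is a vertex of $p$ only when $z$ is also a vertex of $q$, which never occurs. The remaining work --- ruling out the two intermediate values $(5\pm\sqrt5)/10$, equivalently showing that no vertex of $p$ lies at an icosahedral angular distance from a vertex of $q$ --- is the step I expect to be the main obstacle: it comes down to verifying that a handful of polynomials in $\sqrt5$ built from the quartic are non-zero. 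This is finite but algebraically heavy, and is best done (with Mathematica) so as to obtain \eqref{eqpqSet} in exact form rather than numerically, so that its disjointness from \eqref{eqCSet} is certain. Granting this, the criterion of Section~\ref{sscBubConf} rules out any bubbled configuration through $p$ and $q$, which is the assertion of the lemma.
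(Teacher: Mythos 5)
Your proposal takes exactly the same route as the paper: the paper's proof likewise invokes the criterion of Section \ref{sscBubConf}, computes the two sets \eqref{eqpqSet} and \eqref{eqCSet} explicitly from the vertices of $p$, $q$ and $I$, and concludes disjointness from the $\mathbb{Q}$-linear independence of $1$ and $\sqrt{5}$. Your value of \eqref{eqCSet} matches the paper's $\{0,\ \tfrac12\pm\tfrac{1}{2\sqrt5},\ 1\}$, and the remaining explicit evaluation of \eqref{eqpqSet} that you defer to Mathematica is precisely what the paper carries out.
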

\begin{proof}  To show this we calculate the explicit values in $\C\P^1$ of the vertices of $p$ and $q$, and use these to compute the sets \eqref{eqpqSet} and \eqref{eqCSet} from Section\nobreakspace \ref {sscBubConf}.  They are
\[
\left\{ \frac{1}{3} - \frac{2}{3\sqrt{5}}, \frac{1}{6}+\frac{1}{6\sqrt{5}}, \frac{1}{2} \pm \frac{\sqrt{5}}{6}, \frac{1}{2} \pm \frac{1}{6\sqrt{5}}, \frac{5}{6} - \frac{1}{6\sqrt{5}}, \frac{2}{3} + \frac{2}{3\sqrt{5}} \right\}
\]
and
\[
\left\{ 0, \frac{1}{2} \pm \frac{1}{2\sqrt{5}}, 1 \right\}
\]
respectively.  Since $1$ and $\sqrt{5}$ are linearly independent over $\mathbb{Q}$, these are disjoint.
\end{proof}

\section{Explicit representatives of the configurations}\label{secExplReps}

Here we collect together explicit expressions in standard coordinates for the triangle, tetrahedron, octahedron and icosahedron in each of three positions, depending on what feature is pointing vertically upwards: a vertex, the mid-point of an edge, or the centre of a face.  We denote these configurations by $C_v$, $C_e$ and $C_f$ respectively.  To remove any ambiguity regarding rotations about a vertical axis, for the edge (respectively face) case we take one end of the top edge (respectively one vertex of the top face) to lie on the positive real axis.  In the case of an upward-pointing vertex, we take one of next-northernmost vertices to lie on the positive real axis.  With these conventions, we have:

\begin{gather*}
\tri_v = [1: 0: -3: 0]
\\ \tri_e = [0: -3: 0: 1]
\\ \tri_f = [1: 0: 0: 1]
\end{gather*}

\begin{gather*}
T_v = [1: 0 : 0: 2\sqrt{2}: 0]
\\ T_e = [1: 0: 2\sqrt{3}: 0: -1]
\\ T_f = [0: 2\sqrt{2}: 0: 0: 1]
\end{gather*}

\begin{gather*}
O_v = [0: 1: 0: 0: 0: -1: 0]
\\ O_e = [1: 0: -5: 0: -5: 0: 1]
\\ O_f = [1: 0: 0: -5\sqrt{2}: 0: 0: -1]
\end{gather*}

\begin{gather*}
I_v = [0: 1: 0: 0: 0: 0: -11: 0: 0: 0: 0: -1: 0]
\\ I_e = [\sqrt{5}: 0: -22: 0: -33\sqrt{5}: 0: 44: 0: -33\sqrt{5}: 0: -22: 0: \sqrt{5}]
\\ I_f = [1: 0: 0: -11\sqrt{5}: 0: 0: -33: 0: 0: 11\sqrt{5}: 0: 0: 1].
\end{gather*}

These can be computed as follows, recalling that $r_C$ is the number of faces meeting at a vertex of $C$, equal to $2$, $3$, $4$ and $5$ for $C$ equal to $\tri$, $T$, $O$ and $I$ respectively.  Working from north to south on the sphere, the vertices of the configuration $C_v$ are at $\infty$, then at the vertices of a horizontal regular $r_C$-gon containing a point $a$ in $(0, \infty)$.  These contribute factors $x$ and
\[
\prod_{j=1}^n (e^{2j\pi i/r_C}ax+y) = (-1)^{r_C+1}a^{r_C}x^{r_C} + y^{r_C}
\]
respectively.  For the triangle and tetrahedron there are no other vertices, whilst the octahedron also contains $0$ and the icosahedron contains $0$ and the horizontal regular $r_C$-gon through $-1/a$.

Applying the cosine angle formula \eqref{eqAngle} to the points $\infty$, $a$ and then $a$, $e^{2\pi i/r_C}a$ we see that
\begin{equation}
\label{Angles}
\frac{a}{\sqrt{1+a^2}} = \frac{|1+e^{2\pi i/r_C}a^2|}{1+a^2}.
\end{equation}
It is easy to compute from this that
\[
a = \frac{1}{\sqrt{1-\cos(2\pi / r_C)}},
\]
and hence that $a$ is $1/\sqrt{3}$, $1/\sqrt{2}$, $1$ and $(1+\sqrt{5})/2$ in the four cases.  Plugging into $\tri_v = [x(-a^2x^2+y^2)]$, $T_v = [x(a^3x^3+y^3)]$, $O_v = [x(-a^4x^4+y^4)y]$ and $I_v = [x(a^5x^5+y^5)(-x^5/a^5+y^5)y]$ gives the claimed expressions for each $C_v$.

To compute $C_e$ and $C_f$ note that each of these configurations can be obtained by rotating $C_v$, and these rotations can be performed easily on a computer.  Explicitly, rotating $C_v$ through angle $\theta_e$ (right-handed) about the axis from $i$ to $-i$ gives $C_e$, where $\theta_e$ is the angle between the horizontal and one of the edges emanating from the north pole.  This is half the angle between $\infty$ and $a$, so its cosine is exactly given by the left-hand side of \eqref{Angles}.

In order to get $C_f$ from $C_v$, we first rotate through angle $\pi/r_C$ about the vertical axis so that the vertex at $a$ becomes one end of an edge which is parallel to the $y$-axis.  We then rotate through angle $\theta_f$ about the axis from $i$ to $-i$, where $\theta_f$ is the angle between the horizontal and a face meeting the north pole, and finally through angle $\pi$ about the vertical axis in order to make one of the vertices of the top face lie on the positive real axis.  Simple trigonometry then gives
\[
\sin \theta_f = \frac{2}{\sqrt{3}}\sin \theta_e = \frac{2}{\sqrt{3(1+a^2)}}.
\]

Using a computer to carry out these rotations and simplify, we obtain the claimed expressions for $C_e$ and $C_f$.

\bibliography{biblio}

\providecommand{\href}[2]{#2}\begingroup\raggedright\begin{thebibliography}{10}

\bibitem{AF}
P.~Aluffi and C.~Faber, ``Linear orbits of {$d$}-tuples of points in {${\bf
  P}\sp 1$},'' {\em J. Reine Angew. Math.} {\bfseries 445} (1993) 205--220.

\bibitem{AuDam}
M.~Audin and M.~Damian, \href{http://dx.doi.org/10.1007/978-1-4471-5496-9}{{\em
  Morse theory and {Floer} homology}}.
\newblock Universitext. Springer, London; EDP Sciences, Les Ulis, 2014.
\newblock \url{http://dx.doi.org/10.1007/978-1-4471-5496-9}.
\newblock Translated from the 2010 French original by Reinie Ern{\'e}.

\bibitem{Au}
D.~Auroux, ``Mirror symmetry and {$T$}-duality in the complement of an
  anticanonical divisor,'' {\em J. G\"okova Geom. Topol. GGT} {\bfseries 1}
  (2007) 51--91.

\bibitem{BM}
A.~Bayer and Y.~I. Manin, ``{(Semi)simple} exercises in quantum cohomology,''
  in {\em The {Fano} {Conference}}, pp.~143--173.
\newblock Univ. Torino, Turin, 2004.

\bibitem{BCQS}
P.~Biran and O.~Cornea, ``Quantum structures for {Lagrangian} submanifolds,''
  \href{http://arxiv.org/abs/0708.4221v1}{{\ttfamily arXiv:0708.4221v1
  [math.SG]}}.

\bibitem{BCLQH}
P.~Biran and O.~Cornea, ``A {Lagrangian} quantum homology,'' in {\em New
  perspectives and challenges in symplectic field theory}, vol.~49 of {\em CRM
  Proc. Lecture Notes}, pp.~1--44.
\newblock Amer. Math. Soc., Providence, RI, 2009.

\bibitem{BCEG}
P.~Biran and O.~Cornea, ``Lagrangian topology and enumerative geometry,''
  \href{http://dx.doi.org/10.2140/gt.2012.16.963}{{\em Geom. Topol.} {\bfseries
  16} no.~2, (2012) 963--1052}. \url{http://dx.doi.org/10.2140/gt.2012.16.963}.

\bibitem{Ch}
R.~Chiang, ``New {Lagrangian} submanifolds of {$\mathbb{CP}\sp n$},''
  \href{http://dx.doi.org/10.1155/S1073792804133102}{{\em Int. Math. Res. Not.}
  no.~45, (2004) 2437--2441}.
  \url{http://dx.doi.org/10.1155/S1073792804133102}.

\bibitem{ChoCl}
C.-H. Cho, ``Holomorphic discs, spin structures, and {Floer} cohomology of the
  {Clifford} torus,'' \href{http://dx.doi.org/10.1155/S1073792804132716}{{\em
  Int. Math. Res. Not.} no.~35, (2004) 1803--1843}.
  \url{http://dx.doi.org/10.1155/S1073792804132716}.

\bibitem{CoRa}
O.~Cornea and A.~Ranicki, ``Rigidity and gluing for {Morse} and {Novikov}
  complexes,'' \href{http://dx.doi.org/10.1007/s10097-003-0052-6}{{\em J. Eur.
  Math. Soc. (JEMS)} {\bfseries 5} no.~4, (2003) 343--394}.
  \url{http://dx.doi.org/10.1007/s10097-003-0052-6}.

\bibitem{VdS}
V.~de~Silva, {\em Products in Symplectic {Floer} Homology of {Lagrangian}
  Intersection}.
\newblock PhD thesis, University of Oxford, 1999.

\bibitem{EL2}
J.~D. Evans and Y.~Lekili, ``Generating the {Fukaya} categories of
  {Hamiltonian} ${G}$-manifolds,''
  \href{http://arxiv.org/abs/1507.05842v2}{{\ttfamily arXiv:1507.05842v2
  [math.SG]}}.

\bibitem{EL1}
J.~D. Evans and Y.~Lekili, ``Floer cohomology of the {Chiang} {Lagrangian},''
  \href{http://dx.doi.org/10.1007/s00029-014-0171-9}{{\em Selecta Math. (N.S.)}
  {\bfseries 21} no.~4, (2015) 1361--1404}.
  \url{http://dx.doi.org/10.1007/s00029-014-0171-9}.

\bibitem{FOOObig}
K.~Fukaya, Y.-G. Oh, H.~Ohta, and K.~Ono, {\em Lagrangian intersection {Floer}
  theory: anomaly and obstruction. {Two}-volume set}, vol.~46 of {\em AMS/IP
  Studies in Advanced Mathematics}.
\newblock American Mathematical Society, Providence, RI; International Press,
  Somerville, MA, 2009.

\bibitem{FOOOinv}
K.~Fukaya, Y.-G. Oh, H.~Ohta, and K.~Ono, ``Antisymplectic involution and
  {Floer} cohomology,'' \href{http://dx.doi.org/10.2140/gt.2017.21.1}{{\em
  Geom. Topol.} {\bfseries 21} no.~1, (2017) 1--106}.
  \url{http://dx.doi.org/10.2140/gt.2017.21.1}.

\bibitem{Glob}
J.~Globevnik, ``Perturbation by analytic discs along maximal real submanifolds
  of {$\bold C\sp N$},'' \href{http://dx.doi.org/10.1007/BF02571946}{{\em Math.
  Z.} {\bfseries 217} no.~2, (1994) 287--316}.
  \url{http://dx.doi.org/10.1007/BF02571946}.

\bibitem{Ha}
L.~Haug, ``On the quantum homology of real {Lagrangians} in {Fano} toric
  manifolds,'' {\em Int. Math. Res. Not. IMRN} no.~14, (2013) 3171--3220.

\bibitem{HiNe}
J.~Hilgert and K.-H. Neeb,
  \href{http://dx.doi.org/10.1007/978-0-387-84794-8}{{\em Structure and
  geometry of {Lie} groups}}.
\newblock Springer Monographs in Mathematics. Springer, New York, 2012.
\newblock \url{http://dx.doi.org/10.1007/978-0-387-84794-8}.

\bibitem{Hir}
M.~W. Hirsch, {\em Differential topology}, vol.~33 of {\em Graduate Texts in
  Mathematics}.
\newblock Springer-Verlag, New York, 1994.
\newblock Corrected reprint of the 1976 original.

\bibitem{Hit}
N.~J. Hitchin, ``Poncelet polygons and the {Painlev\'e} equations,'' in {\em
  Geometry and analysis ({Bombay}, 1992)}, pp.~151--185.
\newblock Tata Inst. Fund. Res., Bombay, 1995.

\bibitem{MomchilLocSys}
M.~Konstantinov, ``Higher rank local systems in {Lagrangian} {Floer} theory,''
  \href{http://arxiv.org/abs/1701.03624v2}{{\ttfamily arXiv:1701.03624v2
  [math.SG]}}.

\bibitem{McS}
D.~McDuff and D.~Salamon, \href{http://dx.doi.org/10.1090/ulect/006}{{\em
  {$J$}-holomorphic curves and quantum cohomology}}, vol.~6 of {\em University
  Lecture Series}.
\newblock American Mathematical Society, Providence, RI, 1994.
\newblock \url{http://dx.doi.org/10.1090/ulect/006}.

\bibitem{Oh}
Y.-G. Oh, ``Riemann-{Hilbert} problem and application to the perturbation
  theory of analytic discs,'' {\em Kyungpook Math. J.} {\bfseries 35} no.~1,
  (1995) 39--75.

\bibitem{SmDCS}
J.~Smith, ``Discrete and continuous symmetries in monotone {Floer} theory,''
  \href{http://arxiv.org/abs/1703.05343v2}{{\ttfamily arXiv:1703.05343v2
  [math.SG]}}.

\bibitem{Vek}
N.~P. Vekua, {\em Systems of singular integral equations}.
\newblock P. Noordhoff, Ltd., Groningen, 1967.
\newblock Translated from the Russian by A. G. Gibbs and G. M. Simmons. Edited
  by J. H. Ferziger.

\end{thebibliography}\endgroup
\bibliographystyle{utcapsor2}

\end{document}